\newtheorem{theorem}{Theorem}[section]
\newtheorem{proposition}{Proposition}[section]
\newtheorem{corollary}{Corollary}[theorem]
\newtheorem{lemma}[theorem]{Lemma}
\newtheorem{definition}{Definition}[section]
\newtheorem{notation}{Notation}[section]
\newcommand\restr[2]{{% we make the whole thing an ordinary symbol
  \left.\kern-\nulldelimiterspace % automatically resize the bar with \right
  #1 % the function
  \vphantom{\big|} % pretend it's a little taller at normal size
  \right|_{#2} % this is the delimiter
  }}
\begin{document}
\title{Homotopy Bicategories of $2$-fold Segal Spaces}
\author{Jack Rom\"o\footnote{Supported by the Leeds Doctoral Scholarship from the Faculty of Engineering and Physical Sciences at the University of Leeds, 2020-2024.}}

\affil{\textit{Email:} \texttt{\href{mailto:jack.t.romo@gmail.com}{jack.t.romo@gmail.com}}}

\maketitle

\begin{abstract}
In this paper, we address the construction of homotopy bicategories of $(\infty,2)$-categories, which we take as being modeled by 2-fold Segal spaces. Our main result is the concrete construction of a functor $h_2$ from the category of Reedy fibrant $2$-fold Segal spaces (each decorated with chosen sections of the Segal maps) to the category of unbiased bicategories and pseudofunctors between them.
Though our construction depends on the choice of sections, we show that for a given $2$-fold Segal space, all possible choices yield the same unbiased bicategory up to an equivalence that acts as the identity on objects, morphisms and $2$-morphisms. We illustrate our results with the example of the fundamental bigroupoid of a topological space.
\end{abstract}
\section{Introduction}

The development of higher category theory has proven to follow a complete reversal of the usual timeline for mathematical research. One should usually expect to define the constructions of interest and later prove theorems they satisfy. Higher category theorists have instead found themselves on a different trajectory, developing definitions with the intent of satisfying certain theorems. In the face of such a conundrum, one may find themselves questioning what is theorem or definition. For instance, a desirable property of higher groupoids is the \emph{homotopy hypothesis}, posed by Grothendieck in \cite{grothendieckPursuingStacks}, which declares that $n$-groupoids should model homotopy $n$-types. Some models of higher category prove this as a theorem, like the groupoidal weakly globular $n$-fold categories of Paoli \cite{paoliWeaklyGlobularNfold2016}, while many instead take it as an axiom in their definitions, such as quasicategories \cite{joyalQuasicategoriesKanComplexes2002}, complicial sets \cite{verityWeakComplicialSets2008}, Segal $n$-categories \cite{hirschowitzSimpsonDescentePourNchamps2001}, complete $n$-fold Segal spaces \cite{barwickCatClosedModel2005}, $\Theta_n$-spaces \cite{bergnerRezkComparisonModelsCategories2013} and $n$-quasi-categories \cite{araHigherQuasicategoriesVs2014}.

Due in part to these variable foundations, it has become somewhat challenging to establish translations between all the diverse definitions of higher category. In exchange however, each correspondence constructed sheds new light on both models being considered, as one must devise means to extract definition from theorem or theorem from definition. One example of such a correspondence is that of a \emph{homotopy category}. Take an $\infty$-groupoid $X$. Should we believe the homotopy hypothesis, where the objects of $X$ may be seen as points, $1$-cells as paths and so on in some topological space that we may write as $\lvert X \rvert$, one may consider the set of path components $\pi_0(X) \cong \pi_0(\lvert X \rvert)$, which is just the set of equivalence classes of objects in $X$. A somewhat frivolous name for this could be the `homotopy set' of $X$. In a similar manner, one may consider an $(\infty, 1)$-category $Y$, where the $\infty$-category of morphisms $\textbf{Hom}_Y(x, y)$ for any objects $x, y \in Y$ is an $\infty$-groupoid. We may use $\pi_0$ to obtain a category from $Y$, called the homotopy category, whose objects are the same and whose hom-sets take the form $\pi_0(\textbf{Hom}_Y(x, y))$.

In general, there should be a means to construct from any $(\infty, n)$-category $X$ its \emph{homotopy $n$-category} $h_n(X)$, for $n \geq 0$, by taking the $k$-cells of $h_n(X)$ to be those of $X$ for $k < n$ and the $n$-cells in $h_n(X)$ to be equivalence classes of $n$-cells in $X$. The difficulty of completing such a construction revolves around the models of $(\infty, n)$-category and $n$-category chosen. We take special interest in comparing models which are `homotopy-theoretic', which assume the homotopy hypothesis and use it to control coherence conditions, and the `algebraic' models, which specify composition operations and coherence morphisms directly. A homotopy $n$-category functor from a model in the former camp to the latter would amount to, for a given $(\infty, n)$-category $X$ and some composable diagram of morphisms $P$ in $X$, taking the contractible space $X_P$ of composites available for $P$ in $X$ and choosing particular points in $X_P$ to be the chosen composites of $P$ and particular higher paths to be the coherence isomorphisms between them. For instance, if $P$ is a diagram $x \xrightarrow{f} y \xrightarrow{g} z$, the space $X_P$ would contain all potential composites of the morphisms $g$ and $f$. One would be tasked with choosing points in $X_P$ representing composites like $g \circ f, (g \circ 1_y) \circ f$ or $g \circ (1_y \circ (f \circ 1_x))$, together with chosen paths between these points representing composites of associators and unitors and higher paths giving higher coherences between these.

A case of particular interest is the comparison of \emph{$2$-fold Segal spaces}, as defined for instance in \cite{bergnerRezkComparisonModelsInfty2020} or \cite{johnson-freydScheimbauerOpLaxNatural2017}, with classical bicategories. General $n$-fold Segal spaces are `homotopy-theoretic' models of $(\infty, n)$-category defined inductively. One starts from $\infty$-groupoids $\textbf{SeSp}_0 := \infty\textbf{Grpd}$, assumed to be modeled by spaces of some form by the homotopy hypothesis. Given a suitable category $\textbf{SeSp}_{n-1}$ of $(n-1)$-fold Segal spaces equipped with a sensible notion of equivalences thereof, one then defines an $n$-fold Segal space to be a functor $X : \Delta^{op} \rightarrow \textbf{SeSp}_{n-1}$ sending $[n] \mapsto X_n$, such that the \emph{Segal maps}
$$
    X_n \rightarrow X_1 \times^h_{X_0} \cdots \times^h_{X_0} X_1
$$
are equivalences in $\textbf{SeSp}_{n-1}$. This yields a category $\textbf{SeSp}_n$, where the equivalences $X \rightarrow Y$ are the maps such that the levelwise maps $X_k \rightarrow Y_k$ for all $k \geq 0$ are equivalences in $\textbf{SeSp}_{n-1}$.

Advancements in constructing extended topological quantum field theories in full generality have proceeded in the model of symmetric monoidal $n$-fold Segal spaces  \cite{ calaqueScheimbauerNoteInftyCategory2019, gradyPavlovExtendedFieldTheories2022, lurieClassificationTopologicalField2009, schommerPriesInvertibleField2017}, while much work on once-extended such theories has been done using symmetric monoidal bicategories \cite{bartlettDouglasSchommerPriesVicaryExtended3DimensionalBordismTheory2014, martinsPorterCategorificationQuinnFinite2023, pstragowskiDualizableObjectsMonoidal2014, schommer-priesClassificationTwoDimensionalExtended2014}. Relating these two models by a homotopy bicategory construction, sending a $2$-fold Segal space $X$ to a suitable bicategory $h_2(X)$, paves the way to comparing results established in each model, such as applying the cobordism hypothesis proven for $n$-fold Segal spaces to known constructions of bicategorical topological quantum field theories.

In this paper, we explicitly construct a functor
$$
    h_2 : \textbf{SeSp}_2^{comp} \rightarrow \textbf{UBicat}
$$
from the category $\textbf{SeSp}_2^{comp}$ of (Reedy fibrant) $2$-fold Segal spaces, each decorated with chosen sections of the \emph{Segal maps}, to the category of \emph{unbiased bicategories} as defined in \cite{leinsterOperadsHigherDimensionalCategory2000}. Unbiased bicategories are equivalent to bicategories, though allow composition operations that compose chains of $n$ morphisms for all $n$ directly, rather than just allowing a single binary composition operation that must be applied many times to compose such a chain. For instance, in an unbiased bicategory $\mathscr{B}$, a chain of $1$-morphisms $X \xrightarrow{f} Y \xrightarrow{g} Y \xrightarrow{h} Z$ can be directly composed with a trinary composition operation into $h \circ g \circ f$, whereas in a classical bicategory we could only form the composites $h \circ (g \circ f)$ and $(h \circ g) \circ f$. Using unbiased bicategories highlights the $n$-ary horizontal composition operations implicitly available for every $n$ in a $2$-fold Segal space, obtained by weakly inverting all the Segal maps.

Another advantage to using unbiased bicategories is that they readily generalize to known \emph{algebraic} models of $n$-category for higher $n$, such as the Batanin-Leinster models of $\infty$-category \cite{leinsterHigherOperadsHigher2004}, which likewise support unbiased composition operations. Moreover, our unbiased approach, being fundamentally operadic by design, is similar in nature to known operadic models of algebraic $n$-category, like Trimble $n$-categories \cite{chengComparingOperadicTheories2011}. It is the author's hope that the techniques presented in this paper will lead to homotopy $n$-category models from $n$-fold Segal spaces or similar homotopy-theoretic models to algebraic models of $n$-category, leading to a better understanding of the relationship between these two camps of model for higher category theory.

\subsection{Previous Constructions}

There are several known constructions of homotopy bicategory in the literature relevant to the goals of this paper. One is the approach of Moser in \cite{moserDoubleCategoricalNerveDoubleCategories2021}, where a functor from bisimplicial spaces to strict $2$-categories is developed that restricts to a suitable homotopy $2$-category functor from Reedy fibrant \emph{complete} $2$-fold Segal spaces. While this construction defines a Quillen pair between strict $2$-categories and complete $2$-fold Segal spaces, it is designed to only produce strict $2$-categories.
The major developments in once-extended TQFTs using symmetric monoidal bicategories in the literature, such as the work of Schommer-Pries \cite{schommer-priesClassificationTwoDimensionalExtended2014}, define the relevant higher categories of manifolds, cobordisms and higher cobordisms with non-trivial associators and unitors. Thus, we would like our homotopy bicategory functor to explicitly produce weak bicategories from the incoming $(\infty, 2)$-categories, obtaining associators and unitors from the structure of a $2$-fold Segal space. This will help ensure that our homotopy bicategory construction will translate $(\infty, 2)$-categories of cobordisms constructed via $2$-fold Segal spaces, such as those of Calaque and Scheimbauer \cite{calaqueScheimbauerNoteInftyCategory2019} or Grady and Pavlov \cite{gradyPavlovExtendedFieldTheories2022}, directly into bicategorical such constructions that can be more easily compared with current developments in this domain. Indeed, an outline for such a comparison has been given in Scheimbauer's thesis \cite{scheimbauerThesis}, which we hope our work here may help to formalize.

One should also consider the approach of Campbell in \cite{campbellHomotopyCoherentCellular2020}, where a homotopy bicategory functor from \emph{$2$-quasicategories} to bicategories is defined. The construction itself is appealing in its simplicity, much like the approach of Moser, and admits several similar technical advantages, such as being participant in a Quillen pair. Applying this construction to our context would require a suitable functor $\textbf{SeSp}_2 \rightarrow \textbf{qCat}_2$ from $2$-fold Segal spaces to $2$-quasicategories. This should define an equivalence of the models, either a Quillen equivalence or similarly an equivalence of $(\infty, 1)$-categories. The potential means to obtain such a functor currently in the literature, as far as the author is aware, are given by the chains in the (non-commuting) diagram
\[\begin{tikzcd}
	{\textbf{SeSp}_2} & {\textbf{CSSP}_2} && {\Theta_2\textbf{Sp}} && {\textbf{qCat}_2}
	\arrow["C", from=1-1, to=1-2]
	\arrow[""{name=0, anchor=center, inner sep=0}, "{d_\ast}", curve={height=-18pt}, from=1-2, to=1-4]
	\arrow[""{name=1, anchor=center, inner sep=0}, "{d^\#}"', curve={height=18pt}, from=1-2, to=1-4]
	\arrow[""{name=2, anchor=center, inner sep=0}, "{d^\ast}"{description}, from=1-4, to=1-2]
	\arrow["{(-)_0}", curve={height=-12pt}, from=1-4, to=1-6]
	\arrow["{\textbf{Sing}_W}"', curve={height=12pt}, from=1-4, to=1-6]
	\arrow["\dashv"{anchor=center, rotate=85}, draw=none, from=1, to=2]
	\arrow["\dashv"{anchor=center, rotate=95}, draw=none, from=2, to=0]
\end{tikzcd}\]
The functor $C : \textbf{SeSp}_2 \rightarrow \textbf{CSSP}_2$ is a \emph{completion} functor, converting a $2$-fold Segal space to the special case of a \emph{complete} $2$-fold Segal space. An example of such a functor can be found for instance in \cite[Thm. 1.2.13]{lurieInfty2CategoriesGoodwillie2009}. The functor $d^\ast : \Theta_2\textbf{Sp} \rightarrow \textbf{CSSP}_2$ from \emph{$\Theta_2$-spaces} to complete $2$-fold Segal spaces is a simple pullback functor. However, the functor $d_\ast$, the pushforward, is less clear explicitly; it is given by a coend that may be complicated to compute directly. Moreover, $d_\ast$ is right Quillen on the injective model structures for the relevant categories of simplicial presheaves \cite{bergnerRezkComparisonModelsInfty2020}, so the overall composite of functors from $\textbf{SeSp}_2$ to bicategories may not be left Quillen, as one may expect from a homotopy bicategory functor and as is established for instance in \cite{campbellHomotopyCoherentCellular2020} and \cite{moserDoubleCategoricalNerveDoubleCategories2021}. Moreover, the alternative functor $d^\#$ is Quillen on the \emph{projective} model structure rather than the \emph{injective} structure \cite{bergnerRezkComparisonModelsInfty2020}, so an injective fibrant replacement or some similar adjustment may be required. The two functors from $\Theta_2$-spaces to $2$-quasicategories are defined in \cite{araHigherQuasicategoriesVs2014}, while the three functors between $\Theta_2$-spaces and complete $2$-fold Segal spaces are established and studied in \cite{bergnerRezkComparisonModelsInfty2020}. Studying this conversion seems highly nontrivial and its composition with Campbell's homotopy bicategory functor is left to future work.

Many other related constructions exist in the literature, whose relationship with the author's construction are left to future work. `Homotopy sets,' more commonly just referred to as sets of path components, are easily constructed from Kan complexes. Homotopy categories of quasicategories are classical \cite[pg. 24]{lurieHigherToposTheory2008}, while Rezk has established homotopy categories for complete Segal spaces \cite{rezkModelHomotopyTheory2000}. Campbell, beyond his construction of homotopy bicategory, notes in \cite{campbellHomotopyCoherentCellular2020} many `nerve' functors which should be right adjoint to suitable notions of homotopy bicategory, such as the functor from strict $2$-categories to quasicategory-enriched categories defined by enriching along the nerve functor of quasicategories, the nerve of a bicategory as a $2$-dimensional Postnikov complex due to Duskin \cite{duskinSimplicialMatricesNerves2001} and Gurski \cite{gurskiNervesBicategoriesStratified2009}, the nerve of strict $2$-category as a $2$-precompicial set due to Ozornova and Rovelli \cite{ozornovaRovelliNerves2categories2categorification2021} and the composition of Lack and Paoli's nerve for bicategories \cite{lackPaoli2nervesBicategories2008} with change of base by the quasicategory nerve functor to obtain Reedy fibrant quasicategory-enriched Segal categories. A number of such nerve constructions are compared by Moser, Ozornova and Rovelli in \cite{moserOzornovaRovelliModelIndependence2022}, though all of these nerves are, save the work of Campbell, defined for strict $2$-categories.

Johnson-Freyd, Calaque and Scheimbauer have suggested a notion of homotopy bicategory for projective fibrant complete $2$-fold Segal spaces in \cite{johnson-freydScheimbauerOpLaxNatural2017}, \cite{scheimbauerThesis} and \cite{calaqueScheimbauerNoteInftyCategory2019}. In fact, it is this last approach that underpins our method for defining homotopy bicategories; our construction is a Reedy fibrant implementation of their desired definition. We proceed in the Reedy fibrant case to make available certain necessary lifting problems for our construction to succeed. It would also be of interest to compare with the suggested means to obtain algebraic coherence data from a weakly enriched $\infty$-category by Gepner and Haugseng in \cite[Remark 2.2.19]{gepnerHaugsengEnrichedCategories2015}, following the equivalence between this model of higher category and $2$-fold Segal spaces discussed in \cite{haugsengRectificationEnriched2015}.

\textbf{Acknowledgements.} This paper was written under the careful and endlessly inspiring guidance of the author's PhD supervisors, Jo\~ao Faria Martins and Nicola Gambino, to who endless thanks are owed for making the results of this work possible. The author also wishes to thank Julie Bergner and Ross Street for helpful e-mail correspondences, as well as Udit Mavinkurve and Tim Porter for suggesting useful references. The author learned of many of the potential comparison functors forming pathways between 2-fold Segal spaces and 2-quasicategories from the \emph{$(\infty, 2)$-categories workshop} run by Martina Rovelli, Nima Rasekh and Lyne Moser; the author extends his gratitude to both these organizers and all participants. The author also thanks the developers of \url{https://q.uiver.app/}, with which the commutative diagrams in this paper were constructed. Much of this research was funded by the Leeds Doctoral Scholarship from the Faculty of Engineering and Physical Sciences at the University of Leeds, to whom the author gives his gratitude.

Significant portions of this paper are taken vertabim from the author's thesis \cite{romoTowardsAlgebraicNCategories}, which is largely an augmented and corrected version of an earlier version of this article. The reader seeking more background in model categories or curious about the \emph{projective fibrant} case of $2$-fold Segal spaces, which underpins the models of higher categories of cobordisms studied in \cite{lurieClassificationTopologicalField2009} and \cite{calaqueScheimbauerNoteInftyCategory2019}, is referred to the author's thesis for relevant supplementary material.

\tableofcontents
\section{2-fold Segal Spaces}

In producing a comparison between two models of higher category, our first port of call is to establish the fine-print of the models we have chosen. Our starting point is that of $(\infty, 2)$-categories, which for us, are \emph{complete $2$-fold Segal spaces}. In fact, we will work with the weaker notion of a \emph{$2$-fold Segal space}.

Our choice of complete $2$-fold Segal spaces as a model for $(\infty, 2)$-categories comes largely from their connections to fully extended topological quantum field theories \cite{johnson-freydScheimbauerOpLaxNatural2017} \cite{calaqueScheimbauerNoteInftyCategory2019} \cite{lurieClassificationTopologicalField2009}, along with their capacity to be inductively extended to a model of $(\infty, n)$-category \cite{barwickCatClosedModel2005} \cite{bergnerRezkComparisonModelsInfty2020} \cite{calaqueScheimbauerNoteInftyCategory2019} \cite{lurieInfinityCategoriesGoodwillie2009} \cite{barwickSchommerPriesUnicityHomotopyTheory2020}; extending the constructions in this paper to general $n$ will be future work. The form of homotopy bicategory presented here is also a natural extension of the construction discussed in \cite{calaqueScheimbauerNoteInftyCategory2019}, \cite{scheimbauerThesis} and \cite{johnson-freydScheimbauerOpLaxNatural2017}, though applied this time to Reedy fibrant $2$-fold Segal spaces so that certain lifting problems are made available.

\subsection{Model Structures on $n$-uple Simplicial Spaces}

Before we can proceed much further, we should take the time to establish some of the important infrastructure for our definitions. Our formalism is inspired by the discussion in \cite[pg. 5-7]{rezkModelHomotopyTheory2000}, though primarily applies results from \cite{hirschhornModelCategoriesTheir2009}. It should be noted thus that all our model categories are assumed to have functorial factorizations, as in \cite[Def. 7.1.3]{hirschhornModelCategoriesTheir2009}.

For notational purposes, we write $\textbf{sSet}$ for the category of simplicial sets. We choose to write $\langle f_0, \cdots, f_m \rangle : [m] \rightarrow [n]$ for the map in $\Delta$ such that $i \mapsto f_i$ for all $0 \leq i \leq m$. There are special maps
$$
d_i^n := \langle 0, \cdots, i-1, i+1, \cdots, n \rangle : [n] \rightarrow [n+1]
$$
which we dub the \emph{coface maps}, and
$$
s_i^n := \langle 0, \cdots, i, i, \cdots, n \rangle : [n] \rightarrow [n-1]
$$
which we call the \emph{codegeneracy maps}. We will often omit the superscript when it is evident.

In general, for any \emph{simplicial object} $X$ in a category $\mathscr{C}$, meaning a functor $X : \Delta^{op} \rightarrow \mathscr{C}$, we write $X_i := X([i])$ and $X_f : X_i \rightarrow X_j$ for $X(f)$, given a map $f : [j] \rightarrow [i]$ in $\Delta$. For instance, there are maps $X_{d_0}, X_{d_1}, X_{d_2} : X_2 \rightarrow X_1$ and $X_{\langle 0, 4, 7 \rangle} : X_9 \rightarrow X_2$. For a simplicial set $X$, we call the maps $X_{d_i^n}$ the \emph{face maps} and $X_{s_i^n}$ the \emph{degeneracy maps}.

Write $\Delta[n]$ for the representable simplicial set from $[n]$ and $\partial \Delta[n]$ for the maximal subobject of $\Delta[n]$ not containing the identity $1_{[n]} : [n] \to [n]$. Here, a `subobject' simply means a levelwise subset.

\begin{definition} \label{defn_sesp_sp_k}
    For $n \geq 1$ and $k \geq 1$, let
    $$
        Sp(n) := \Delta[1] \sqcup_{\Delta[0]} \cdots \sqcup_{\Delta[0]} \Delta[1] \in \textbf{sSet}
    $$
    and set $g_n : Sp(n) \hookrightarrow \Delta[n]$ to be the inclusion $\langle 0, 1 \rangle \sqcup_{\langle 1 \rangle} \cdots \sqcup_{\langle n-1 \rangle} \langle n-1, n \rangle$. For $n=0$, let $Sp(0) = \Delta[0]$ and $g_0 = 1_{\Delta[0]}$.
\end{definition}

\begin{definition}[{\cite[pg. 60]{heutsMoerdijkSimplicialDendroidalHomotopy2022}}]
    Let $k \geq 0$ and $X \in \textbf{sSet}$. Then the \emph{$k$-skeleton} of $X$, written $\textbf{sk}_k X$, is the minimal subobject of $X$ such that $(\textbf{sk}_k X)_n = X_n$ for all $0 \leq n \leq k$.
\end{definition}

It is known that $\textbf{sSet}$ is a simplicial model category and is Cartesian closed, where the exponent $Y^X$ defining the enrichment for $X, Y \in \textbf{sSet}$ is such that
$$
    (Y^X)_n \cong \textbf{Hom}_{\textbf{sSet}}(X \times \Delta[n], Y).
$$

We write $\iota_{\textbf{S}} : \textbf{Set} \hookrightarrow \textbf{sSet}$ for the functor sending a set $X$ to a levelwise constant simplicial set. We call simplicial sets in the (essential) image of $\iota_{\textbf{S}}$ \emph{discrete}.

There is an important classical comparison between simplicial sets and topological spaces we will need. Henceforth, let $\textbf{Top}$ be a convenient category of topological spaces, such as $k$-spaces.

\begin{definition} \label{def_ssets_geomreal}
    There is a functor $\bullet_t : \Delta \rightarrow \textbf{Top}$ sending $[n]$ to the space
    $$
        \Delta_t[n] := \{(x_0, \cdots, x_n) \in \mathbb{R}^{n+1} \mid \sum_i x_i = 1, x_i \geq 0\}
    $$
    and $f: [n] \to [m]$ to the unique affine map $\Delta_t[n] \to \Delta_t[m]$ sending each coordinate vector $e_i \in \mathbb{R}^{n+1}$ to $e_{f_i} \in \mathbb{R}^{m+1}$.
\end{definition}

\begin{definition} \label{def_ssets_sing}
    Define the functor $\textbf{Sing} : \textbf{Top} \rightarrow \textbf{sSet}$ to send $X \in \textbf{Top}$ to the simplicial set $\textbf{Sing}(X)$, defined levelwise as
    $$
        \textbf{Sing}(X)_n := \textbf{Hom}_{\textbf{Top}}(\Delta_t[n], X).
    $$
    For a map $f : X \rightarrow Y$ in $\textbf{Top}$, $\textbf{Sing}(f)$ is given by levelwise postcomposition, sending an $n$-simplex $\Delta_t[n] \rightarrow X$ to its composition with $f :X \rightarrow Y$. 
\end{definition}

There is a left adjoint $\lvert \cdot \rvert : \textbf{sSet} \rightarrow \textbf{Top}$ to $\textbf{Sing}$, sending $X \in \textbf{sSet}$ to its \emph{geometric realization} $\lvert X \rvert \in \textbf{Top}$, given by a certain coend. We will not discuss the details here.

We will need more substantial data structures than mere simplicial sets:

\begin{definition} \label{def_ssets_nuple_simp_spaces}
    The category of \emph{$n$-uple simplicial spaces}, for any $n \geq 0$, is defined to be $\textbf{sSpace}_n := \textbf{sSet}^{(\Delta^{op})^n}$.

    The category of \emph{simplicial spaces} is the category of $1$-uple simplicial spaces, which we write as $\textbf{sSpace} := \textbf{sSpace}_1$.

    The category of \emph{bisimplicial spaces} is the category of $2$-uple simplicial spaces, which we write as $\textbf{ssSpace} := \textbf{sSpace}_2$.
\end{definition}

Write $\Delta[i_0, \cdots, i_n]$ for the $n$-uple simplicial space represented by $([i_0], \cdots, [i_n]) \in \Delta^{n+1}$, noting $\textbf{sSet}^{(\Delta^{op})^n} \cong \textbf{Set}^{(\Delta^{op})^{n+1}}$. Our convention for ordering of indices follows from the evident isomorphism
$$
    \textbf{sSet}^{(\Delta^{op})^n} \cong (\cdots (\textbf{sSet}^{\Delta^{op}}) \cdots)^{\Delta^{op}}
$$
which does not permute the copies of $\Delta^{op}$. With this in mind we also write, for $X \in \textbf{sSpace}_n$, that
$$
    X_{i_1, \cdots, i_n} := ((X_{i_1}) \cdots )_{i_n}.
$$
Hence, by the Yoneda lemma,
$$
    \textbf{Hom}_{\textbf{sSpace}_n}(\Delta[i_0, \cdots, i_n], X) \cong (X_{i_n, i_{n-1}, \cdots, i_1})_{i_0}.
$$

\begin{definition} \label{def_ssets_f_n}
    Let $0 \leq k < n$. Define the map
    $$
        F_n^k : \textbf{sSpace}_k \rightarrow \textbf{sSpace}_n
    $$
    such that, for any $K \in \textbf{sSpace}_k$,
    $$
        (F_n^k(K)_{i_1, \cdots, i_n})_{j} = (K_{i_1, \cdots, i_k})_{i_{k+1}}.
    $$
    More formally, this is given by the precomposition map on presheaf categories induced by the functor $\Delta^{n+1} \rightarrow \Delta^{k+1}$ sending
    $$
        ([i_0], \cdots, [i_n]) \mapsto ([i_{n-k}], \cdots [i_n]).
    $$
\end{definition}

Note that $F_n^0(K)$ is levelwise a \emph{discrete} $(n-1)$-uple simplicial space, meaning a constant functor $(\Delta^{op})^{n-1} \rightarrow \textbf{sSet}$ whose image is discrete as a simplicial set.

We will also need to understand boundaries of our representable $n$-uple simplicial spaces, once we discuss model structures:

\begin{notation} \label{not_ssets_bdry}
    Write $\partial \Delta[i_n, \cdots, i_0]$ for the maximal sub-presheaf of $\Delta[i_n, \cdots, i_0]$ missing the element $(1_{[i_n]}, \cdots, 1_{[i_0]})$.
\end{notation}

\begin{notation} \label{not_ssets_f_rep}
    Write
    \begin{align*}
        F_n^k(i_k, \cdots, i_0) &= F_n^k(\Delta[i_k, \cdots, i_0]) \\
        \partial F_n^k(i_k, \cdots, i_0) &= F_n^k(\partial \Delta[i_k, \cdots, i_0]).
    \end{align*}
\end{notation}

\begin{lemma} \label{lemma_ssets_fnk_iso_zeros}
    There is an isomorphism
    $$
        F_n^k(i_k, \cdots, i_0) \cong \Delta[0, \cdots, 0, i_k, \cdots, i_0].
    $$
\end{lemma}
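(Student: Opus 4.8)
The plan is to recognize $F_n^k$ as a precomposition functor along a projection of product categories and to exploit the fact that $[0]$ is the terminal object of $\Delta$. Recall from Definition \ref{def_ssets_f_n} that $F_n^k$ is precomposition along the functor $\phi : \Delta^{n+1} \to \Delta^{k+1}$ sending $([i_0], \cdots, [i_n]) \mapsto ([i_{n-k}], \cdots, [i_n])$; under the identification $\Delta^{n+1} \cong \Delta^{n-k} \times \Delta^{k+1}$ in which the last $k+1$ coordinates constitute the second factor, $\phi$ is simply the projection onto that second factor.

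First I would observe that $[0]$ is terminal in $\Delta$, since there is a unique order-preserving map $[m] \to [0]$ for every $m \geq 0$; hence $([0], \cdots, [0]) \in \Delta^{n-k}$ (with $n-k \geq 1$ entries, as $0 \leq k < n$) is terminal. It follows that the projection $\phi$ has a right adjoint $R : \Delta^{k+1} \to \Delta^{n+1}$ given by $([j_0], \cdots, [j_k]) \mapsto ([0], \cdots, [0], [j_0], \cdots, [j_k])$ with $n-k$ leading copies of $[0]$: for any $c = ([i_0], \cdots, [i_n])$ and any $d \in \Delta^{k+1}$, the $\textbf{Hom}$-sets in the discarded $\Delta^{n-k}$-coordinates of $\textbf{Hom}_{\Delta^{n+1}}(c, R(d))$ are singletons, so $\textbf{Hom}_{\Delta^{n+1}}(c, R(d)) \cong \textbf{Hom}_{\Delta^{k+1}}(\phi(c), d)$, naturally in $c$.

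Next I would invoke the general principle that precomposition along a left adjoint carries representables to representables: since $\phi \dashv R$, the presheaf $\textbf{Hom}_{\Delta^{k+1}}(\phi(-), d)$ obtained by restricting the representable on $d$ along $\phi$ is canonically isomorphic to $\textbf{Hom}_{\Delta^{n+1}}(-, R(d))$, the representable on $R(d)$. Applying this with $d$ the object $([i_k], [i_{k-1}], \cdots, [i_0])$ that represents $\Delta[i_k, \cdots, i_0]$ yields
$$
    F_n^k(i_k, \cdots, i_0) \;\cong\; \Delta[0, \cdots, 0, i_k, i_{k-1}, \cdots, i_0],
$$
with exactly $n-k$ copies of $0$, which is the assertion. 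Naturality of this isomorphism in the auxiliary simplicial direction is automatic, as the whole argument takes place at the level of presheaf categories.

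The one genuine hazard, and the step I would be most careful with, is the index and ordering bookkeeping: confirming precisely which coordinates $\phi$ forgets, that $R$ inserts the $n-k$ copies of $[0]$ in the leading (lower-index) slots rather than the trailing ones, and that this is compatible with the reversed display convention $\Delta[i_k, \cdots, i_0]$ together with the stipulation that $\Delta[a_0, \cdots, a_m]$ is represented by $([a_0], \cdots, [a_m])$. Alternatively one can avoid the adjunction entirely and verify the isomorphism componentwise from the formula $(F_n^k(K)_{i_1, \cdots, i_n})_j = (K_{i_1, \cdots, i_k})_{i_{k+1}}$ of Definition \ref{def_ssets_f_n} together with the product description of the simplices of a representable presheaf on $\Delta^{n+1}$; this is elementary but demands exactly the same bookkeeping.
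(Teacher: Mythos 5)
Your proof is correct. The paper actually states Lemma \ref{lemma_ssets_fnk_iso_zeros} without any proof, treating it as immediate from the definitions, so there is nothing to compare against; your argument --- that $[0]$ is terminal in $\Delta$, hence the projection $\phi : \Delta^{n+1} \to \Delta^{k+1}$ defining $F_n^k$ has the right adjoint $R(d) = ([0],\cdots,[0],d)$, and restriction along a left adjoint carries the representable at $d$ to the representable at $R(d)$ --- is a clean and complete justification, and your index bookkeeping (exactly $n-k$ copies of $0$, placed in the leading slots, consistent with the convention that $\Delta[a_0,\cdots,a_m]$ is represented by $([a_0],\cdots,[a_m])$) is consistent with how the paper uses the isomorphism, e.g.\ in the proof of Proposition \ref{prop_ssets_enrich_f_rep}.
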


\begin{proposition} \label{prop_ssets_bdry_decomp}
    For all $n \geq 1$ and $i_0, \cdots, i_n \geq 0$, there is an isomorphism
    $$
        \partial \Delta[i_n, \cdots, i_0] \cong (\Delta[i_n, \cdots, i_1, 0] \times \partial F_n^0(i_0)) \sqcup_{\partial \Delta[i_n, \cdots, i_1, 0] \times \partial F_n^0(i_0)} (\partial \Delta[i_n, \cdots, i_1, 0] \times F_n^0(i_0)).
    $$
\end{proposition}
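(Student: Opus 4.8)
The plan is to prove this by working componentwise (over each object $([j_0],\ldots,[j_n]) \in \Delta^{n+1}$, recalling $\textbf{sSpace}_n \cong \textbf{Set}^{(\Delta^{op})^{n+1}}$) and identifying which $(n+1)$-tuples of simplices lie in each piece. The key observation is that $\partial F_n^0(i_0)$ is, by Lemma \ref{lemma_ssets_fnk_iso_zeros} and Notation \ref{not_ssets_bdry}, essentially $\partial\Delta[i_0]$ placed in the last coordinate (with zeros elsewhere), so $\Delta[i_n,\ldots,i_1,0] \times F_n^0(i_0)$ picks out those $(n+1)$-tuples whose last component is $1_{[i_0]}$, while $\Delta[i_n,\ldots,i_1,0] \times \partial F_n^0(i_0)$ and the two boundary factors impose the corresponding nondegeneracy restrictions. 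Concretely, I would first record that a $T$-point of $\Delta[i_n,\ldots,i_0]$, for $T = ([j_0],\ldots,[j_n])$, is a tuple $(\varphi_n,\ldots,\varphi_0)$ of maps $\varphi_k : [j_{n-k}] \to [i_k]$ in $\Delta$ — wait, one must be careful with the index reversal in Notation \ref{not_ssets_bdry}; I would pin down the exact bijection using the Yoneda formula displayed before Definition \ref{def_ssets_f_n}, namely $\textbf{Hom}(\Delta[i_0,\ldots,i_n],X) \cong (X_{i_n,\ldots,i_1})_{i_0}$, and then note that $(1_{[i_n]},\ldots,1_{[i_0]})$ is the missing element defining $\partial\Delta[i_n,\ldots,i_0]$.

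The main step is then a pushout verification: I would show that the square
\[
\begin{tikzcd}
\partial\Delta[i_n,\ldots,i_1,0] \times \partial F_n^0(i_0) \ar[r] \ar[d] & \Delta[i_n,\ldots,i_1,0] \times \partial F_n^0(i_0) \ar[d] \\
\partial\Delta[i_n,\ldots,i_1,0] \times F_n^0(i_0) \ar[r] & \partial\Delta[i_n,\ldots,i_0]
\end{tikzcd}
\]
is a pushout in $\textbf{sSpace}_n$, where the maps into the corner are induced by the inclusions $\partial \hookrightarrow$ into the full representables in the appropriate coordinates, together with the identification $\Delta[i_n,\ldots,i_1,0,i_0] \cong \Delta[i_n,\ldots,i_1,0] \times F_n^0(i_0)$ coming from Lemma \ref{lemma_ssets_fnk_iso_zeros} (more precisely $F_n^0(i_0) \cong \Delta[0,\ldots,0,i_0]$, and a product of representables is the representable at the product object). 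Since colimits in a presheaf category are computed componentwise, it suffices to check, at each $T = ([j_0],\ldots,[j_n])$, that the corresponding square of sets is a pushout; and a square of sets with injective legs is a pushout precisely when the bottom-right set is the union of the images of the two maps into it, with intersection exactly the image of the top-left set. So I would verify: (i) every $T$-point of $\partial\Delta[i_n,\ldots,i_0]$ either is degenerate in the "$i_0$-slot" (its $[j_0]$-component map $[j_0] \to [i_0]$ is not the identity, forcing $j_0 \le i_0$ with a noninjective-or-non-$i_0$-target map — equivalently it factors through $\partial$ there, putting it in the first factor $\Delta[i_n,\ldots,i_1,0] \times \partial F_n^0(i_0)$), or is nondegenerate in that slot but then must be degenerate in one of the other slots (putting it in $\partial\Delta[i_n,\ldots,i_1,0] \times F_n^0(i_0)$); (ii) a $T$-point lies in both factors iff it is degenerate in the $i_0$-slot and also in one of the others, i.e. exactly the $T$-points of $\partial\Delta[i_n,\ldots,i_1,0] \times \partial F_n^0(i_0)$.

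The bookkeeping with the reversed index convention of Notation \ref{not_ssets_bdry} versus the "last coordinate" role of $F_n^0$ in Definition \ref{def_ssets_f_n} is the part most likely to cause sign-error-style mistakes, so I would be explicit about it: the factor $F_n^0(i_0)$ really does control the coordinate that appears \emph{first} inside $\Delta[i_n,\ldots,i_1,0,i_0]$ after the string of zeros, which by Lemma \ref{lemma_ssets_fnk_iso_zeros} and the ordering convention is the $i_0$-coordinate, and "missing $(1_{[i_n]},\ldots,1_{[i_0]})$" for the product decomposes as "missing $1$ in the $\Delta[i_n,\ldots,i_1,0]$ part OR missing $1$ in the $F_n^0(i_0)$ part," which is exactly the general fact $\partial(A \times B) = (\partial A \times B) \cup (A \times \partial B)$ for representables, rephrased as the pushout above. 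Indeed, the cleanest route is to first establish that general lemma — for representable simplicial (or $n$-uple simplicial) spaces $A = \Delta[\vec a]$, $B = \Delta[\vec b]$, one has $\partial(\Delta[\vec a, \vec b]) \cong (\Delta[\vec a] \times \partial\Delta[\vec b]) \sqcup_{\partial\Delta[\vec a] \times \partial\Delta[\vec b]} (\partial\Delta[\vec a] \times \Delta[\vec b])$, since an element of a product representable is the identity iff both components are — and then specialize with $\vec a = (i_n,\ldots,i_1,0)$, $\vec b = (i_0)$, using $F_n^0(i_0) \cong \Delta[0,\ldots,0,i_0]$ and absorbing the leading zeros. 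The expected main obstacle is purely notational: making sure the product $\Delta[i_n,\ldots,i_1,0] \times F_n^0(i_0)$ is correctly identified with $\Delta[i_n,\ldots,i_1,0,i_0]$ under the ordering conventions fixed after Definition \ref{def_ssets_nuple_simp_spaces}, and then that $\partial F_n^0(i_0) = F_n^0(\partial\Delta[i_0])$ matches the "$\partial$ in the last slot" description; once those identifications are nailed down, the pushout claim is the elementary set-level union/intersection check described above.
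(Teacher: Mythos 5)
Your proposal is correct and takes essentially the same route as the paper, which likewise identifies the pushout with the union of the two subobjects of $\Delta[i_n, \cdots, i_0]$ and observes that this union consists exactly of those tuples of maps that fail to be surjective in at least one coordinate --- the standard characterization of the boundary of a (multi-)representable. The only small slip is the phrase ``is not the identity'' for membership in $\partial F_n^0(i_0)$: the correct condition, which you then in fact use (``factors through $\partial$ there''), is that the component $[j_0] \to [i_0]$ is not surjective.
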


\begin{proof}
    The pushout contains all maps in $\Delta^n$ of the form $([j_n], \cdots, [j_0]) \rightarrow ([i_n], \cdots, [i_0])$ such that either $[j_0] \rightarrow [i_0]$ is not a surjection or such that the maps $[j_h] \rightarrow [i_h]$ are not all surjections for $1 \leq h \leq n$. Thus, all maps which are not surjections on every coordinate are contained, which is an alternative characterization of $\partial \Delta[i_n, \cdots, i_0]$.
\end{proof}

\begin{notation} \label{not_ssets_nuple_k_enrichment}
    For any $0 \leq k \leq n$ and some $X, Y \in \textbf{sSpace}_n$, write $\textbf{Map}_n^k(X, Y)$ for the induced $k$-uple mapping space between $X$ and $Y$. More concretely, this is
    $$
        \textbf{Map}_n^k(X, Y) = (Y^X)_{\underbrace{0, \cdots, 0}_{n-k}}.
    $$
    If $n = k$, this is just the inner hom.
\end{notation}

\begin{proposition} \label{prop_ssets_enrich_f_rep}
    For any $0 \leq k < n$ and $i_0, \cdots, i_k \geq 0$,
    $$
        \textbf{Map}_n^{n-k-1}(F_n^k(i_k, \cdots, i_0), X) \cong X_{i_0, \cdots, i_k}.
    $$
\end{proposition}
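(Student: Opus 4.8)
The plan is to reduce the statement to the Yoneda lemma for representable $n$-uple simplicial spaces, using that all the functors in sight preserve the relevant limits and adjunctions. First I would invoke Lemma~\ref{lemma_ssets_fnk_iso_zeros} to replace $F_n^k(i_k, \cdots, i_0)$ by the representable $A := \Delta[0, \cdots, 0, i_k, \cdots, i_0]$, with $n-k$ leading zeros. By Notation~\ref{not_ssets_nuple_k_enrichment} the left-hand side is then $(X^A)_{0, \cdots, 0}$, with $k+1$ zeros, an object of $\textbf{sSpace}_{n-k-1} = \textbf{sSet}^{(\Delta^{op})^{n-k-1}} \cong \textbf{Set}^{(\Delta^{op})^{n-k}}$. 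Since objects of this presheaf category are just functors on $\Delta^{n-k}$ and isomorphisms are natural families of isomorphisms of values, it suffices to produce a natural isomorphism between the value of $(X^A)_{0, \cdots, 0}$ and the value of $X_{i_0, \cdots, i_k}$ at each object $([m_0], \cdots, [m_{n-k-1}]) \in \Delta^{n-k}$.

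Next I would unwind the left-hand value. By definition of the restriction $(-)_{0, \cdots, 0}$, evaluating it at $([m_0], \cdots, [m_{n-k-1}])$ is the same as evaluating $X^A$ at $([m_0], \cdots, [m_{n-k-1}], [0], \cdots, [0])$, so by the Yoneda lemma this value is $\textbf{Hom}_{\textbf{sSpace}_n}(\Delta[m_0, \cdots, m_{n-k-1}, 0, \cdots, 0], X^A)$, with $k+1$ trailing zeros. The Cartesian-closure adjunction $(-) \times A \dashv (-)^A$ rewrites this as
\[
    \textbf{Hom}_{\textbf{sSpace}_n}(\Delta[m_0, \cdots, m_{n-k-1}, 0, \cdots, 0] \times A, X).
\]
The key point is that this product of representables is again a representable: products in $\textbf{sSpace}_n$ are computed coordinatewise as external products of simplicial sets, the $n-k$ leading zeros of $A$ occupy exactly the coordinates carrying the $m_j$, while the $k+1$ trailing zeros of the first factor occupy exactly the coordinates carrying the $i_j$, so in each of the $n+1$ coordinates one factor is $\Delta[0]$, and applying $\Delta[0] \times \Delta[p] \cong \Delta[p]$ coordinatewise yields $\Delta[m_0, \cdots, m_{n-k-1}, i_k, \cdots, i_0]$.

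Applying the Yoneda lemma once more, together with the indexing convention $\textbf{Hom}_{\textbf{sSpace}_n}(\Delta[j_0, \cdots, j_n], X) \cong (X_{j_n, \cdots, j_1})_{j_0}$, the Hom-set above becomes $(X_{i_0, \cdots, i_k, m_{n-k-1}, \cdots, m_1})_{m_0}$, which is precisely the value of $X_{i_0, \cdots, i_k} \in \textbf{sSpace}_{n-k-1}$ at $([m_0], \cdots, [m_{n-k-1}])$. All the isomorphisms used — the Yoneda identifications, the exponential adjunction, and the identification of the product of representables — are natural in $([m_0], \cdots, [m_{n-k-1}])$, so they assemble into the asserted isomorphism of $(n-k-1)$-uple simplicial spaces. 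I expect the only genuine work to be the bookkeeping in the key step: one must track the ordering conventions carefully (in particular the reversal between $\Delta[j_0, \cdots, j_n]$ and $X_{j_n, \cdots, j_1}$, and which coordinate is the inner $\textbf{sSet}$-direction) to confirm that the leading zeros produced by $F_n^k$ and the trailing zeros of the probing representable are exactly complementary, so that no coordinate ever contributes a genuine product $\Delta[p] \times \Delta[q]$ with $p, q > 0$.
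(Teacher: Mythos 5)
Your proposal is correct and follows essentially the same route as the paper's proof: compare levelwise, apply Yoneda, use the exponential adjunction, identify $F_n^k(i_k,\cdots,i_0)$ with $\Delta[0,\cdots,0,i_k,\cdots,i_0]$, and observe that the product of the two representables collapses to a single representable because the zero coordinates are exactly complementary. The only difference is cosmetic bookkeeping in the index labels.
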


\begin{proof}
    We can compare the two sides levelwise. For $j_0, \cdots, j_{n - k - 1} \geq 0$:
    \begin{align*}
        &(\textbf{Map}_n^{n-k-1}(F_n^k(\Delta[i_k, \cdots, i_0]), X)_{j_0, \cdots, j_{n-k-2}})_{j_{n - k - 1}} = ((X^{F_n^k(\Delta[i_k, \cdots, i_0])})_{0, \cdots, 0, j_0, \cdots, j_{n-k-2}})_{j_{n - k - 1}} \\
        &\cong \textbf{Hom}_{\textbf{sSpace}_n}(\Delta[j_{n-k-1}, \cdots, j_0, 0, \cdots, 0], X^{F_n^k(\Delta[i_k, \cdots, i_0])}) \\
        &\cong \textbf{Hom}_{\textbf{sSpace}_n}(F_n^k(\Delta[i_k, \cdots, i_0]) \times \Delta[j_{n-k-1}, \cdots, j_0, 0, \cdots, 0], X) \\
        &\cong \textbf{Hom}_{\textbf{sSpace}_n}(\Delta[0, \cdots, 0, i_k, \cdots, i_0] \times \Delta[j_{n-k-1}, \cdots, j_0, 0, \cdots, 0], X) \\
        &\cong \textbf{Hom}_{\textbf{sSpace}_n}(\Delta[j_{n-k-1}, \cdots, j_0, i_k, \cdots, i_0], X) \cong (X_{i_0, \cdots, i_k, j_0, \cdots, j_{n-k-2}})_{j_{n - k - 1}}
    \end{align*}
    which completes the proof.
\end{proof}

The case of $k = 0$ and $n = 1$ is discussed in \cite[pg. 6]{rezkModelHomotopyTheory2000}. More generally, the case $k = 0$ and $n \geq 1$ is discussed in \cite[pg. 6]{rezkCartesianPresentationWeak2010}. Note that $\textbf{Map}_n^0$ defines an enrichment of $\textbf{sSpace}_n$ in simplicial sets.

Finally, we need another means to embed lower-dimensional simplicial spaces into higher-dimensional ones, by producing a space that is levelwise constant:

\begin{definition} \label{def_ssets_iota_n}
    Let $0 \leq k < n$. Define the map
    $$
        \iota_n^k : \textbf{sSpace}_k \rightarrow \textbf{sSpace}_n
    $$
    such that, for any $K \in \textbf{sSpace}_k$ and $i_0, \cdots, i_n \geq 0$,
    $$
        (\iota_n^k(K)_{i_0, \cdots, i_{n-1}})_{i_n} = (K_{i_{n-k}, \cdots, i_{n-1}})_{i_n}.
    $$
    More formally, this is given by the precomposition map on presheaf categories induced by the functor $\Delta^{n+1} \rightarrow \Delta^{k+1}$ sending
    $$
        ([i_n], \cdots, [i_0]) \mapsto ([i_n], \cdots [i_{n-k}]).
    $$
\end{definition}

\begin{notation} \label{not_ssets_tensor_simp_sp}
    For $0 \leq k < n$, write $- \square_n^k - : \textbf{sSpace}_n \times \textbf{sSpace}_k \rightarrow \textbf{sSpace}_n$ for the functor defined such that, for $X \in \textbf{sSpace}_n$ and $K \in \textbf{sSpace}_k$,
    $$
        X \square_n^k K := X \times \iota_n^k(K).
    $$
    If $k = n$, say that $\square_n^k := \times$ is the Cartesian product on $\textbf{sSpace}_n$.
\end{notation}

We now turn to model structures. In general, we will assume the injective model structure on $\textbf{sSpace}_n$, which we write as $\textbf{sSpace}_n^{inj}$. We have by \cite{bergnerRezkReedyCategoriesVarTheta2013} that $\Delta$ is an \emph{elegant Reedy category}, so this is in fact precisely the Reedy model structure. The weak equivalences and cofibrations are levelwise, meaning the cofibrations are simply levelwise injections. Hence, every object in $\textbf{sSpace}_n$ is Reedy cofibrant.

An inductive characterization of the fibrations, the only nontrivial part of this model structure, is as follows:

\begin{definition} \label{def_reedy_reedystr_sspace_n}
    Let $f : X \rightarrow Y$ be a map in $\textbf{sSpace}_n$. Then $f$ is a \emph{Reedy fibration of $n$-uple simplicial spaces} if and only if, for every $k \geq 0$, the map
    $$
        X_k \rightarrow Y_k \times_{\textbf{Map}_n^{n-1}(\partial F_n^0(k), Y)} \textbf{Map}_n^{n-1}(\partial F_n^0(k), X)
    $$
    is a fibration in the Reedy model structure on $\textbf{sSpace}_{n-1}$.
\end{definition}

A more elementary description not invoking Reedy fibrations of simplicial spaces is also possible, by instead taking the underlying model category to be $\textbf{sSet}$ and the Reedy category to be $(\Delta^{op})^n$. It is shown by \cite[Theorem 15.5.2]{hirschhornModelCategoriesTheir2009} that these choices will yield identical model structures on $\textbf{sSpace}_n$. We thus have:

\begin{proposition} \label{prop_reedy_levelwise_fib}
    Let $n \geq 1$. A map $f : X \rightarrow Y$ is a Reedy fibration in $\textbf{sSpace}_n$ if and only if, for every $i_1, \cdots, i_n \geq 0$, the map
    $$
        X_{i_1, \cdots, i_n} \rightarrow Y_{i_1, \cdots, i_n} \times_{\textbf{Map}_n^0(\partial F_n^{n-1}(i_n, \cdots, i_1), X)} \textbf{Map}_n^0(\partial F(i_n, \cdots, i_1), Y)
    $$
is a Kan fibration.
\end{proposition}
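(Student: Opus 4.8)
The plan is to read this off from the standard description of Reedy fibrations applied to the \enquote{elementary} presentation of $\textbf{sSpace}_n^{inj}$ recalled just above: the Reedy model structure on $\textbf{sSet}^{(\Delta^{op})^n}$ with underlying model category $\textbf{sSet}$ (Kan--Quillen) and indexing category the $n$-fold product $(\Delta^{op})^n$ — itself a Reedy category, being a finite product of copies of the elegant Reedy category $\Delta^{op}$ — which by \cite[Theorem 15.5.2]{hirschhornModelCategoriesTheir2009} agrees with $\textbf{sSpace}_n^{inj}$. By the general characterization of Reedy fibrations \cite{hirschhornModelCategoriesTheir2009}, $f : X \to Y$ is a Reedy fibration if and only if for every object $c = ([i_1], \cdots, [i_n])$ of $(\Delta^{op})^n$ the relative matching map
$$
    X_c \longrightarrow Y_c \times_{M_c Y} M_c X
$$
is a fibration in $\textbf{sSet}$, i.e. a Kan fibration, where $M_c(-)$ is the $c$-th matching object. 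Since $X_c = X_{i_1, \cdots, i_n}$ by our indexing conventions, it remains only to identify $M_c$, together with its canonical map out of $X_c$, with $\textbf{Map}_n^0(\partial F_n^{n-1}(i_n, \cdots, i_1), -)$ and the restriction along $\partial F_n^{n-1}(i_n, \cdots, i_1) \hookrightarrow F_n^{n-1}(i_n, \cdots, i_1)$; the pullback above then coincides with the one in the statement (with $M_c Y = \textbf{Map}_n^0(\partial F_n^{n-1}(i_n, \cdots, i_1), Y)$ and $M_c X = \textbf{Map}_n^0(\partial F_n^{n-1}(i_n, \cdots, i_1), X)$).

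For that identification, recall first that $\textbf{Map}_n^0$ is the simplicial enrichment of $\textbf{sSpace}_n$ determined by the tensoring $\square_n^0$, so $\textbf{Map}_n^0(-, X)$ carries colimits in $\textbf{sSpace}_n$ to limits in $\textbf{sSet}$. By Proposition \ref{prop_ssets_enrich_f_rep} (with $k = n-1$) we have $\textbf{Map}_n^0(F_n^{n-1}(i_n, \cdots, i_1), -) \cong (-)_{i_1, \cdots, i_n}$, so $F_n^{n-1}(i_n, \cdots, i_1) \cong \Delta[0, i_n, \cdots, i_1]$ (Lemma \ref{lemma_ssets_fnk_iso_zeros}) is the simplicially enriched representable of $\textbf{sSet}^{(\Delta^{op})^n}$ at $c$. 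On the other hand $M_c X$ is the limit of $X$ over the matching category of $(\Delta^{op})^n$ at $c$; since that limit is corepresented by the subobject $L \subseteq F_n^{n-1}(i_n, \cdots, i_1)$ obtained as the union of the images of the representables $\mathcal{R}(d, -)$ over the matching category, we get $M_c X \cong \textbf{Map}_n^0(L, X)$. A tuple $(\phi_1, \cdots, \phi_n)$ of maps in $\Delta$ lies in $L$ exactly when it factors through some non-invertible degree-lowering map out of $c$, i.e. when at least one $\phi_l$ fails to be surjective; by the proof of Proposition \ref{prop_ssets_bdry_decomp} this is precisely the defining condition for membership in $\partial\Delta[i_n, \cdots, i_1]$, so $L = F_n^{n-1}(\partial\Delta[i_n, \cdots, i_1]) = \partial F_n^{n-1}(i_n, \cdots, i_1)$ by Notations \ref{not_ssets_bdry} and \ref{not_ssets_f_rep}. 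Hence $M_c X \cong \textbf{Map}_n^0(\partial F_n^{n-1}(i_n, \cdots, i_1), X)$ naturally in $X$, the relative matching map above becomes the map in the statement, and the proposition follows.

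The step I expect to be the main obstacle is the last one: pinning down, against the paper's index conventions, that the maximal subobject $\partial\Delta[i_n, \cdots, i_1]$ of Notation \ref{not_ssets_bdry} really is the Reedy boundary for the \emph{product} Reedy structure on $(\Delta^{op})^n$ — and not merely one of the $n$ partial boundaries — and that passing through $F_n^{n-1}$ and the enrichment $\textbf{Map}_n^0$ leaves everything undisturbed. Proposition \ref{prop_ssets_bdry_decomp}, exhibiting $\partial\Delta[i_n, \cdots, i_0]$ as an iterated pushout of products of partial boundaries, matches the iterated pushout-product description of matching objects of a product Reedy category and handles exactly this point. Alternatively, one can avoid the product Reedy structure entirely and argue by induction on $n$ starting from Definition \ref{def_reedy_reedystr_sspace_n}, applying the inductive hypothesis to the relative matching maps $X_k \to Y_k \times_{\cdots} \cdots$ in $\textbf{sSpace}_{n-1}$ and invoking Proposition \ref{prop_ssets_bdry_decomp} to recombine the boundary in the top coordinate with those in the remaining $n-1$. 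Either way no new ideas beyond the representability result of Proposition \ref{prop_ssets_enrich_f_rep} are required.
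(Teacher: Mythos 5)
Your proposal is correct and takes exactly the route the paper intends: the paper offers no written proof beyond invoking \cite[Theorem 15.5.2]{hirschhornModelCategoriesTheir2009} to identify the inductively defined Reedy structure with the one on $\textbf{sSet}^{(\Delta^{op})^n}$, after which the statement is the standard relative-matching-map characterization of Reedy fibrations. Your identification of the matching object at $([i_1],\cdots,[i_n])$ with $\textbf{Map}_n^0(\partial F_n^{n-1}(i_n,\cdots,i_1),-)$, via corepresentability of evaluation by $F_n^{n-1}(i_n,\cdots,i_1)$ and the characterization of $\partial\Delta[i_n,\cdots,i_1]$ as the tuples with at least one non-surjective coordinate, correctly supplies the details the paper leaves implicit.
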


We may also show that the mapping spaces $\textbf{Map}_n^k(-, -)$ interact neatly with this model structure in a manner outlined in \cite{mayPontoMoreConciseAlgebraic2012}. Before we can show this, we must first show a useful fact about this enrichment:

\begin{definition}[{\cite[pg. 329]{mayPontoMoreConciseAlgebraic2012}}]
    Let $\mathscr{V}$ be a bicomplete closed symmetric monoidal category. A $\mathscr{V}$-enriched category $\mathscr{M}$ is \emph{$\mathscr{V}$-bicomplete} if and only if it is bicomplete and has all tensors and cotensors.
\end{definition}

\begin{proposition}\label{prop_enrich_sspace_adj_2_vars}
    $\textbf{sSpace}_n$ is $\textbf{sSpace}_k$-bicomplete for all $0 \leq k \leq n$.
\end{proposition}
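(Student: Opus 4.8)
The plan is to recognise $\textbf{sSpace}_n$ as a category of $\textbf{sSpace}_k$-valued diagrams over a small ordinary category and then appeal to the standard fact that such diagram categories are bicomplete over their category of values, with all the relevant structure computed pointwise. Grouping the $n+1$ copies of $\Delta^{op}$ defining $\textbf{sSpace}_n \cong \textbf{Set}^{(\Delta^{op})^{n+1}}$ into the $n-k$ ``outer'' copies — those evaluated at $[0]$ in Notation \ref{not_ssets_nuple_k_enrichment} — and the remaining $k+1$ copies, one obtains a canonical isomorphism
$$
\textbf{sSpace}_n \;\cong\; \textbf{sSpace}_k^{(\Delta^{op})^{n-k}},
$$
the category of functors from the small category $(\Delta^{op})^{n-k}$ to $\textbf{sSpace}_k$. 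Under this identification the functor $\iota_n^k$ of Definition \ref{def_ssets_iota_n} sends $K$ to the constant diagram at $K$, while the functor $e_0 := (-)_{\underbrace{0,\cdots,0}_{n-k}} : \textbf{sSpace}_n \to \textbf{sSpace}_k$ occurring in $\textbf{Map}_n^k$ is evaluation at the object $([0],\cdots,[0])$, which is initial in $(\Delta^{op})^{n-k}$ since $[0]$ is terminal in $\Delta$; hence $e_0$ computes the limit over $(\Delta^{op})^{n-k}$ and is right adjoint to $\iota_n^k$, and being moreover a restriction functor it preserves all limits and colimits, so in particular it is strong symmetric monoidal for the cartesian monoidal structures.

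First I would record that $\textbf{sSpace}_k$, being a presheaf category on a small category, is bicomplete and cartesian closed, hence a bicomplete closed symmetric monoidal category under $\times$, so that the statement of $\textbf{sSpace}_k$-bicompleteness is meaningful; and that $\textbf{sSpace}_n$, likewise a presheaf category, is bicomplete with limits and colimits formed pointwise over $(\Delta^{op})^{n-k}$. For the remaining data I would invoke the standard pointwise construction of the $\mathscr{V}$-enriched structure on a diagram category $\mathscr{V}^{\mathscr{C}}$, for $\mathscr{V}$ bicomplete closed symmetric monoidal and $\mathscr{C}$ small (the enriched-categorical folklore underlying \cite{mayPontoMoreConciseAlgebraic2012}): the $\mathscr{V}$-hom object of $F, G$ is the end $\int_{c} (Gc)^{Fc}$, the tensor by $V \in \mathscr{V}$ is the diagram $c \mapsto Fc \otimes V$, and the cotensor by $V$ is $c \mapsto (Fc)^{V}$. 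The tensor--hom--cotensor adjunctions then hold because $\mathscr{V}$ is closed and ends commute with the cotensor functors, and tensors and cotensors plainly exist; combined with pointwise bicompleteness this is precisely $\mathscr{V}$-bicompleteness in the sense of the definition cited above. Taking $\mathscr{V} = \textbf{sSpace}_k$ and $\mathscr{C} = (\Delta^{op})^{n-k}$ yields the result.

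The remaining step — and the only one requiring any genuine bookkeeping — is to check that these canonical pointwise structures are the ones already named in the text. Since $\iota_n^k K$ is the constant diagram at $K$, the pointwise tensor $\vec m \mapsto X_{\vec m} \times K$ is literally $X \times \iota_n^k(K) = X \square_n^k K$ of Notation \ref{not_ssets_tensor_simp_sp}, and dually the pointwise cotensor is $Y^{\iota_n^k(K)}$, the internal hom of $\textbf{sSpace}_n$ out of $\iota_n^k(K)$; when $k = n$ this specialises to the Cartesian product and the inner hom, as noted in Notations \ref{not_ssets_nuple_k_enrichment} and \ref{not_ssets_tensor_simp_sp}. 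For the enrichment, evaluating the end at a multi-index $\vec q$ and using the internal-hom adjunction of $\textbf{sSpace}_n$ together with $\iota_n^k \dashv e_0$, one computes
$$
\Big(\textstyle\int_{\vec m}(Y_{\vec m})^{X_{\vec m}}\Big)_{\vec q} \;\cong\; \textbf{Hom}_{\textbf{sSpace}_n}\big(\iota_n^k(\Delta[\vec q]), Y^X\big) \;\cong\; \big(e_0(Y^X)\big)_{\vec q},
$$
so the end equals $(Y^X)_{\underbrace{0,\cdots,0}_{n-k}} = \textbf{Map}_n^k(X,Y)$. The tensor--hom--cotensor adjunctions consequently read
$$
\textbf{Map}_n^k(X \square_n^k K, Y) \;\cong\; \textbf{Map}_k^k\!\big(K, \textbf{Map}_n^k(X,Y)\big) \;\cong\; \textbf{Map}_n^k\big(X, Y^{\iota_n^k(K)}\big),
$$
which one may alternatively verify directly from the isomorphisms $Y^{X \times \iota_n^k K} \cong (Y^X)^{\iota_n^k K} \cong (Y^{\iota_n^k K})^X$ in $\textbf{sSpace}_n$, the adjunction $\iota_n^k \dashv e_0$, and the strong monoidality of $\iota_n^k$ (which together give the auxiliary identity $(e_0 Z)^{K} \cong e_0(Z^{\iota_n^k K})$ for $Z \in \textbf{sSpace}_n$). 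I do not anticipate a real obstacle here: the delicate points are merely aligning the index conventions of Definition \ref{def_ssets_iota_n} and Notations \ref{not_ssets_nuple_k_enrichment}, \ref{not_ssets_tensor_simp_sp} with the outer/inner splitting of the $\Delta^{op}$-factors, and observing that the monoidal structure on $\textbf{sSpace}_k$ relevant to the statement is the cartesian one.
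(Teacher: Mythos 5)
Your proposal is correct and, at its core, runs on the same mechanism as the paper's proof: the adjunction $\iota_n^k \dashv (-)_{\underbrace{0,\cdots,0}_{n-k}}$ (the paper's $\rho_n^k$) combined with the cartesian closedness of the presheaf category $\textbf{sSpace}_n$ yields the tensor--hom--cotensor bijections for $\square_n^k$ and $(-)^{\iota_n^k(-)}$, with bicompleteness free from being a presheaf category. The extra packaging via $\textbf{sSpace}_n \cong \textbf{sSpace}_k^{(\Delta^{op})^{n-k}}$ and the pointwise/end construction is a sound conceptual gloss but not needed --- the ``direct verification'' you sketch in your final paragraph is essentially the paper's entire argument.
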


\begin{proof}
    That $\textbf{sSpace}_n$ is bicomplete is immediate from it being a category of presheaves. For tensors and cotensors, consider setting the tensor $\odot := \square^k_n$ and the cotensor $\pitchfork := (-)^{\iota^k_n(-)}$. Note that $\iota_n^k$ is left adjoint to $\rho_n^k$, as proven in for instance \cite[Prop. 2.1.46]{romoTowardsAlgebraicNCategories}. Then we have the natural bijection induced by the adjunction between $\iota_n^k$ and $\rho_n^k$ of the form
    \begin{align*}
        \textbf{Hom}_{\textbf{sSpace}_n}(X \odot V, Y) &= \textbf{Hom}_{\textbf{sSpace}_n}(X \square_n^k V, Y) \\
        &= \textbf{Hom}_{\textbf{sSpace}_n}(X \times \iota_n^k(V), Y) \\
        &\cong \textbf{Hom}_{\textbf{sSpace}_n}(\iota_n^k(V), Y^X) \\
        &\cong \textbf{Hom}_{\textbf{sSpace}_k}(V, \textbf{Map}_n^k(X, Y)).
    \end{align*}
    Moreover, we have a natural bijection induced by the same adjunction of the form
    \begin{align*}
        \textbf{Hom}_{\textbf{sSpace}_k}(V, \textbf{Map}_n^k(X, Y)) &\cong \textbf{Hom}_{\textbf{sSpace}_n}(\iota_n^k(V), Y^X) \\
        &\cong \textbf{Hom}_{\textbf{sSpace}_n}(\iota_n^k(V) \times X, Y) \\
        &\cong \textbf{Hom}_{\textbf{sSpace}_n}(X, Y^{\iota_n^k(V)}).
    \end{align*}
    These bijections establish the conditions for tensors and cotensors as needed.
\end{proof}

In general, we will not use the notation of $\odot$ and $\pitchfork$ and simply write $\square_n^k$ and $(-)^{(-)}$. Note in the latter case that we omit the $\iota_n^k$ implicit in the exponent.

We then have the following classical result:

\begin{lemma}[{\cite[Lemma 16.4.5]{mayPontoMoreConciseAlgebraic2012}}] \label{lemma_enrich_quill_bifunc_equiv}
    Suppose $\mathscr{M}$ and $\mathscr{V}$ are model categories such that $\mathscr{V}$ is a bicomplete closed symmetric monoidal category and $\mathscr{M}$ is $\mathscr{V}$-bicomplete. Let $\odot$ and $\pitchfork$ be a tensor and cotensor on $\mathscr{M}$. Then the following conditions are equivalent:
    \begin{enumerate}
        \item Given a cofibration $f : X \rightarrow Y$ in $\mathscr{M}$ and a cofibration $p : U \rightarrow V$ in $\mathscr{V}$, the induced map
        $$
            f \square p : (Y \odot U) \sqcup_{X \odot U} (X \odot V) \rightarrow Y \odot V
        $$
        is a cofibration in $\mathscr{M}$, which is trivial if either $f$ or $p$ is.
        
        \item Given a cofibration $f : W \rightarrow X$ and a fibration $g : Y \rightarrow Z$ in $\mathscr{M}$, the induced map
        $$
            \mathscr{M}[f, g] : \mathscr{M}(X, Y) \rightarrow \mathscr{M}(W, Y) \times_{\mathscr{M}(W, Z)} \mathscr{M}(X, Z)
        $$
        is a fibration in $\mathscr{V}$, which is trivial if either $f$ or $g$ is.

        \item Given a cofibration $p : U \rightarrow V$ in $\mathscr{V}$ and a fibration $g : Y \rightarrow Z$ in $\mathscr{M}$, the induced map
        $$
            \pitchfork[p, g] : V \pitchfork Y \rightarrow U \pitchfork Y \times_{U \pitchfork Z} V \pitchfork Z
        $$
        is a fibration in $\mathscr{M}$, which is trivial if either $p$ or $g$ is.
    \end{enumerate}
\end{lemma}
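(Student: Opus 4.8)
The plan is to reduce all three conditions to a single common family of lifting problems, exploiting the two-variable adjunction that $\mathscr{V}$-bicompleteness provides. Recall that in any model category a map is a cofibration (resp.\ a trivial cofibration) precisely when it has the left lifting property against every trivial fibration (resp.\ every fibration), and dually a map is a fibration (resp.\ a trivial fibration) precisely when it has the right lifting property against every trivial cofibration (resp.\ every cofibration). Moreover, since $\mathscr{M}$ is $\mathscr{V}$-bicomplete, the tensor $\odot$, the cotensor $\pitchfork$ and the enriched hom $\mathscr{M}(-,-)$ assemble into a two-variable adjunction, i.e.\ we have natural bijections
\[
    \textbf{Hom}_{\mathscr{M}}(X \odot U, Y) \;\cong\; \textbf{Hom}_{\mathscr{V}}(U, \mathscr{M}(X, Y)) \;\cong\; \textbf{Hom}_{\mathscr{M}}(X, U \pitchfork Y),
\]
natural in all three variables. (In the case at hand this two-variable adjunction is the content of Proposition~\ref{prop_enrich_sspace_adj_2_vars}.)

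The heart of the matter is an \emph{adjointness of box products}: for \emph{arbitrary} maps $f : W \to X$ in $\mathscr{M}$, $p : U \to V$ in $\mathscr{V}$ and $g : Y \to Z$ in $\mathscr{M}$, there are natural bijections between (i) the set of commutative squares from $f \square p$ to $g$, (ii) the set of commutative squares from $p$ to $\mathscr{M}[f,g]$, and (iii) the set of commutative squares from $f$ to $\pitchfork[p,g]$, and these bijections send diagonal fillers to diagonal fillers. Hence $f \square p$ has the left lifting property against $g$ iff $p$ has the left lifting property against $\mathscr{M}[f,g]$ iff $f$ has the left lifting property against $\pitchfork[p,g]$. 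I would prove this by transposing through the two-variable adjunction above: a commutative square from $f \square p$ to $g$ amounts, by the universal property of the pushout defining the source of $f \square p$, to a compatible pair of maps $X \odot U \to Y$ and $W \odot V \to Y$ together with a map $X \odot V \to Z$ making the evident diagram commute; applying the natural bijections termwise turns this data into exactly a commutative square from $p$ to the pullback $\mathscr{M}[f,g]$, and symmetrically into a commutative square from $f$ to $\pitchfork[p,g]$. A filler on one side transposes to a filler on the other because the relevant triangle identities are preserved under the adjunction.

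Granting this, the equivalences follow by bookkeeping. For $(1) \Leftrightarrow (2)$: condition $(1)$ asserts that for all cofibrations $f$ in $\mathscr{M}$ and $p$ in $\mathscr{V}$, the map $f \square p$ lifts on the left against every trivial fibration $g$ in $\mathscr{M}$, and additionally against every fibration $g$ whenever $f$ or $p$ is trivial; by the box-product adjointness this is literally the statement that $p$ lifts on the left against $\mathscr{M}[f,g]$ in exactly those cases, which — reading off the lifting characterizations of (trivial) fibrations in $\mathscr{V}$ — is exactly condition $(2)$, namely that $\mathscr{M}[f,g]$ is a fibration that is trivial when $f$ or $g$ is. The equivalence $(1) \Leftrightarrow (3)$ is proved in the same way, using instead the bijection with commutative squares from $f$ to $\pitchfork[p,g]$ and the lifting characterization of (trivial) fibrations in $\mathscr{M}$. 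The main obstacle is entirely the diagram-chasing step establishing the box-product adjointness: one must verify that the pushout corner defining $f \square p$ and the pullback corners defining $\mathscr{M}[f,g]$ and $\pitchfork[p,g]$ correspond under the two-variable adjunction and that fillers match up; once this is in place the remaining arguments are a mechanical translation between "is a (trivial) cofibration or fibration" and "has the corresponding lifting property". This reproduces the proof of \cite[Lemma 16.4.5]{mayPontoMoreConciseAlgebraic2012}, to which we refer for the full details.
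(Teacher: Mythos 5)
The paper does not actually prove this lemma: it is quoted verbatim from May--Ponto \cite[Lemma 16.4.5]{mayPontoMoreConciseAlgebraic2012} and used as a black box, so there is no in-paper proof to compare against. Your argument is the standard proof of that cited result and is correct: the key step is exactly the ``adjointness of box products,'' i.e.\ that for arbitrary $f$, $p$, $g$ the lifting problems of $f \square p$ against $g$, of $p$ against $\mathscr{M}[f,g]$, and of $f$ against \(\pitchfork[p,g]\) correspond bijectively (fillers included) under the two-variable adjunction, after which the three conditions become the same family of lifting statements read off via the lifting characterizations of (trivial) cofibrations and fibrations; your case bookkeeping for the trivial/non-trivial variants matches up exactly as required.
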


\begin{proposition}[{\cite[Prop. 4.2.8]{hoveyModelCategories2007}}] \label{prop_enrich_sset_quillen}
    The category $\textbf{sSet}$, enriched over itself, with tensor given by Cartesian product and cotensor by the exponent, satisfies the equivalent conditions in Lemma \ref{lemma_enrich_quill_bifunc_equiv}.
\end{proposition}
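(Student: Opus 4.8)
The plan is to verify condition (1) of Lemma~\ref{lemma_enrich_quill_bifunc_equiv}, the pushout--product axiom; since $\textbf{sSet}$ is Cartesian closed and, by Proposition~\ref{prop_enrich_sspace_adj_2_vars} with $n = k = 0$, is $\textbf{sSet}$-bicomplete, the hypotheses of that lemma are met and verifying any one of its three equivalent conditions suffices. Recalling that the cofibrations of the Kan--Quillen structure are exactly the monomorphisms and that every object is cofibrant, the task is to show that for monomorphisms $f : X \to Y$ and $p : U \to V$ the induced map
\[
    f \square p : (Y \times U) \sqcup_{X \times U} (X \times V) \longrightarrow Y \times V
\]
is a monomorphism, and moreover a weak equivalence as soon as $f$ or $p$ is.

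For the monomorphism claim I would work levelwise, using that colimits of simplicial sets are computed in $\textbf{Set}$ degreewise. Fixing a degree $m$ and viewing $X_m \subseteq Y_m$ and $U_m \subseteq V_m$, the subsets $Y_m \times U_m$ and $X_m \times V_m$ of $Y_m \times V_m$ meet precisely in $X_m \times U_m$, so their pushout over $X_m \times U_m$ is their union inside $Y_m \times V_m$; hence $(f \square p)_m$ is an inclusion of sets and $f \square p$ is a monomorphism.

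For the trivial case, suppose $p$ is also a weak equivalence. I would invoke two standard facts: $\textbf{sSet}$ is left proper (immediate since all objects are cofibrant), and for any $K \in \textbf{sSet}$ the functor $- \times K$ preserves weak equivalences --- this last point reduces, via the adjunction $\lvert \cdot \rvert \dashv \textbf{Sing}$ and the natural homeomorphism $\lvert A \times B \rvert \cong \lvert A \rvert \times \lvert B \rvert$ in a convenient category of spaces, to the fact that products of spaces preserve weak homotopy equivalences. Now consider the pushout square with corners $X \times U$, $X \times V$, $Y \times U$ and $(Y \times U) \sqcup_{X \times U} (X \times V)$: the map $X \times U \to X \times V$ is a weak equivalence while $X \times U \to Y \times U$ is a monomorphism, so left properness makes $Y \times U \to (Y \times U) \sqcup_{X \times U} (X \times V)$ a weak equivalence; since $Y \times U \to Y \times V$ is also a weak equivalence, the two-out-of-three property forces $f \square p$ to be a weak equivalence, hence (with the previous paragraph) a trivial cofibration. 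The case in which $f$ is a weak equivalence is symmetric, exchanging the two factors, and Lemma~\ref{lemma_enrich_quill_bifunc_equiv} then yields the proposition.

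The only genuinely nontrivial ingredient is that $- \times K$ preserves weak equivalences of simplicial sets, which ultimately rests on Milnor's theorem that geometric realization commutes with finite products in a convenient category of spaces; everything else is bookkeeping with pushouts and left properness. I note an alternative, purely combinatorial route that avoids topology: reduce by saturation to the generating cofibrations $\partial \Delta[m] \hookrightarrow \Delta[m]$ and the generating trivial cofibrations (horn inclusions) $\Lambda^k[n] \hookrightarrow \Delta[n]$, and check directly that the pushout--product of a horn inclusion with a boundary inclusion is anodyne, as in the classical theory of anodyne extensions. I expect the topological argument to be the shorter one to present here.
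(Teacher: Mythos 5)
Your argument is correct. The paper itself offers no proof of this proposition --- it is quoted directly from Hovey (Prop.~4.2.8), whose proof follows the combinatorial route you mention only as an alternative: reduce by the standard saturation argument to the generating cofibrations $\partial\Delta[m] \hookrightarrow \Delta[m]$ and the horn inclusions, and then verify the pushout--product condition using the theory of anodyne extensions. Your primary route is genuinely different and self-contained modulo two imported facts: that $\lvert \cdot \rvert$ commutes with finite products in a convenient category of spaces (so that $- \times K$ preserves weak equivalences), and that $\textbf{sSet}$ is left proper (which, as you say, follows from every object being cofibrant). The levelwise identification of the pushout with a union of subsets of $Y_m \times V_m$ correctly handles the cofibration part, and the left-properness-plus-two-out-of-three argument for the acyclicity part is sound; the composite $Y \times U \to (Y \times U) \sqcup_{X \times U} (X \times V) \to Y \times V$ is indeed $1_Y \times p$, as your appeal to two-out-of-three requires. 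What each approach buys: the anodyne-extension proof is purely combinatorial and independent of topology, which matters if one wants the result in settings where realization is unavailable, while your argument is shorter to present and makes the homotopical content (products preserve weak equivalences, gluing along cofibrations preserves weak equivalences) transparent. Either is a legitimate substitute for the citation.
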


\begin{proposition}
    The tensor and cotensor in Proposition \ref{prop_enrich_sspace_adj_2_vars} satisfy all the equivalent conditions in Lemma \ref{lemma_enrich_quill_bifunc_equiv}.
\end{proposition}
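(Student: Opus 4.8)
The plan is to verify condition~(1) of Lemma~\ref{lemma_enrich_quill_bifunc_equiv} for the tensor $\odot = \square_n^k$ and cotensor $(-)^{\iota_n^k(-)}$; by the equivalence asserted in that lemma, the other two conditions then follow for free. The key observation is that the tensor factors as $X \odot K = X \times \iota_n^k(K)$, so that for a cofibration $f : X \to Y$ in $\textbf{sSpace}_n$ and a cofibration $p : U \to V$ in $\textbf{sSpace}_k$ the induced map
\[
  f \square p : (Y \times \iota_n^k(U)) \sqcup_{X \times \iota_n^k(U)} (X \times \iota_n^k(V)) \longrightarrow Y \times \iota_n^k(V)
\]
is precisely the pushout--product of $f$ with $\iota_n^k(p) : \iota_n^k(U) \to \iota_n^k(V)$ taken with respect to the Cartesian product on $\textbf{sSpace}_n$. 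Hence it suffices to establish two facts: first, that $\iota_n^k$ carries cofibrations to cofibrations and trivial cofibrations to trivial cofibrations; and second, that the Cartesian product on $\textbf{sSpace}_n^{inj}$ satisfies the pushout--product axiom, i.e.\ the pushout--product of two cofibrations is a cofibration, trivial whenever one of the two is.

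For the first fact, recall from Definition~\ref{def_ssets_iota_n} that $\iota_n^k$ is the restriction functor along the functor $\Delta^{n+1} \to \Delta^{k+1}$ there, so every level $(\iota_n^k(K))_{i_0,\dots,i_n}$ is literally one of the levels $K_{i_{n-k},\dots,i_n}$ of $K$. Since the cofibrations in $\textbf{sSpace}_m^{inj}$ are the levelwise monomorphisms of simplicial sets and the weak equivalences are the levelwise weak equivalences (as recalled just before Definition~\ref{def_reedy_reedystr_sspace_n}), $\iota_n^k$ manifestly preserves both classes, and therefore also their intersection, the trivial cofibrations.

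For the second fact, products and pushouts in $\textbf{sSpace}_n$ are computed levelwise, so for cofibrations $f$ and $g$ in $\textbf{sSpace}_n^{inj}$ the pushout--product $f \mathbin{\hat\times} g$ is, at each multi-index $(i_1,\dots,i_n)$, the pushout--product in $\textbf{sSet}$ of the monomorphisms $f_{i_1,\dots,i_n}$ and $g_{i_1,\dots,i_n}$. By Proposition~\ref{prop_enrich_sset_quillen} this is again a monomorphism, and is a trivial cofibration as soon as either factor is; hence $f \mathbin{\hat\times} g$ is a cofibration in $\textbf{sSpace}_n^{inj}$, trivial if $f$ or $g$ is. Combining the two facts: given $f$ and $p$ as above, $\iota_n^k(p)$ is a cofibration in $\textbf{sSpace}_n$ which is trivial when $p$ is, so $f \square p = f \mathbin{\hat\times} \iota_n^k(p)$ is a cofibration in $\textbf{sSpace}_n$, and it is trivial whenever $f$ is trivial or $p$ is trivial. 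This is exactly condition~(1) of Lemma~\ref{lemma_enrich_quill_bifunc_equiv}, which therefore yields all three equivalent conditions, proving the proposition.

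The argument is essentially bookkeeping, and the only point that warrants care is the identification in the first paragraph: one must confirm that the pushout appearing in $f \square p$ genuinely coincides with the one in the Cartesian pushout--product. This is immediate once the tensor is written in the form $X \times \iota_n^k(K)$, but it is the step on which everything hinges, since the cotensor statement (condition~(3)) is then obtained purely formally from Lemma~\ref{lemma_enrich_quill_bifunc_equiv} rather than by any separate computation with $(-)^{\iota_n^k(-)}$.
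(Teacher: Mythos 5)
Your proof is correct and follows essentially the same route as the paper: both verify condition (1) of Lemma \ref{lemma_enrich_quill_bifunc_equiv} by reducing the pushout--product to a levelwise statement in $\textbf{sSet}$ and invoking Proposition \ref{prop_enrich_sset_quillen}. Your factorization into ``$\iota_n^k$ preserves (trivial) cofibrations'' plus ``the Cartesian pushout--product axiom holds levelwise'' is just a cleaner packaging of the paper's direct levelwise computation.
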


\begin{proof}
    We will prove the first condition holds. Suppose $f : U \rightarrow V$ is a cofibration in $\textbf{sSpace}_n$ and $g : W \rightarrow X$ is a cofibration in $\textbf{sSpace}_k$. As we are employing the injective model structures on these categories, these are both simply levelwise cofibrations in $\textbf{sSet}$, so checking that the pushout product $f \square g : (V \square_n^k W) \sqcup_{U \square_n^k W} (U \square_n^k X) \rightarrow (V \square_n^k X)$ is a cofibration amounts to checking this levelwise. This problem reduces to checking that for every $i_1, \cdots, i_n \geq 0$, the map
    \[\begin{tikzcd}
    	{(V_{i_1, \cdots, i_n} \times W_{i_{n-k+1}, \cdots, i_n}) \sqcup_{U_{i, \cdots, i_n} \times W_{i_{n-k+1}, \cdots, i_n}} (U_{i_1, \cdots, i_n} \times X_{i_{n-k+1}, \cdots, i_n})} \\
    	{V_{i_1, \cdots, i_n} \times X_{i_{n-k+1}, \cdots, i_n}}
    	\arrow[from=1-1, to=2-1]
    \end{tikzcd}\]
    is a cofibration in $\textbf{sSet}$. By Proposition \ref{prop_enrich_sset_quillen}, $f \square g$ is then immediately a cofibration as desired. The above reasoning extends to the case where $f$ or $g$ is a trivial cofibration, as these are again just levelwise trivial cofibrations in $\textbf{sSet}$.
\end{proof}

We thus have the following useful results, allowing us to manipulate fibrations in $\textbf{sSpace}_n^{inj}$.

\begin{corollary} \label{corr_enrich_map_n_k_cofib_fib}
    Suppose $f : U \rightarrow V$ is a Reedy cofibration and $p : Y \rightarrow Z$ a Reedy fibration in $\textbf{sSpace}_n$. Then the induced map
    $$
        \textbf{Map}_n^k(f, p) : \textbf{Map}_n^k(V, Y) \rightarrow \textbf{Map}_n^k(U, Y) \times_{\textbf{Map}_n^k(U, Z)} \textbf{Map}_n^k(V, Z)
    $$
    is a Reedy fibration in $\textbf{sSpace}_k$, which is trivial if either $f$ or $p$ is.
\end{corollary}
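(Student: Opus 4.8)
The plan is to recognize the statement as nothing more than condition (2) of Lemma \ref{lemma_enrich_quill_bifunc_equiv}, specialized to $\mathscr{M} = \textbf{sSpace}_n^{inj}$ and $\mathscr{V} = \textbf{sSpace}_k^{inj}$. First I would record the structural prerequisites demanded by that lemma: $\textbf{sSpace}_k$, being a presheaf category, is bicomplete and Cartesian closed, hence a bicomplete closed symmetric monoidal category under $\times$; and by Proposition \ref{prop_enrich_sspace_adj_2_vars} the category $\textbf{sSpace}_n$ is $\textbf{sSpace}_k$-bicomplete, with tensor $\square_n^k$ and cotensor $(-)^{\iota_n^k(-)}$. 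The key identification to make explicit is that the $\textbf{sSpace}_k$-enriched hom $\mathscr{M}(X,Y)$ attached to this tensor–cotensor pair is exactly $\textbf{Map}_n^k(X,Y)$: this is precisely what the first chain of natural bijections in the proof of Proposition \ref{prop_enrich_sspace_adj_2_vars} exhibits, since $\textbf{Hom}_{\textbf{sSpace}_n}(X \square_n^k V, Y) \cong \textbf{Hom}_{\textbf{sSpace}_k}(V, \textbf{Map}_n^k(X,Y))$ identifies $\textbf{Map}_n^k(X,-)$ as right adjoint to $X \square_n^k -$. I would also note that ``Reedy cofibration'' and ``Reedy fibration'' in $\textbf{sSpace}_n$ mean exactly cofibration and fibration in $\textbf{sSpace}_n^{inj}$, as the injective and Reedy structures coincide here.

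With these identifications in hand, the Proposition immediately preceding this corollary establishes condition (1) of Lemma \ref{lemma_enrich_quill_bifunc_equiv} for precisely this tensor and cotensor. By the equivalence of the three conditions in that lemma, condition (2) holds as well. Unwinding condition (2) with $\mathscr{M}(-,-) = \textbf{Map}_n^k(-,-)$, a Reedy cofibration $f : U \to V$ and a Reedy fibration $p : Y \to Z$ in $\textbf{sSpace}_n$, one obtains exactly that $\textbf{Map}_n^k(f,p)$ is a fibration in $\textbf{sSpace}_k^{inj}$ — i.e. a Reedy fibration — which is trivial whenever $f$ or $p$ is. That is the assertion.

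I expect the only points requiring genuine care, as opposed to pure bookkeeping, to be: (a) confirming that the enriched hom of the tensor–cotensor structure from Proposition \ref{prop_enrich_sspace_adj_2_vars} is literally $\textbf{Map}_n^k$ rather than merely isomorphic to it up to reindexing, which is settled by reading off the relevant adjunction in its proof; and (b) matching the pullback $\mathscr{M}(W,Y) \times_{\mathscr{M}(W,Z)} \mathscr{M}(X,Z)$ from condition (2) with the pullback $\textbf{Map}_n^k(U,Y) \times_{\textbf{Map}_n^k(U,Z)} \textbf{Map}_n^k(V,Z)$ in the statement, which is immediate once the variables are aligned (the cofibration $f : U \to V$ here plays the role of $f : W \to X$ there). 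No serious obstacle arises; the corollary is a direct application of the machinery assembled just above it.
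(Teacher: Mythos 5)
Your proposal is correct and matches the paper's (implicit) argument exactly: the corollary is stated without proof precisely because it is condition (2) of Lemma \ref{lemma_enrich_quill_bifunc_equiv}, which holds by the equivalence of conditions once the preceding proposition verifies condition (1) for the tensor and cotensor of Proposition \ref{prop_enrich_sspace_adj_2_vars}. Your care about identifying the enriched hom with $\textbf{Map}_n^k$ and aligning the pullbacks is exactly the bookkeeping the paper leaves to the reader.
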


\begin{corollary} \label{corr_enrich_map_n_k_fibrant}
    If $Y$ is Reedy fibrant in $\textbf{sSpace}_n$ and $f : U \rightarrow V$ is a (trivial) Reedy cofibration in $\textbf{sSpace}_n$, then the map
    $$
        \textbf{Map}_n^k(f, Y) : \textbf{Map}_n^k(V, Y) \rightarrow \textbf{Map}_n^k(U, Y)
    $$
is a (trivial) Reedy fibration in $\textbf{sSpace}_k$.
\end{corollary}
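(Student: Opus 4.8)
The plan is to deduce this at once from Corollary \ref{corr_enrich_map_n_k_cofib_fib} by specializing the Reedy fibration to the map out of $Y$ to the terminal object. First I would recall that $\textbf{sSpace}_n$, being a presheaf category, has a terminal object $\ast$, and that $Y$ is Reedy fibrant precisely when the unique map $p : Y \rightarrow \ast$ is a Reedy fibration.

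Next I would note that $\textbf{Map}_n^k(X, -)$ preserves the terminal object for every $X \in \textbf{sSpace}_n$. Indeed, the chain of natural bijections in the proof of Proposition \ref{prop_enrich_sspace_adj_2_vars} exhibits $\textbf{Map}_n^k(X, -)$ as a right adjoint (to $X \square_n^k -$), so it preserves all limits, in particular the terminal object; alternatively this is immediate from the formula $\textbf{Map}_n^k(X, Y) = (Y^X)_{0, \cdots, 0}$. Hence $\textbf{Map}_n^k(U, \ast) = \textbf{Map}_n^k(V, \ast) = \ast$.

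Then I would feed $f : U \rightarrow V$ and $p : Y \rightarrow \ast$ into Corollary \ref{corr_enrich_map_n_k_cofib_fib}. Its target is the pullback
$$
\textbf{Map}_n^k(U, Y) \times_{\textbf{Map}_n^k(U, \ast)} \textbf{Map}_n^k(V, \ast) \cong \textbf{Map}_n^k(U, Y) \times_{\ast} \ast \cong \textbf{Map}_n^k(U, Y),
$$
and under this identification the induced map $\textbf{Map}_n^k(f, p)$ is exactly $\textbf{Map}_n^k(f, Y) : \textbf{Map}_n^k(V, Y) \rightarrow \textbf{Map}_n^k(U, Y)$. Corollary \ref{corr_enrich_map_n_k_cofib_fib} then says this is a Reedy fibration in $\textbf{sSpace}_k$, which is moreover trivial whenever $f$ is a trivial Reedy cofibration (invoking the ``$f$ is trivial'' branch of that corollary), giving both the plain and the parenthesized assertions.

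There is essentially no obstacle: the only nonformal point is the observation that the mapping-space bifunctor carries the terminal object in its second slot to the terminal object, which is a one-line consequence of the adjunction already in hand. The statement is recorded separately from Corollary \ref{corr_enrich_map_n_k_cofib_fib} only because it is the precise shape in which it will later be applied to set up the lifting problems used to build $h_2$.
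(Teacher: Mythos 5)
Your proof is correct and is exactly the intended derivation: the paper states this as an immediate corollary of Corollary \ref{corr_enrich_map_n_k_cofib_fib}, obtained by specializing the fibration to $Y \rightarrow \ast$ so that the pullback collapses to $\textbf{Map}_n^k(U, Y)$. The observation that $\textbf{Map}_n^k(X, -)$ preserves the terminal object (being a right adjoint by Proposition \ref{prop_enrich_sspace_adj_2_vars}) is the only point worth making explicit, and you have made it.
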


We note again that this fact is not necessarily new in all cases. For instance, the case $k = 0$ and $n = 1$ is established in \cite[Prop. 2.4]{joyalTierneyQuasicategoriesVsSegal2007}. More generally, the case $k = 0$ for all $n$ is already known:

\begin{proposition} \label{prop_reedy_map_n_0_goerss}
    The mapping spaces $\textbf{Map}_n^0$, tensors $\square_n^0$ and cotensor $(-)^{\iota_n^0(-)}$ are precisely those obtained from the natural simplicial model structure on $\textbf{sSpace}_n$ described in \cite[pg. 370]{goerssJardineReedyModelCategories2009}.
    
    Moreover, it is precisely the enrichment from the natural simplicial \emph{projective} model structure on $\textbf{sSpace}_n$ obtained inductively via \cite[Def. 11.7.2]{hirschhornModelCategoriesTheir2009} and from the enrichment of $\textbf{sSet}$ by itself.
\end{proposition}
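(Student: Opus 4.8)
The plan is to show that all three descriptions of the enrichment collapse onto the single notion of the \emph{pointwise} simplicial structure on a diagram category, after which a soft categorical argument finishes the proof. The key input is that a tensoring--cotensoring structure of a bicomplete $\mathscr{V}$-category $\mathscr{C}$ over a bicomplete closed symmetric monoidal $\mathscr{V}$ is uniquely determined by the tensor action $- \odot - \colon \mathscr{C} \times \mathscr{V} \to \mathscr{C}$ (as a coherent action of the monoidal category $\mathscr{V}$), since the cotensor $\pitchfork$ and the enriched hom are then forced to be the pointwise right adjoints of $- \odot V$ and of $C \odot -$. By Proposition \ref{prop_enrich_sspace_adj_2_vars} the triple $(\square_n^0, (-)^{\iota_n^0(-)}, \textbf{Map}_n^0)$ is such a structure on $\textbf{sSpace}_n$ over $\textbf{sSet}$, so it suffices to identify the tensor $\square_n^0$ with the tensor bifunctors coming from Goerss--Jardine and from Hirschhorn.

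For the first claim I unwind Definition \ref{def_ssets_iota_n} at $k = 0$, where $\textbf{sSpace}_0 = \textbf{sSet}$. The formula $(\iota_n^0(K)_{i_0, \cdots, i_{n-1}})_{i_n} = K_{i_n}$, equivalently the ``more formal'' description of $\iota_n^0$ as precomposition along the projection $(\Delta^{op})^{n+1} \to \Delta^{op}$ onto the internal $\textbf{sSet}$-coordinate, exhibits $\iota_n^0(K)$ as the $n$-uple simplicial space that is constant in every simplicial direction of $\textbf{sSpace}_n$ with value the simplicial set $K$; that is, $\iota_n^0$ is the constant-diagram functor $\textbf{sSet} \to \textbf{sSet}^{(\Delta^{op})^n}$. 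Since limits and colimits in the presheaf category $\textbf{sSpace}_n$ are computed pointwise, $X \square_n^0 K = X \times \iota_n^0(K)$ (Notation \ref{not_ssets_tensor_simp_sp}) satisfies $(X \square_n^0 K)_{i_1, \cdots, i_n} = X_{i_1, \cdots, i_n} \times K$. This is exactly the pointwise tensoring underlying the natural simplicial structure that Goerss--Jardine put on a Reedy diagram category $\textbf{sSet}^{(\Delta^{op})^n}$ --- the Reedy structure only governs which maps are cofibrations and fibrations, not the enrichment --- so by the determinacy principle the full structures coincide; as a sanity check one reads off directly from the adjunctions that $\textbf{Map}_n^0(X, Y)$ is the end $\int_{[i_1], \cdots, [i_n]} \textbf{Map}_{\textbf{sSet}}(X_{i_1, \cdots, i_n}, Y_{i_1, \cdots, i_n})$.

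For the projective statement I argue by induction on $n$. The base case $n = 0$ is the definitional identification of $\textbf{sSpace}_0 = \textbf{sSet}$ with its self-enrichment (tensor the Cartesian product). For the inductive step, write $\textbf{sSpace}_n = (\textbf{sSpace}_{n-1})^{\Delta^{op}}$ without permuting the copies of $\Delta^{op}$; Hirschhorn's Definition 11.7.2 equips the projective model category $(\textbf{sSpace}_{n-1}^{proj})^{\Delta^{op}}$ with the simplicial structure whose tensor is pointwise over this new copy of $\Delta^{op}$ relative to the simplicial structure on $\textbf{sSpace}_{n-1}^{proj}$, which by the inductive hypothesis is pointwise over $(\Delta^{op})^{n-1}$. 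Composing, the resulting tensor is again $(X \otimes K)_{i_1, \cdots, i_n} = X_{i_1, \cdots, i_n} \times K$, and by determinacy of simplicial structures by their tensor this structure agrees with $(\square_n^0, (-)^{\iota_n^0(-)}, \textbf{Map}_n^0)$, completing the induction.

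I expect the only real friction to be bookkeeping rather than substance: confirming from the two cited references that ``natural simplicial structure'' means the pointwise one in each case, and keeping the several index conventions straight when identifying $\iota_n^0$ with the constant-diagram functor and when peeling off one $\Delta^{op}$ at each inductive stage. The conceptual core --- that a simplicial structure is pinned down by its tensor bifunctor, which here is visibly the pointwise product with a constant diagram --- keeps the argument itself short.
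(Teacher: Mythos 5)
The paper states Proposition \ref{prop_reedy_map_n_0_goerss} without proof, treating it as a routine observation, so there is no in-text argument to compare yours against; what you have written is a correct way to fill that gap. Your two key moves both check out: (i) at $k=0$ the formula in Definition \ref{def_ssets_iota_n} gives $(\iota_n^0(K))_{i_1,\cdots,i_n} = K$ for every multi-index, so $X \square_n^0 K$ is the pointwise product $X_{i_1,\cdots,i_n} \times K$, which is exactly the tensor of the ``pointwise'' simplicial structure on a diagram category in $\textbf{sSet}$ used by both cited sources (and, as you note, this structure is independent of whether one imposes the Reedy or projective model structure --- only the verification of SM7 differs); (ii) the determinacy of the cotensor and mapping space from the tensor via the two adjunctions of Proposition \ref{prop_enrich_sspace_adj_2_vars}. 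One small point worth making explicit if you write this up: determinacy by adjunction a priori only identifies the cotensor and hom-space up to canonical natural isomorphism, whereas the proposition says ``precisely.'' Here the stronger statement does hold on the nose, because the $j$-simplices of $\textbf{Map}_n^0(X,Y)$ are literally $\textbf{Hom}(X \square_n^0 \Delta[j], Y)$ and the tensors are literally equal, and a one-line end/coend computation (of the kind you sketch) shows $\bigl(Y^{\iota_n^0(K)}\bigr)_{i_1,\cdots,i_n} \cong (Y_{i_1,\cdots,i_n})^{K}$, matching Hirschhorn's pointwise cotensor. Your induction for the projective case, peeling off one copy of $\Delta^{op}$ at a time and using that Hirschhorn's construction is pointwise relative to the inner enrichment, is also sound given the paper's non-permuting identification $\textbf{sSpace}_n \cong (\textbf{sSpace}_{n-1})^{\Delta^{op}}$.
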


We will have use for the following notation henceforth, as $\textbf{Map}_n^k(-, -)$ will prove cumbersome to use in equations and diagrams:

\begin{notation} \label{not_cd_x_k}
    Suppose $X \in \textbf{sSpace}_k$. Let $0 \leq m < k$. Write 
    $$
        X_\bullet : (\textbf{sSpace}_m)^{op} \rightarrow \textbf{sSpace}_{k-m-1}
    $$
    for the functor $\textbf{Map}_k^{k-m-1}(F_k^m(\bullet), X)$, sending
    $$
        K \mapsto X_K := \textbf{Map}_k^{k-m-1}(F_k^m(K), X).
    $$
\end{notation}

For instance, we have for $X \in \textbf{sSpace}_k$ that
\begin{align*}
    X_{\Delta[n]} &\cong X_n = X_{n, \bullet, \cdots, \bullet} \\
    X_{Sp(n)} &\cong X_1 \times_{X_0} \cdots \times_{X_0} X_1 \\&= X_{1, \bullet, \cdots, \bullet} \times_{X_{0, \bullet, \cdots, \bullet}} \cdots \times_{X_{0, \bullet, \cdots, \bullet}} X_{1, \bullet, \cdots, \bullet} \\
    X_{\partial \Delta[n]} &\cong M_n X \\
    X_{\partial \Delta[i_k, \cdots, i_1]} &\cong M_{([i_k], \cdots, [i_1])} X
\end{align*}
where we write $M_{([i_k], \cdots, [i_1])} X$ for the matching object of $X$ at $\Delta[i_k, \cdots, i_1]$.

We have by Corollary \ref{corr_enrich_map_n_k_fibrant} that if $j : K \rightarrow L$ is a Reedy cofibration of $m$-uple simplicial spaces, the map $j^* = X_j : X_L \rightarrow X_K$ is a Reedy fibration of $(k-m-1)$-uple simplicial spaces.

In particular, setting $k = 2$ and $m = 0$ will give us, for $X \in \textbf{sSpace}_2$ and $K_1, K_2 \in \textbf{sSet}$, a simplicial space $X_{K_1}$ and a simplicial set $X_{K_1, K_2} := (X_{K_1})_{K_2}$. Moreover, if $Q \in \textbf{sSpace}$, we get a simplicial set $X_Q$.
\subsection{Segal Spaces}

Complete Segal spaces were first developed by Rezk as a model for $(\infty, 1)$-categories in \cite{rezkModelHomotopyTheory2000}. Though his intention was moreso to use complete Segal spaces as models for homotopy theories akin to model categories, it soon became apparent that his construction could be iterated to obtain a definition of $(\infty, n)$-category, called \emph{complete $n$-fold Segal spaces}. These objects are studied in \cite{lurieInfinityCategoriesGoodwillie2009}, \cite{bergnerRezkComparisonModelsInfty2020}, \cite{barwickCatClosedModel2005} and \cite{johnson-freydScheimbauerOpLaxNatural2017}. For us, the case $n=2$ will suffice. Moreover, we will not need the full notion of a complete $n$-fold Segal space; more general $n$-fold Segal spaces will have enough information for our construction to apply. Using complete $n$-fold Segal spaces is then merely a matter of restricting to these more constrained objects.

We draw from the intuitions given in \cite{lurieClassificationTopologicalField2009}. Suppose we wished to construct a model of an $(\infty, 1)$-category $X$. Where do we start? A reasonable place would be the \emph{homotopy hypothesis}, which tells us that Kan complexes are a suitable model for $\infty$-groupoids. Hence, we can at least represent the `underlying $\infty$-groupoid' $X_0 \in \textbf{sSet}$ of $X$, the result of stripping away all non-invertible higher morphisms.

How do we encode the non-invertible morphisms in $X$? Consider that a morphism should be represented by an `$\infty$-functor' $[1] \rightarrow X$, from the poset category $[1] = \{0 \rightarrow 1\}$ to $X$. Hence, we might imagine the `$\infty$-groupoid of such functors' as another simplicial set $X_1 \in \textbf{sSet}$. One might note that $X_0$ could similarly be interpreted as the $\infty$-groupoid of functors $[0] \rightarrow X$, where $[0]$ is the discrete category with one object.

We immediately consider there to be maps $s, t : X_1 \rightarrow X_0$ obtained by `precomposing' with the maps $[0] \rightarrow [1]$. These maps extract the source and target of a $1$-morphism. Moreover, there is a map $i : X_0 \rightarrow X_1$ from the projection $[1] \rightarrow [0]$, which we interpret as giving the identity map of an object.

How do we compose morphisms? This will be answered by considering a new $\infty$-groupoid $X_2 \in \textbf{sSet}$ of `functors' $[2] \rightarrow X$, where in general $[n]$ is the poset category $\{0 < \cdots < n\}$ for $n \geq 0$. Such a functor identifies a chain of two morphisms $x \overset{f}{\rightarrow} y \overset{g}{\rightarrow} z$ and a third morphism $x \overset{g \circ f}{\rightarrow} z$ such that said maps commute up to some higher equivalence - indeed, a functor between higher categories need not strictly respect composition of morphisms. There are clearly two maps $X_1 \rightarrow X_2$ given by inserting identities on the left or right, along with three maps $X_2 \rightarrow X_1$ given by extracting $f, g$ or $g \circ f$.

Being able to take compositions now reduces to demanding that the induced \emph{Segal map}
$$
    \gamma_2 : X_2 \rightarrow X_1 \times_{t, X_0, s} X_1
$$
is invertible. Then, we have a path $X_1 \times_{t, X_0, s} X_1 \rightarrow X_2 \rightarrow X_1$ sending $(f, g)$ to $g \circ f$.

Is invertibility such a good idea? Similarly to quasicategories, we will abstain from choosing a particular composite and instead ask that there is a `contractible space of options' for composites of two maps. More formally, we will ask that $\gamma_2$ is a \emph{weak equivalence} of simplicial sets. Indeed, we will go further and assert it is a trivial fibration - if everything is indeed a Kan complex, one would have that for each pair of maps $f, g \in X_1$ such that $t(f) = s(g)$, the preimage $\gamma_2^{-1}((f, g))$ would be a contractible Kan complex as desired.

For weak associativity and such, we will include spaces $X_n$ of `functors' $[n] \rightarrow X$, representing chains of length $n$ and all possible unbiased composites. Maps between these spaces are given by functors $[m] \rightarrow [n]$. We will then demand that all the remaining Segal maps
$$
    \gamma_n : X_n \rightarrow X_1 \times_{X_0} \cdots \times_{X_0} X_1
$$
are trivial fibrations. The result of this discussion is a functor $X : \Delta^{op} \rightarrow \textbf{sSet}$ known as a \emph{Segal space}.

\begin{definition}[{\cite[pg. 11]{rezkModelHomotopyTheory2000}}] \label{defn_sesp_segal_space}
    A \emph{Segal space} is a Reedy fibrant simplicial space $X : \Delta^{op} \rightarrow \textbf{sSet}$ such that the Segal maps,
    $$
        \gamma_n : X_n \rightarrow X_1 \times_{X_0} \cdots \times_{X_0} X_1,
    $$
    are weak equivalences for all $n \geq 2$.
\end{definition}

\begin{definition} \label{defn_sesp_cat_sesp}
    Let $\textbf{SeSp}$ be the full subcategory of $\textbf{sSpace}$ whose objects are the Segal spaces.
\end{definition}

Note that, because a Segal space $X$ is Reedy fibrant, the spaces $X_0$ and $X_1$ are Kan complexes and the maps $s := X_{\langle 0 \rangle} = X_{d^1_1}$ and $t := X_{\langle 1 \rangle} = X_{d^1_0}$ must both be Kan fibrations. Hence, as noted in \cite[pg. 11]{rezkModelHomotopyTheory2000}, the space $X_1 \times_{X_0} \cdots \times_{X_0} X_1$ is actually a homotopy limit, ie.
$$
    X_1 \times_{X_0} \cdots \times_{X_0} X_1 \cong X_1 \times^h_{X_0} \cdots \times^h_{X_0} X_1.
$$
Note also that the maps $\gamma_n$ are necessarily trivial fibrations \cite[pg. 11]{rezkModelHomotopyTheory2000}. Indeed, referring to Definition \ref{defn_sesp_sp_k}, we can clearly see that $\gamma_n$ is just $\textbf{Map}_1^0(F_1^0(g_n), X)$. In the case that $n < 2$, $\gamma_n$ is just an identity. This is clearly a cofibration, so $\gamma_n$ is a Reedy fibration by Corollary \ref{corr_enrich_map_n_k_fibrant} as needed.

An example is now in order. One of the classic litmus tests for any new definition of higher category is whether it supports a notion of \emph{fundamental higher groupoid} of a topological space. This structure should encode all the weak homotopy theory of the space in question. The means we present to accomplish this in the $(\infty, 1)$ and later $(\infty, 2)$ cases are by no means novel, but an explicit reference in the literature remains elusive.

\begin{definition} \label{defn_sesp_sing_ss}
    Let $\textbf{Sing}_{\textbf{sS}} : \textbf{Top} \rightarrow \textbf{sSpace}$ be the functor sending any $X \in \textbf{Top}$ to the space $\textbf{Sing}_{\textbf{sS}}(X)$, defined levelwise such that
    $$
        \textbf{Sing}_{\textbf{sS}}(X)_n := \textbf{Sing}(X^{\Delta_t[n]})
    $$
    with simplicial maps induced by precomposition.
\end{definition}

We find that
$$
    \textbf{Sing}_{\textbf{sS}}(X)_{n, m} \cong \textbf{Hom}_{\textbf{Top}}(\Delta_t[n] \times \Delta_t[m], X).
$$
Again, by \cite[pg. 41, Theorem 2]{maclaneMoerdijkSheavesGeometryLogic1994}, this has a left adjoint $\lvert \bullet \rvert_{\textbf{sS}}$ given by a certain coend. We do not take a great deal of interest in this adjoint, so we will not discuss it further.

It is prudent that we evaluate how the intuitions that led us to defining Segal spaces apply to this example. Given a space $X$, the Kan complex $\textbf{Sing}_{\textbf{sS}}(X)_0$ is just $\textbf{Sing}(X)$, which is indeed the prototypical example of an $\infty$-groupoid, namely the fundamental $\infty$-groupoid of a topological space. This will be the underlying $\infty$-groupoid of our $(\infty, 1)$-category.

The space $\textbf{Sing}_{\textbf{sS}}(X)_1$ is then the fundamental $\infty$-groupoid of the space $X^{\Delta_t[1]}$ of paths in $X$. Indeed, the $1$-morphisms in our $(\infty, 1)$-category will be paths in our topological space. Looking at $\textbf{Sing}_{\textbf{sS}}(X)_n$ reveals in general that the $n$-simplices will simply be the topological $n$-simplices in $X$.

Now, the Segal map on $\textbf{Sing}_{\textbf{sS}}(X)$ is one which sends a $2$-simplex $\Delta_t[2] \rightarrow X$ to the restriction $\lvert Sp(2) \rvert \rightarrow X$. That the Segal map is a weak equivalence is immediate: there is a deformation retract $\Delta_t[2] \rightarrow \lvert Sp(2) \rvert$ of the spine inclusion, which induces a deformation retract of the Segal map itself by precomposition. This means that, given a chain of two $1$-morphisms in $\textbf{Sing}_{\textbf{sS}}(X)_1$, they may be composed by applying this deformation retract to obtain a new $1$-morphism in $\textbf{Sing}_{\textbf{sS}}(X)$. One may in general compose a chain of length $n$ by doing the same with $\Delta_t[n]$ and $\lvert Sp(n) \rvert$.

One might expect some notion of a `mapping space' in a definition of $\infty$-category, namely an $\infty$-groupoid of higher morphisms between two fixed objects. This is easily obtained with Segal spaces:

\begin{definition}[{\cite[pg. 10]{bergnerSurveyModelsInfty2018}}] \label{defn_sesp_map_sp}
    Let $X$ be a Segal space and $x, y \in X_{0, 0}$. The \emph{mapping space} between $x$ and $y$, written here as $X(x, y)$, is defined to be the pullback
    \[\begin{tikzcd}
    	{X(x, y)} & {X_1} \\
    	{\{(x, y)\}} & {X_0 \times X_0}
    	\arrow[from=2-1, to=2-2]
    	\arrow[from=1-2, to=2-2]
    	\arrow[from=1-1, to=1-2]
    	\arrow[from=1-1, to=2-1]
    	\arrow["\lrcorner"{anchor=center, pos=0.125}, draw=none, from=1-1, to=2-2]
    \end{tikzcd}\]
    in $\textbf{sSet}$.
\end{definition}

Note that this is once again a homotopy pullback \cite[pg. 16]{bergnerSurveyModelsInfty2018}. Moreover, it is clearly fibrant, as fibrations are preserved under pullback and the discrete space $\{(x, y)\}$ is necessarily fibrant.

From now on, we introduce the following notation:

\begin{notation} \label{not_sesp_supscript_fiber}
    Let $X \in \textbf{sSpace}_k$. Let $x_1, \cdots, x_n \in (X_{0, \cdots, 0})_0$. Then write $(-)^{x_1, \cdots, x_n} : (\textbf{sSpace}_{k-1})_{/ (X_0)^n} \rightarrow \textbf{sSpace}_{k-1}$ be the functor sending $A \rightarrow (X_0)^n$ to the pullback
    $$
        A^{x_1, x_2, \cdots, x_n} := \{(x_1, \cdots, x_n)\} \times_{(X_0)^n} A.
    $$
\end{notation}

For instance, $X(a, b) = X_1^{a, b}$. We will also start writing $X(a_0, \cdots, a_n) := X_{Sp(n)}^{a_0, \cdots, a_n}$. Moreover, we have Segal maps on fibers
$$
\gamma_n^{a_0, \cdots, a_n} : X_n^{a_0, \cdots, a_n} \rightarrow X(a_0, \cdots, a_n).
$$
We would like a convenient term for when this is possible:

\begin{definition} \label{defn_sesp_object_fibered}
    Consider a cospan  $A \rightarrow (X_0)^m \leftarrow B$ in $\textbf{sSpace}_k$. A map $f : A \rightarrow B$ is \emph{object-fibered} with respect to this cospan if $f$ commutes with the cospan. If the cospan is evident, simply say $f$ is object-fibered.
\end{definition}

\begin{proposition} \label{prop_sesp_fiber_pres_trivfib}
    $(-)^{x_1, \cdots, x_n}$ preserves fibrations and trivial fibrations for the injective model structures on $(\textbf{sSpace}_{k-1})_{/ (X_0)^n}$ and $\textbf{sSpace}_{k-1}$.
\end{proposition}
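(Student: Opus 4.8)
The plan is to exhibit $(-)^{x_1, \cdots, x_n}$ as a base-change functor and to invoke the stability of fibrations and trivial fibrations under pullback, which holds in any model category. First I would unwind the model structures involved: by definition of the slice model structure, a morphism of $(\textbf{sSpace}_{k-1})_{/(X_0)^n}$ is a (trivial) fibration exactly when its underlying morphism $f : A \to B$ in $\textbf{sSpace}_{k-1}$ is a (trivial) Reedy fibration, the structure maps to $(X_0)^n$ playing no role in this condition. So the claim reduces to showing that, given a (trivial) Reedy fibration $f : A \to B$ together with a commuting triangle $A \to B \to (X_0)^n$, the induced map $A^{x_1, \cdots, x_n} \to B^{x_1, \cdots, x_n}$ is again a (trivial) Reedy fibration.

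The key step is to observe that the square with vertical edges $A^{x_1, \cdots, x_n} \to B^{x_1, \cdots, x_n}$ and $f : A \to B$, and horizontal edges the canonical projections $A^{x_1, \cdots, x_n} \to A$ and $B^{x_1, \cdots, x_n} \to B$, is a pullback square in $\textbf{sSpace}_{k-1}$. This is a short pasting argument: by Notation \ref{not_sesp_supscript_fiber} we have $A^{x_1, \cdots, x_n} = \{(x_1, \cdots, x_n)\} \times_{(X_0)^n} A$ and likewise $B^{x_1, \cdots, x_n} = \{(x_1, \cdots, x_n)\} \times_{(X_0)^n} B$, whence $B^{x_1, \cdots, x_n} \times_B A \cong \{(x_1, \cdots, x_n)\} \times_{(X_0)^n} A = A^{x_1, \cdots, x_n}$, naturally over $B^{x_1, \cdots, x_n}$.

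I would then conclude by the standard model-categorical fact that both the fibrations and the trivial fibrations are closed under pullback: each class is a right lifting class --- against the trivial cofibrations, respectively the cofibrations --- and any such class is stable under pullback. Applying this in $\textbf{sSpace}_{k-1}^{inj}$ to the pullback square above yields exactly the statement, for $f^{x_1,\cdots,x_n}$ is then the pullback of $f$ along $B^{x_1,\cdots,x_n} \to B$.

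There is no real obstacle here; the only care required is in the bookkeeping of the pasted pullback squares and the slice-category conventions. Should one prefer a more hands-on argument, one could instead use the levelwise characterization of Reedy fibrations in Proposition \ref{prop_reedy_levelwise_fib}: since $\textbf{Map}_{k-1}^0(\partial F(-), -)$ and the matching objects preserve limits, the relative matching map of $A^{x_1, \cdots, x_n} \to B^{x_1, \cdots, x_n}$ is the pullback of that of $f$, reducing the claim to the stability of (trivial) Kan fibrations under pullback in $\textbf{sSet}$; but the abstract argument above is cleaner and avoids this computation.
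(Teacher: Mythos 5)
Your proposal is correct and follows essentially the same route as the paper, whose entire proof is the one-line observation that pullbacks preserve fibrations and trivial fibrations; you have merely spelled out the pasting of pullback squares and the lifting-class argument that the paper leaves implicit.
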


\begin{proof}
    Pullbacks preserve fibrations and trivial fibrations in general.
\end{proof}

For instance, the maps $\gamma_n^{a_0, \cdots, a_n}$ are pullbacks of trivial fibrations of the form
\[\begin{tikzcd}
	{X_n^{a_0, \cdots, a_n}} & {X_n} \\
	{X(a_0, \cdots, a_n)} & {X_1 \times_{X_0} \cdots \times_{X_0} X_1}
	\arrow["{\gamma_n}", from=1-2, to=2-2]
	\arrow["{\gamma_n^{a_0, \cdots, a_n}}"', from=1-1, to=2-1]
	\arrow[from=1-1, to=1-2]
	\arrow[hook, from=2-1, to=2-2]
	\arrow["\lrcorner"{anchor=center, pos=0.125}, draw=none, from=1-1, to=2-2]
\end{tikzcd}\]

Another useful construction with regards to $\infty$-categories, and indeed one which underpins the core result of this paper, is the capacity to `collapse' such a higher category down to a mere category, with the same objects but with just path components of mapping spaces as its hom-sets. This is not at all difficult to achieve:

\begin{definition}[{\cite[Def. 1.9]{calaqueScheimbauerNoteInftyCategory2019}}] \label{defn_sesp_h_1}
    Let $X$ be a Segal space. The \emph{homotopy category} $h_1(X) \in \textbf{Cat}$ is the category whose objects are the elements of the set $(X_0)_0$, whose hom-sets are of the form
    $$
        \textbf{Hom}_{h_1(X)}(x, y) := \pi_0(X(x, y))
    $$
    with identities given by the degeneracies and composition by applying $\pi_0$ to the zig-zag diagram
    \[\begin{tikzcd}
    	{X(x, y) \times X(y, z)} & {X(x, y, z)} & {X(x,z)} & {X_2^{x,z}} & {X(x, z)}
    	\arrow["\cong", from=1-1, to=1-2]
    	\arrow["{X_{d^2_1}^{x,z}}", from=1-4, to=1-5]
    	\arrow[hook, from=1-2, to=1-3]
    	\arrow["{\gamma_2^{x,z}}"', from=1-4, to=1-3]
    \end{tikzcd}\]
\end{definition}

Because $\gamma_2^{x, z}$ is a weak equivalence of simplicial sets, the map $\pi_0(\gamma_2^{x, z})$ is a bijection. Hence, after applying $\pi_0$, we will have a single function from the leftmost object of the diagram to the rightmost.

Note that since the Segal maps $\gamma_2^{x, z}$ are trivial fibrations, we have a lifting problem of the form
\[\begin{tikzcd}
	&& {X_2^{x, z}} & {X(x, z)} \\
	{X(x, y) \times X(y, z)} & {(X_1 \times_{X_0} X_1)^{x, z}} & {(X_1 \times_{X_0} X_1)^{x, z}}
	\arrow[from=2-2, to=2-3]
	\arrow["{\gamma_2^{x, z}}"', from=1-3, to=2-3]
	\arrow["{\mu_2^{x, z}}", dashed, from=2-2, to=1-3]
	\arrow[from=1-3, to=1-4]
	\arrow[hook, from=2-1, to=2-2]
\end{tikzcd}\]
which admits the solution $\mu_2^{x, z}$. Taking $\pi_0$ of this chain of morphisms induces the same composition map as above. Rezk and Rasekh make a similar observation in  \cite{rezkModelHomotopyTheory2000} and \cite{rasekhIntroductionCompleteSegal2018} respectively, though instead note only that since $\gamma_2^{x, z}$ is a trivial fibration, any element of $(X_1 \times_{X_0} X_1)^{x, z}$ can be lifted to an element of $X_2^{x, z}$, with any two such liftings being in the same path component. All of these approaches yield the same category. We will make explicit use of the approach of solving the entire lifting problem when we come to discuss homotopy bicategories.

\begin{proposition}[{\cite[Prop. 5.4]{rezkModelHomotopyTheory2000}}] \label{prop_sesp_h1_is_cat}
    Let $X$ be a Segal space. Then $h_1(X)$ is a category.
\end{proposition}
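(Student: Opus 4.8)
The plan is to verify the three axioms of a category for $h_1(X)$: well-definedness of composition, associativity, and unitality. Since the objects are a set and the hom-sets $\pi_0(X(x,y))$ are honest sets, the only work is in the composition operation and its axioms. First I would confirm that the composition map is well-defined: the zig-zag in Definition \ref{defn_sesp_h_1} involves the map $\gamma_2^{x,z}$, which is a trivial fibration (being a pullback of $\gamma_2$, itself a trivial fibration by the discussion after Definition \ref{defn_sesp_segal_space} and Proposition \ref{prop_sesp_fiber_pres_trivfib}), hence a weak equivalence of simplicial sets; applying $\pi_0$ turns it into a bijection, so the zig-zag becomes a genuine function $\pi_0(X(x,y)\times X(y,z)) \to \pi_0(X(x,z))$ after noting $\pi_0$ preserves finite products. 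So composition $g \circ f := \pi_0(X_{d_1^2}^{x,z}) \circ \pi_0(\gamma_2^{x,z})^{-1}(f,g)$ is a well-defined function.

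For \textbf{unitality}, I would use the degeneracy $s_0 : X_1 \to X_2$ (more precisely the relevant fiber-restricted version) together with the simplicial identities $d_1^2 s_0 = \mathrm{id}$ and the fact that $\gamma_2 \circ (\text{appropriate degeneracy})$ recovers a pair of the form $(f, 1_y)$ or $(1_x, f)$. Concretely, for $f \in X(x,y)$ representing a class in $\pi_0$, the element $X_{s_1}(f) \in X_2^{x,y}$ (or $X_{s_0}(f)$) maps under $\gamma_2^{x,y}$ to $(f, 1_y)$ (resp. $(1_x, f)$) and under $X_{d_1^2}^{x,y}$ back to $f$; so after $\pi_0$ the composite with an identity returns the original class. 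This is a direct chase through simplicial identities, restricted to the relevant fibers over $(X_0)^n$ — the object-fibered maps of Definition \ref{defn_sesp_object_fibered} ensure everything is compatible with source/target.

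\textbf{Associativity} is the main obstacle. The standard argument uses the space $X_3$ of $3$-chains: the Segal condition gives that $\gamma_3^{a_0,\dots,a_3} : X_3^{a_0,\dots,a_3} \to X(a_0,a_1,a_2,a_3)$ is a trivial fibration, and the two bracketings $(h\circ g)\circ f$ and $h\circ(g\circ f)$ are both obtained by restricting a $3$-simplex along different face maps $[2] \hookrightarrow [3]$. I would lift a given triple $(f,g,h) \in X(x,y)\times X(y,z)\times X(z,w)$ (or rather a point of the Segal-map target $X_1\times_{X_0}X_1\times_{X_0}X_1$) to a point of $X_3$, observe that the two composites are computed by applying different faces $X_{d_i}$ to this lift, and then check that in $h_1(X)$ — after passing to $\pi_0$ — both reductions agree because any two choices of lift are connected by a path, so their images agree in $\pi_0$. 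The key compatibilities are simplicial identities relating the faces $[1]\hookrightarrow[2]\hookrightarrow[3]$; one must be careful that the lifts chosen when forming $g\circ f$ first and then composing with $h$ can be assembled into a lift over $X_3$, which follows from the Segal maps for $X_3$ factoring appropriately through $X_2 \times_{X_1} X_2$. Since $h_1(X)$ only remembers path components, all the higher coherence data washes out and the argument reduces to the bijectivity of $\pi_0(\gamma_n^{\vec a})$ for $n = 2, 3$ plus bookkeeping. I would structure the proof as: (1) well-definedness; (2) a lemma that the $n$-ary composite in $\pi_0$ is independent of how it is bracketed, proven via $X_3$ and the trivial fibration $\gamma_3$; (3) deduce associativity and unitality as special cases.
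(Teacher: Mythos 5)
The paper does not prove this statement itself but defers to Rezk's Proposition 5.4, and your argument is essentially that proof: well-definedness and unitality from the trivial fibration $\gamma_2^{x,z}$ (whose $\pi_0$ is a bijection) together with the degeneracies, and associativity by lifting a triple through the trivial fibration $\gamma_3^{x,y,z,w}$ and reading both bracketings off the faces of a single $3$-simplex, using that the composite class in $\pi_0$ is independent of the chosen lift and of the chosen representative of $g \circ f$. One small simplification: you need not worry about assembling two separately chosen $2$-simplex lifts into a point of $X_3$ via $X_2 \times_{X_1} X_2$ — fixing one lift of $(f,g,h)$ in $X_3$ from the start and invoking the independence just mentioned already closes the argument.
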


One may also prove that any map $X \rightarrow Y$ between Segal spaces induces a functor $h_1(X) \rightarrow h_1(Y)$ in a functorial manner, due to commutativity with Segal maps and degeneracy maps. Hence, one obtains a functor
$$
    h_1 : \textbf{SeSp} \rightarrow \textbf{Cat}.
$$
It is perhaps useful to consider what happens to our running example of a Segal space. Here, $\Pi_1 : \textbf{Top} \rightarrow \textbf{Cat}$ is the \emph{fundamental groupoid} functor, sending a topological space to the groupoid whose objects are points in $X$ and whose morphisms are homotopy classes of paths relative to start and end points \cite[ch. 2]{may1999AConciseCourse}.

\begin{proposition} \label{prop_sesp_h_1_pi_1}
    There is a natural isomorphism
    $$
        h_1 \circ \textbf{Sing}_{\textbf{sS}} \cong \Pi_1.
    $$
\end{proposition}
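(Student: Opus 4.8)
The plan is to construct a natural isomorphism $h_1 \circ \textbf{Sing}_{\textbf{sS}} \cong \Pi_1$ by comparing the two functors object-by-object and checking naturality. Fix a space $X \in \textbf{Top}$. On objects, both $h_1(\textbf{Sing}_{\textbf{sS}}(X))$ and $\Pi_1(X)$ have as object set the points of $X$: indeed the object set of $h_1(\textbf{Sing}_{\textbf{sS}}(X))$ is $(\textbf{Sing}_{\textbf{sS}}(X)_0)_0 = \textbf{Sing}(X)_0 = \textbf{Hom}_{\textbf{Top}}(\Delta_t[0], X)$, which is exactly the underlying point set of $X$, matching the objects of $\Pi_1(X)$. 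So the candidate isomorphism is the identity on objects, and it remains to exhibit a bijection on hom-sets compatible with composition and identities.

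For the hom-sets, I would unwind both sides. On the $\Pi_1$ side, $\textbf{Hom}_{\Pi_1(X)}(x,y)$ is the set of homotopy classes rel endpoints of paths from $x$ to $y$ in $X$. On the other side, $\textbf{Hom}_{h_1(\textbf{Sing}_{\textbf{sS}}(X))}(x,y) = \pi_0(\textbf{Sing}_{\textbf{sS}}(X)(x,y))$, and by Definition~\ref{defn_sesp_map_sp} this mapping space is the pullback of $\textbf{Sing}_{\textbf{sS}}(X)_1 = \textbf{Sing}(X^{\Delta_t[1]})$ over $\{(x,y)\} \hookrightarrow \textbf{Sing}_{\textbf{sS}}(X)_0 \times \textbf{Sing}_{\textbf{sS}}(X)_0 = \textbf{Sing}(X) \times \textbf{Sing}(X)$ along the source and target maps, which are induced by the two inclusions $\Delta_t[0] \rightrightarrows \Delta_t[1]$, i.e.\ by evaluation of a path at its two endpoints. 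Hence $\textbf{Sing}_{\textbf{sS}}(X)(x,y)$ is precisely $\textbf{Sing}(P_{x,y}X)$, where $P_{x,y}X \subseteq X^{\Delta_t[1]}$ is the space of paths from $x$ to $y$ (this uses that $\textbf{Sing}$ preserves the pullback, which it does as a right adjoint, together with the identification of the fiber of the endpoint-evaluation map $X^{\Delta_t[1]} \to X \times X$ over $(x,y)$ with $P_{x,y}X$). Therefore $\pi_0(\textbf{Sing}_{\textbf{sS}}(X)(x,y)) = \pi_0(\textbf{Sing}(P_{x,y}X)) = \pi_0(P_{x,y}X)$, and $\pi_0$ of the path space is exactly the set of homotopy classes of paths rel endpoints. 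This gives the desired bijection on hom-sets, canonically and naturally in $x, y$.

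Next I would check this bijection is compatible with identities and composition, so that it assembles into a functor (hence an isomorphism of categories). Identities: the identity on $x$ in $h_1$ is the image of the degeneracy $\textbf{Sing}_{\textbf{sS}}(X)_0 \to \textbf{Sing}_{\textbf{sS}}(X)_1$, which is induced by the projection $\Delta_t[1] \to \Delta_t[0]$, sending a point to the constant path — matching the identity of $\Pi_1(X)$. Composition is the more delicate point and I expect it to be the main obstacle: in $h_1$, composition is defined via the zig-zag through $\textbf{Sing}_{\textbf{sS}}(X)_2$ using the inverse of $\pi_0(\gamma_2^{x,z})$, whereas in $\Pi_1$ composition is concatenation of paths. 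One must trace through: a class in $\pi_0(\textbf{Sing}_{\textbf{sS}}(X)_2^{x,z})$ maps under $\gamma_2^{x,z}$ to a pair of composable path-classes, and under $\textbf{Sing}_{\textbf{sS}}(X)_{d^2_1}^{x,z}$ (induced by the edge $\langle 0, 2\rangle : [1] \to [2]$, realized by the affine inclusion $\Delta_t[1] \hookrightarrow \Delta_t[2]$) to a class in $\pi_0(\textbf{Sing}_{\textbf{sS}}(X)(x,z)) = \pi_0(P_{x,z}X)$. Concretely a $2$-simplex here is a map $\Delta_t[2] \to X$, whose restriction along the spine gives two paths and whose restriction along the long edge gives a path homotopic rel endpoints to their concatenation (via the standard affine reparametrization/retraction $\Delta_t[2] \to |Sp(2)|$ already invoked in the text when checking the Segal condition). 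Thus the zig-zag recipe recovers, on $\pi_0$, exactly concatenation up to homotopy rel endpoints, which is composition in $\Pi_1(X)$.

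Finally, naturality in $X$: for $f : X \to X'$ in $\textbf{Top}$, both $h_1(\textbf{Sing}_{\textbf{sS}}(f))$ and $\Pi_1(f)$ act by postcomposition with $f$ on points and on (classes of) paths, and the identifications above are manifestly compatible with this, since $\textbf{Sing}$, the pullback defining the mapping space, and $\pi_0$ are all functorial and the homeomorphism $\textbf{Sing}_{\textbf{sS}}(X)(x,y) \cong \textbf{Sing}(P_{x,y}X)$ is natural. This yields the natural isomorphism $h_1 \circ \textbf{Sing}_{\textbf{sS}} \cong \Pi_1$. The only real work is the composition-compatibility bookkeeping in the previous paragraph; everything else is unwinding definitions.
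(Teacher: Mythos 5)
Your proposal is correct and follows essentially the same route as the paper: identify objects with points of $X$, identify $\pi_0$ of the fibered mapping space with homotopy classes of paths rel endpoints, and check functoriality and naturality. The only difference is that the paper dismisses compatibility with composition as evident, whereas you spell out the zig-zag through $\textbf{Sing}_{\textbf{sS}}(X)_2^{x,z}$ and the deformation retraction $\Delta_t[2]\to\lvert Sp(2)\rvert$ — a worthwhile elaboration of the one genuinely nontrivial step, and entirely consistent with the paper's argument.
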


\begin{proof}
    Consider a topological space $X$ and take the category $C := h_1(\textbf{Sing}_{\textbf{sS}}(X))$. The objects of this category are given by the underlying set of $\textbf{Sing}_{\textbf{sS}}(X)_0$, which is just the underlying set of $X$. Hence, there is a clear bijection from the objects of $C$ to those of $\Pi_1(X)$.

    Now, let $x, y \in X$. We wish to find a natural bijection
    $$
        \textbf{Hom}_C(x, y) := \pi_0(\{(x, y)\} \times_{X^2} X^{\Delta_t[1]}) \rightarrow \textbf{Hom}_{\Pi_1(X)}(x, y).
    $$
    Note that two paths $\Delta_t[1] \cong [0, 1] \rightarrow X$ from $x$ to $y$ are identified in $\Pi_1(X)$ if and only if there is a path $[0, 1] \rightarrow X^{\Delta_t[1]}$ between these paths that is constant on source and target. This happens if and only if they are in the same path component of $\pi_0(\{(x,y)\} \times_{X^2} X^{\Delta_t[1]})$, so there is a natural bijection between these sets sending $[\gamma]$ to $[\gamma]$ for any path $\gamma$ from $x$ to $y$ in $X$.

    That these bijections induce a functor is evident. Naturality is also clear.
\end{proof}

One may construct a model structure for Segal spaces, though we will not explore how this is done in depth here:

\begin{theorem}[{\cite[Theorem 5.2]{bergnerRezkComparisonModelsInfty2020}}] \label{thm_sesp_modstr_sesp}
    There is a model structure obtained as a left Bousfield localization of the Reedy model structure on $\textbf{sSpace}$, which we will write as $SeSp$, whose fibrant objects are the Segal spaces.
\end{theorem}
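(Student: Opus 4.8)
The plan is to realize $SeSp$ as a left Bousfield localization of $\textbf{sSpace}^{inj}$ at the set of Segal maps
$$
    S := \{\, F_1^0(g_n) : F_1^0(Sp(n)) \hookrightarrow F_1^0(\Delta[n]) \mid n \geq 2 \,\},
$$
with $g_n$ as in Definition \ref{defn_sesp_sp_k}, and then to identify the resulting fibrant objects. First I would verify the hypotheses of Hirschhorn's localization theorem \cite[Thm. 4.1.1]{hirschhornModelCategoriesTheir2009}. The category $\textbf{sSpace} = \textbf{sSet}^{\Delta^{op}}$ is a presheaf category, hence locally presentable, and in $\textbf{sSpace}^{inj}$ both the weak equivalences and the cofibrations (the levelwise monomorphisms) are detected levelwise; since $\textbf{sSet}$ is left proper and cellular, these properties pass levelwise, so $\textbf{sSpace}^{inj}$ is left proper and cellular (equivalently, combinatorial). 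It is moreover a simplicial model category under the enrichment $\textbf{Map}_1^0$, the relevant pushout--product axiom being exactly the compatibility of $\square_1^0$ and $(-)^{\iota_1^0(-)}$ with cofibrations and fibrations established just after Lemma \ref{lemma_enrich_quill_bifunc_equiv}; and functorial factorizations are available by our standing assumption. Thus \cite[Thm. 4.1.1]{hirschhornModelCategoriesTheir2009} yields a left proper simplicial model structure $SeSp := L_S\textbf{sSpace}^{inj}$ with the same cofibrations, whose fibrant objects are precisely the $S$-local objects: the Reedy fibrant $X$ such that the induced map of homotopy function complexes $\textbf{Map}_1^0(F_1^0(\Delta[n]), X) \to \textbf{Map}_1^0(F_1^0(Sp(n)), X)$ is a weak equivalence for every $n \geq 2$.

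It then remains to identify the $S$-local objects with the Segal spaces of Definition \ref{defn_sesp_segal_space}. Every object of $\textbf{sSpace}$ is Reedy cofibrant, so for Reedy fibrant $X$ the homotopy function complexes above are computed strictly by $\textbf{Map}_1^0(-, X)$. Since $g_n$ is a monomorphism, $F_1^0(g_n)$ is a levelwise monomorphism, hence a Reedy cofibration, so by Corollary \ref{corr_enrich_map_n_k_fibrant} the map $\textbf{Map}_1^0(F_1^0(g_n), X)$ is a Kan fibration; using Proposition \ref{prop_ssets_enrich_f_rep} together with the identifications recorded after Notation \ref{not_cd_x_k}, this map is exactly the Segal map $\gamma_n : X_n \to X_1 \times_{X_0} \cdots \times_{X_0} X_1$. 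A Kan fibration is a weak equivalence if and only if it is a trivial fibration, so $X$ is $S$-local precisely when every $\gamma_n$ with $n \geq 2$ is a weak equivalence, i.e. precisely when $X$ is a Segal space; nothing is lost by excluding $n < 2$, where $\gamma_n$ is an identity. Hence the fibrant objects of $SeSp$ are exactly the Segal spaces.

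The only genuinely delicate point is the verification that $\textbf{sSpace}^{inj}$ satisfies the cellularity (or combinatoriality) and left-properness hypotheses of the localization theorem. This is routine given the corresponding facts for $\textbf{sSet}$ and the levelwise description of the injective/Reedy structure, but it is where the bookkeeping lives; once the localization is known to exist, the identification of its fibrant objects with Segal spaces is a direct unwinding of definitions through the enriched mapping-space computations already in hand. One could alternatively bypass cellularity and invoke the combinatorial version of left Bousfield localization, the rest of the argument being unchanged.
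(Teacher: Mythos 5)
Your argument is correct and follows exactly the route the paper indicates: Theorem \ref{thm_sesp_modstr_sesp} is quoted from Bergner--Rezk without proof, and the paper's only comment is that the structure is obtained by localizing at the spine inclusions $F_1^0(g_n)$ for $n \geq 2$, which is precisely your set $S$; your identification of the $S$-local objects with Segal spaces via Proposition \ref{prop_ssets_enrich_f_rep} and Corollary \ref{corr_enrich_map_n_k_fibrant} is the standard and correct unwinding. One small quibble: ``cellular'' and ``combinatorial'' are not equivalent hypotheses but two different sufficient conditions for the existence of the localization, though, as you note yourself, either route works here.
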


In short, one obtains this structure by localizing with respect to the spine inclusions $F_1^0(g_n)$ for all $n \geq 2$.

If we sought a fully correct model of $(\infty, 1)$-category, however, we would find that we are not quite done. One final addition is necessary to ensure that the `homotopical' data of $\infty$-groupoids at each level of a Segal space and the `categorical' data, displayed for instance in the homotopy category, coincide neatly. This is the notion of a \emph{complete Segal space} {\cite[pg. 14]{rezkModelHomotopyTheory2000}}, which we will not need and thus omit from this paper.

\subsection{2-fold Segal Spaces}

We are now ready to begin our foray into the world of $(\infty, 2)$-categories. Our model of choice will be \emph{(complete) $2$-fold Segal spaces}, which will be built out of (complete) Segal spaces.

In order to facilitate the structure of an $(\infty, 2)$-category $X$, we should consider both horizontal and vertical composition. Hence, for each $n, m \geq 0$, we will now have an $\infty$-groupoid of `grids' of $2$-morphisms of horizontal length $n$ and vertical length $m$, which we will write as $X_{n, m}$. We might see this as the $\infty$-groupoid of $\infty$-functors $[n] \times [m] \rightarrow X$.

This should, for a fixed $n \geq 0$, induce a (complete) Segal space $X_{n, \bullet}$ where composition is vertical. In the special case $n = 1$, we have what we might call the $(\infty, 1)$-category of $1$-morphisms $X_{1, \bullet}$. We then have $(\infty, 1)$-categories $X_{\bullet, n}$ whose composition is instead horizontal.

A starting point to formalize this intuition is as follows:

\begin{definition}[{\cite[pg. 15]{bergnerSurveyModelsInfty2018}}] \label{defn_sesp2_double_sesp}
    A \emph{double Segal space} is a Reedy fibrant bisimplicial space such that the Segal maps,
    \begin{align*}
        \gamma_{n, \bullet}: X_{n, \bullet} &\rightarrow X_{1, \bullet} \times_{X_{0, \bullet}} \cdots \times_{X_{0, \bullet}} X_{1, \bullet}, \\
        \gamma_{\bullet, n} : X_{\bullet, n} &\rightarrow X_{\bullet, 1} \times_{X_{\bullet, 0}} \cdots \times_{X_{\bullet, 0}} X_{\bullet, 1}, 
    \end{align*}
    are weak equivalences for all $n \geq 2$, so $X_{k, \bullet}$ and $X_{\bullet, k}$ are Segal spaces.
\end{definition}

This is in fact quite analogous to how we previously built Segal spaces from $\infty$-groupoids. At each level, a double Segal space $X$ will be a Segal space. We could indeed choose to see $X_0$ as the `underlying $(\infty, 1)$-category' of $X$, only containing invertible $2$-morphisms\footnote{This intuition is only truly precise in the case of \emph{completeness} of $X$; without this property, $X_0$ may not contain all of the `invertible $2$-morphisms' in $X$. This flexibility is necessary to construct many models of higher categories of cobordisms relevant to TQFTs, such as in \cite{lurieClassificationTopologicalField2009}, so we will not discuss completeness further.}. More generally, $X_n$ will be a Segal space, which we could choose to interpret as the $(\infty, 1)$-category of $(\infty, 2)$-functors $[n] \rightarrow X$. Precomposition yields a functor from $\Delta^{op}$ to $\textbf{SeSp}$, which is a Reedy fibrant bisimplicial space satisfying a Segal condition valued in $(\infty, 1)$-categories instead of $\infty$-groupoids. This can be made more precise:

\begin{proposition} \label{prop_sesp2_doubl_interp}
    A bisimplicial space $X$ is a double Segal space if and only if it is Reedy fibrant, levelwise a Segal space and is such that the Segal maps for all $n \geq 2$
    $$
        \gamma_n : X_n \rightarrow X_1 \times_{X_0} \cdots \times_{X_0} X_1
    $$
    are weak equivalences in $SeSp$.
\end{proposition}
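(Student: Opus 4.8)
The plan is to unwind both characterizations into the two simplicial directions and match them up, the key tool being that $SeSp$ is a left Bousfield localization of $\textbf{sSpace}^{inj}$, so that between $SeSp$-fibrant objects (equivalently, Segal spaces, by Theorem~\ref{thm_sesp_modstr_sesp}) the local weak equivalences and fibrations agree with the Reedy ones. I would begin by recording two consequences of Reedy fibrancy of a bisimplicial space $X$: by the inductive description of Reedy fibrations in Definition~\ref{def_reedy_reedystr_sspace_n}, the matching map of the simplicial object $n \mapsto X_n = X_{n,\bullet}$ is a Reedy fibration at every level, so each $X_n$ is a Reedy fibrant simplicial space, and at level $[1]$ this matching map is exactly $(s,t) : X_1 \to X_0 \times X_0$; post-composing with the two projections (Reedy fibrations since $X_0$ is Reedy fibrant) shows $s, t : X_1 \to X_0$ are Reedy fibrations. (Equivalently one may invoke the remarks after Notation~\ref{not_cd_x_k} together with Corollary~\ref{corr_enrich_map_n_k_fibrant}, since $X_0 \times X_0 \cong X_{\partial\Delta[1]}$.)

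Next I would identify ``$X$ is levelwise a Segal space'' with ``$\gamma_{\bullet,n}$ is a weak equivalence for every $n \geq 2$'' in the presence of Reedy fibrancy. Since each $X_n = X_{n,\bullet}$ is automatically Reedy fibrant by the previous paragraph, it is a Segal space precisely when its own Segal maps $X_{n,m} \to X_{n,1}\times_{X_{n,0}}\cdots\times_{X_{n,0}}X_{n,1}$ are weak equivalences of simplicial sets for all $m \geq 2$; but evaluating $\gamma_{\bullet,m}$ at level $n$ in the first variable produces exactly this map, so letting $n$ and $m \geq 2$ range freely shows the two conditions are literally the same. This simultaneously accounts for one of the two Segal conditions of Definition~\ref{defn_sesp2_double_sesp} (the ``vertical'' one) and for the ``levelwise Segal space'' hypothesis of the Proposition.

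Working now under the standing assumptions that $X$ is Reedy fibrant and levelwise a Segal space, I would show that $\gamma_{n,\bullet}$ is a weak equivalence for all $n \geq 2$ iff $\gamma_n : X_n \to X_1 \times_{X_0}\cdots\times_{X_0} X_1$ is a weak equivalence in $SeSp$ for all $n \geq 2$. The source $X_n$ is a Segal space, hence $SeSp$-fibrant. For the target, $X_0$ and $X_1$ are Segal spaces and $s, t : X_1 \to X_0$ are Reedy fibrations, hence $SeSp$-fibrations (a Reedy fibration between $SeSp$-fibrant objects is a $SeSp$-fibration, by the standard theory of left Bousfield localizations, \cite{hirschhornModelCategoriesTheir2009}); an induction on the number of factors, using that pullbacks of $SeSp$-fibrations are $SeSp$-fibrations and that $SeSp$-fibrant objects are closed under pullback along $SeSp$-fibrations, shows $X_1\times_{X_0}\cdots\times_{X_0}X_1$ is $SeSp$-fibrant, i.e.\ a Segal space. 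Therefore $\gamma_n$ is a map between $SeSp$-fibrant objects, so it is a $SeSp$-weak equivalence iff it is a Reedy weak equivalence, i.e.\ a levelwise weak equivalence of simplicial sets. Since limits in $\textbf{sSpace}$ are computed levelwise, evaluating $\gamma_n$ at level $m$ recovers precisely $(\gamma_{n,\bullet})_m : X_{n,m} \to X_{1,m}\times_{X_{0,m}}\cdots\times_{X_{0,m}}X_{1,m}$, so $\gamma_n$ is a levelwise weak equivalence for all $n \geq 2$ iff $\gamma_{n,\bullet}$ is a weak equivalence for all $n \geq 2$.

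Putting the pieces together yields the biconditional: a double Segal space is Reedy fibrant with $\gamma_{\bullet,n}$ and $\gamma_{n,\bullet}$ weak equivalences for $n \geq 2$, hence (by the second paragraph) levelwise a Segal space and (by the third) has $\gamma_n$ a $SeSp$-weak equivalence for $n \geq 2$; and conversely the same two equivalences, read in reverse, recover both Segal conditions of Definition~\ref{defn_sesp2_double_sesp} from the hypotheses of the Proposition. I expect the only non-bookkeeping step to be the middle of the third paragraph, namely checking that the Segal-map target $X_1\times_{X_0}\cdots\times_{X_0}X_1$ is again a Segal space: this is where Reedy fibrancy of $X$ as a bisimplicial space is genuinely used (to see $s,t$ are Reedy fibrations) and where one needs the closure properties of the localized model structure; everything else is just tracking how the doubly-indexed Segal maps restrict in each variable.
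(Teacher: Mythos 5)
Your proof is correct and follows essentially the same route as the paper's: both arguments hinge on the fact that $SeSp$ is a left Bousfield localization of the Reedy structure, so that for the Reedy fibration $\gamma_n$ being a weak equivalence in $SeSp$ is equivalent to being a levelwise weak equivalence, and then both match the levelwise conditions in the two simplicial directions with the two families of Segal maps of Definition~\ref{defn_sesp2_double_sesp}. The only difference is that you explicitly verify that the source and target of $\gamma_n$ are $SeSp$-fibrant (in particular that $X_1\times_{X_0}\cdots\times_{X_0}X_1$ is a Segal space), a point the paper's one-line appeal to the agreement of trivial fibrations leaves implicit.
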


\begin{proof}
    The Segal map $\gamma_n$ is a Reedy fibration. Thus, since Reedy trivial fibrations and trivial fibrations in $SeSp$ are the same \cite[Prop. 3.3.3]{hirschhornModelCategoriesTheir2009}, the maps $\gamma_n$ are weak equivalences in $SeSp$ if and only if they are weak equivalences levelwise. This means that for each $m \geq 0$, the map
    $$
        \gamma_{n, m} : X_{n, m} \rightarrow X_{1, m} \times_{X_{0, m}} \cdots \times_{X_{0, m}} X_{1, m}
    $$
    is a weak equivalence, which is precisely the first case of Segal maps for a double Segal space. The second case is given if and only if each $X_n$ is a Segal space.
\end{proof}

We had to separately assert that $X$ was levelwise a Segal space, since Reedy fibrancy alone will not guarantee it. One may note that this is precisely the notion of $2$-fold Segal space in \cite[Def. 3.4]{bergnerModelsInftyCategories2012} minus `essential constancy', a property we will return to later. This result is by no means original; there is a similar statement to be found for instance in \cite[Prop. 6.3]{bergnerSurveyModelsInfty2018}. It is written here only to secure intuitions, so will not study it further in this paper.

A useful construction with double Segal spaces will be an analogue of the mapping spaces we saw with Segal spaces:

\begin{definition}[{\cite[pg. 16]{bergnerSurveyModelsInfty2018}}] \label{defn_sesp2_map_sp}
    Let $X$ be a double Segal space. Let $x, y \in X_{0, 0, 0}$. Then the \emph{mapping space} $X(x, y)$ is defined to be the pullback in $\textbf{sSpace}$ of the form
    \[\begin{tikzcd}
    	{X(x, y)} & {X_1} \\
    	{\{(x, y)\}} & {X_0 \times X_0.}
    	\arrow[from=1-2, to=2-2]
    	\arrow[from=1-1, to=1-2]
    	\arrow[from=1-1, to=2-1]
    	\arrow[from=2-1, to=2-2]
    	\arrow["\lrcorner"{anchor=center, pos=0.125}, draw=none, from=1-1, to=2-2]
    \end{tikzcd}\]
\end{definition}

As noted in \cite[pg. 15]{bergnerSurveyModelsInfty2018}, double Segal spaces do not quite model $(\infty, 2)$-category theory as we may wish them to. To see the issue, as our prior intuitions have demonstrated, we should consider $X_{1, 1}$ to be the $\infty$-groupoid of $1$-morphisms with $2$-morphisms between them. We might consider the source and target maps $X_{1, 1} \rightarrow X_{1, 0}$ to give the source and target $1$-morphisms. However, there are now four possible source and target objects, given by the four maps $X_{1, 1} \rightarrow X_{0, 0}$. These do not have to equate to each other to give a `globular picture' of $2$-morphisms:
\[\begin{tikzcd}
	\bullet && \bullet
	\arrow[""{name=0, anchor=center, inner sep=0}, curve={height=-18pt}, from=1-1, to=1-3]
	\arrow[""{name=1, anchor=center, inner sep=0}, curve={height=18pt}, from=1-1, to=1-3]
	\arrow[shorten <=5pt, shorten >=5pt, Rightarrow, from=0, to=1]
\end{tikzcd}\]
Instead, we have to contend with the `vertical $1$-morphisms' in $X_{0, 1}$ that we have neglected thus far. We instead have a `cubical' picture of $2$-morphisms:
\[\begin{tikzcd}
	\bullet && \bullet \\
	\\
	\bullet && \bullet
	\arrow[""{name=0, anchor=center, inner sep=0}, from=1-1, to=1-3]
	\arrow[from=1-1, to=3-1]
	\arrow[""{name=1, anchor=center, inner sep=0}, from=3-1, to=3-3]
	\arrow[from=1-3, to=3-3]
	\arrow[shorten <=9pt, shorten >=9pt, Rightarrow, from=0, to=1]
\end{tikzcd}\]
More explicitly, given some $f \in (X_{1, 1})_0$, we could identify the parts of this diagram as
\[\begin{tikzcd}
	{X_{(d^1_1, d^1_1)}(f) \in X_{0,0}} && {X_{(d^1_1, d^1_0)}(f) \in X_{0,0}} \\
	\\
	{X_{(d^1_0, d^1_1)}(f) \in X_{0,0}} && {X_{(d^1_0, d^1_0)}(f) \in X_{0,0}}
	\arrow[""{name=0, anchor=center, inner sep=0}, "{X_{(d^1_1, id)}(f) \in X_{1,0}}", from=1-1, to=1-3]
	\arrow["{X_{(id, d^1_1)}(f) \in X_{0,1}}"', from=1-1, to=3-1]
	\arrow[""{name=1, anchor=center, inner sep=0}, "{X_{(d^1_1, id)}(f) \in X_{1,0}}"', from=3-1, to=3-3]
	\arrow["{X_{(id, d^1_0)}(f) \in X_{0,1}}", from=1-3, to=3-3]
	\arrow["{f \in X_{1,1}}", shorten <=9pt, shorten >=9pt, Rightarrow, from=0, to=1]
\end{tikzcd}\]
We take a greater interest in the globular picture here. Hence, the vertical spaces $X_{0, \bullet}$ will have to be brushed under the rug somehow, which we will accomplish by demanding that they are \emph{essentially constant}:

\begin{definition}[{\cite[pg. 12]{calaqueScheimbauerNoteInftyCategory2019}}] \label{defn_sesp2_ess_const}
    A Segal space $X$ is \emph{essentially constant} if and only if the natural map $q : \iota_1^0(X_0) \rightarrow X$, defined levelwise such that $q_n : X_0 \rightarrow X_n$ is induced by the degeneracy map $\Delta[n] \rightarrow \Delta[0]$, is a weak equivalence in $SeSp$.
\end{definition}

It will be beneficial for us to establish a running example of an $(\infty, 2)$-category. This will be done in much the same way as with complete Segal spaces:

\begin{definition} \label{defn_sesp2_sing_sss}
    Suppose $X \in \textbf{Top}$. Then define $\textbf{Sing}_{\textbf{ssS}}(X)$ to be the bisimplicial space such that, for all $n \geq 0$,
    $$
        \textbf{Sing}_{\textbf{ssS}}(X)_n := \textbf{Sing}_{\textbf{sS}}(X^{\Delta_t[n]}).
    $$
\end{definition}

Note then that
$$
    (\textbf{Sing}_{\textbf{ssS}}(X)_{a,b})_c \cong \textbf{Hom}_{\textbf{Top}}(\Delta_t[a] \times \Delta_t[b] \times \Delta_t[c], X).
$$
It is evident that this is a double Segal space. Indeed, we have that $\textbf{Sing}_{\textbf{ssS}}(X)_{\bullet, k} \cong \textbf{Sing}_{\textbf{ssS}}(X)_{k, \bullet} = \textbf{Sing}_{\textbf{sS}}(X^{\Delta_t[k]})$.

As for our intuitions, the idea that $\textbf{Sing}_{\textbf{ssS}}(X)_0$ should be seen as the underlying $(\infty, 1)$-category of $\textbf{Sing}_{\textbf{ssS}}(X)$ is somewhat immediate by definition. Moreover, the interpretation of $\textbf{Sing}_{\textbf{ssS}}(X)_1$ as the $(\infty, 1)$-category of morphisms also applies. The intuition carries on for higher levels.

Now, consider the cubical picture for $2$-morphisms from before. Indeed, the space $\textbf{Sing}_{\textbf{ssS}}(X)_{1, 1}$ is quite literally $\textbf{Sing}(X^{\Delta_t[1] \times \Delta_t[1]})$, the space of squares inside $X$. The face maps identify the edges and corners of this square. With the Segal maps, we can compose these squares either horizontally or vertically by aligning edges.

Note also however that the vertical maps are `essentially constant'; they are paths, which can be contracted so the square is essentially `pinched' into a globular picture, up to homotopy. Hence, the vertical data is rather negligible. Of course, in this example the horizontal data is also as such since $\textbf{Sing}_{\textbf{ssS}}(X)$ is really an $\infty$-groupoid in disguise, but more general examples of $(\infty, 2)$-categories are not so simple.

We henceforth employ the notation $A^{a_1, \cdots, a_n}$ and $X(a_1, \cdots, a_n)$ similarly to for simplicial spaces. Again, note that the fibers of the Segal maps $\gamma_n^{a_0, \cdots, a_n}$ are trivial fibrations in $\textbf{sSpace}^{inj}$ if $X$ is a double Segal space.
\subsection{Composition and Composite Diagrams}

Before we conclude on $2$-fold Segal spaces, it will serve us well to think further on how composition in such an $(\infty, 2)$-category really works. Our intuitions for $2$-fold Segal spaces and discussion of $h_1$ show that, for a $2$-fold Segal space $X$, constructing a binary composition operation amounts to solving the lifting problem
\[\begin{tikzcd}
	& {X_2} & {X_1} \\
	{X_1 \times_{X_0} X_1} & {X_1 \times_{X_0} X_1}
	\arrow["id"', from=2-1, to=2-2]
	\arrow["{\gamma_2}", from=1-2, to=2-2]
	\arrow["{\mu_2}", dashed, from=2-1, to=1-2]
	\arrow["{X_{\langle 0, 2 \rangle}}", from=1-2, to=1-3]
\end{tikzcd}\]
to obtain a map with respect to some $x, y, z \in (X_{0, 0})_0$
$$
    \circ^{x, y, z} : X(x, y) \times X(y, z) \hookrightarrow (X_1 \times_{X_0} X_1 \times_{x_0} X_1)^{x, z} \xrightarrow{\mu_2^{x, z}} X_2^{x, z} \xrightarrow{X_{\langle 0, 2 \rangle}^{x, z}} X(x, z).
$$
We should think of this as binary composition on the hom-spaces $X(x, y)$ and $X(y, z)$. In general, we can take any sequence of objects $x_0, \cdots, x_n \in (X_{0, 0})_0$ and solve a lifting problem
\[\begin{tikzcd}
	& {X_n} & {X_1} \\
	{X_1 \times_{X_0} \cdots \times_{X_0} X_1} & {X_1 \times_{X_0} \cdots \times_{X_0} X_1}
	\arrow["id"', from=2-1, to=2-2]
	\arrow["{\gamma_n}", from=1-2, to=2-2]
	\arrow["{\mu_n}", dashed, from=2-1, to=1-2]
	\arrow["{X_{\langle 0, n \rangle}}", from=1-2, to=1-3]
\end{tikzcd}\]
to obtain a composition map
$$
    \circ^{x_0, \cdots, x_n} : \prod_{i = 1}^n X(x_{i-1}, x_i) \hookrightarrow (X_1 \times_{X_0} \cdots \times_{X_0} X_1)^{x_0, x_n} \xrightarrow{\mu_n^{x_0, x_n}} X_n^{x_0, x_n} \xrightarrow{X_{\langle 0, n \rangle}^{x_0, x_n}} X(x_0, x_n).
$$
Note that $\mu_n$ must be object-fibered with respect to the cospan
$$
    X_n \xrightarrow{X_{\langle 0 \rangle} \times \cdots \times X_{\langle n \rangle}} (X_0)^{n+1} \xleftarrow{(X_{\langle 0 \rangle} \times X_{\langle 1 \rangle}) \times_{1_{X_0}} \cdots \times_{1_{X_0}} (X_{\langle 0 \rangle} \times X_{\langle 1 \rangle})} X_1 \times_{X_0} \cdots \times_{X_0} X_1
$$
as defined in Definition~\ref{defn_sesp_object_fibered}.

\begin{proposition} \label{prop_cd_objfibered_liftprob}
    Let $X$ be a $2$-fold Segal space. Consider a solution $g$ to a lifting problem in $\textbf{sSpace}^{inj}$
    \[\begin{tikzcd}
    	A & B \\
    	C & D
    	\arrow["p", from=1-2, to=2-2]
    	\arrow["t"', from=2-1, to=2-2]
    	\arrow["g"{description}, dashed, from=2-1, to=1-2]
    	\arrow["i"', from=1-1, to=2-1]
    	\arrow["s", from=1-1, to=1-2]
    \end{tikzcd}\]
    where every morphism besides $g$ is object-fibered over some maps to $(X_0)^{n+1}$. Then $g$ is object-fibered and for any $x_0, \cdots, x_n \in (X_{0, 0})_0$, the diagram\footnote{Note that this may no longer be a lifting problem, as $i^{x_0, \cdots, x_n}$ may not be a (trivial) cofibration. For us, usually $A = \emptyset$ and all objects are cofibrant when we want a lifting problem, so this doesn't matter.}
    \[\begin{tikzcd}
    	{A^{x_0, \cdots, x_n}} && {B^{x_0, \cdots, x_n}} \\
    	{C^{x_0, \cdots, x_n}} && {D^{x_0, \cdots, x_n}}
    	\arrow["{p^{x_0, \cdots, x_n}}", from=1-3, to=2-3]
    	\arrow["{t^{x_0, \cdots, x_n}}"', from=2-1, to=2-3]
    	\arrow["{g^{x_0, \cdots, x_n}}"{description}, dashed, from=2-1, to=1-3]
    	\arrow["{i^{x_0, \cdots, x_n}}"', from=1-1, to=2-1]
    	\arrow["{s^{x_0, \cdots, x_n}}", from=1-1, to=1-3]
    \end{tikzcd}\]
    commutes.
\end{proposition}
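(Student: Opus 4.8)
The plan is to split the statement into its two assertions and handle them in order: first that $g$ is itself object-fibered, then that the fibered square commutes. The first is a two-line diagram chase using only the object-fiberedness of $p$ and $t$ together with the lifting identity $p \circ g = t$; the second is then immediate from functoriality of the fiber construction $(-)^{x_0, \ldots, x_n}$.

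For the first step, denote by $\alpha_A, \alpha_B, \alpha_C, \alpha_D$ the chosen structure maps to $(X_0)^{n+1}$ that witness object-fiberedness of $i, s, p, t$; in particular $\alpha_D \circ p = \alpha_B$ (from $p$ being object-fibered over the cospan $B \to (X_0)^{n+1} \leftarrow D$) and $\alpha_D \circ t = \alpha_C$. Since $g$ solves the lifting problem we have $p \circ g = t$, hence
$$
    \alpha_B \circ g = \alpha_D \circ p \circ g = \alpha_D \circ t = \alpha_C,
$$
which says exactly that $g$ commutes with the cospan $C \to (X_0)^{n+1} \leftarrow B$, i.e. $g$ is object-fibered. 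I would also record the consistency check against the other half of the lifting, $g \circ i = s$: indeed $\alpha_B \circ g \circ i = \alpha_C \circ i = \alpha_A = \alpha_B \circ s$, so no contradiction arises.

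For the second step, observe that now all five maps $i, s, p, t, g$ commute with the structure maps to $(X_0)^{n+1}$, so the entire commuting square — and both commuting triangles $p \circ g = t$ and $g \circ i = s$ — lives in the slice category $(\textbf{sSpace}^{inj})_{/(X_0)^{n+1}}$. The functor $(-)^{x_0, \ldots, x_n}$ of Notation~\ref{not_sesp_supscript_fiber} from this slice to $\textbf{sSpace}$, sending each $Z \to (X_0)^{n+1}$ to $\{(x_0, \ldots, x_n)\} \times_{(X_0)^{n+1}} Z$ and each slice morphism to the induced map on pullbacks, preserves commutative diagrams by virtue of being a functor; applying it to the two triangles yields $p^{x_0, \ldots, x_n} \circ g^{x_0, \ldots, x_n} = t^{x_0, \ldots, x_n}$ and $g^{x_0, \ldots, x_n} \circ i^{x_0, \ldots, x_n} = s^{x_0, \ldots, x_n}$, which is precisely the commutativity claimed.

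The only real subtlety — hardly an obstacle — is bookkeeping the directions of the structure maps so that Definition~\ref{defn_sesp_object_fibered} is invoked with the correct cospan in each case; everything else is formal. I would also flag, as the footnote already does, that after restricting the fibered square need not be a lifting problem, since $i^{x_0, \ldots, x_n}$ need not be a (trivial) cofibration, but this is harmless because the statement only asserts commutativity, not that $g^{x_0, \ldots, x_n}$ solves a lifting problem.
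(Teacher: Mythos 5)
Your proof is correct and follows essentially the same route as the paper: the identity $\alpha_B \circ g = \alpha_D \circ p \circ g = \alpha_D \circ t = \alpha_C$ is exactly the paper's computation $f_C = f_D \circ t = f_B \circ g$, and the commutativity of the fibered square is likewise deduced from functoriality of $(-)^{x_0, \cdots, x_n}$. The extra consistency check against $g \circ i = s$ and the remark about the slice category are harmless elaborations of the same argument.
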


\begin{proof}
    Consider the chosen maps $f_Z : Z \rightarrow (X_0)^{n+1}$ for $Z \in \{A, B, C, D\}$. Then
    $$
        f_C = f_D \circ t = f_D \circ (p \circ g) = (f_D \circ p) \circ g = f_B \circ g
    $$
    and so $g$ is object-fibered. Commutativity then follows from functoriality of $(-)^{x_0, \cdots, x_n}$.
\end{proof}

The map $\circ^{x_0, \cdots, x_n}$ is then `unbiased composition,' taking a sequence of $1$-morphisms in $X$
$$
    x_0 \xrightarrow{f_1} x_1 \xrightarrow{f_2} \cdots \xrightarrow{f_{n-1}} x_{n-1} \xrightarrow{f_n} x_n
$$
and composing them into a single map $f_n \circ \cdots \circ f_0$ at once. These operations amount to horizontal composition in our $(\infty, 2)$-category, while the Segal maps in each Segal space $X_n$ induce vertical composition. We package this into a definition:

\begin{definition} \label{defn_cd_hor_comps}
    Let $X$ be a $2$-fold Segal space. Then a \emph{choice of horizontal compositions} is a sequence of maps $\mu_n : X_1 \times_{X_0} \cdots \times_{X_0} X_1 \rightarrow X_n$ for $n \geq 2$ solving the lifting problems
    \[\begin{tikzcd}
    	& {X_n} \\
    	{X_1 \times_{X_0} \cdots \times_{X_0} X_1} & {X_1 \times_{X_0} \cdots \times_{X_0} X_1}
    	\arrow["id"', from=2-1, to=2-2]
    	\arrow["{\gamma_n}", from=1-2, to=2-2]
    	\arrow["{\mu_n}", dashed, from=2-1, to=1-2]
    \end{tikzcd}\]
\end{definition}

\begin{notation} \label{not_cd_mu1_mu0}
    Given a choice of horizontal composition maps $\mu_n$, write $\mu_1 = 1_{X_1}$ and $\mu_0 = 1_{X_0}$.
\end{notation}

Recall that since $\gamma_1$ and $\gamma_0$ are just identities, the maps $\mu_1$ and $\mu_0$ are solutions of similar lifting problems. In fact, they are the only solutions of their respective lifting problems.

\begin{notation} \label{not_cd_circ_mu_n}
    For a $2$-fold Segal space $X$, given $n > 0$ and $x_0, \cdots, x_n \in (X_{0, 0})_0$, along with a choice of horizontal compositions $(\mu_n)_{n \geq 0}$, write $\circ^{x_0, \cdots, x_n} : \prod_{i = 1}^n X(x_{i-1}, x_i) \rightarrow X(x_0, x_n)$ for the map described above. If $n = 0$, write $\circ^{x_0} = X^{x_0, x_0}_{s^1_0} : \ast \cong \{x_0\} \hookrightarrow X(x_0, x_0)$.
\end{notation}

Note we did not need to separately specify $\circ^{x_0}$, as if $n = 0$ then $\prod_{i = 1}^n X(x_{i-1}, x_i) \cong \ast$. We do so simply for clarity; in future, the $n = 0$ case will be implicitly covered in this manner.

Note also that there is almost never a unique choice of horizontal compositions - there are a multitude of possible solutions to the lifting problems in general. We will see in time that these choices, while not equal, are in fact always equivalent in a way we will make precise later.

One might wonder how more complex `nested' composition operations look, for instance composing a chain of $1$-morphisms in $X$ of the form $v \xrightarrow{f} w \xrightarrow{g} x \xrightarrow{h} y \xrightarrow{k} z$ into the composite $(k \circ h) \circ (g \circ f)$. This is of course merely a matter of applying our operations $\circ^{x_0, \cdots, x_n}$ in sequence, yet a more direct interpretation is also possible, one which will be of immense importance in developing our homotopy bicategories.

Note that everything we have written can be rephrased in terms of mapping spaces:
\[\begin{tikzcd}
	& {X_n} & {X_1} \\
	{X_{Sp(n)}} & {X_{Sp(n)}}
	\arrow["id"', from=2-1, to=2-2]
	\arrow["{\gamma_n}", from=1-2, to=2-2]
	\arrow["{\mu_n}", dashed, from=2-1, to=1-2]
	\arrow["{X_{\langle 0, n \rangle}}", from=1-2, to=1-3]
\end{tikzcd}\]
The composition operation $(- \circ -) \circ (- \circ -)$ can now be constructed from the solutions to the sequence of two lifting problems
\[\begin{tikzcd}
	&& {X_2} & {X_1} \\
	&& {X_{Sp(2)}} \\
	&& {X_{\Delta[2] \sqcup_{\Delta[0]} \Delta[2]}} \\
	{X_{Sp(4)}} && {X_{Sp(4)}}
	\arrow["id"', from=4-1, to=4-3]
	\arrow["{\gamma_2 \times_{1_{X_0}} \gamma_2}", from=3-3, to=4-3]
	\arrow["{\mu_2 \times_{1_{X_0}} \mu_2}"{description}, dashed, from=4-1, to=3-3]
	\arrow["{X_{\langle 0, 2 \rangle} \times_{1_{X_0}} X_{\langle 0, 2 \rangle}}"', from=3-3, to=2-3]
	\arrow["{\gamma_2}", from=1-3, to=2-3]
	\arrow["{X_{\langle 0, 2 \rangle}}", from=1-3, to=1-4]
	\arrow["{\mu_2 \circ (X_{\langle 0, 2 \rangle} \times_{1_{X_0}} X_{\langle 0, 2 \rangle}) \circ (\mu_2 \times_{1_{X_0}} \mu_2)}", curve={height=-12pt}, dashed, from=4-1, to=1-3]
\end{tikzcd}\]
The overall map is then, for $v, w, x, y, z \in (X_{0, 0})_0$, the path
\[\begin{tikzcd}
	{X(v, w) \times X(w, x) \times X(x, y) \times X(y, z)} & {X_{Sp(4)}^{v, z}} \\
	& {X(v, z)}
	\arrow[hook, from=1-1, to=1-2]
	\arrow["{X_{\langle 0, 2 \rangle}^{v, z} \circ (\mu_2 \circ (X_{\langle 0, 2 \rangle} \times_{1_{X_0}} X_{\langle 0, 2 \rangle}) \circ (\mu_2 \times_{1_{X_0}} \mu_2))^{v, z}}"', from=1-2, to=2-2]
\end{tikzcd}\]
We can reinterpret this algebraic monstrosity in a more geometric light. Consider how the cospan
$$
    X_2 \xrightarrow{\gamma_2} X_{Sp(2)} \leftarrow X_{\Delta[2] \sqcup_{\Delta[0]} \Delta[2]}
$$
appears in the above lifting problem. The two lifts can be combined into a single lift to the pullback of the cospan
$$
    X_{\Delta[2] \sqcup_{\Delta[0]} \Delta[2]} \times_{X_{Sp(2)}} X_2
$$
which, since $F_2^0$ preserves colimits, we can rephrase as
$$
    X_{(\Delta[2] \sqcup_{\Delta[0]} \Delta[2]) \sqcup_{Sp(2)} \Delta[2]}
$$
to obtain a lifting problem
\[\begin{tikzcd}
	& {X_{(\Delta[2] \sqcup_{\Delta[0]} \Delta[2]) \sqcup_{Sp(2)} \Delta[2]}} & {X_1} \\
	{X_{Sp(4)}} & {X_{Sp(4)}}
	\arrow["id"', from=2-1, to=2-2]
	\arrow["{\iota^*}", from=1-2, to=2-2]
	\arrow["\mu"{description}, dashed, from=2-1, to=1-2]
	\arrow["{\tau^*}"', from=1-2, to=1-3]
\end{tikzcd}\]
where $\iota^*$ and $\tau^*$ are induced by the maps $\iota$ and $\tau$ in the cospan in $\textbf{sSet}$
$$
    Sp(4) \xhookrightarrow{\iota} (\Delta[2] \sqcup_{\Delta[0]} \Delta[2]) \sqcup_{Sp(2)} \Delta[2] \xhookleftarrow{\tau} \Delta[1].
$$
The domain of the map $\mu$ is well-understood to us by this point: it is simply the $\infty$-category of chains of length four in $X$. The codomain, however, is new. We should interpret it as the $\infty$-category of diagrams in $X$ of the form
\[\begin{tikzcd}
	& w && y \\
	&& x \\
	v &&&& z
	\arrow["f", from=3-1, to=1-2]
	\arrow["g", from=1-2, to=2-3]
	\arrow["h", from=2-3, to=1-4]
	\arrow["k", from=1-4, to=3-5]
	\arrow[""{name=0, anchor=center, inner sep=0}, "q"{description}, from=3-1, to=2-3]
	\arrow[""{name=1, anchor=center, inner sep=0}, "r"{description}, from=2-3, to=3-5]
	\arrow[""{name=2, anchor=center, inner sep=0}, "p"', from=3-1, to=3-5]
	\arrow["\alpha", shorten >=5pt, Rightarrow, from=1-2, to=0]
	\arrow["\beta"', shorten >=5pt, Rightarrow, from=1-4, to=1]
	\arrow["\gamma"', shorten >=3pt, Rightarrow, from=2-3, to=2]
\end{tikzcd}\]
where $\alpha, \beta, \gamma \in (X_{2, 0})_0$ are compositions, $f, g, h, k, q, r, p \in (X_{1, 0})_0$ are morphisms and $v, w, x, y, z \in (X_{0, 0})_0$ are objects in our $(\infty, 2)$-category.

The morphism $\mu$ can now be seen as taking a chain $v \xrightarrow{f} w \xrightarrow{g} x \xrightarrow{h} y \xrightarrow{k} z$ and extending it to a diagram as above, choosing $\alpha, \beta, \gamma, q, r$ and $p$. The shape of this diagram can in fact tell us directly what the composition operation in question is, as $p$ is a composite of $q$ with $r$, which are in turn composites of $f$ with $g$ and $h$ with $k$ respectively.

The general shape of such `composition diagrams' is evident by an induction. In the below definition, write
$$
    s, t: \Delta[0] \rightarrow Sp(n)
$$
for the maps in $\textbf{sSet}$ identifying the first and last objects.

\begin{definition} \label{defn_cd_simp_comp_diag}
    A \emph{simplicial composition diagram} is a cospan $Sp(n) \xrightarrow{\iota} K \xleftarrow{\tau} \Delta[1]$ defined inductively such that either:
    \begin{enumerate}
        \item $K = \Delta[n]$, $\iota = g_n : Sp(n) \hookrightarrow \Delta[n]$ and $\tau = \langle 0, n \rangle : \Delta[1] \hookrightarrow \Delta[n]$ for some $n \geq 0$;
        \item $K = (K_1 \sqcup_{\Delta[0]} \cdots \sqcup_{\Delta[0]} K_d) \sqcup_{Sp(d)} \Delta[d]$ for some $d > 1$, where $Sp(k_i) \xrightarrow{\iota_i} K_i \xleftarrow{\tau_i} \Delta[1]$ are simplicial composition diagrams for $1 \leq i \leq d$ and the pushouts are taken over the diagrams
        $$
            K_1 \xleftarrow{\iota_1 \circ t} \Delta[0] \xrightarrow{\iota_2 \circ s} \cdots \xleftarrow{\iota_{d-1} \circ t} \Delta[0] \xrightarrow{\iota_d \circ s} K_d
        $$
        and
        $$
            (K_1 \sqcup_{\Delta[0]} \cdots \sqcup_{\Delta[0]} K_d) \xleftarrow{\tau_1 \sqcup_{1_{\Delta[0]}} \cdots \sqcup_{1_{\Delta[0]}} \tau_d} Sp(d) \hookrightarrow \Delta[d].
        $$
        The maps are then $\iota$ as in the diagram
        \[\begin{tikzcd}
        	{Sp(n)} & {Sp(k_1 + \cdots + k_d)} & {Sp(k_1) \sqcup_{\Delta[0]} \cdots \sqcup_{\Delta[0]} Sp(k_d)} \\
        	&& {K_1 \sqcup_{\Delta[0]} \cdots \sqcup_{\Delta[0]} K_d} \\
        	&& K
        	\arrow["\cong"{marking, allow upside down}, draw=none, from=1-2, to=1-3]
        	\arrow["{\iota_1 \sqcup_{1_{\Delta[0]}} \cdots \sqcup_{1_{\Delta[0]}} \iota_d}", from=1-3, to=2-3]
        	\arrow[hook', from=2-3, to=3-3]
        	\arrow["\iota"', from=1-2, to=3-3]
        	\arrow["{=}"{description}, draw=none, from=1-1, to=1-2]
        \end{tikzcd}\]
        and $\tau : \Delta[1] \xrightarrow{\langle 0, d \rangle} \Delta[d] \hookrightarrow K$.
    \end{enumerate}
    We will say the \emph{arity} of the composition diagram $K$ is $n$. Its \emph{depth} is either $1$ if $K = \Delta[n]$ or $1 + \max_{1 \leq i \leq d} d_i$ otherwise, where $d_i$ is the depth of $K_i$.
\end{definition}

Note the inclusion of $\Delta[0] \xrightarrow{id} \Delta[0] \xleftarrow{X_{\langle 0, 0 \rangle}} \Delta[1]$, which amounts to the problem of taking identities. Note also that it is entirely unambiguous to refer to a simplicial composition diagram solely by the space $K$; there is only one possibility for the maps $\iota$ and $\tau$.

A similar notion to these diagrams is to be found in the membranes obtained from polygonal decompositions of convex $(n+1)$-gons used to define $2$-Segal spaces in \cite{dyckerhoffKapranovHigherSegalSpaces2019}. They also strongly resemble the opetopes of \cite{baezDolanHigherDimensionalAlgebraIII1998}. Making these connections precise and exploring their full influence is left to future work.

The space of composition diagrams in question will be $X_K$ for a $2$-fold Segal space $X$ and a simplicial composition diagram $K$. It is the case that these are always Segal spaces:

\begin{proposition} \label{prop_cd_sp_of_cd_cssp}
    Let $X$ be a $2$-fold Segal space. Suppose $K$ is a simplicial composition diagram. Then $X_K$ is a Segal space.
\end{proposition}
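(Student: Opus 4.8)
The plan is to induct on the depth of the simplicial composition diagram $K$. The main structural input is that the functor $X_{(-)} = \textbf{Map}_2^1(F_2^0(-), X) : \textbf{sSet}^{op} \to \textbf{sSpace}$ carries colimits of simplicial sets to limits of simplicial spaces: $F_2^0$ is a precomposition functor between presheaf categories, hence preserves colimits, and $\textbf{Map}_2^1(-, X)$ turns colimits in its first variable into limits. This is precisely the device used just before Definition~\ref{defn_cd_simp_comp_diag} to splice successive lifting problems together. Observe also that $X_K$ is Reedy fibrant for \emph{any} $K \in \textbf{sSet}$: applying Corollary~\ref{corr_enrich_map_n_k_fibrant} to the cofibration $\emptyset \hookrightarrow F_2^0(K)$ and the Reedy fibrant object $X$ makes $X_K \to \ast$ a Reedy fibration. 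So it remains only to show the Segal maps of $X_K$ are weak equivalences. For the base case $K = \Delta[n]$, Lemma~\ref{lemma_ssets_fnk_iso_zeros} and Proposition~\ref{prop_ssets_enrich_f_rep} give $X_K \cong X_n$, which is a Segal space because a $2$-fold Segal space is levelwise a Segal space.

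For the inductive step, write $K = (K_1 \sqcup_{\Delta[0]} \cdots \sqcup_{\Delta[0]} K_d) \sqcup_{Sp(d)} \Delta[d]$ as in Definition~\ref{defn_cd_simp_comp_diag}, with the $K_i$ of strictly smaller depth, and apply $X_{(-)}$ to this iterated colimit to obtain a natural isomorphism
$$
    X_K \;\cong\; X_d \times_{X_{Sp(d)}} \bigl( X_{K_1} \times_{X_0} \cdots \times_{X_0} X_{K_d} \bigr),
$$
using $X_{\Delta[d]} \cong X_d$, $X_{Sp(d)} \cong X_1 \times_{X_0} \cdots \times_{X_0} X_1$ and $X_{\Delta[0]} \cong X_0$; here $X_{K_i} \to X_0$ is induced by an endpoint $\Delta[0] \to K_i$, the gluing map $X_{K_1} \times_{X_0} \cdots \times_{X_0} X_{K_d} \to X_{Sp(d)}$ is assembled from the $\tau_i : \Delta[1] \to K_i$, and $X_d \to X_{Sp(d)}$ is the Segal map $\gamma_d$ of $X$. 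I would then argue inside the localization $SeSp$ of Theorem~\ref{thm_sesp_modstr_sesp}. Each endpoint inclusion $\Delta[0] \hookrightarrow K_i$ is a monomorphism of simplicial sets, so by the remark following Notation~\ref{not_cd_x_k} the maps $X_{K_i} \to X_0$ are Reedy fibrations; since $X_{K_i}$ is a Segal space by the inductive hypothesis and $X_0$ is a Segal space, these are fibrations in $SeSp$, as a Reedy fibration between $SeSp$-fibrant objects is an $SeSp$-fibration ($\textbf{sSpace}^{inj}$ being left proper). Because $SeSp$-fibrant objects are closed under pullback along $SeSp$-fibrations with fibrant codomain and under composites of fibrations, an induction on $d$ shows $X_{K_1} \times_{X_0} \cdots \times_{X_0} X_{K_d}$ is a Segal space. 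Finally, the spine inclusion $Sp(d) \hookrightarrow \Delta[d]$ makes $\gamma_d : X_d \to X_{Sp(d)}$ a Reedy fibration, and it is a weak equivalence since $X$ is a $2$-fold Segal space, so $\gamma_d$ is a Reedy trivial fibration, hence a trivial fibration in $SeSp$. Its pullback along $X_{K_1} \times_{X_0} \cdots \times_{X_0} X_{K_d} \to X_{Sp(d)}$ therefore exhibits $X_K$ as the domain of a trivial $SeSp$-fibration whose codomain is a Segal space, so $X_K$ is itself a Segal space.

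The point demanding care is that the maps $\tau_i : \Delta[1] \to K_i$ need \emph{not} be monomorphisms — already $K_i = \Delta[0]$, the ``identities'' case, gives $\tau_i$ a degeneracy — so one cannot deduce fibrancy of the gluing map onto $X_{Sp(d)}$ from the inclusion $Sp(d) \hookrightarrow K_1 \sqcup_{\Delta[0]} \cdots \sqcup_{\Delta[0]} K_d$, which may fail to be a monomorphism. The argument is arranged so that the outermost pullback is always taken along $\gamma_d$, which is a (trivial) Reedy fibration regardless of the shapes of the $K_i$. Beyond this the proof is bookkeeping, relying only on the colimit-to-limit behaviour of $X_{(-)}$, Corollary~\ref{corr_enrich_map_n_k_fibrant}, and the standard closure properties of fibrant objects in a left Bousfield localization.
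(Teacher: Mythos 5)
Your proof is correct and follows essentially the same route as the paper: induct on the depth of $K$, identify $X_K$ with the pullback of the trivial Reedy fibration $\gamma_d$ along $X_{K_1} \times_{X_0} \cdots \times_{X_0} X_{K_d} \rightarrow X_{Sp(d)}$, and conclude using the fact that a left Bousfield localization has the same trivial fibrations as the original model structure. You actually supply more detail than the paper on the one step it elides --- namely why the iterated fiber product of the $X_{K_i}$ over $X_0$ is itself fibrant in $SeSp$ --- and your remark that the $\tau_i^*$ leg of the pullback need not be a fibration, forcing the pullback to be taken along $\gamma_d$, is a correct and worthwhile observation.
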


\begin{proof}
    We proceed by induction on the depth of $K$. The result is clear if $K = \Delta[n]$. Now, suppose the depth is greater than $1$, so $K = (K_1 \sqcup_{\Delta[0]} \cdots \sqcup_{\Delta[0]} K_n) \sqcup_{Sp(n)} \Delta[n]$. This implies that $X_K$ is isomorphic to the pullback
    $$
         (X_{K_1} \times_{X_0} \cdots \times_{X_0} X_{K_n}) \times_{X_{Sp(n)}} X_n.
    $$
    The outermost pullback is in fact along a Segal map. Hence, the induced map by the pullback 
    $$
        X_K \rightarrow X_{K_1} \times_{X_0} \cdots \times_{X_0} X_{K_n}
    $$
    is a trivial Reedy fibration, meaning it is one in $SeSp$ as any left Bousfield localization has the same trivial fibrations as the original model structure \cite[Prop. 3.3.3]{hirschhornModelCategoriesTheir2009}. By induction, the codomain is fibrant in this model structure, so the space $X_K$ is also fibrant as desired.
\end{proof}

Moreover, the map $\iota$ always induces a trivial fibration:

\begin{proposition} \label{prop_cd_iota_ast_trivfib}
    Let $X$ be a $2$-fold Segal space. Suppose $Sp(n) \xrightarrow{\iota} K \xleftarrow{\tau} \Delta[1]$ is a simplicial composition diagram. Then the map $\iota^* : X_K \rightarrow X_{Sp(n)}$ induced by $\iota$ is a trivial Reedy fibration.
\end{proposition}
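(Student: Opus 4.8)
The plan is to argue by induction on the depth of $K$, closely following the proof of Proposition \ref{prop_cd_sp_of_cd_cssp}. In the base case $K = \Delta[n]$, so $\iota = g_n : Sp(n) \hookrightarrow \Delta[n]$ and $\iota^* = X_{g_n} : X_n \to X_{Sp(n)}$ is exactly the Segal map $\gamma_n$. Since $g_n$ is a Reedy cofibration of simplicial sets and $X$ is Reedy fibrant, Corollary \ref{corr_enrich_map_n_k_fibrant} shows $\iota^*$ is a Reedy fibration of simplicial spaces; and for $n \geq 2$ it is a levelwise, hence Reedy, weak equivalence because that is precisely the first Segal condition for the double Segal space $X$ (Definition \ref{defn_sesp2_double_sesp}), while for $n < 2$ it is an identity. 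A Reedy fibration that is a Reedy weak equivalence is a trivial Reedy fibration, so this case is settled.

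For the inductive step, write $K = (K_1 \sqcup_{\Delta[0]} \cdots \sqcup_{\Delta[0]} K_d) \sqcup_{Sp(d)} \Delta[d]$ with each $K_i$ a simplicial composition diagram of arity $k_i$ and strictly smaller depth, and put $n = k_1 + \cdots + k_d$. Because $F_2^0$ preserves colimits, the functor $X_{(-)} = \textbf{Map}_2^1(F_2^0(-), X)$ carries this iterated pushout to an iterated pullback
$$
    X_K \cong (X_{K_1} \times_{X_0} \cdots \times_{X_0} X_{K_d}) \times_{X_{Sp(d)}} X_d ,
$$
and, by Definition \ref{defn_cd_simp_comp_diag}, $\iota$ factors as $Sp(n) \cong Sp(k_1) \sqcup_{\Delta[0]} \cdots \sqcup_{\Delta[0]} Sp(k_d) \xrightarrow{\iota_1 \sqcup \cdots \sqcup \iota_d} K_1 \sqcup_{\Delta[0]} \cdots \sqcup_{\Delta[0]} K_d \hookrightarrow K$. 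Hence $\iota^*$ factors as $X_K \xrightarrow{p} X_{K_1} \times_{X_0} \cdots \times_{X_0} X_{K_d} \xrightarrow{q} X_{Sp(n)}$, where $p$ is the projection off the last factor and $q$ is the map on iterated fibre products over $X_0$ induced by the $\iota_i^* : X_{K_i} \to X_{Sp(k_i)}$. It then suffices to show that $p$ and $q$ are each trivial Reedy fibrations, since their composite is then one too.

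First, $p$ is a base change of the Segal map $\gamma_d : X_d \to X_{Sp(d)}$ (here $d > 1$), which is a trivial Reedy fibration by the base case, and trivial Reedy fibrations are stable under base change; this is the same step used in the proof of Proposition \ref{prop_cd_sp_of_cd_cssp}. For $q$, the inductive hypothesis says each $\iota_i^*$ is a trivial Reedy fibration, and I would finish by an auxiliary induction on $d$, replacing the factors $X_{K_i}$ by $X_{Sp(k_i)}$ one at a time: each such step is a base change of a single $\iota_i^*$ along a map into $X_{Sp(k_i)}$, hence a trivial Reedy fibration, so $q$ is a composite of these and is itself a trivial Reedy fibration. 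The one place needing real care — and the step I expect to be the main obstacle — is this last bookkeeping: verifying that each intermediate arrow really is a pullback of an $\iota_i^*$, which uses that all the maps in sight are compatible with the structure maps to the relevant powers of $X_0$ (ultimately because, per Definition \ref{defn_cd_simp_comp_diag}, the $\iota_i$ commute with the source and target inclusions $s, t : \Delta[0] \to Sp(k_i)$). Everything else is formal, relying only on $X_{(-)}$ turning colimits of composition diagrams into limits and on base change preserving trivial Reedy fibrations.
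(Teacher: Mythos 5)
Your proof is correct and takes essentially the same route as the paper's: the paper's (one-line) argument is that, by induction on the structure of $K$, the map $\iota^*$ is built from pullbacks of Segal maps and identities and is therefore a trivial Reedy fibration by the Segal condition. Your write-up is a careful expansion of exactly that induction, with the factorization $\iota^* = q \circ p$ and the object-fiberedness of the $\iota_i^*$ supplying the pullback bookkeeping the paper leaves implicit.
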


\begin{proof}
    By induction on the definition of $K$, the map $\iota^*$ is a pullback of Segal maps and identities and is thus a trivial fibration by the Segal condition on $X$.
\end{proof}

Note also that the maps $\iota^* : X_K \rightarrow X_{Sp(n)}$ are tautologically object-fibered with respect to the maps induced by $\Delta[0] \hookrightarrow Sp(n) \xrightarrow{\iota} K$. The same can be said for $\tau^* : X_K \rightarrow X_1$ with respect to the maps $\Delta[0] \hookrightarrow \Delta[1] \xrightarrow{\tau} K$. These two maps also agree with the two endpoint inclusions of the spine composed with $\iota$.

Simplicial composition diagrams can be used to nest existing composition operations in an evident way, a natural extension of our example:

\begin{definition} \label{defn_cd_induced_comp_k}
    Let $X$ be a $2$-fold Segal space with a choice of horizontal compositions $(\mu_n)_{n \geq 0}$. Let $Sp(n) \xrightarrow{\iota} K \xleftarrow{\tau} \Delta[1]$ be a simplicial composition diagram. Then the \emph{induced composition on $K$} is the map $\mu_K$ solving the lifting problem
    \[\begin{tikzcd}
    	& {X_K} & {X_1} \\
    	{X_{Sp(n)}} & {X_{Sp(n)}}
    	\arrow["id"', from=2-1, to=2-2]
    	\arrow["{\iota^*}", from=1-2, to=2-2]
    	\arrow["{\mu_K}"{description}, dashed, from=2-1, to=1-2]
    	\arrow["{\tau^*}", from=1-2, to=1-3]
    \end{tikzcd}\]
    where $\iota^*$ and $\tau^*$ are induced by $\iota$ and $\tau$ respectively, such that:
    \begin{enumerate}
        \item If $K = \Delta[n]$ then $\mu_K = \mu_n$;
        \item If $K = (K_1 \sqcup_{\Delta[0]} \cdots \sqcup_{\Delta[0]} K_r) \sqcup_{Sp(r)} \Delta[r]$ where each $K_i$ has arity $k_i$, then $\mu_K$ is the pullback map induced by $(\mu_{K_1} \times_{1_{X_0}} \cdots \times_{1_{X_0}} \mu_{K_r})$ and
        $$
            \mu_r \circ (\tau_1^* \times_{1_{X_0}} \cdots \times_{1_{X_0}} \tau_r^*) \circ (\mu_{K_1} \times_{1_{X_0}} \cdots \times_{1_{X_0}} \mu_{K_r})
        $$
        where $\tau_i^*$ is induced by the map $\tau_i : \Delta[1] \rightarrow K_i$ for $1 \leq i \leq r$.
    \end{enumerate}
\end{definition}

It is not hard to see how $\mu_K$, by induction, defines a nesting of composition operations described in the structure of $K$. In particular, if $K = (K_1 \sqcup_{\Delta[0]} \cdots \sqcup_{\Delta[0]} K_r) \sqcup_{Sp(r)} \Delta[r]$, then the composition operation in question composes the first $k_1$ morphisms by $\mu_{K_1}$, the second $k_2$ morphisms by $\mu_{K_2}$ and so on up to $\mu_{K_r}$, then composes the resulting chain of $r$ morphisms by $\mu_r$. This can be generalized:

\begin{proposition} \label{prop_cd_decomp_scd}
    Suppose $K = (K_1 \sqcup_{\Delta[0]} \cdots \sqcup_{\Delta[0]} K_r) \sqcup_{Sp(r)} K_0$ is a simplicial composition diagram of arity $n$ for diagrams $\Delta[k_i] \xrightarrow{\iota_i} K_i \xleftarrow{\tau_i} \Delta[1]$, where $k_0 = r$. Then $\mu_K$ is the pullback map induced by $(\mu_{K_1} \times_{1_{X_0}} \cdots \times_{1_{X_0}} \mu_{K_r})$ and
    $$
        Q := \mu_{K_0} \circ (\tau_1^* \times_{1_{X_0}} \cdots \times_{1_{X_0}} \tau_r^*) \circ (\mu_{K_1} \times_{1_{X_0}} \cdots \times_{1_{X_0}} \mu_{K_r}).
    $$
\end{proposition}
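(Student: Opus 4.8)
The plan is to argue by induction on the depth of $K_0$. When the depth of $K_0$ is $1$ we have $K_0 = \Delta[r]$, so $\mu_{K_0} = \mu_r$ by the first clause of Definition~\ref{defn_cd_induced_comp_k}, and $K = (K_1 \sqcup_{\Delta[0]} \cdots \sqcup_{\Delta[0]} K_r) \sqcup_{Sp(r)} \Delta[r]$ is a composition diagram of the second kind in Definition~\ref{defn_cd_simp_comp_diag}; the assertion is then precisely the second clause of Definition~\ref{defn_cd_induced_comp_k}. So the whole content is in the inductive step, where $K_0 = (L_1 \sqcup_{\Delta[0]} \cdots \sqcup_{\Delta[0]} L_s) \sqcup_{Sp(s)} \Delta[s]$ for simplicial composition diagrams $L_j$ of arity $m_j$ with $m_1 + \cdots + m_s = r$, each of depth strictly less than that of $K_0$.

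The first move in the inductive step is to rewrite $K$. Since the structure map $\iota_0 : Sp(r) \to K_0$ factors as $Sp(r) \cong Sp(m_1) \sqcup_{\Delta[0]} \cdots \sqcup_{\Delta[0]} Sp(m_s) \to L_1 \sqcup_{\Delta[0]} \cdots \sqcup_{\Delta[0]} L_s \hookrightarrow K_0$, the pasting law for pushouts gives
$$
    K \;\cong\; (L_1' \sqcup_{\Delta[0]} \cdots \sqcup_{\Delta[0]} L_s') \sqcup_{Sp(s)} \Delta[s], \qquad L_j' := \big( K_{a_j+1} \sqcup_{\Delta[0]} \cdots \sqcup_{\Delta[0]} K_{a_j + m_j} \big) \sqcup_{Sp(m_j)} L_j,
$$
with $a_j := m_1 + \cdots + m_{j-1}$. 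Each $L_j'$ is again of the shape treated by this proposition, with ``outer'' diagram $L_j$ of strictly smaller depth, so the inductive hypothesis applies to it: $L_j'$ is a simplicial composition diagram (hence so is $K$, now presented in the second form of Definition~\ref{defn_cd_simp_comp_diag} with the $L_j'$ as constituents), $\mu_{L_j'}$ is the pullback map induced by $\mu_{K_{a_j+1}} \times_{1_{X_0}} \cdots \times_{1_{X_0}} \mu_{K_{a_j+m_j}}$ and by $Q_j := \mu_{L_j} \circ (\tau_{a_j+1}^* \times_{1_{X_0}} \cdots \times_{1_{X_0}} \tau_{a_j+m_j}^*) \circ (\mu_{K_{a_j+1}} \times_{1_{X_0}} \cdots \times_{1_{X_0}} \mu_{K_{a_j+m_j}})$, and the $\tau$-map of $L_j'$ factors through $\Delta[1] \xrightarrow{\tau_{L_j}} L_j \to L_j'$, so that $\tau_{L_j'}^* = \tau_{L_j}^* \circ \rho_j$ where $\rho_j : X_{L_j'} \to X_{L_j}$ is the leg of the pullback presenting $X_{L_j'}$. (That last factorization of the $\tau$-maps under nesting is a routine parallel induction on the recursive form of $L_j$.)

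By Definition~\ref{defn_cd_induced_comp_k} applied to the presentation $K \cong (L_1' \sqcup_{\Delta[0]} \cdots \sqcup_{\Delta[0]} L_s') \sqcup_{Sp(s)} \Delta[s]$, the map $\mu_K$ is the pullback map into $X_K \cong (X_{L_1'} \times_{X_0} \cdots \times_{X_0} X_{L_s'}) \times_{X_{Sp(s)}} X_s$ induced by $\mu_{L_1'} \times_{1_{X_0}} \cdots \times_{1_{X_0}} \mu_{L_s'}$ and by $\mu_s \circ (\tau_{L_1'}^* \times_{1_{X_0}} \cdots \times_{1_{X_0}} \tau_{L_s'}^*) \circ (\mu_{L_1'} \times_{1_{X_0}} \cdots \times_{1_{X_0}} \mu_{L_s'})$. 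To conclude, I would check that $\mu_K$ coincides with the map claimed in the statement by comparing their two projections out of the pullback $X_K \cong (X_{K_1} \times_{X_0} \cdots \times_{X_0} X_{K_r}) \times_{X_{Sp(r)}} X_{K_0}$ coming from the presentation $K = (K_1 \sqcup_{\Delta[0]} \cdots \sqcup_{\Delta[0]} K_r) \sqcup_{Sp(r)} K_0$. The projection to $X_{K_1} \times_{X_0} \cdots \times_{X_0} X_{K_r}$ unwinds, via the descriptions of the $\mu_{L_j'}$, to $\mu_{K_1} \times_{1_{X_0}} \cdots \times_{1_{X_0}} \mu_{K_r}$. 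For the projection to $X_{K_0}$, I would project further into the pullback $X_{K_0} \cong (X_{L_1} \times_{X_0} \cdots \times_{X_0} X_{L_s}) \times_{X_{Sp(s)}} X_s$: using $\rho_j \circ \mu_{L_j'} = Q_j$ and $\tau_{L_j'}^* = \tau_{L_j}^* \circ \rho_j$, the $X_{L_j}$-component is $Q_j$ and the $X_s$-component is $\mu_s \circ (\tau_{L_1}^* \times_{1_{X_0}} \cdots \times_{1_{X_0}} \tau_{L_s}^*) \circ (Q_1 \times_{1_{X_0}} \cdots \times_{1_{X_0}} Q_s)$; expanding $\mu_{K_0}$ by Definition~\ref{defn_cd_induced_comp_k} applied to $K_0$ and invoking the blockwise identity $(\mu_{L_1} \times_{1_{X_0}} \cdots \times_{1_{X_0}} \mu_{L_s}) \circ (\tau_1^* \times_{1_{X_0}} \cdots \times_{1_{X_0}} \tau_r^*) \circ (\mu_{K_1} \times_{1_{X_0}} \cdots \times_{1_{X_0}} \mu_{K_r}) = Q_1 \times_{1_{X_0}} \cdots \times_{1_{X_0}} Q_s$ shows that $Q = \mu_{K_0} \circ (\tau_1^* \times_{1_{X_0}} \cdots \times_{1_{X_0}} \tau_r^*) \circ (\mu_{K_1} \times_{1_{X_0}} \cdots \times_{1_{X_0}} \mu_{K_r})$ has the same two components. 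The universal property of the pullback then forces $\mu_K$ to equal the claimed map.

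The one genuinely laborious ingredient is the bookkeeping hidden in the last paragraph: one has to carry the reindexing of $\{1, \ldots, r\}$ into the blocks $\{a_j+1, \ldots, a_j+m_j\}$ through several nested layers of pullbacks, insert the product-of-projections $X_{Sp(n)} \to X_{Sp(k_{a_j+1})} \times_{X_0} \cdots \times_{X_0} X_{Sp(k_{a_j+m_j})}$ in the right places, and confirm that the projection maps out of $X_K$ used above really are the ones induced by the legs of the pushout presentations of $K$ and of $K_0$. Each of these sub-checks is routine, but there are many of them; beyond this, the argument is just a direct application of the two definitions together with the inductive hypothesis.
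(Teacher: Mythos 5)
Your proof is correct and follows essentially the same route as the paper's: induction on the depth of $K_0$, the same regrouping of $K$ into blocks $(K_{a_j+1}\sqcup_{\Delta[0]}\cdots\sqcup_{\Delta[0]}K_{a_j+m_j})\sqcup_{Sp(m_j)}L_j$ glued over $\Delta[s]$, the inductive hypothesis applied to those blocks, and a comparison of pullback components. The only (immaterial) difference is that you compare projections relative to the presentation $(K_1\sqcup_{\Delta[0]}\cdots\sqcup_{\Delta[0]}K_r)\sqcup_{Sp(r)}K_0$, whereas the paper checks the candidate map's components against the regrouped presentation defining $\mu_K$.
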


\begin{proof}
    The proof is by induction on the depth of $K_0$. Either $K_0 = \Delta[n]$, for which the result is simply by definition, or $K_0 = (K^1_0 \sqcup_{\Delta[0]} \cdots \sqcup_{\Delta[0]} K^p_0) \sqcup_{Sp(p)} \Delta[p]$ for simplicial composition diagrams $\Delta[h_j] \xrightarrow{\iota^0_j} K^j_0 \xleftarrow{\tau^0_j} \Delta[1]$.
    Suppose we are in the latter case. We have that
    $$
        K = (K_1' \sqcup_{\Delta[0]} \cdots \sqcup_{\Delta[0]} K_p') \sqcup_{Sp(p)} \Delta[p]
    $$
    for simplicial composition diagrams $\Delta[q_j] \xrightarrow{\iota_j'} K_j' \xleftarrow{\tau_j'} \Delta[1]$. We have moreover for each $1 \leq j \leq p$ that $K_j' = (H^j_1 \sqcup_{\Delta[0]} \cdots \sqcup_{\Delta[0]} H^j_{h_j}) \sqcup_{Sp(h_j)} K^j_0$ for simplicial composition diagrams $\Delta[s_{j, r}] \xrightarrow{\iota_{j, r}} H^j_r \xleftarrow{\tau_{j, r}} \Delta[1]$. The spaces $H^j_r$ are clearly just the spaces $K_i$.

    We wish to show that the induced pullback map
    \[\begin{tikzcd}
    	{X_{Sp(n)}} \\
    	& {X_K} & {X_{K_1 \sqcup_{\Delta[0]} \cdots \sqcup_{\Delta[0]} K_r}} \\
    	& {X_{K_0}} & {X_{Sp(r)}}
    	\arrow["{\iota_0^*}"', from=3-2, to=3-3]
    	\arrow["{\tau_1^* \times_{1_{X_0}} \cdots \times_{1_{X_0}} \tau_r^*}", from=2-3, to=3-3]
    	\arrow[from=2-2, to=3-2]
    	\arrow[from=2-2, to=2-3]
    	\arrow["\ulcorner"{anchor=center, pos=0.125}, draw=none, from=2-2, to=3-3]
    	\arrow[dashed, from=1-1, to=2-2]
    	\arrow["{\mu_{K_0} \circ (\tau_1^* \times_{1_{X_0}} \cdots \times_{1_{X_0}} \tau_r^*) \circ (\mu_{K_1} \times_{1_{X_0}} \cdots \times_{1_{X_0}} \mu_{K_r})}"', curve={height=12pt}, from=1-1, to=3-2]
    	\arrow["{\mu_{K_1} \times_{1_{X_0}} \cdots \times_{1_{X_0}} \mu_{K_r}}", curve={height=-12pt}, from=1-1, to=2-3]
    \end{tikzcd}\]
    is $\mu_K$. Our strategy is to consider the diagram
    \[\begin{tikzcd}
    	{X_{Sp(n)}} \\
    	& {X_K} & {X_{K_1' \sqcup_{\Delta[0]} \cdots \sqcup_{\Delta[0]} K_p'}} & {X_{K_1 \sqcup_{\Delta[0]} \cdots \sqcup_{\Delta[0]} K_r}} \\
    	& {X_p} & {X_{Sp(p)}} \\
    	& {X_{K_0}} && {X_{Sp(r)}}
    	\arrow["{\iota_0^*}"', from=4-2, to=4-4]
    	\arrow[from=2-4, to=4-4]
    	\arrow["{((\tau^0_1)^* \times_{1_{X_0}} \cdots \times_{1_{X_0}} (\tau^0_p)^*) \times_{1_{Sp(p)}} 1_{X_p}}"', from=4-2, to=3-2]
    	\arrow["Q"', from=1-1, to=4-2]
    	\arrow[dashed, from=1-1, to=2-2]
    	\arrow[from=2-2, to=3-2]
    	\arrow["{\gamma_p}"', from=3-2, to=3-3]
    	\arrow["\ulcorner"{anchor=center, pos=0.125}, draw=none, from=2-2, to=3-3]
    	\arrow["R", from=2-4, to=2-3]
    	\arrow[from=2-3, to=3-3]
    	\arrow[from=2-2, to=2-3]
    	\arrow["{\mu_{K_1} \times_{1_{X_0}} \cdots \times_{1_{X_0}} \mu_{K_r}}", from=1-1, to=2-4]
    \end{tikzcd}\]
    where
    $$
        R := 1_{X_{K_1 \sqcup_{\Delta[0]} \cdots \sqcup_{\Delta[0]} K_r}} \times_{1_{X_{Sp(r)}}} (\mu_{K_0^1} \times_{1_{X_0}} \cdots \times_{1_{X_0}} \mu_{K_0^p})
    $$
    and show that the induced morphisms $X_{Sp(n)} \rightarrow X_{K_1' \sqcup_{\Delta[0]} \cdots \sqcup_{\Delta[0]} K_p'}$ and $X_{Sp(n)} \rightarrow X_p$ are $(\mu_{K_1'} \times_{1_{X_0}} \cdots \times_{1_{X_0}} \mu_{K_p'})$ and
    $$
        \mu_p \circ ((\tau_1')^* \times_{1_{X_0}} \cdots \times_{1_{X_0}} (\tau_p')^*) \circ (\mu_{K_1'} \times_{1_{X_0}} \cdots \times_{1_{X_0}} \mu_{K_p'})
    $$
    respectively. This will imply the induced pullback map is $\mu_K$ by the inner pullback square.
    
    Note first that, for each $1 \leq j \leq p$, by induction on depth we have that the diagram
    \[\begin{tikzcd}
    	{X_{Sp(q_j)}} \\
    	{X_{H^j_1 \sqcup_{\Delta[0]} \cdots \sqcup_{\Delta[0]} H^j_{h_j}}} &&&&& {X_{K_j'}}
    	\arrow["{1_{X_{H^j_1 \sqcup_{\Delta[0]} \cdots \sqcup_{\Delta[0]} H^j_{h_j}}} \times_{1_{X_{Sp(h_j)}}} \mu_{K_0^j}}"', from=2-1, to=2-6]
    	\arrow["{\mu_{H^j_1} \times_{1_{X_0}} \cdots \times_{1_{X_0}} \mu_{H^j_{h_j}}}"', from=1-1, to=2-1]
    	\arrow["{\mu_{K_j'}}", from=1-1, to=2-6]
    \end{tikzcd}\]
    commutes. Thus, by taking a pullback of these maps, the first morphism is as required. For $Q$, it will suffice to show that the diagram
    \[\begin{tikzcd}
    	{X_{Sp(n)}} \\
    	& {X_{K_1' \sqcup_{\Delta[0]} \cdots \sqcup_{\Delta[0]} K_p'}} && {X_{K_1 \sqcup_{\Delta[0]} \cdots \sqcup_{\Delta[0]} K_r}} \\
    	& {X_{K^1_0 \sqcup_{\Delta[0]} \cdots \sqcup_{\Delta[0]} K^p_0}} && {X_{Sp(r)}} \\
    	& {X_{Sp(p)}} \\
    	{X_p} &&& {X_{K_0}}
    	\arrow["{\mu_p}"', from=4-2, to=5-1]
    	\arrow["{\tau_1^* \times_{1_{X_0}} \cdots \times_{1_{X_0}} \tau_r^*}", from=2-4, to=3-4]
    	\arrow["{(\tau_1^0)^* \times_{1_{X_0}} \cdots \times_{1_{X_0}} (\tau_p^0)^*}", from=3-2, to=4-2]
    	\arrow["{\mu_{K^1_0} \times_{1_{X_0}} \cdots \times_{1_{X_0}} \mu_{K^p_0}}"', from=3-4, to=3-2]
    	\arrow[from=1-1, to=5-1]
    	\arrow["{((\tau^0_1)^* \times_{1_{X_0}} \cdots \times_{1_{X_0}} (\tau^0_p)^*) \times_{1_{Sp(p)}} 1_{X_p}}", from=5-4, to=5-1]
    	\arrow["{\mu_{K_0}}", from=3-4, to=5-4]
    	\arrow[from=2-2, to=3-2]
    	\arrow[from=1-1, to=2-2]
    	\arrow["{\mu_{K_1} \times_{1_{X_0}} \cdots \times_{1_{X_0}} \mu_{K_r}}", from=1-1, to=2-4]
    \end{tikzcd}\]
    commutes, where the map $X_{Sp(n)} \rightarrow X_p$ is induced by the other morphisms. The leftmost pentagon commutes by definition, while the bottom right pentagon commutes by the definition of $\mu_{K_0}$ and the fact that the lowest horizontal map is simply the projection map from the pullback defining $X_{K_0}$ in terms of $X_p$ and the spaces $X_{K^j_0}$.

    For the uppermost pentagon, we consider each $K_j'$ and repective $K_j^0$ in turn for varying $j$. The respective diagram will commute in each case by induction. Thus, the whole pentagon commutes and the leftmost vertical map is indeed the map we seek.
\end{proof}

We can package the nested nature of $\mu_K$ into a genuine statement about composition operations analogous to the maps $\circ^{x_0, \cdots, x_n}$:

\begin{notation} \label{not_cd_simpcomp_op}
    Let $X$ be a $2$-fold Segal space and $Sp(n) \xrightarrow{\iota} K \xleftarrow{\tau} \Delta[1]$ a simplicial composition diagram. Let $x_0, \cdots, x_n \in (X_{0, 0})_0$. Then define
    \[\begin{tikzcd}
    	{\prod_{i = 1}^n X(x_{i-1}, x_i)} & {X_{Sp(n)}^{x_0, x_n}} & {X_K^{x_0, x_n}} \\
    	&& {X(x_0, x_n)}
    	\arrow[hook, from=1-1, to=1-2]
    	\arrow["{\mu_K^{x_0, x_n}}", from=1-2, to=1-3]
    	\arrow["{(\tau^*)^{x_0, x_n}}", from=1-3, to=2-3]
    	\arrow["{\circ^{x_0, \cdots, x_n}_K}"', from=1-1, to=2-3]
    \end{tikzcd}\]
\end{notation}

Note again that if $n = 0$, the domain of $\circ^x_K$ is the terminal object $\ast \cong \{x\}$.

\begin{theorem} \label{thm_cd_diag_is_comp_of_circs}
    Suppose $X$ is a $2$-fold Segal space and $Sp(k_i) \xrightarrow{\iota_i} K_i \xleftarrow{\tau_i} \Delta[1]$ are simplicial composition diagrams for $0 \leq i \leq n$, with $k_0 = n \geq 1$. Let $r = \sum_i k_i$. Suppose
    $$
        Y = ((x^1_0, \cdots, x^1_{k_1}), \cdots, (x^n_0, \cdots, x^n_{k_n}))
    $$
    is a nested list of elements of $(X_{0, 0})_0$ such that $x^i_{k_i} = x^{i+1}_0$ for $i < n$. Let $(x_0, \cdots, x_r)$ be the flattened version of this list where all $x^i_{k_i}$ have been removed for $i < n$.
    
    Then, setting $K := (K_1 \sqcup_{\Delta[0]} \cdots \sqcup_{\Delta[0]} K_n) \sqcup_{Sp(n)} K_0$, we have that
    $$
        \circ^{x_0, \cdots, x_r}_K = \circ^{x^1_0, x^2_0, \cdots, x^n_0, x^n_{k_n}}_{K_0} \circ (\circ^{x^1_0, \cdots, x^1_{k_1}}_{K_1} \times \cdots \times \circ^{x^n_0, \cdots, x^n_{k_n}}_{K_n}).
    $$
\end{theorem}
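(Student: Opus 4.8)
The plan is to obtain the statement as a consequence of Proposition~\ref{prop_cd_decomp_scd}, which already performs the inductive analysis of $\mu_K$, together with Proposition~\ref{prop_cd_objfibered_liftprob}, which governs the passage to fibres $(-)^{x_0, \cdots, x_r}$. First I would note that $K = (K_1 \sqcup_{\Delta[0]} \cdots \sqcup_{\Delta[0]} K_n) \sqcup_{Sp(n)} K_0$ really is a simplicial composition diagram: if $K_0 = \Delta[n]$ this is Definition~\ref{defn_cd_simp_comp_diag}(2) directly, and otherwise one writes $K_0 = (L_1 \sqcup_{\Delta[0]} \cdots \sqcup_{\Delta[0]} L_m) \sqcup_{Sp(m)} \Delta[m]$, regroups the $K_j$ into blocks indexed by the arities of the $L_i$, grafts each block onto the corresponding $L_i$, and inducts on the depth of $K_0$ --- exactly the manipulation in the proof of Proposition~\ref{prop_cd_decomp_scd}. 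Applying that proposition (with the roles of its symbols $n$ and $r$ interchanged relative to ours) identifies $X_K$ with $(X_{K_1} \times_{X_0} \cdots \times_{X_0} X_{K_n}) \times_{X_{Sp(n)}} X_{K_0}$ and exhibits $\mu_K$ as the map induced by $\mu_{K_1} \times_{1_{X_0}} \cdots \times_{1_{X_0}} \mu_{K_n}$ into the first factor and by $Q := \mu_{K_0} \circ (\tau_1^* \times_{1_{X_0}} \cdots \times_{1_{X_0}} \tau_n^*) \circ (\mu_{K_1} \times_{1_{X_0}} \cdots \times_{1_{X_0}} \mu_{K_n})$ into the second. Since $\tau \colon \Delta[1] \to K$ factors as $\Delta[1] \xrightarrow{\tau_0} K_0 \hookrightarrow K$, the map $\tau^* \colon X_K \to X_1$ is $\tau_0^*$ composed with the projection $X_K \to X_{K_0}$, and hence, as maps $X_{Sp(r)} \to X_1$,
\[
  \tau^* \circ \mu_K = \tau_0^* \circ \mu_{K_0} \circ (\tau_1^* \times_{1_{X_0}} \cdots \times_{1_{X_0}} \tau_n^*) \circ (\mu_{K_1} \times_{1_{X_0}} \cdots \times_{1_{X_0}} \mu_{K_n}).
\]

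Next I would pass to fibres. Every map occurring is object-fibered over the appropriate powers of $X_0$ --- for the $\mu$'s by Proposition~\ref{prop_cd_objfibered_liftprob} applied to their defining lifting problems, for the $\tau^*$'s by the remark after Proposition~\ref{prop_cd_iota_ast_trivfib} --- so applying $(-)^{x_0, \cdots, x_r}$ and using its functoriality preserves the displayed identity. On the left, precomposing with the inclusion $\prod_{i=1}^r X(x_{i-1}, x_i) = X_{Sp(r)}^{x_0, \cdots, x_r} \hookrightarrow X_{Sp(r)}^{x_0, x_r}$ recovers $\circ^{x_0, \cdots, x_r}_K$ by Notation~\ref{not_cd_simpcomp_op}. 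On the right, I would set $y_0 := x_0$, $y_j := x^j_{k_j} = x^{j+1}_0$ for $1 \le j < n$ and $y_n := x^n_{k_n} = x_r$, decompose $X_{Sp(r)}^{x_0, \cdots, x_r} = \prod_{j=1}^n X_{Sp(k_j)}^{x^j_0, \cdots, x^j_{k_j}}$, and observe that under $\mu_{K_1} \times_{1_{X_0}} \cdots \times_{1_{X_0}} \mu_{K_n}$ followed by $\tau_1^* \times_{1_{X_0}} \cdots \times_{1_{X_0}} \tau_n^*$ the $j$-th factor becomes $X_{Sp(k_j)}^{x^j_0, \cdots, x^j_{k_j}} \hookrightarrow X_{Sp(k_j)}^{x^j_0, x^j_{k_j}} \xrightarrow{\mu_{K_j}} X_{K_j}^{x^j_0, x^j_{k_j}} \xrightarrow{\tau_j^*} X(x^j_0, x^j_{k_j})$, which is $\circ^{x^j_0, \cdots, x^j_{k_j}}_{K_j}$, so the assembled map is $\circ^{x^1_0, \cdots, x^1_{k_1}}_{K_1} \times \cdots \times \circ^{x^n_0, \cdots, x^n_{k_n}}_{K_n}$ with target $\prod_{j=1}^n X(y_{j-1}, y_j) = X_{Sp(n)}^{y_0, \cdots, y_n}$. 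Finally $\tau_0^* \circ \mu_{K_0}$ restricted along $X_{Sp(n)}^{y_0, \cdots, y_n} \hookrightarrow X_{Sp(n)}^{y_0, y_n}$ is by definition $\circ^{y_0, \cdots, y_n}_{K_0} = \circ^{x^1_0, x^2_0, \cdots, x^n_0, x^n_{k_n}}_{K_0}$, so the restricted right-hand side is exactly the composite claimed.

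I expect the only real obstacle to be the fibre bookkeeping in the second step: one must verify that the natural inclusions of ``all objects'' fibres into ``endpoint only'' fibres used in Notation~\ref{not_cd_simpcomp_op} are compatible with the iterated fibre products describing $X_{Sp(r)}$, $X_{K_1} \times_{X_0} \cdots \times_{X_0} X_{K_n}$ and $X_{Sp(n)}$, so that componentwise the restricted composite is genuinely $\circ_{K_j}$ and the assembled map genuinely factors through $X_{Sp(n)}^{y_0, \cdots, y_n}$ along the same inclusion that Notation~\ref{not_cd_simpcomp_op} uses in defining $\circ_{K_0}$. Degenerate factors with $k_j = 0$ cause no difficulty, since then the $j$-th factor of $X_{Sp(r)}^{x_0, \cdots, x_r}$ is the point $\ast \cong \{x^j_0\}$, matching the convention for $\circ^{x^j_0}_{K_j}$. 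Everything else is functoriality of $(-)^{x_0, \cdots, x_r}$ (Proposition~\ref{prop_cd_objfibered_liftprob}) and the structural form of $\mu_K$ (Proposition~\ref{prop_cd_decomp_scd}); no further induction is required beyond what those results already supply.
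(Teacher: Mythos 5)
Your proposal is correct and follows essentially the same route as the paper: both reduce the statement to Proposition~\ref{prop_cd_decomp_scd} to obtain the structural description of $\mu_K$ as a pullback map, and then verify that passing to fibres via $(-)^{x_0,\cdots,x_r}$ and the various endpoint inclusions is compatible with the iterated fibre products (the paper organizes this bookkeeping as a single commutative diagram whose three squares are checked, whereas you phrase it as a chain of equalities checked componentwise, but the content is identical). Your additional care in verifying that $K$ is itself a simplicial composition diagram and that $\tau$ factors through $K_0$ is a reasonable supplement that the paper leaves implicit.
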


Before we can prove this theorem, we need some notation for compactness:

\begin{notation} \label{not_cd_prod_ai}
    For a series of spans $X_0 \leftarrow A_i \rightarrow X_0$ with $1 \leq i \leq n$ and $n > 0$, write
    $$
        \prod_{X_0}^{1 \leq i \leq n} A_i := A_1 \times_{X_0} \cdots \times_{X_0} A_n.
    $$
    If $n = 0$, set this to be the terminal object $\ast$.
\end{notation}

\begin{proof}
    By Proposition \ref{prop_cd_decomp_scd}, it will suffice to prove that the diagram
    \[\begin{tikzcd}
    	{\prod_{i = 1}^n \prod_{j = 1}^{k_i} X(x^i_{j - 1}, x^i_j)} & {\prod_{i = 1}^r X(x_{i-1}, x_i)} \\
    	{\prod_{i = 1}^{n}(\prod_{X_0}^{1 \leq i \leq k_i} X_1)^{x^i_0, x^i_{k_i}}} & {(\prod_{X_0}^{1 \leq i \leq r} X_1)^{x_0, x_r}} \\
    	{\prod_{i = 1}^{n}X_{K_i}^{x^i_0, x^i_{k_i}}} & {X_{K_1 \sqcup_{\Delta[0]} \cdots \sqcup_{\Delta[0]} K_n}^{x_0, x_r}} \\
    	{\prod_{i = 1}^{n}X(x^i_0, x^i_{k_i})} & {X_{Sp(n)}^{x_0, x_r}} \\
    	& {X_{K_0}^{x_0, x_r}} \\
    	& {X(x_0, x_r)}
    	\arrow[hook, from=1-2, to=2-2]
    	\arrow[hook, from=1-1, to=2-1]
    	\arrow[hook, from=2-1, to=2-2]
    	\arrow["{\prod_{i=1}^n \mu_{K_i}^{x^i_0, x^i_{k_i}}}"', from=2-1, to=3-1]
    	\arrow["{(\mu_{K_1} \times_{1_{X_0}} \cdots \times_{1_{X_0}} \mu_{K_n})^{x_0, x_r}}", from=2-2, to=3-2]
    	\arrow[hook, from=3-1, to=3-2]
    	\arrow["\cong", hook, from=1-1, to=1-2]
    	\arrow["{\prod_{i = 1}^n (\tau_i^*)^{x^i_0, x^i_{k_i}}}"', from=3-1, to=4-1]
    	\arrow["{(\tau_1^* \times_{1_{X_0}} \cdots \times_{1_{X_0}} \tau_n^*)^{x_0, x_r}}", from=3-2, to=4-2]
    	\arrow[hook, from=4-1, to=4-2]
    	\arrow["{\mu_{K_0}^{x_0, x_r}}"', from=4-2, to=5-2]
    	\arrow["{(\tau_0^*)^{x_0, x_r}}"', from=5-2, to=6-2]
    \end{tikzcd}\]
    commutes, where the inclusions are in general of the form $A \times_{X_0} \{x\} \times_{X_0} B \hookrightarrow A \times_{X_0} B$, where $A \rightarrow X_0 \leftarrow B$ is a cospan of simplicial spaces and $x \in (X_{0, 0})_0$.

    One can reduce the problem to checking each of the three squares in the diagram commutes. The first square is immediate, as the two paths are simply a matter of removing objects $x_i$ in different orders. The second two squares commute trivially; it is a matter of applying the inclusion before or after the vertical morphisms, which has no effect. Hence, the result is immediate, as the two maps being equated are both paths in this diagram.
\end{proof}

When we come to define homotopy bicategories, our hom-categories will be $h_1(X(x, y))$ with composition functors $h_1(\circ^{x_0, \cdots, x_n})$. To obtain coherence isomorphisms like associators, we need an essential technical property of these maps which respect \emph{maps between simplicial composition diagrams}:

\begin{definition} \label{defn_cd_map_scds}
    A \emph{map of simplicial composition diagrams} is a map $f : K_1 \rightarrow K_2$ in $\textbf{sSet}$ for simplicial composition diagrams $\Delta[n] \xrightarrow{\iota_i} K_i \xleftarrow{\tau_i} \Delta[1]$ for $i = \{1, 2\}$, such that $f \circ \iota_1 = \iota_2$ and $f \circ \tau_1 = \tau_2$.
\end{definition}

\begin{definition} \label{defn_cd_category_scdn}
    Let $\textbf{SCD}_n$ be the category whose objects are simplicial composition diagrams of arity $n$ and whose morphisms are maps of simplicial composition diagrams.
\end{definition}

\begin{proposition} \label{prop_cd_objfib_liftprob}
    Let $Sp(n) \xrightarrow{\iota_i} K_i \xleftarrow{\tau_i} \Delta[1]$ for $i \in \{1, 2\}$ be two simplicial composition diagrams of arity $n \geq 0$ with $f : K_1 \rightarrow K_2$ a map in $\textbf{SCD}_n$ between them. Let $X$ be a $2$-fold Segal space.

    Then for any $x_0, \cdots, x_n \in (X_{0, 0})_0$, there exists a map $\mu_{K_2}'$ solving the same lifting problem as $\mu_{K_1}$, namely giving a commutative diagram
    \[\begin{tikzcd}
    	& {X_{K_1}} \\
    	{X_{Sp(n)}} & {X_{Sp(n)}}
    	\arrow["id"', from=2-1, to=2-2]
    	\arrow["{\iota_1^*}", from=1-2, to=2-2]
    	\arrow["{\mu_{K_2}'}", dashed, from=2-1, to=1-2]
    \end{tikzcd}\]
    such that the diagram
    \[\begin{tikzcd}
    	{\prod_{i = 1}^n X(x_{i-1}, x_i)} & {X_{Sp(n)}^{x_0, x_n}} & {X_{K_1}^{x_0, x_n}} \\
    	&& {X(x_0, x_n)}
    	\arrow[hook, from=1-1, to=1-2]
    	\arrow["{(\mu_{K_2}')^{x_0, x_n}}", from=1-2, to=1-3]
    	\arrow["{(\tau_1^*)^{x_0, x_n}}", from=1-3, to=2-3]
    	\arrow["{\circ^{x_0, \cdots, x_n}_{K_2}}"', from=1-1, to=2-3]
    \end{tikzcd}\]
    commutes.
\end{proposition}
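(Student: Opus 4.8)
The plan is to obtain $\mu_{K_2}'$ by transporting $\mu_{K_2}$ along the map $f$. Since $X_\bullet$ is a contravariant functor on $\textbf{sSet}$ (Notation~\ref{not_cd_x_k}), the map $f : K_1 \to K_2$ induces $f^* : X_{K_2} \to X_{K_1}$, and I would simply set
$$
    \mu_{K_2}' := f^* \circ \mu_{K_2} : X_{Sp(n)} \xrightarrow{\mu_{K_2}} X_{K_2} \xrightarrow{f^*} X_{K_1}.
$$
Because $f$ is a morphism in $\textbf{SCD}_n$, we have $f \circ \iota_1 = \iota_2$ and $f \circ \tau_1 = \tau_2$ in $\textbf{sSet}$; applying $X_\bullet$ yields the dual identities $\iota_1^* \circ f^* = \iota_2^*$ and $\tau_1^* \circ f^* = \tau_2^*$ of maps out of $X_{K_2}$. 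Hence $\iota_1^* \circ \mu_{K_2}' = \iota_2^* \circ \mu_{K_2} = \mathrm{id}_{X_{Sp(n)}}$, the last equality holding because $\mu_{K_2}$ solves its own lifting problem from Definition~\ref{defn_cd_induced_comp_k}. So $\mu_{K_2}'$ solves the same lifting problem as $\mu_{K_1}$.

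For the commuting triangle I would first record that the relevant maps are object-fibered over the source--target maps to $(X_0)^2$. The maps $\iota^*$ and $\tau^*$ are tautologically object-fibered and their induced source and target maps on each $X_{K_i}$ coincide; thus $f^*$ is object-fibered, since the source--target map of $X_{K_1}$ composed with $f^*$ is $(s,t)\circ \tau_1^* \circ f^* = (s,t)\circ \tau_2^*$, the source--target map of $X_{K_2}$. Moreover $\mu_{K_2}$ is object-fibered by Proposition~\ref{prop_cd_objfibered_liftprob} applied to its defining lifting problem, in which $\mathrm{id}$ and $\iota_2^*$ are object-fibered. Consequently $\mu_{K_2}' = f^* \circ \mu_{K_2}$ is object-fibered, and since $(-)^{x_0, x_n}$ is functorial on object-fibered maps (Notation~\ref{not_sesp_supscript_fiber}) we obtain $(\mu_{K_2}')^{x_0, x_n} = (f^*)^{x_0, x_n} \circ (\mu_{K_2})^{x_0, x_n}$ and $(\tau_1^*)^{x_0, x_n} \circ (f^*)^{x_0, x_n} = (\tau_2^*)^{x_0, x_n}$.

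Combining these, precomposition with the canonical inclusion $j : \prod_{i=1}^n X(x_{i-1},x_i) \hookrightarrow X_{Sp(n)}^{x_0, x_n}$ gives
$$
    (\tau_1^*)^{x_0, x_n} \circ (\mu_{K_2}')^{x_0, x_n} \circ j = (\tau_2^*)^{x_0, x_n} \circ (\mu_{K_2})^{x_0, x_n} \circ j = \circ^{x_0, \cdots, x_n}_{K_2},
$$
where the last equality is the definition of $\circ^{x_0, \cdots, x_n}_{K_2}$ in Notation~\ref{not_cd_simpcomp_op}. This is exactly the asserted commutativity.

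The argument is essentially formal once $\mu_{K_2}'$ is chosen to be $f^* \circ \mu_{K_2}$; the only subtlety — and the main (mild) obstacle — is verifying the object-fiberedness needed both to pass to the fibers $(-)^{x_0, x_n}$ and to split the composites $(\mu_{K_2}')^{x_0,x_n}$ and $(\tau_1^* \circ f^*)^{x_0,x_n}$, since the fiber functor is only functorial on maps over $(X_0)^2$. No input about Segal spaces beyond this bookkeeping is required.
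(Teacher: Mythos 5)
Your proposal is correct and follows essentially the same route as the paper: both define $\mu_{K_2}' := f^* \circ \mu_{K_2}$, verify the lifting property via $\iota_1^* \circ f^* = \iota_2^*$, and obtain the triangle from $\tau_1^* \circ f^* = \tau_2^*$ together with object-fiberedness. Your more careful bookkeeping of which maps are object-fibered (via the $\tau_i^*$'s rather than the $\iota_i^*$'s) is a harmless variation on the paper's one-line justification.
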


\begin{proof}
    We define $\mu_{K_2}' : X_{Sp(n)} \xrightarrow{\mu_{K_2}} X_{K_2} \xrightarrow{f^*} X_{K_1}$. We have that $\mu_{K_2}'$ is object-fibered, as $f^*$'s commutativity with the maps $\iota_i^*$ implies it is object-fibered. It is then clear that this is a solution to the lifting problem, since $\iota_1^* \circ \mu_{K_2}' = (f \circ \iota_1)^* \circ \mu_{K_2} = \iota_2^* \circ \mu_{K_2} = id$. Moreover, the second diagram commutes, since $\tau_1^* \circ f^* = (f \circ \tau_1)^* = \tau_2^*$ and we obtain the original definition of $\circ_{K_2}^{x_0, \cdots, x_n}$.
\end{proof}

The consequence of this result we need is that, for any $f : K_1 \rightarrow K_2$ as above, the maps $\circ^{x_0, \cdots, x_n}_{K_i}$ are of the form $\beta \circ \mu^{(i)} \circ \alpha$, where the two $\mu^{(i)}$ for $i \in \{1, 2\}$ are solutions to the same lifting problem. We will find that this implies there is a homotopy between the two $\circ^{x_0, \cdots, x_n}_{K_i}$, which will descend under $h_1$ to our associator.

A final object we should define is the following functor:

\begin{definition} \label{defn_cd_mathcal_g}
    Let $n > 0$ and $k_1, \cdots, k_n \geq 0$. Define the functor
    $$
        \mathcal{G}^n_{k_1, \cdots, k_n} : \textbf{SCD}_n \times \prod_{i = 1}^n \textbf{SCD}_{k_i} \rightarrow \textbf{SCD}_{k_1 + \cdots + k_n}
    $$
    defined by sending
    $$
        (K, K_1, \cdots, K_n) \mapsto (K_1 \sqcup_{\Delta[0]} \cdots \sqcup_{\Delta[0]} K_n) \sqcup_{Sp(n)} K
    $$
    with similar behavior for maps.
\end{definition}

One might note that this creates a (nonsymmetric) operad $\textbf{SCD}_\bullet$ valued in $\textbf{Cat}$. We will not need this fact particularly so we omit a proof, but this same structure will reappear later.
\section{Bicategories of Varying Biases}

Armed with a reasonable understanding of our domain, we are now ready to consider the codomain of our construction. Our goal is to extend the ideas of homotopy categories of Segal spaces to $2$-fold Segal spaces. It would seem sensible then that the result of our construction should be a bicategory of some description.

We believe that a more natural target may be \emph{unbiased bicategories}. Indeed, consider the nature of composition in a $2$-fold Segal space $X$; given a choice of horizontal compositions $\mu_n$, we obtain composition maps $\circ^{x_0, \cdots, x_n}$ for all $n \geq 2$ rather than merely binary composition. Such operations would be most faithfully represented by an `unbiased' construction, where there are indeed specified operations to compose $n$ morphisms at once.

In the below definitions and indeed throughout this paper, we take the convention that for natural isomorphisms $\phi : F \Rightarrow G$, $\psi : G \Rightarrow H$ and $\theta : Q \Rightarrow R$, vertical composition is written $\psi \phi : F \Rightarrow H$ and horizontal is $\theta \circ \phi : Q \circ F \Rightarrow R \circ G$.

\begin{definition}[{\cite[Def. 1.2.1]{leinsterOperadsHigherDimensionalCategory2000}}] \label{defn_ubicat_ubicat}
    An \emph{unbiased bicategory} $\mathscr{B}$ is a collection of the following data:
    \begin{enumerate}
        \item A collection of objects $\textbf{ob}(\mathscr{B})$;
        \item For each pair of objects $x$ and $y$, a category of $1$-morphisms $x \to y$ and $2$-morphisms $f \Rightarrow g$ between them, $\textbf{Hom}_\mathscr{B}(x, y)$;
        \item For each tuple of objects $X = (x_0, \cdots, x_n)$ for $n \in \mathbb{Z}_{> 0}$, a functor
        $$
        \circ^X : \textbf{Hom}_\mathscr{B}(x_0, x_1) \times \cdots \times \textbf{Hom}_\mathscr{B}(x_{n-1}, x_n) \rightarrow \textbf{Hom}_\mathscr{B}(x_0, x_n)$$
        which we write as
        $$
        (f_1, \cdots, f_n) \mapsto (f_n \circ \cdots \circ f_1) = \bigcirc_{i = 1}^n f_i
        $$
        for $f_i \in \textbf{Hom}_{\mathscr{B}}(x_{i-1}, x_i)$ all $1$-morphisms (or all $2$-morphisms);
        \item For each object $x$, a functor $\circ^x : \star \rightarrow \textbf{Hom}_\mathscr{B}(x, x)$ from the discrete singleton category, identifying an element we denote as $()$;
        
        \item For each $n \in \mathbb{Z}_{> 0}$ and $k_1, \cdots, k_n \in \mathbb{Z}_{\geq 0}$, for each sequence of tuples $X_i = (x^i_0, \cdots, x^i_{k_i})$ such that $x^i_{k_i} = x^{i+1}_0$ for every $i < n$, setting
        $$
        X = (X_1, \cdots, X_n)
        $$
        and
        $$
        Y = (x^1_0, \cdots, x^1_{k_1}, x^2_{1}, \cdots, x^2_{k_2}, \cdots, x^n_{1}, \cdots, x^n_{k_n})
        $$
        a natural isomorphism
        $$
        \gamma_X :  \circ^{(x^1_0, x^1_{k_1}, x^2_{k_2}, \cdots, x^n_{k_n})} \circ (\circ^{X_1} \times \circ^{X_2} \times \cdots \times \circ^{X_n}) \Rightarrow \circ^Y
        $$
        sending $((f^n_{k_n} \circ \cdots \circ f^n_1) \circ \cdots \circ (f^1_{k_1} \circ \cdots \circ f^1_1))$ to $(f^n_{k_n} \circ \cdots \circ f^1_1)$, written levelwise as $\gamma_{((f^1_1, \cdots, f^1_{k_1}), \cdots, (f^n_1, \cdots, f^n_{k_n}))} : \bigcirc_{i = 1}^n (\bigcirc_{j = 1}^{k_i} f_j^i) \rightarrow \bigcirc_{i = 1}^n \bigcirc_{j = 1}^{k_i} f_j^i$;
        \item For each pair of objects $x$ and $y$, a natural isomorphism $\iota_{x, y} : 1_{\textbf{Hom}_\mathscr{B}(x, y)} \Rightarrow \circ^{x, y}$ sending $f$ to $(f)$,
    \end{enumerate}
    such that:
    
    \begin{enumerate}
        \item (associativity) for any $n, m_1, \cdots, m_n \in \mathbb{Z}_{> 0}$, integers $k_1^1, \cdots, k_{m_n}^n \in \mathbb{Z}_{\geq 0}$ and thrice-nested sequence of objects $(((x_{p, q, r})_{r=0}^{k^p_q})_{q=1}^{m_p})_{p=1}^n$ such that $x_{k^p_q, q, p} = x_{0, q+1, p}$ for $q < m_p$ and $x_{k^p_{m_p}, m_p, p} = x_{0, 1, p+1}$ for $p < n$, together with any sequence of $1$-morphisms $f_{p, q, r} \in \textbf{Hom}_{\mathscr{B}}(x_{p,q,r-1}, x_{p, q, r})$ for $p, q, r \geq 1$, the diagram
        \[\begin{tikzcd}
        	& {\bigcirc_{p=1}^n (\bigcirc_{q=1}^{m_p} ( \bigcirc_{r=1}^{k^p_q} f_{p, q, r}))} \\
        	{\bigcirc_{p=1}^n (\bigcirc_{q=1}^{m_p} \bigcirc_{r=1}^{k^p_q} f_{p, q, r})} && {\bigcirc_{p=1}^n \bigcirc_{q=1}^{m_p} ( \bigcirc_{r=1}^{k^p_q} f_{p, q, r})} \\
        	& {\bigcirc_{p=1}^n \bigcirc_{q=1}^{m_p} \bigcirc_{r=1}^{k^p_q} f_{p, q, r}}
        	\arrow["{\bigcirc_{p = 1}^n \gamma_{D_p}}"'{pos=0.7}, from=1-2, to=2-1]
        	\arrow["{\gamma_E}"'{pos=0.4}, from=2-1, to=3-2]
        	\arrow["{\gamma_{D}}"{pos=0.6}, from=1-2, to=2-3]
        	\arrow["{\gamma_F}"{pos=0.4}, from=2-3, to=3-2]
        \end{tikzcd}\]
        commutes, where
        \begin{enumerate}
            \item $D_p = ((f_{p, q, r})_{r=1}^{k_q^p})_{q = 1}^{m_p}$;
            \item $D = ((\bigcirc_{r = 1}^{k_q^p} f_{p, q, r})_{q=1}^{m_q})_{p=1}^n$;
            \item $E = ((f_{p, q, r})_{r = 1, q = 1}^{k_q^p, m_p})_{p = 1}^n$;
            \item $F = ((f_{p, q, r})_{r=1}^{k_q^p})_{q = 1, p = 1}^{m_q, n}$;
        \end{enumerate}
    
        \item (unitality) for any sequence of objects $x_0, \cdots, x_n \in \textbf{ob}(\mathscr{B})$ and $1$-morphisms $f_i \in \textbf{Hom}_{\mathscr{B}}(x_{i-1}, x_i)$, the diagram
        \[\begin{tikzcd}
        	& {\bigcirc_{i=1}^n f_i} \\
        	{(\bigcirc_{i = 1}^n f_i)} && {\bigcirc_{i = 1}^n(f_i)} \\
        	& {\bigcirc_{i = 1}^n f_i}
        	\arrow["{\iota_{\bigcirc_{i = 1}^n f_i}}"', from=1-2, to=2-1]
        	\arrow["{\gamma_{((f_1, \cdots, f_n))}}"'{pos=0.1}, from=2-1, to=3-2]
        	\arrow["{\bigcirc_{i=1}^n \iota_{f_i}}", from=1-2, to=2-3]
        	\arrow["{\gamma_{((f_1), \cdots, (f_n))}}"{pos=0.1}, from=2-3, to=3-2]
        	\arrow["{1_{\bigcirc_{i=1}^n f_i}}"{description}, from=1-2, to=3-2]
        \end{tikzcd}\]
        commutes.
    \end{enumerate}
\end{definition}

Note the existence of object identities given by the elements $()$. The unitality condition on these is now implicit in the associators, rather than the unitors.

We must also define pseudofunctors between these unbiased bicategories:

\begin{definition}[{\cite[Def. 1.2.3]{leinsterOperadsHigherDimensionalCategory2000}}] \label{defn_ubicat_pseudofunc}
    An \emph{unbiased pseudofunctor}, or \emph{unbiased weak functor} $P : \mathscr{B} \rightarrow \mathscr{C}$ between unbiased bicategories $\mathscr{B}$ and $\mathscr{C}$ is the following collection of data:
    \begin{enumerate}
        \item A mapping of objects $\textbf{ob}(\mathscr{B}) \rightarrow \textbf{ob}(\mathscr{C})$, mapping $x \mapsto P(x)$;
        \item For each pair of objects $x$ and $y$ in $\mathscr{B}$, a functor
        $$
            P_{x, y} : \textbf{Hom}_{\mathscr{B}}(x, y) \rightarrow \textbf{Hom}_{\mathscr{C}}(P(x), P(y));
        $$
        \item For each $n \in \mathbb{Z}_{> 0}$ and objects $x_0, \cdots, x_n$, a natural isomorphism
        $$
        \pi_{x_0, \cdots, x_n} : \circ_\mathscr{C}^{P(x_0), \cdots, P(x_n)} \circ (P_{x_0, x_1} \times \cdots \times P_{x_{n-1}, x_n}) \Rightarrow P_{x_0, x_n} \circ_\mathscr{B}^{x_0, \cdots, x_n}
        $$
        which is levelwise a morphism $\pi_{(f_1, \cdots, f_n)} : \bigcirc_{i = 1}^n P_{x_{i-1}, x_i}(f_i) \rightarrow P_{x_0, x_n}(\bigcirc_{i = 1}^n f_i)$ for $f_i \in \textbf{Hom}_{\mathscr{B}}(x_{i-1}, x_i)$;

        \item A natural isomorphism $\pi_{x} : \circ_{\mathscr{C}}^{P(x)} \Rightarrow P_{x, x} \circ \circ_{\mathscr{B}}^x$ of the form $() \rightarrow P(())$
    \end{enumerate}
    such that:
    
    \begin{enumerate}
        \item For any $n > 0$, integers $k_1, \cdots, k_n \in \mathbb{Z}_{\geq 0}$ and nested sequence of objects $((x^i_j)_{j = 0}^{k_i})_{i = 1}^n$ in $\mathscr{B}$ such that $x^i_{k_i} = x^{i+1}_0$ for $i < n$, along with morphisms $f^i_j \in \textbf{Hom}_{\mathscr{B}}(x^i_{j-1}, x^i_j)$ for $1 \leq i \leq n$ and $1 \leq j \leq k_i$, the diagram
        \[\begin{tikzcd}
        	{\bigcirc_{i = 1}^n (\bigcirc_{j = 1}^{k_i} P_{x_{j-1}^i, x_j^i}(f^i_j))} && {\bigcirc_{i = 1}^n \bigcirc_{j = 1}^{k_i} P_{x_{j-1}^i, x_j^i}(f^i_j)} \\
        	{\bigcirc_{i = 1}^n P_{x_0^i, x_{k_i}^i} (\bigcirc_{j = 1}^{k_i} f^i_j)} \\
        	{P_{x^1_0, x^n_{k_n}} (\bigcirc_{i = 1}^n (\bigcirc_{j = 1}^{k_i} f^i_j))} && {P_{x^1_0, x^n_{k_n}} (\bigcirc_{i = 1}^n \bigcirc_{j = 1}^{k_i} f^i_j)}
        	\arrow["{\bigcirc_{i=1}^n \pi_{(f^i_1, \cdots, f^i_{k_i})}}"', from=1-1, to=2-1]
        	\arrow["{\pi_{(\bigcirc_{j = 1}^{k_1} f^1_j, \cdots, \bigcirc_{j=1}^{k_n} f^n_j)}}"', from=2-1, to=3-1]
        	\arrow["{\pi_{(f^1_1, \cdots, f^n_{k_n})}}", from=1-3, to=3-3]
        	\arrow["{\gamma_D}", from=1-1, to=1-3]
        	\arrow["{P_{x^1_0, x^n_{k_n}}(\gamma_{D'})}"', from=3-1, to=3-3]
        \end{tikzcd}\]
        commutes, where:
        \begin{enumerate}
            \item $D = ((P_{x^i_{j-1}, x^i_j}(f_j^i))_{j = 1}^{k_i})_{i = 1}^n$;
            \item $D' = ((f_j^i)_{j = 1}^{k_i})_{i = 1}^n$;
        \end{enumerate}

        \item For each $x, y \in \textbf{ob}(\mathscr{B})$ and $f \in \textbf{Hom}_{\mathscr{B}}(x, y)$, the diagram
        \[\begin{tikzcd}
        	{P_{x, y}(f)} & {(P_{x, y}(f))} \\
        	& {P_{x, y}((f))}
        	\arrow["{P_{x, y} \iota_f}"', from=1-1, to=2-2]
        	\arrow["{\iota_{P_{x, y}(f)}}", from=1-1, to=1-2]
        	\arrow["{\pi_{(f)}}", from=1-2, to=2-2]
        \end{tikzcd}\]
        commutes.
    \end{enumerate}
\end{definition}

Note again the subtle insertion of $()$ here, with $\pi_{x_0} : () \rightarrow P_{x_0, x_0}(())$.

We wish to show that these form a category $\textbf{UBicat}$ of small unbiased bicategories and unbiased pseudofunctors. We are thus obliged to define the composite of two unbiased pseudofunctors:

\begin{definition}[{\cite[pg. 7-8]{leinsterOperadsHigherDimensionalCategory2000}}] \label{defn_ubicat_func_comp}
    Let $\mathscr{B}, \mathscr{C}$ and $\mathscr{D}$ be unbiased bicategories. Let $P : \mathscr{B} \rightarrow \mathscr{C}$ and $Q : \mathscr{C} \rightarrow \mathscr{D}$ be unbiased pseudofunctors, with natural isomorphisms $\pi$ and $\theta$, respectively. Then define $Q \circ P : \mathscr{B} \rightarrow \mathscr{D}$ to be the unbiased pseudofunctor with:

    \begin{enumerate}
        \item Objects mapping as $x \mapsto Q(P(x))$;
        \item $(Q \circ P)_{x, y} := Q_{P(x), P(y)} \circ P_{x, y}$ for $x, y \in \textbf{ob}(\mathscr{B})$;
        \item Natural isomorphisms
        $$
            \psi_{x_0, \cdots, x_n} := \big( Q_{P(x_0), P(x_n)}(\pi_{x_0, \cdots, x_n}) \big) \big( \theta_{P(x_0), \cdots, P(x_n)} \circ (P_{x_0, x_1} \times \cdots \times P_{x_{n-1}, x_n}) \big)
        $$
        which are thus levelwise of the form
        $$
            \bigcirc_{i = 1}^n Q_{P(x_{i-1}), P(x_i)}(P_{x_{i-1}, x_i}(f_i)) \rightarrow Q_{P(x_0), P(x_n)}(P_{x_0, x_n}(\bigcirc_{i = 1}^n f_i));
        $$
        \item Natural isomorphisms $\psi_x := \big( Q_{x, x}(\pi_x) \big) \theta_{P(x)}$.
    \end{enumerate}
\end{definition}

That this defines a genuine pseudofunctor is routine. One may also check that this composition is associative and unital, with identity pseudofunctors clearly being identities on objects and hom-categories with trivial natural isomorphisms. Thus, we have the following:

\begin{proposition}[{\cite[pg. 8]{leinsterOperadsHigherDimensionalCategory2000}}] \label{prop_ubicat_cat}
    There is a category $\textbf{UBicat}$, whose objects are small unbiased bicategories, morphisms are unbiased pseudofunctors with composition defined as in Definition \ref{defn_ubicat_func_comp}.
\end{proposition}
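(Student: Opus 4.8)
The plan is to supply the verification that the proof sketch in Definition~\ref{defn_ubicat_func_comp} and the surrounding text leaves implicit: namely that $Q \circ P$ as constructed there really is an unbiased pseudofunctor, that identity pseudofunctors exist, and that composition is associative and unital. The only tools required are the interchange law for horizontal and vertical composition of natural transformations, the fact that a functor carries natural isomorphisms to natural isomorphisms, and naturality of the structure $2$-cells $\pi, \theta, \gamma, \iota$ in play; no genuinely new idea is needed, so the work is entirely bookkeeping, and I will indicate where that bookkeeping is heaviest.

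First I would confirm that $Q \circ P$ satisfies the axioms of Definition~\ref{defn_ubicat_pseudofunc}. The data is already specified, so two things remain. (a) Each $\psi_{x_0, \cdots, x_n}$ is a natural isomorphism, which is immediate since it is the vertical composite of $Q_{P(x_0), P(x_n)}$ applied to the natural isomorphism $\pi_{x_0, \cdots, x_n}$ with the whiskering of the natural isomorphism $\theta_{P(x_0), \cdots, P(x_n)}$ by $P_{x_0, x_1} \times \cdots \times P_{x_{n-1}, x_n}$; likewise $\psi_x$ is a composite of isomorphisms. (b) The associativity hexagon and unitality triangle of Definition~\ref{defn_ubicat_pseudofunc} hold: for each one I would draw the full pasting region, insert one copy of the corresponding axiom for $P$ (whiskered by the functors $Q_{P(-),P(-)}$) and one copy of the corresponding axiom for $Q$ (precomposed with the $P_{x_{i-1},x_i}$), and fill the remaining cells using naturality of $\theta$ against the components of $P_{x_0^1, x_{k_n}^n}(\gamma_{D'})$ (respectively $P_{x,y}(\iota_f)$) together with functoriality of $Q_{P(-),P(-)}$. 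This is a routine but lengthy diagram chase; it is exactly the step the paper defers to Leinster.

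Next I would exhibit the identity pseudofunctor $1_{\mathscr{B}}$: the identity on objects, the identity functor on each $\textbf{Hom}_{\mathscr{B}}(x,y)$, with $\pi_{x_0,\cdots,x_n} = \mathrm{id}$ and $\pi_x = \mathrm{id}$. For this choice the two pseudofunctor axioms degenerate precisely to the associativity hexagon and unitality triangle of $\mathscr{B}$ itself, so they are satisfied. Unitality of composition, $1_{\mathscr{C}} \circ P = P = P \circ 1_{\mathscr{B}}$, then follows by direct inspection of the formulas in Definition~\ref{defn_ubicat_func_comp}: whiskering by an identity functor and vertically composing with an identity natural transformation have no effect, and the identity functor applied to $\pi$ returns $\pi$; the same holds for the $0$-ary components $\pi_x$.

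Finally, associativity $(R \circ Q) \circ P = R \circ (Q \circ P)$ for pseudofunctors $\mathscr{B} \xrightarrow{P} \mathscr{C} \xrightarrow{Q} \mathscr{D} \xrightarrow{R} \mathscr{E}$: on objects and on hom-functors both sides are literally $R_{(-)(-)} \circ Q_{(-)(-)} \circ P_{(-)(-)}$, so the content is the equality of the structure isomorphisms. Expanding the formula of Definition~\ref{defn_ubicat_func_comp} twice on each side and using the interchange law to slide whiskerings past vertical composites, each side collapses to the common three-layer composite built from $R_{(-)(-)}$ applied to $Q_{(-)(-)}$ applied to $\pi_{x_0,\cdots,x_n}$, then $R_{(-)(-)}$ applied to $\theta_{P(x_0),\cdots,P(x_n)}$ whiskered by the $P_{x_{i-1},x_i}$, then the structure isomorphism of $R$ whiskered by the $Q_{P(-),P(-)}$ and the $P_{x_{i-1},x_i}$; the $0$-ary components match analogously. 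The one genuinely non-formal point here is a single application of the interchange law identifying $R_{(-)(-)}(\theta \circ (P_{x_0,x_1} \times \cdots))$ with $(R_{(-)(-)}\theta) \circ (\text{whiskered identities})$, used alongside functoriality of $R_{(-)(-)}$. I expect this associativity computation --- keeping the whiskerings and the interchange law straight across three pseudofunctors at once --- to be the main obstacle, though it remains pure bookkeeping rather than a conceptual difficulty.
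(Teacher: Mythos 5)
Your proposal is correct and follows exactly the route the paper takes: the paper simply declares these verifications routine and defers to Leinster, and your outline fills in precisely that bookkeeping (well-definedness of $Q\circ P$ via pasting the two pseudofunctor axioms with naturality of $\theta$, identities with trivial structure isomorphisms, and associativity/unitality by expanding Definition~\ref{defn_ubicat_func_comp} and commuting whiskerings past vertical composites). No gap; the argument matches the intended proof.
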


We thus might imagine that we seek a functor
$$
    h_2 : \textbf{SeSp}_2 \rightarrow \textbf{UBicat}.
$$
However, complications will arise over the domain for the functor $h_2$. Indeed, somehow we will need to choose the horizontal composites $\mu_n$ for every $2$-fold Segal space such that they result in a genuine functor. We choose to have a domain as follows:

\begin{definition} \label{defn_ubicat_cssp2comp}
    Let $\textbf{SeSp}_2^{comp}$ be the category whose objects are pairs $(X, (\mu_n)_{n \geq 0})$ of $2$-fold Segal spaces with a choice of horizontal composition and morphisms $(X, (\mu_n)_{n \geq 0}) \rightarrow (Y, (\nu_n)_{n \geq 0})$ are maps $X \rightarrow Y$ in $\textbf{SeSp}_2$.
\end{definition}

Note how the morphisms do not account for the choice of horizontal compositions in either the domain or codomain.

We hence seek a functor
$$
    h_2 : \textbf{SeSp}_2^{comp} \rightarrow \textbf{UBicat}.
$$

\subsection{Classical Versus Unbiased Bicategories}

One might balk at the definition of an unbiased bicategory. Why make use of such a definition when classical bicategories are better understood and more widely used? They have existed since B{\'e}nabou's definition in \cite{benabouIntroductionBicategories1967} and remain one of the oldest and best-understood instances of a higher category.

Such questions can be answered with the insight that nothing has truly been lost. Unbiased bicategories can be converted into bicategories easily enough:

\begin{definition}[{\cite[pg. 10]{leinsterOperadsHigherDimensionalCategory2000}}] \label{defn_ubicat_underlying_bicat}
    Let $\mathscr{B}$ be an unbiased bicategory. The \emph{underlying bicategory} $\Tilde{\mathscr{B}}$ is the classical bicategory given by the following data:
    \begin{enumerate}
        \item Objects and hom-categories are as in $\mathscr{B}$;
        
        \item The composition functors $\circ^{x, y, z} : \textbf{Hom}_{\Tilde{\mathscr{B}}}(x, y) \times \textbf{Hom}_{\Tilde{\mathscr{B}}}(y, z) \rightarrow \textbf{Hom}_{\Tilde{\mathscr{B}}}(x, z)$ for $x, y, z \in \textbf{ob}(\Tilde{\mathscr{B}})$ are the binary composition functors in $\mathscr{B}$;

        \item Identity functors $1_x : \ast \rightarrow \textbf{Hom}_{\Tilde{\mathscr{B}}}(x, x)$ for all $x \in \textbf{ob}(\Tilde{\mathscr{B}})$ are given by the functors $\circ^x$ in $\mathscr{B}$;
        
        \item The associators $\gamma_{w, x, y, z}$ for $w, x, y, z \in \Tilde{\mathscr{B}}$ are given by the tuples $X_1 := ((w, x), (x, y, z))$ and $X_2 := ((w, x, y), (y, z))$, along with the composites
        $$
            \alpha_{w, x, y, z} := \big( 1_{\circ^{w, y, z}} \circ (1_{\circ^{x, y, z}} \times \iota_{y, z}^{-1}) \big) \gamma_{X_2}^{-1} \gamma_{X_1} \big( 1_{\circ^{w, x, z}} \circ (\iota_{w, x} \times 1_{\circ^{x, y, z}}) \big)
        $$
        of domain $\circ^{w, x, z} \circ (1_{\textbf{Hom}_{\Tilde{\mathscr{B}}}(w, x)} \times \circ^{x, y, z})$ and codomain $\circ^{w, y, z} \circ (\circ^{w, x, y} \times 1_{\textbf{Hom}_{\Tilde{\mathscr{B}}}(y, z)})$;

        \item The unitors $\lambda_{x, y}$ and $\rho_{x, y}$ for $x, y \in \textbf{ob}(\Tilde{\mathscr{B}})$ are given by the natural isomorphisms
        $$
            \lambda_{x, y} := \iota^{-1}_{x, y} \gamma_{(x), (x, y)} \big( 1_{\circ^{x, x, y}} \circ (1_{\circ^x} \times \iota_{x, y}) \big) : \circ^{x, x, y} \circ (\circ^x \times 1_{\textbf{Hom}_{\Tilde{\mathscr{B}}}(x, y)}) \Rightarrow 1_{\textbf{Hom}_{\Tilde{\mathscr{B}}}(x, y)}
        $$
        and
        $$
            \rho_{x, y} := \iota^{-1}_{x, y} \gamma_{(x, y), (y)} \big( 1_{\circ^{x, y, y}} (\iota_{x, y} \times 1_{\circ^y}) \big) : \circ^{x, y, y} \circ (1_{\textbf{Hom}_{\Tilde{\mathscr{B}}}(x, y)} \times \circ^{y}) \Rightarrow 1_{\textbf{Hom}_{\Tilde{\mathscr{B}}}(x, y)}.
        $$
    \end{enumerate}
\end{definition}

The pentagon and triangle axioms may be checked to hold from the axioms of an unbiased bicategory. Indeed, the pentagon axiom quickly follows from pondering the following diagram:

\[\begin{tikzcd}
	&& {((fg)h)k} \\
	{(f(gh))k} & {(fgh)k} & fghk & {(fg)hk} & {(fg)(hk)} \\
	& {f(gh)k} & {f(ghk)} & {fg(hk)} \\
	& {f((gh)k)} && {f(g(hk))}
	\arrow[dotted, from=1-3, to=2-1]
	\arrow[dotted, from=2-1, to=4-2]
	\arrow[dotted, from=4-2, to=4-4]
	\arrow[dotted, from=2-5, to=4-4]
	\arrow[dotted, from=1-3, to=2-5]
	\arrow[dashed, from=1-3, to=2-2]
	\arrow[dashed, from=2-1, to=2-2]
	\arrow[dashed, from=2-1, to=3-2]
	\arrow[dashed, from=4-2, to=3-2]
	\arrow[dashed, from=4-2, to=3-3]
	\arrow[dashed, from=4-4, to=3-3]
	\arrow[dashed, from=4-4, to=3-4]
	\arrow[dashed, from=2-5, to=3-4]
	\arrow[dashed, from=2-5, to=2-4]
	\arrow[dashed, from=1-3, to=2-4]
	\arrow[from=2-2, to=2-3]
	\arrow[from=3-2, to=2-3]
	\arrow[from=3-3, to=2-3]
	\arrow[from=3-4, to=2-3]
	\arrow[from=2-4, to=2-3]
\end{tikzcd}\]

The dotted arrows are the newly constructed associators in the biased bicategory, while the dashed and solid arrows are associators in the original unbiased bicategory. Commutativity now follows from all the five triangles and quadrilaterals in the pentagon's interior commuting. The triangles commute by definition, while the quadrilaterals are each an instance of the associativity condition of an unbiased bicategory. Thus, the result holds.

One may also construct a pseudofunctor between classical bicategories from an unbiased pseudofunctor, just by restricting to binary composition. The laws of a pseudofunctor are then induced. This conversion induces a functor $\Tilde{\bullet} : \textbf{UBicat} \rightarrow \textbf{Bicat}$, to the category of bicategories and pseudofunctors:

\begin{theorem} [{\cite[Thm. 1.3.1]{leinsterOperadsHigherDimensionalCategory2000}}] \label{thm_ubicat_equiv}
    For any bicategory $\mathscr{B}$, $\Tilde{\mathscr{B}}$ is a bicategory. Moreover, $\Tilde{\bullet}$ defines a genuine functor from $\textbf{UBicat}$ to $\textbf{Bicat}$. This functor is fully faithful and essentially surjective.
\end{theorem}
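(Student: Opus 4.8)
I will check the three assertions in turn: that $\Tilde{\mathscr{B}}$ is a bicategory, that $\Tilde{\bullet}$ is a functor, and that it is fully faithful and essentially surjective. For the first, each associator $\alpha_{w,x,y,z}$ and each unitor $\lambda_{x,y}, \rho_{x,y}$ is, by its very definition, a pasting of invertible natural isomorphisms built from the structure isomorphisms $\gamma, \iota$ of $\mathscr{B}$, hence is itself invertible and natural. The pentagon axiom is precisely the commutativity of the large diagram displayed immediately before the theorem: its inner triangles commute by the definitions of the relevant $\alpha$'s, while its inner quadrilaterals are instances of the associativity axiom of Definition~\ref{defn_ubicat_ubicat}. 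The triangle axiom is handled by an analogous but smaller diagram whose cells are the defining pastings of $\alpha, \lambda, \rho$ together with a single instance each of the unitality and associativity axioms. For functoriality of $\Tilde{\bullet}$: it is the identity on objects and hom-categories, so it preserves identity pseudofunctors, and a direct comparison of Definition~\ref{defn_ubicat_func_comp} with the ordinary composition of biased pseudofunctors shows that the binary and nullary constraints $\psi_{x,y,z}, \psi_x$ of $Q \circ P$ restrict exactly to the composites of the constraints of $\Tilde{Q}$ and $\Tilde{P}$, so $\Tilde{Q \circ P} = \Tilde{Q} \circ \Tilde{P}$.

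For essential surjectivity, given a bicategory $\mathscr{C}$ I would manufacture an unbiased bicategory $\mathscr{C}^{\mathrm{ub}}$ with the same objects and hom-categories, with $\circ^x := 1_x$ and with $\circ^{x_0, \cdots, x_n}$ obtained from the binary composition of $\mathscr{C}$ by a fixed bracketing convention, and with every constraint $\gamma_X$ and $\iota_{x,y}$ taken to be the canonical coherence isomorphism of $\mathscr{C}$ relating the two bracketings in question; these exist and are unique by Mac Lane--Par\'e coherence for bicategories. The unbiased associativity and unitality axioms then hold because every diagram of coherence isomorphisms commutes, so $\mathscr{C}^{\mathrm{ub}}$ is a genuine unbiased bicategory. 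By construction $\Tilde{\mathscr{C}^{\mathrm{ub}}}$ has the same objects, hom-categories, binary composites and identities as $\mathscr{C}$, and its associators and unitors, being pastings of coherence isomorphisms, are themselves the unique coherence isomorphisms of their respective shapes and hence agree with those of $\mathscr{C}$; thus $\Tilde{\mathscr{C}^{\mathrm{ub}}} = \mathscr{C}$.

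For full faithfulness, fix unbiased bicategories $\mathscr{B}, \mathscr{C}$ and consider the comparison $\textbf{UBicat}(\mathscr{B}, \mathscr{C}) \to \textbf{Bicat}(\Tilde{\mathscr{B}}, \Tilde{\mathscr{C}})$. An unbiased pseudofunctor $P$ has its object map, hom-functors, nullary constraint $\pi_x$ and binary constraint $\pi_{x,y,z}$ all visibly recoverable from $\Tilde{P}$; its unary constraint is forced by the unit axiom of Definition~\ref{defn_ubicat_pseudofunc} as $\pi_{(f)} = P_{x,y}(\iota_f) \circ \iota_{P_{x,y}(f)}^{-1}$; and for $n \geq 3$, applying the compatibility axiom to the two-block partition of block sizes $(n-1, 1)$ exhibits $\pi_{(f_1, \cdots, f_n)}$ as a composite of constraints of strictly smaller arity with $P$ applied to a structure isomorphism of $\mathscr{B}$ and the inverse of one of $\mathscr{C}$. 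An induction then shows every higher constraint of $P$ is determined by $\Tilde{P}$, giving injectivity. For surjectivity, given $G : \Tilde{\mathscr{B}} \to \Tilde{\mathscr{C}}$ I would define $P$ to agree with $G$ on objects, hom-categories and binary/nullary constraints, define its higher constraints by the recursion just described, and then verify that $P$ satisfies the unbiased pseudofunctor axioms and that $\Tilde{P} = G$. This verification is the main obstacle: it is a coherence computation, most economically organized by first establishing a coherence theorem for unbiased bicategories parallel to the bicategorical one used above, after which the remaining diagrams commute automatically. Everything else is bookkeeping.
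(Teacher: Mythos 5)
Your treatment of the first two claims coincides with what the paper actually does: the paper verifies the pentagon by exactly the diagram you describe (inner triangles commute by the definition of the new associators, inner quadrilaterals are instances of the unbiased associativity axiom), and it disposes of functoriality by the same observation that composition of unbiased pseudofunctors restricts on binary/nullary constraints to composition of biased ones. For full faithfulness and essential surjectivity the paper offers no argument at all --- it cites Leinster's Theorem 1.3.1 --- so everything you write there goes beyond the source. Your essential surjectivity argument (build $\mathscr{C}^{\mathrm{ub}}$ with a fixed bracketing, take all constraints to be the canonical coherence isomorphisms, and use uniqueness of coherence cells to get $\Tilde{\mathscr{C}^{\mathrm{ub}}} = \mathscr{C}$ on the nose) is sound and is essentially the standard route. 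Your injectivity argument for full faithfulness is also correct: the unit axiom pins down $\pi_{(f)}$, and the compatibility axiom for the block partition $(n-1,1)$, with $\gamma_D$ invertible, determines each $n$-ary constraint from lower-arity ones, since the $\gamma$'s of the fixed unbiased bicategories $\mathscr{B}$ and $\mathscr{C}$ are common to any two pseudofunctors being compared.

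The one genuine gap --- which you flag yourself --- is the surjectivity half of full faithfulness: having defined the higher constraints of $P$ by your recursion from a biased pseudofunctor $G$, you must verify that they satisfy the unbiased compatibility axiom for \emph{every} partition $(k_1,\dots,k_n)$, not just the $(n-1,1)$ ones used to define them, and that the result is independent of the recursion order. This is not bookkeeping; it is the actual content of Leinster's theorem, and your proposed shortcut (``first establish a coherence theorem for unbiased bicategories'') is itself a nontrivial result that would need proof. So the writeup is correct and well-structured as far as it goes, but the hardest quarter of the statement is deferred rather than proved --- which, to be fair, is also what the paper does by citing Leinster.
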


It is clear however that there is no `canonical' inverse to the functor $\Tilde{\bullet}$. There are many ways to obtain an unbiased bicategory from a biased one, with a countably infinite number of choices resulting just from building the unbiased compositions out of different nestings of binary composition operations. We will not explore this further here.
\section{Homotopies and Globular $2$-Homotopies}

It is now time for us to address the question of coherence isomorphisms. These will be achieved by a certain notion of \emph{homotopy} between maps $f, g : X \rightarrow Y$ in $\textbf{SeSp}^{inj}$. In general, it is our aim to show that, for a given $2$-fold Segal space $X$ with some choice of horizontal compositions $(\mu_n)_{n \geq 0}$, maps $f : K_1 \rightarrow K_2$ of simplicial composition diagrams induce homotopies from $\mu_{K_1}$ to $f^\ast \circ \mu_{K_2}$, which will in turn induce homotopies between $\circ^{x_0, \cdots, x_n}_{K_1}$ and $\circ^{x_0, \cdots, x_n}_{K_2}$ for all $x_0, \cdots, x_n \in (X_{0, 0})_0$.

The payoff of this result is in the images of these homotopies under $h_1$. We will design our notion of homotopy to be such that the image of any homotopy under $h_1$ is a natural isomorphism between functors. Moreover, we will show that, under certain conditions, any two homotopies between the same two functors will be related by a higher `homotopy between homotopies', which we will refer to as a \emph{globular $2$-homotopy}. The image of such a $2$-homotopy under $h_1$ will be an equality between natural isomorphisms. Our coherence isomorphisms will then be the images of certain homotopies between composition operations $\circ^{x_0, \cdots, x_n}_K$ under $h_1$, which will satisfy the coherence conditions because of the existence of certain globular $2$-homotopies.

The main concrete result of this section is an enrichment of $h_1$ by Kan complexes, whose domain $\textbf{SeSp}^{inj}$ will be such that the hom-space from $A$ to $B$ will be the Kan complex of `higher homotopies' between maps $A \rightarrow B$. The $1$-simplices will be homotopies, while the $2$-simplices will be some generalization of globular $2$-homotopies to a simplicial context. Moreover, the codomain $\textbf{Cat}$ will be enriched by natural isomorphisms between functors. We will also give a more abstract justification for globular $2$-homotopies, proving they exist in a general model category and that their presence between certain homotopies is not specific to simplicial spaces or Segal spaces.

\subsection{Homotopy Categories of Segal Spaces}

Before we can proceed further, we will need a natural extension of the fact that $\pi_0$ preserves products to $h_1$. We do not claim originality for this elementary observation as it is noted in \cite[pg. 31]{rezkModelHomotopyTheory2000}. However, we have not yet found a proof in the literature, so we provide one here:

\begin{lemma} \label{lemma_h1_prods}
    $h_1$ preserves products up to natural isomorphism of categories.
\end{lemma}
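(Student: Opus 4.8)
The plan is to prove the stronger statement that the canonical comparison functor
$$
\langle h_1(\mathrm{pr}_X), h_1(\mathrm{pr}_Y) \rangle : h_1(X \times Y) \longrightarrow h_1(X) \times h_1(Y),
$$
induced by the two product projections, is an isomorphism of categories, natural in the Segal spaces $X$ and $Y$; this is exactly the assertion that $h_1$ preserves (binary, and hence finite) products. First I would check that $X \times Y$, formed in $\textbf{sSpace}$, genuinely lands in $\textbf{SeSp}$ so that it is the product there: it is Reedy fibrant because a product of fibrant objects is fibrant in any model category, and its Segal maps are $\gamma_n^{X \times Y} = \gamma_n^X \times \gamma_n^Y$, since both $(-)_n$ and the iterated pullback over $(-)_0$ commute with the Cartesian product; these are levelwise weak equivalences of simplicial sets, as the Cartesian product of weak equivalences of simplicial sets is again a weak equivalence.

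Next, for the bijection on objects: the objects of $h_1(X \times Y)$ are $((X\times Y)_0)_0 = (X_0)_0 \times (Y_0)_0$, which is precisely the object set of $h_1(X) \times h_1(Y)$, and the comparison functor is the identity there. For hom-sets, fix $a = (x,y)$ and $b = (x', y')$. Because pullbacks commute with products, the mapping space $(X\times Y)(a,b)$, defined in Definition~\ref{defn_sesp2_map_sp} as a pullback over $(X_0 \times Y_0)^2$, is canonically isomorphic to $X(x,x') \times Y(y,y')$, with the evident projections. The essential input is then the elementary, well-known fact (noted in \cite[pg. 31]{rezkModelHomotopyTheory2000}) that $\pi_0 : \textbf{sSet} \to \textbf{Set}$ preserves finite products: the map $\pi_0(S\times T) \to \pi_0(S) \times \pi_0(T)$ is surjective via $[(s,t)] \mapsto ([s],[t])$, and injective because two zig-zags of $1$-simplices witnessing $[s] = [s']$ and $[t] = [t']$ can be padded with degeneracies to equal length and then spliced (inserting an intermediate vertex at a step where the two legs point in opposite directions) into a single zig-zag in $S \times T$ from $(s,t)$ to $(s',t')$. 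Applying this yields the desired bijection $\textbf{Hom}_{h_1(X\times Y)}(a,b) \cong \textbf{Hom}_{h_1(X)}(x,x') \times \textbf{Hom}_{h_1(Y)}(y,y')$.

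It remains to observe that the comparison map is a functor and is natural, both of which are automatic and require no manipulation of the composition zig-zag of Definition~\ref{defn_sesp_h_1}: it is the pair $\langle h_1(\mathrm{pr}_X), h_1(\mathrm{pr}_Y)\rangle$ of honest functors, so it respects identities and composition by construction, and naturality in $X$ and $Y$ follows from naturality of the projections and functoriality of $h_1$, since the relevant square commutes after postcomposition with each projection out of $h_1(X') \times h_1(Y')$. The nullary case $h_1(\ast) \cong \ast$ (the terminal Segal space being the constant point) is immediate. Thus a bijective-on-objects, fully faithful, natural comparison functor is produced, proving the claim. The only step carrying real mathematical content — and hence the main obstacle, such as it is — is the combinatorial verification that $\pi_0$ preserves products, i.e.\ the padding-and-splicing of zig-zags; the rest is bookkeeping with the universal property of the product and the compatibility, already available above, of the mapping-space construction and the Segal maps with Cartesian products.
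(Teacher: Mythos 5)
Your proof is correct and follows essentially the same route as the paper's: identify objects, use the isomorphism $(X\times Y)((x_1,y_1),(x_2,y_2))\cong X(x_1,x_2)\times Y(y_1,y_2)$ coming from the pullback definition of mapping spaces, and conclude via $\pi_0$ preserving products. You simply fill in more detail than the paper does (the check that $X\times Y$ is a Segal space, the zig-zag splicing argument for $\pi_0$, and packaging the map as $\langle h_1(\mathrm{pr}_X), h_1(\mathrm{pr}_Y)\rangle$ so functoriality and naturality are automatic), all of which is consistent with the paper's terser argument.
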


\begin{proof}
    Let $X$ and $Y$ be Segal spaces. We seek to specify an isomorphism in $\textbf{Cat}$ of the form $p : h_1(X \times Y) \rightarrow h_1(X) \times h_1(Y)$, which is natural in $X$ and $Y$.

    On objects, there is an evident natural bijection $\textbf{ob}(h_1(X \times Y)) \rightarrow \textbf{ob}(h_1(X) \times h_1(Y))$. For morphisms, note that
    \begin{align*}
        (X \times Y)((x_1, y_1), (x_2, y_2)) &:= \{(x_1, y_1)\} \times_{X_0 \times Y_0} (X_1 \times Y_1) \times_{X_0 \times Y_0} \{(x_2, y_2)\} \\
        &\cong (\{x_1\} \times_{X_0} X_1 \times_{X_0} \{x_2\}) \times (\{y_1\} \times_{Y_0} Y_1 \times_{Y_1} \{y_2\}) \\
        &= X(x_1, x_2) \times Y(x_2, y_2).
    \end{align*}
    Hence, there are natural bijections
    $$
        \textbf{Hom}_{h_1(X \times Y)}((x_1, y_1), (x_2, y_2)) \rightarrow \textbf{Hom}_X(x_1, x_2) \times \textbf{Hom}_Y(y_1, y_2)
    $$
    induced by $\pi_0$ preserving products. That these respect identities and composition is easily verified.
\end{proof}

With this out of the way, the current aim for us is to show that $h_1$ sends some notion of homotopies to natural isomorphisms.

To understand what this means in a precise sense, we will devise a suitable interpretation of the categories $\textbf{SeSp}^{inj}$ and $\textbf{Cat}$ as $\textbf{sSet}$-enriched categories that $h_1$ respects, where $\textbf{SeSp}^{inj}$ is enriched via homotopies between maps and $\textbf{Cat}$ is enriched via natural isomorphisms between functors. The latter is classical, though we will need a modification of it suitable for our methods.

\begin{definition} \label{defn_h1_i_n}
    Define $I[n] \in \textbf{Cat}$ to be the unique groupoid with objects $\{0, \cdots, n\}$ and with a unique morphism between any two objects.
\end{definition}

\begin{definition}
    Define $\textbf{Iso} : \textbf{Cat} \rightarrow \textbf{Grpd}$ to be the functor sending $\mathscr{C}$ to the maximal subgroupoid $\textbf{Iso}(\mathscr{C})$ of $\mathscr{C}$.
\end{definition}

Note that $\textbf{Iso}$ is right adjoint to the natural inclusion $\iota : \textbf{Grpd} \rightarrow \textbf{Cat}$, which itself evidently preserves finite products and terminal objects. Thus, the functors $\textbf{Iso}, \iota$ and the right adjoint functor $\textbf{nerve} : \textbf{Cat} \rightarrow \textbf{sSet}$ are all monoidal under the monoidal structures on $\textbf{Cat}, \textbf{Grpd}$ and $\textbf{sSet}$ given by Cartesian products. This implies the functor
$$
    \textbf{nerve} \circ \iota \circ \textbf{Iso} : \textbf{Cat} \rightarrow \textbf{sSet}
$$
is monoidal under this structure on $\textbf{Cat}$ and $\textbf{sSet}$.

Note moreover that $\textbf{Cat}$ is naturally enriched over itself by considering categories $\textbf{Fun}(-, -)$ of functors and natural transformations. Thus, by \cite[Lemma 3.4.3]{riehlCategoricalHomotopyTheory2014}, applying $\textbf{nerve} \circ \iota \circ \textbf{Iso}$ to this enrichment allows us to obtain the following enrichment of $\textbf{Cat}$ by simplicial sets:

\begin{definition} \label{defn_h1_cat_s}
    Let $\textbf{Cat}_s$ be the $\textbf{sSet}$-enriched category of small categories with hom-spaces of the form
    $$
        \textbf{Hom}_{\textbf{Cat}_s}(\mathscr{C}, \mathscr{D}) := \textbf{nerve}(\textbf{Iso}(\textbf{Fun}(\mathscr{C}, \mathscr{D}))).
    $$
    The set of $n$-simplices for this simplicial set may be written as
    $$
        \textbf{Hom}_{\textbf{Cat}_s}(\mathscr{C}, \mathscr{D})_n \cong \textbf{Hom}_{\textbf{Cat}}(\mathscr{C} \times I[n], \mathscr{D}).
    $$
\end{definition}

There is a clear cosimplicial object $I[\bullet] : \Delta \rightarrow \textbf{Grpd}$ sending each $[n]$ to $I[n]$, defining the simplicial maps here. To see how the above description of natural isomorphisms applies, note that a natural isomorphism $F \cong G$ of functors $F, G : \mathscr{C} \rightarrow \mathscr{D}$ can be alternatively be written as a functor $\mathscr{C} \times I[1] \rightarrow \mathscr{D}$.

\subsection{Cosimplicial Resolutions}

The story for Segal spaces demands some more advanced technology. We could certainly state that $\textbf{SeSp}^{inj}$ is enriched in $\textbf{sSet}$ almost immediately, using $\textbf{Map}_1^0(-, -)$. This is however not the structure we seek. Our wish is for the $1$-simplices in our enrichment to represent homotopies of some description, which should be sent by $h_1$ to natural isomorphisms. More precisely, we wish to have a notion of homotopy between maps $f, g : X \rightarrow Y$ in $\textbf{SeSp}^{inj}$ of the form $H : X \times M \rightarrow Y$ for some fixed $M \in \textbf{SeSp}^{inj}$ such that $h_1(M) \cong I[1]$. If this condition is satisfied, then we may apply $h_1$ to obtain a natural isomorphism from $H$ between $h_1(f)$ and $h_1(g)$ of the form
$$
    h_1(X) \times I[1] \cong h_1(X) \times h_1(M) \cong h_1(X \times M) \xrightarrow{h_1(H)} h_1(Y).
$$
We would moreover like these maps $X \times M \rightarrow Y$ to be the $1$-simplices in our enrichment of $\textbf{SeSp}^{inj}$ and in turn $h_1$.

Let $X, Y \in \textbf{SeSp}^{inj}$. A $1$-simplex in $\textbf{Map}_1^0(X, Y)$ is a morphism $X \square_1^0 \Delta[1] \rightarrow Y$. It is at this point we notice a fundamental obstruction: the constant bisimplicial set $\iota_1^0\Delta[1]$ is not Reedy fibrant, so is not an object in $\textbf{SeSp}^{inj}$.

Thus, we must seek some alternative construction. One other such potential enrichment of $\textbf{SeSp}^{inj}$ we could turn to is given by mapping spaces from $X$ to $Y$ of the form
$$
    \textbf{Map}_F(X, Y) := ((Y^X)_\bullet)_0.
$$
The $n$-simplices are now of the form $f : X \times F_1^0(n) \rightarrow Y$. Notably, $F_1^0(n)$ is Reedy fibrant as it is levelwise discrete, as noted more generally in \cite[pg. 6]{rezkModelHomotopyTheory2000}. Moreover, it is evidently a Segal space; indeed, the Segal maps in this case are bijections. Thus, we may apply $h_1$ to such a $1$-simplex $f$ and obtain a functor
$$
    h_1(f) : h_1(X) \times h_1(F_1^0(1)) \cong h_1(X) \times [1] \rightarrow h_1(Y).
$$
This is precisely a natural transformation between functors. However, we are interested only in natural \emph{isomorphisms}, as all of the coherence $2$-morphisms we wish to construct for our homotopy bicategories must be as such. Hence, though one could likely proceed with the above enrichment, we it will be beneficial for us to instead pick an enrichment of $\textbf{SeSp}^{inj}$ whose $1$-simplices are guaranteed to be sent by $h_1$ to natural isomorphisms on the nose.

A sensible path forwards is to identify some deeper mechanism which gives rise to the appearance of $\Delta[1]$ in our theory and consider whether $\Delta[1]$ can be thus substituted for something more well-behaved, while continuing to reap all the benefits said mechanism provided. The correct such structure is that of a \emph{homotopy function complex} and the underlying \emph{cosimplicial resolution}.

\begin{definition}[{\cite[Not. 16.1.1]{hirschhornModelCategoriesTheir2009}} {\cite[Def. 16.1.2]{hirschhornModelCategoriesTheir2009}}] \label{defn_loc_cosimp_res}
    Let $\mathscr{M}$ be a model category. Let $X \in \mathscr{M}$. Define $\textbf{cc}_*(X) \in \mathscr{M}^\Delta$ to be the constant functor to $X$.

    A \emph{cosimplicial resolution of $X$} is a cofibrant approximation $\Tilde{X} \rightarrow \textbf{cc}_*(X)$ in the Reedy model category structure\footnote{Note that the source Reedy category is now $\Delta$ rather than $\Delta^{op}$.} on $\mathscr{M}^\Delta$.
\end{definition}

\begin{definition}[{\cite[Def. 17.1.1 and Notation 16.4.1]{hirschhornModelCategoriesTheir2009}}] \label{defn_loc_left_htpy_func_cplx}
    Let $\mathscr{M}$ be a model category and $X, Y \in \mathscr{M}$. A \emph{left homotopy function complex from $X$ to $Y$} is a triple
    $$
        (\Tilde{X}, \hat{Y}, \mathscr{M}(\Tilde{X}, \hat{Y}))
    $$
    where $\Tilde{X}$ is a cosimplicial resolution of $X$, $\hat{Y}$ is a fibrant approximation to $Y$ and $\mathscr{M}(\Tilde{X}, \hat{Y})$ is the simplicial set which at level $n$ is the set $\textbf{Hom}_{\mathscr{M}}(\Tilde{X}_n, \hat{Y})$.
\end{definition}

The reason to think about these is that cosimplicial resolutions are closely tied to notions of left homotopy, always exhibiting valid cylinder objects:

\begin{definition}[{\cite[Def. 4.2]{dwyerSpalinskiHomotopyTheoriesModel1995}}] \label{defn_h1_cylobj}
    Let $\mathscr{M}$ be a model category and $C$ an object in $\mathscr{M}$. A \emph{cylinder object of $C$} is a factorization
    $$
        C \sqcup C \xrightarrow{i} C \wedge I \xrightarrow{p} C
    $$
    of the map $C \sqcup C \rightarrow C$ such that $p$ is a weak equivalence. The cylinder object is a \emph{good cylinder object} if $i$ is a cofibration and a \emph{very good cylinder object} if it is good and $p$ is a trivial fibration.
\end{definition}

\begin{definition}[{\cite[pg. 19]{dwyerSpalinskiHomotopyTheoriesModel1995}}] \label{defn_h1_lefthtpy}
    Let $\mathscr{M}$ be a model category and $f, g : C \rightarrow D$ be morphisms in $\mathscr{M}$. Suppose $C \sqcup C \xrightarrow{i} C \wedge I \xrightarrow{p} C$ is a cylinder object for $C$. A \emph{left homotopy $H : C \wedge I \rightarrow D$ from $f$ to $g$} is a map $H$ such that $H \circ i$ is the map $f + g : C \sqcup C \rightarrow D$.
    
    The map $H$ is moreover a \emph{good left homotopy} if $C \wedge I$ is a good cylinder object and a \emph{very good left homotopy} if $C \wedge I$ is a very good cylinder object.
\end{definition}

\begin{proposition}[{\cite[Prop. 16.1.6]{hirschhornModelCategoriesTheir2009}}] \label{prop_h1_cosimp_res_cylobj}
    Let $\mathscr{M}$ be a model category. If $\Tilde{X}$ is a cosimplicial resolution of $X$ in $\mathscr{M}$, then $$\Tilde{X}_0 \sqcup \Tilde{X}_0 \xrightarrow{\Tilde{X}_{\langle 0 \rangle} \sqcup \Tilde{X}_{\langle 1 \rangle}} \Tilde{X}_1 \xrightarrow{{\Tilde{X}_{\langle 0, 0 \rangle}}} \Tilde{X}_0$$ is a good cylinder object of $\Tilde{X}_0$.
\end{proposition}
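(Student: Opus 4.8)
The plan is to verify the three defining conditions of a good cylinder object directly from the two ingredients in the definition of a cosimplicial resolution: that $\tilde{X}$ is Reedy cofibrant in $\mathscr{M}^\Delta$, and that the structure map $\eta \colon \tilde{X} \to \textbf{cc}_*(X)$ is a levelwise weak equivalence.

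First I would check that the displayed composite $\tilde{X}_0 \sqcup \tilde{X}_0 \xrightarrow{i} \tilde{X}_1 \xrightarrow{p} \tilde{X}_0$, with $i = \tilde{X}_{\langle 0 \rangle} \sqcup \tilde{X}_{\langle 1 \rangle}$ and $p = \tilde{X}_{\langle 0, 0 \rangle}$, is the fold map $\nabla \colon \tilde{X}_0 \sqcup \tilde{X}_0 \to \tilde{X}_0$. Since $\langle 0, 0 \rangle \circ \langle 0 \rangle = 1_{[0]} = \langle 0, 0 \rangle \circ \langle 1 \rangle$ in $\Delta$, functoriality of $\tilde{X}$ gives $p \circ \tilde{X}_{\langle 0 \rangle} = 1_{\tilde{X}_0} = p \circ \tilde{X}_{\langle 1 \rangle}$, and the universal property of the coproduct identifies $p \circ i$ with $\nabla$. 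So the triple is a factorization of the codiagonal, as a cylinder object must be.

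Next, $p$ is a weak equivalence. Applying $\tilde{X}(-)$ and $\textbf{cc}_*(X)(-)$ to $\langle 0, 0 \rangle \colon [1] \to [0]$ and using naturality of $\eta$ yields a commuting square whose vertical arrows are $p = \tilde{X}_{\langle 0, 0 \rangle}$ and $1_X$ and whose horizontal arrows $\eta_1 \colon \tilde{X}_1 \to X$ and $\eta_0 \colon \tilde{X}_0 \to X$ are weak equivalences, $\eta$ being a levelwise weak equivalence. Two-out-of-three forces $p$ to be a weak equivalence, so the cylinder object is at least ordinary. The only step with genuine content is showing $i$ is a cofibration, i.e. that the cylinder object is good. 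The key observation is that $i$ is precisely the latching map of $\tilde{X}$ at $[1]$: the latching category of $\Delta$ (in its Reedy structure, with the degree-raising maps being the injections/cofaces) at $[1]$ has as objects the two non-identity injections $[0] \hookrightarrow [1]$, namely $\langle 0 \rangle$ and $\langle 1 \rangle$, and has no non-identity morphisms, since a degree-raising map $[0] \to [0]$ must be the identity and the identity does not intertwine $\langle 0 \rangle$ with $\langle 1 \rangle$. Hence the latching category is discrete on two objects, $L^1 \tilde{X} \cong \tilde{X}_0 \sqcup \tilde{X}_0$, and its latching map is exactly $\tilde{X}_{\langle 0 \rangle} \sqcup \tilde{X}_{\langle 1 \rangle} = i$. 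Reedy cofibrancy of $\tilde{X}$ says every latching map is a cofibration, so $i$ is a cofibration and we are done.

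The main obstacle is purely bookkeeping around the Reedy structure on $\Delta$ rather than $\Delta^{op}$: one must be sure that for cosimplicial objects the degree-raising maps are the cofaces, so that the latching object at $[1]$ really is $\tilde{X}_0 \sqcup \tilde{X}_0$ (built from the two coface maps) and not something involving the codegeneracies. Once the latching object and latching map are correctly identified, the statement follows immediately from the definitions with no further difficulty.
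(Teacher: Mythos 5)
Your proof is correct and is essentially the standard argument behind the cited result of Hirschhorn (the paper itself gives no proof, deferring to [Prop.\ 16.1.6] there): the composite is the fold map since $\langle 0,0\rangle\langle 0\rangle = \langle 0,0\rangle\langle 1\rangle = 1_{[0]}$, the map $\Tilde{X}_{\langle 0,0\rangle}$ is a weak equivalence by two-out-of-three against the levelwise equivalence to $\textbf{cc}_*(X)$, and $\Tilde{X}_{\langle 0\rangle}\sqcup\Tilde{X}_{\langle 1\rangle}$ is the latching map of $\Tilde{X}$ at $[1]$, hence a cofibration by Reedy cofibrancy. Your identification of the latching category at $[1]$ as discrete on the two cofaces is the right bookkeeping and the only point of substance.
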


One may show that $\textbf{Map}_1^0(-, -)$ is a left homotopy function complex induced by a cosimplicial resolution whose cylinder objects are of the form $X \sqcup X \rightarrow X \square_1^0 \Delta[1] \rightarrow X$ \cite[Cor. 2.2.53]{romoTowardsAlgebraicNCategories}. It seems reasonable then that replacing this cosimplicial resolution is the correct way to replace $\Delta[1]$. We should choose the resolution so that the resulting cylinder objects are of the form $X \sqcup X \rightarrow X \times K \rightarrow X$ for some $K$ such that $h_1(K) \cong I[1]$. This will imply we obtain an enrichment in homotopy function complexes whose $1$-simplices are sent by $h_1$ to natural isomorphisms. We will find that this indeed leads to the enrichment we seek.

Another advantage to this approach to building an enrichment is that our mapping spaces will always be Kan complexes:

\begin{proposition}[{\cite[Prop. 17.1.3]{hirschhornModelCategoriesTheir2009}}] \label{prop_h1_funccomp_kan}
    Suppose $\mathscr{M}$ is a model category and $X$ and $Y$ are objects of $\mathscr{M}$. Then a left homotopy function complex is a Kan complex.
\end{proposition}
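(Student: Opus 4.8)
The statement is \cite[Prop. 17.1.3]{hirschhornModelCategoriesTheir2009}, so we could simply cite it; the plan below sketches the argument for completeness. The approach is to realize $\mathscr{M}(\tilde{X}, \hat{Y})$ as a fibrant object of $\textbf{sSet}$ by means of an adjunction. Recall that a cosimplicial resolution $\tilde{X}$ carries a functor $\tilde{X} \otimes - : \textbf{sSet} \rightarrow \mathscr{M}$, defined on $K$ by the coend $\tilde{X} \otimes K := \int^{[m] \in \Delta} K_m \cdot \tilde{X}_m$ (copower), for which co-Yoneda gives a natural isomorphism $\tilde{X} \otimes \Delta[m] \cong \tilde{X}_m$, and which is left adjoint to $\mathscr{M}(\tilde{X}, -) : \mathscr{M} \rightarrow \textbf{sSet}$, $Z \mapsto (\textbf{Hom}_{\mathscr{M}}(\tilde{X}_m, Z))_m$. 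First I would observe that to show $\mathscr{M}(\tilde{X}, \hat{Y})$ is a Kan complex it suffices to solve, for each $n \geq 1$ and $0 \leq k \leq n$, the extension problem for the horn inclusion $\Lambda^k[n] \hookrightarrow \Delta[n]$ against $\mathscr{M}(\tilde{X}, \hat{Y}) \rightarrow \ast$; transporting this across the adjunction, it becomes the problem of lifting a map $\tilde{X} \otimes \Lambda^k[n] \rightarrow \hat{Y}$ along $\tilde{X} \otimes \Lambda^k[n] \rightarrow \tilde{X} \otimes \Delta[n]$. Since $\hat{Y}$ is fibrant, such a lift exists as soon as $\tilde{X} \otimes \Lambda^k[n] \rightarrow \tilde{X} \otimes \Delta[n]$ is an acyclic cofibration in $\mathscr{M}$. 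Thus the whole proposition reduces to showing that $\tilde{X} \otimes -$ is a left Quillen functor (equivalently, that $\mathscr{M}(\tilde{X}, -)$ is right Quillen and so preserves fibrant objects).

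For the cofibration half, I would use the standard identifications $\tilde{X} \otimes \Delta[n] \cong \tilde{X}_n$ and $\tilde{X} \otimes \partial\Delta[n] \cong L_n\tilde{X}$ (the $n$-th latching object), under which the inclusion $\partial\Delta[n] \hookrightarrow \Delta[n]$ induces the $n$-th latching map $L_n\tilde{X} \rightarrow \tilde{X}_n$. By definition, $\tilde{X}$ being Reedy cofibrant means precisely that all these latching maps are cofibrations in $\mathscr{M}$. Since every monomorphism of simplicial sets is a transfinite composition of pushouts of boundary inclusions $\partial\Delta[m] \hookrightarrow \Delta[m]$ and $\tilde{X} \otimes -$ is cocontinuous, it follows that $\tilde{X} \otimes -$ sends every monomorphism, and in particular every horn inclusion, to a cofibration in $\mathscr{M}$.

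For the acyclic half, I would invoke the second defining property of a cosimplicial resolution: the structure map $\tilde{X} \rightarrow \textbf{cc}_*(X)$ is a Reedy weak equivalence, hence each $\tilde{X}_n \rightarrow X$ is a weak equivalence and, by two-out-of-three, every map $\tilde{X}_p \rightarrow \tilde{X}_q$ induced by a morphism of $\Delta$ is a weak equivalence in $\mathscr{M}$ (``homotopical constancy''). In particular the collapse $\Delta[n] \rightarrow \Delta[0]$ induces a weak equivalence $\tilde{X}_n = \tilde{X} \otimes \Delta[n] \rightarrow \tilde{X} \otimes \Delta[0] = \tilde{X}_0$. Because $\Lambda^k[n] \rightarrow \Delta[0]$ factors as $\Lambda^k[n] \hookrightarrow \Delta[n] \rightarrow \Delta[0]$, it suffices to prove that the induced map $\tilde{X} \otimes \Lambda^k[n] \rightarrow \tilde{X}_0$ is a weak equivalence, after which two-out-of-three gives that $\tilde{X} \otimes \Lambda^k[n] \rightarrow \tilde{X} \otimes \Delta[n]$ is a weak equivalence. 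Since $\Lambda^k[n]$ is weakly contractible with only finitely many nondegenerate simplices, one proves this by the same skeletal induction as above together with the gluing lemma: each stage attaches a cell $\tilde{X} \otimes \Delta[m] \cong \tilde{X}_m$ along a cofibration out of $\tilde{X} \otimes \partial\Delta[m] \cong L_m\tilde{X}$, and homotopical constancy makes all the objects and maps in sight weakly equivalent to the ``constant value'' $\tilde{X}_0$, so the pushout remains weakly equivalent to $\tilde{X}_0$; chaining these finitely many stages yields the claim.

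The main obstacle is this acyclic half. Reedy cofibrancy alone yields only the cofibration statement; it is precisely the homotopical constancy of a cosimplicial resolution — that $\tilde{X}$ is weakly equivalent to the constant object $\textbf{cc}_*(X)$, not merely Reedy cofibrant — that forces $\tilde{X} \otimes -$ to take horn inclusions, and indeed all acyclic cofibrations of simplicial sets, to weak equivalences. Some care is needed in the skeletal/gluing induction to keep track of cofibrancy so that the gluing lemma for weak equivalences applies at every pushout, which is where the combinatorics of $\Lambda^k[n]$ and the Reedy structure interact most delicately. Alternatively, one may black-box this entirely by citing that the adjunction $\tilde{X} \otimes - \dashv \mathscr{M}(\tilde{X}, -)$ is a Quillen adjunction whenever $\tilde{X}$ is a cosimplicial resolution, whence $\mathscr{M}(\tilde{X}, -)$ preserves fibrant objects and the proposition is immediate.
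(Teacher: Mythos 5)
The paper offers no proof of this statement: it is quoted verbatim from Hirschhorn, and your closing remark that one may simply cite \cite[Prop.~17.1.3]{hirschhornModelCategoriesTheir2009} is exactly what the paper does. Your sketch of the underlying argument is the standard one --- transpose horn-filling across the adjunction $\tilde{X} \otimes - \dashv \mathscr{M}(\tilde{X}, -)$, reduce to showing $\tilde{X} \otimes \Lambda^k[n] \to \tilde{X} \otimes \Delta[n]$ is an acyclic cofibration, obtain the cofibration half from Reedy cofibrancy via latching objects, and obtain the weak-equivalence half from homotopical constancy --- and the first three of these steps are correct as written.

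However, the induction you sketch for the acyclic half would fail as stated. You claim that in the skeletal filtration of $\Lambda^k[n]$ ``homotopical constancy makes all the objects and maps in sight weakly equivalent to $\tilde{X}_0$.'' This is false at intermediate stages: $\mathrm{sk}_0 \Lambda^k[n]$ is a disjoint union of $n$ vertices, so $\tilde{X} \otimes \mathrm{sk}_0 \Lambda^k[n] \cong \coprod^{n} \tilde{X}_0$ is not weakly equivalent to $\tilde{X}_0$, and the attaching maps go out of $\tilde{X} \otimes \partial\Delta[m] \cong L_m\tilde{X}$, which is likewise not weakly equivalent to $\tilde{X}_0$ in general (already for the standard resolution $\Delta[\bullet] \times X$ in simplicial sets one has $\tilde{X} \otimes \partial\Delta[2] \cong \partial\Delta[2] \times X$, of the homotopy type of $S^1 \times X$). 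Homotopical constancy controls $\tilde{X} \otimes \Delta[m] = \tilde{X}_m$, not $\tilde{X} \otimes K$ for non-representable $K$, so the gluing lemma applied stage by stage does not keep the pushouts weakly equivalent to $\tilde{X}_0$. The standard repair --- essentially Hirschhorn's argument --- is to induct on $n$ using the fact that the vertex inclusion $\{k\} \hookrightarrow \Lambda^k[n]$ is a finite composite of pushouts of horn inclusions of dimension $< n$: by the inductive hypothesis and closure of acyclic cofibrations under pushout and composition, $\tilde{X}_0 \cong \tilde{X} \otimes \{k\} \to \tilde{X} \otimes \Lambda^k[n]$ is an acyclic cofibration; since $\tilde{X}_0 \to \tilde{X}_n$ is a weak equivalence by homotopical constancy and two-out-of-three, another application of two-out-of-three shows $\tilde{X} \otimes \Lambda^k[n] \to \tilde{X} \otimes \Delta[n]$ is a weak equivalence, and your cofibration half upgrades it to an acyclic cofibration.
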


This will later let us compose our left homotopies vertically.

Our chosen cosimplicial resolution involves the \emph{classifying diagram} functor. Recall that $\textbf{Iso}(\mathscr{C})$ for a category $\mathscr{C}$ is its maximal subgroupoid. Moreover, recall that $\textbf{Iso} : \textbf{Cat} \rightarrow \textbf{Grpd}$ is right adjoint to the inclusion $\textbf{Grpd} \rightarrow \textbf{Cat}$.

\begin{definition}[{\cite[pg. 8]{rezkModelHomotopyTheory2000}}] \label{defn_h1_class_diag}
    The \emph{classifying diagram functor} $N : \textbf{Cat} \rightarrow \textbf{sSpace}$ is defined such that for any category $\mathscr{C}$ and $n \geq 0$,
    $$
    N\mathscr{C}_n := \textbf{nerve}(\textbf{Iso}(\mathscr{C}^{[n]}))
    $$
    with the evident simplicial maps and behavior on functors.
\end{definition}

\begin{proposition}[{\cite[Prop. 6.1]{rezkModelHomotopyTheory2000}}] \label{prop_h1_n_cssp}
    Let $\mathscr{C} \in \textbf{Cat}$. Then $N \mathscr{C}$ is a complete Segal space.
\end{proposition}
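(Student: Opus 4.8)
We follow Rezk, whose Proposition~6.1 this is. The plan is to verify separately the three conditions bundled into ``complete Segal space'': Reedy fibrancy, the Segal condition, and completeness. The one structural fact I would use throughout is that $N\mathscr{C}$ is the levelwise image of the simplicial object $[\bullet]\mapsto\mathscr{C}^{[\bullet]}$ of $\textbf{Cat}$ under $\textbf{nerve}\circ\textbf{Iso}\colon\textbf{Cat}\to\textbf{sSet}$, and that both $\textbf{Iso}$ (right adjoint to $\iota\colon\textbf{Grpd}\hookrightarrow\textbf{Cat}$) and $\textbf{nerve}$ (right adjoint to the fundamental-category functor $\tau\colon\textbf{sSet}\to\textbf{Cat}$, so $\tau\Delta[m]=[m]$) preserve all limits, while $\textbf{Fun}(-,\mathscr{C})$ turns colimits of categories into limits.

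Reedy fibrancy is the main obstacle. Each $N\mathscr{C}_n=\textbf{nerve}(\textbf{Iso}(\mathscr{C}^{[n]}))$ is the nerve of a groupoid, hence a Kan complex, so it remains to show every matching map $N\mathscr{C}_n\to M_n(N\mathscr{C})$ is a Kan fibration. First I would identify the matching object: using Proposition~\ref{prop_ssets_enrich_f_rep}, the fact that $F_1^0$ preserves colimits and that $\textbf{Map}_1^0(-,N\mathscr{C})$ sends colimits to limits, one gets $M_n(N\mathscr{C})=(N\mathscr{C})_{\partial\Delta[n]}\cong\lim_{\Delta[m]\to\partial\Delta[n]}(N\mathscr{C})_m$, and then, pulling $\textbf{nerve}$, $\textbf{Iso}$ and $\textbf{Fun}(-,\mathscr{C})$ through the limit, $M_n(N\mathscr{C})\cong\textbf{nerve}(\textbf{Iso}(\mathscr{C}^{P_n}))$ where $P_n:=\tau(\partial\Delta[n])$ (which coincides with $[n]$ for $n\ne 2$). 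Under this identification the matching map is $\textbf{nerve}(\textbf{Iso}(\mathscr{C}^{j_n}))$ for the canonical $j_n\colon P_n\to[n]$, which is a bijection on objects (for $n=0$ it is the trivial map $\mathscr{C}\to\ast$). I would then check that for any category $\mathscr{C}$ and any $j$ bijective on objects into a poset, the precomposition functor $\mathscr{C}^{j}$ is an \emph{isofibration}: given $G\colon[n]\to\mathscr{C}$ and a natural isomorphism $\theta\colon G\circ j\cong H$, transport $G$ along $\theta$ to $\tilde H\colon[n]\to\mathscr{C}$, defined on the poset by $\tilde H(v\le w):=\theta_w\,G(v\le w)\,\theta_v^{-1}$; then $\tilde H\circ j=H$ and $\theta$ exhibits $G\cong\tilde H$. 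Restricting an isofibration of categories to maximal subgroupoids gives an isofibration of groupoids, whose nerve is a Kan fibration; hence $N\mathscr{C}$ is Reedy fibrant. (Equivalently, one may phrase this as: $\textbf{nerve}\circ\textbf{Iso}$ is right Quillen from the canonical (folk) model structure on $\textbf{Cat}$, whose fibrations are the isofibrations, to the Kan--Quillen structure, so it suffices to see that $\mathscr{C}^{[\bullet]}$ is Reedy fibrant in $\textbf{Cat}$, i.e. that the matching maps $\mathscr{C}^{j_n}$ are isofibrations.)

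For the Segal condition: a functor $[n]\to\mathscr{C}$ is precisely a composable chain of $n$ arrows, so $\mathscr{C}^{[n]}\cong\mathscr{C}^{[1]}\times_{\mathscr{C}}\cdots\times_{\mathscr{C}}\mathscr{C}^{[1]}$ as categories, concretely because the categorical spine $[1]\sqcup_{[0]}\cdots\sqcup_{[0]}[1]$ is just $[n]$. Applying the limit-preserving $\textbf{nerve}$ and $\textbf{Iso}$, the Segal map $N\mathscr{C}_n\to N\mathscr{C}_1\times_{N\mathscr{C}_0}\cdots\times_{N\mathscr{C}_0}N\mathscr{C}_1$ is in fact an isomorphism, a fortiori a weak equivalence; together with Reedy fibrancy (so that the strict pullback computes the homotopy pullback) this gives a Segal space in the sense of Definition~\ref{defn_sesp_segal_space}.

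Finally, completeness has to be read off from Rezk's definition (omitted from this paper), in the form that the canonical map from $N\mathscr{C}_0$ into the subspace $(N\mathscr{C})_{\mathrm{hoequiv}}\subseteq N\mathscr{C}_1$ of $1$-simplices invertible in $h_1(N\mathscr{C})$ is a weak equivalence. Along the way I would identify $h_1(N\mathscr{C})\cong\mathscr{C}$: the fibre of $N\mathscr{C}_1\to N\mathscr{C}_0\times N\mathscr{C}_0$ over $(x,y)$ is the \emph{discrete} space $\textbf{Hom}_{\mathscr{C}}(x,y)$, since an isomorphism in $\textbf{Iso}(\mathscr{C}^{[1]})$ that is the identity on source and target is an identity. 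Hence $(N\mathscr{C})_{\mathrm{hoequiv}}=\textbf{nerve}(\textbf{Iso}(\textbf{Fun}(I[1],\mathscr{C})))$ and the completeness map is $\textbf{nerve}(\textbf{Iso}(-))$ applied to $\mathscr{C}\to\textbf{Fun}(I[1],\mathscr{C})$, $c\mapsto 1_c$. It then suffices to observe that this functor is an equivalence of categories — a pseudo-inverse is ``take the source $a$ of $\phi\colon a\cong b$'', the natural isomorphism being the invertible square $(1_a,\phi)$ — and that $\textbf{nerve}\circ\textbf{Iso}$ carries equivalences of categories to weak homotopy equivalences. Assembling the three parts proves the proposition.
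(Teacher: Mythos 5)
The paper gives no proof of this proposition — it is cited directly to Rezk — and your reconstruction follows Rezk's argument correctly in all three parts: Reedy fibrancy via the matching maps $\mathscr{C}^{j_n}$ being isofibrations (so that $\textbf{nerve}\circ\textbf{Iso}$ turns them into Kan fibrations), the Segal maps being isomorphisms because $[n]$ is the categorical spine pushout, and completeness via the equivalence $\mathscr{C}\to\textbf{Fun}(I[1],\mathscr{C})$. One small correction: the parenthetical claim that $\tau(\partial\Delta[n])=[n]$ for all $n\neq 2$ also fails for $n=0$ and $n=1$, since $\tau(\partial\Delta[1])=[0]\sqcup[0]$ is discrete; this is harmless, because your argument only uses that $j_n$ is bijective on objects for $n\geq 1$ (which holds) and treats $n=0$ separately.
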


\begin{proposition}[{\cite[Thm. 3.7]{rezkModelHomotopyTheory2000}}]
    Let $\mathscr{C}, \mathscr{D} \in \textbf{Cat}$. Then $N(\mathscr{C} \times \mathscr{D}) \cong N(\mathscr{C}) \times N(\mathscr{D})$.
\end{proposition}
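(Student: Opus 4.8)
The plan is to verify the isomorphism one simplicial level at a time and then observe that the resulting family is automatically compatible with the simplicial structure. Since products in $\textbf{sSpace} = \textbf{sSet}^{\Delta^{op}}$ are computed pointwise, it suffices to exhibit, for each $n \geq 0$, a natural isomorphism of simplicial sets $N(\mathscr{C} \times \mathscr{D})_n \cong N(\mathscr{C})_n \times N(\mathscr{D})_n$ and to check these commute with the maps induced by morphisms in $\Delta$.

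First I would unwind Definition~\ref{defn_h1_class_diag}: $N(\mathscr{C} \times \mathscr{D})_n = \textbf{nerve}(\textbf{Iso}((\mathscr{C} \times \mathscr{D})^{[n]}))$. The isomorphism then comes from chaining together three product-preservation statements. Since $\textbf{Cat}$ is Cartesian closed, the exponential $(-)^{[n]}$ is a right adjoint and hence preserves products, so $(\mathscr{C} \times \mathscr{D})^{[n]} \cong \mathscr{C}^{[n]} \times \mathscr{D}^{[n]}$. As recalled before Definition~\ref{defn_h1_cat_s}, $\textbf{Iso}$ is right adjoint to the inclusion $\textbf{Grpd} \hookrightarrow \textbf{Cat}$, so it preserves finite products; and $\textbf{nerve}$ likewise preserves products, either because it is a right adjoint or, more elementarily, because $\textbf{nerve}(\mathscr{E})_m = \textbf{Hom}_{\textbf{Cat}}([m], \mathscr{E})$ manifestly sends products to products. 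Composing these yields
\[
N(\mathscr{C} \times \mathscr{D})_n \cong \textbf{nerve}\big(\textbf{Iso}(\mathscr{C}^{[n]}) \times \textbf{Iso}(\mathscr{D}^{[n]})\big) \cong \textbf{nerve}(\textbf{Iso}(\mathscr{C}^{[n]})) \times \textbf{nerve}(\textbf{Iso}(\mathscr{D}^{[n]})) = N(\mathscr{C})_n \times N(\mathscr{D})_n .
\]

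Finally I would note that each of these three isomorphisms is natural in its category argument, and that the simplicial structure maps of $N(-)$ are, by definition, induced by the cosimplicial object $[\bullet]$ in $\textbf{Cat}$ via precomposition; naturality then forces the level-$n$ isomorphisms to commute with these structure maps, so they assemble into an isomorphism of simplicial spaces, natural in $\mathscr{C}$ and $\mathscr{D}$. Equivalently, one may simply observe that this is precisely the assertion that the composite $\textbf{nerve} \circ \textbf{Iso} \circ (-)^{[\bullet]}$ — built entirely from functors already noted to be monoidal for the Cartesian structures — intertwines the Cartesian products. There is no real obstacle here; the only point requiring attention is confirming that every functor in the chain genuinely preserves finite products, which each does by virtue of admitting a left adjoint, and that the comparison isomorphisms are natural enough to respect face and degeneracy maps.
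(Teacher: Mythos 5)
Your argument is correct. The paper does not actually prove this proposition --- it is stated as a citation to Rezk's Theorem 3.7 --- so there is no in-paper proof to compare against; your levelwise verification is a complete and standard elementary argument. Each link in your chain holds: $(-)^{[n]}$ preserves products because $\textbf{Cat}$ is Cartesian closed, $\textbf{Iso}$ preserves them as a right adjoint (or directly, since a morphism in a product category is invertible iff both components are), and $\textbf{nerve}(\mathscr{E})_m = \textbf{Hom}_{\textbf{Cat}}([m],\mathscr{E})$ visibly preserves products. Your closing observation that the level-$n$ isomorphisms are natural in $[n]$, hence commute with the structure maps induced by the cosimplicial object $[\bullet]$, is exactly the point needed to assemble them into an isomorphism of simplicial spaces, and it is consistent with the paper's earlier remark that $\textbf{nerve}$, $\iota$ and $\textbf{Iso}$ are all monoidal for the Cartesian structures.
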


\begin{proposition}[{\cite[pg. 13]{rezkModelHomotopyTheory2000}}] \label{prop_h1_n_cancel}
    There is a canonical natural isomorphism $h_1 \circ N \cong 1_{\textbf{Cat}}$.
\end{proposition}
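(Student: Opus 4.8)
The plan is to exhibit, for each small category $\mathscr{C}$, an isomorphism of categories $\epsilon_\mathscr{C} : h_1(N\mathscr{C}) \xrightarrow{\sim} \mathscr{C}$ that is the identity on objects and the evident identification on hom-sets, and then check it is natural in $\mathscr{C}$. By Proposition \ref{prop_h1_n_cssp}, $N\mathscr{C}$ is a complete Segal space, hence a Segal space, so $h_1(N\mathscr{C})$ is defined. Its objects are $(N\mathscr{C}_0)_0 = (\textbf{nerve}(\textbf{Iso}(\mathscr{C})))_0 = \textbf{ob}(\mathscr{C})$, which gives $\epsilon_\mathscr{C}$ on objects.

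For hom-sets, the key step is to compute the mapping space: I claim $(N\mathscr{C})(x,y)$ is the discrete simplicial set on $\textbf{Hom}_\mathscr{C}(x,y)$. An $n$-simplex of $(N\mathscr{C})(x,y)$ is an $n$-simplex of $N\mathscr{C}_1 = \textbf{nerve}(\textbf{Iso}(\mathscr{C}^{[1]}))$ — that is, a chain of $n$ composable isomorphisms in the arrow category $\mathscr{C}^{[1]}$ — whose images under the source and target maps $N\mathscr{C}_1 \to N\mathscr{C}_0$ are the degenerate $n$-simplices at $x$ and at $y$. Unwinding, this forces every object in the chain to have source $x$ and target $y$ and every connecting isomorphism of $\mathscr{C}^{[1]}$ to have components $(\mathrm{id}_x, \mathrm{id}_y)$; commutativity of the defining square of such an isomorphism then forces the underlying arrow $x \to y$ to be constant along the chain. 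Hence $(N\mathscr{C})(x,y)_n \cong \textbf{Hom}_\mathscr{C}(x,y)$ compatibly with all simplicial operators, so $\pi_0((N\mathscr{C})(x,y)) = \textbf{Hom}_\mathscr{C}(x,y)$ canonically; this defines $\epsilon_\mathscr{C}$ on morphisms. I then verify $\epsilon_\mathscr{C}$ is a functor. Identities: the identity of $x$ in $h_1(N\mathscr{C})$ is the image of $x$ under the degeneracy $(N\mathscr{C}_0)_0 \to (N\mathscr{C}_1)_0$, which is $N$ applied to $[1] \to [0]$ and hence sends an object $a$ of $\mathscr{C}^{[0]}$ to $\mathrm{id}_a \in \mathscr{C}^{[1]}$, i.e.\ corresponds to $\mathrm{id}_x$. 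Composition: by Definition \ref{defn_sesp_h_1} the composite of the classes of $f : x\to y$ and $g : y \to z$ is obtained by applying $\pi_0$ to the zig-zag through $X_2^{x,z}$; tracing $(f,g)$, its canonical lift along the trivial fibration $\gamma_2^{x,z}$ is the genuinely composable pair $x \xrightarrow{f} y \xrightarrow{g} z$ regarded as a $0$-simplex of $(N\mathscr{C})_2 = \textbf{nerve}(\textbf{Iso}(\mathscr{C}^{[2]}))$ — legitimate since $\pi_0(\gamma_2^{x,z})$ is a bijection and $\gamma_2$ on $0$-simplices sends a composable pair to its pair of arrows — and $X_{\langle 0, 2\rangle}$, being induced by precomposition with $\langle 0,2\rangle : [1] \to [2]$, sends this to $x \xrightarrow{g\circ f} z$. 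Thus $\epsilon_\mathscr{C}$ preserves composition, and being bijective on objects and on every hom-set it is an isomorphism of categories.

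Finally, for naturality, given a functor $F : \mathscr{C} \to \mathscr{D}$ one checks the square with $\epsilon_\mathscr{C}, \epsilon_\mathscr{D}$ and $h_1(NF), F$ commutes: on objects both composites are the underlying-object map, and on hom-sets $h_1(NF)$ acts as $\pi_0$ of the map induced by $NF_1 = \textbf{nerve}(\textbf{Iso}(F^{[1]}))$, which on $0$-simplices is $u \mapsto F(u)$, matching $F$ after the identification above. I expect the only delicate point to be the mapping-space computation in the second step — pinning down that the source/target degeneracy conditions collapse the arrow-category chain to a single constant arrow — together with confirming that the indirect definition of composition in $h_1$ is genuinely computed by the evident composable pair; both are resolved by the elementary observations indicated rather than by any substantive difficulty.
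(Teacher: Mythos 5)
Your proof is correct. The paper gives no argument of its own here --- it cites Rezk --- and your direct verification is exactly the standard one implicit in that reference: the fibre of $N\mathscr{C}_1 \to N\mathscr{C}_0 \times N\mathscr{C}_0$ over $(x,y)$ collapses to the discrete simplicial set on $\textbf{Hom}_{\mathscr{C}}(x,y)$ because the connecting squares in $\textbf{Iso}(\mathscr{C}^{[1]})$ with identity components force the chain of arrows to be constant, and the lift of a composable pair along $\gamma_2^{x,z}$ is realised by the corresponding functor $[2]\to\mathscr{C}$, so that $X_{\langle 0,2\rangle}$ returns $g\circ f$. All the delicate points you flag (discreteness of the mapping space, compatibility of the indirect definition of composition in $h_1$ with actual composition in $\mathscr{C}$, naturality in $\mathscr{C}$) are handled correctly.
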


For a Segal space $X$, we take our cosimplicial resolution to be $\Tilde{X} := X \times (N \circ I[\bullet])$, so that $\Tilde{X}_n := X \times N(I[n])$.

\begin{proposition} \label{prop_h1_ni_cosimp_res}
    Suppose $X \in \textbf{SeSp}^{inj}$. Then the functor $X \times N(I[\bullet])$, sending $[n] \mapsto X \times N(I[n])$, defines a cosimplicial resolution of $X$ in $\textbf{sSpace}^{inj}$.
\end{proposition}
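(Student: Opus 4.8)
The plan is to verify directly the two defining conditions of a cosimplicial resolution from Definition~\ref{defn_loc_cosimp_res}: that $X\times N(I[\bullet])$ is Reedy cofibrant as a cosimplicial object in $\textbf{sSpace}^{inj}$, and that the evident natural map $X\times N(I[\bullet])\to\textbf{cc}_*(X)$ is a weak equivalence in the Reedy structure on $(\textbf{sSpace}^{inj})^\Delta$. Weak equivalences and cofibrations in this Reedy structure are detected levelwise in the cosimplicial variable, and colimits (hence latching objects) in $\textbf{sSpace}$ are computed pointwise since $\textbf{sSpace}=\textbf{sSet}^{\Delta^{op}}$ is a presheaf category, so both conditions will reduce to pointwise statements about simplicial sets and sets.

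The first step is to record an explicit description of $N(I[n])$. Because $I[n]$ is an indiscrete groupoid, a functor $[k]\to I[n]$ is determined by its effect on objects and any such assignment is functorial, and any two parallel functors $[k]\to I[n]$ admit a unique natural isomorphism between them; hence $I[n]^{[k]}=\textbf{Iso}(I[n]^{[k]})$ is again the indiscrete groupoid on the set $\textbf{Hom}_{\textbf{Set}}(\{0,\dots,k\},\{0,\dots,n\})$. Taking nerves,
$$
N(I[n])_{k,m}\cong\textbf{Hom}_{\textbf{Set}}\big(\{0,\dots,k\}\times\{0,\dots,m\},\{0,\dots,n\}\big),
$$
with cosimplicial structure (in $n$) given by postcomposition. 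In particular $N(I[n])_k$ is, for each $k$, the nerve of an indiscrete category on a nonempty set, hence a contractible Kan complex, and $N(I[0])$ is the terminal object $\ast$ of $\textbf{sSpace}$.

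For the weak equivalence, the natural map $X\times N(I[\bullet])\to\textbf{cc}_*(X)$ is the one induced by the unique maps $N(I[n])\to N(I[0])=\ast$. Since $N(I[n])_k\to\ast$ is a map onto the point out of a contractible Kan complex, it is a trivial fibration in $\textbf{sSet}$; pulling back along $X_k\to\ast$ shows that the projection $X_k\times N(I[n])_k\to X_k$ is a trivial fibration, in particular a weak equivalence, for every $k$. Thus $X\times N(I[n])\to X$ is a levelwise weak equivalence of simplicial spaces for every $n$, so $X\times N(I[\bullet])\to\textbf{cc}_*(X)$ is a Reedy weak equivalence.

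For Reedy cofibrancy, I would use that $\textbf{sSpace}$ is Cartesian closed, so $X\times(-)$ preserves colimits; hence the latching object of $[n]\mapsto X\times N(I[n])$ is $X\times L^n(N(I[\bullet]))$ and the latching map is $1_X\times(\text{latching map of }N(I[\bullet]))$. Since the pointwise product of a monomorphism of bisimplicial sets with a fixed object is again a monomorphism, it suffices to prove $N(I[\bullet])$ is Reedy cofibrant, i.e. that for each bisimplicial level $(k,m)$ the latching map of the cosimplicial \emph{set} $n\mapsto N(I[n])_{k,m}$ is an injection. By the formula above this cosimplicial set is the $p$-fold Cartesian power (with $p=(k+1)(m+1)$) of the standard cosimplicial set $c\colon n\mapsto\{0,\dots,n\}$, with structure maps by postcomposition. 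One then checks that for each $p\ge 0$ the latching map of $c^{\times p}$ at $[n]$ is injective: its image is exactly the set of non-surjective functions $\{1,\dots,p\}\to\{0,\dots,n\}$, and such a function factors, uniquely up to the morphisms of the latching diagram, through the inclusion of its image, which is a non-identity monomorphism in $\Delta$ and hence an object of that diagram. This last combinatorial verification is the only non-formal point; everything else is bookkeeping with Cartesian-closed structure and pointwise colimits. Combining the two parts, $X\times N(I[\bullet])\to\textbf{cc}_*(X)$ is a cofibrant approximation to $\textbf{cc}_*(X)$ in $(\textbf{sSpace}^{inj})^\Delta$, i.e. a cosimplicial resolution of $X$.
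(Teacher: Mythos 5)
Your proof is correct, and its first half (the levelwise weak equivalence) is essentially the paper's argument: the paper notes that $N(I[n])\to\ast$ is a trivial fibration in $\textbf{sSpace}^{inj}$ because $N$ lands in complete Segal spaces and sends equivalences of categories to levelwise weak equivalences, and then pulls back along $X\to\ast$, while you reach the same conclusion by computing directly that each $N(I[n])_k$ is the nerve of an indiscrete groupoid on a nonempty set, hence a contractible Kan complex. The genuine difference is in the Reedy-cofibrancy half. The paper reduces, as you do, to showing $N(I[\bullet])$ is Reedy cofibrant and then works one simplicial level at a time, but at that point it invokes the unaugmentability criterion of \cite[Cor.~15.9.10]{hirschhornModelCategoriesTheir2009} --- a cosimplicial simplicial set is Reedy cofibrant if and only if the two coface maps $X^0\rightrightarrows X^1$ have empty equalizer --- and checks this by observing that the two cofaces $\textbf{nerve}(I[0]^{[m]})\to\textbf{nerve}(I[1]^{[m]})$ have constant image in the objects $0$ and $1$ respectively. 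You instead verify all latching maps by hand: the identification $N(I[n])_{k,m}\cong\textbf{Hom}(\{0,\dots,k\}\times\{0,\dots,m\},\{0,\dots,n\})$ exhibits the relevant cosimplicial sets as finite powers of $n\mapsto\{0,\dots,n\}$, and your factor-through-the-image argument correctly shows each latching map is injective, with image the non-surjective functions. Your route is more elementary and self-contained (no external criterion), at the cost of an explicit combinatorial computation; the paper's is shorter because the criterion collapses cofibrancy to a single condition in cosimplicial degrees $0$ and $1$. One wording caveat: Reedy cofibrations in $(\textbf{sSpace}^{inj})^{\Delta}$ are not detected levelwise in the cosimplicial variable, as your opening sentence might suggest, but your actual argument never relies on this --- you test cofibrancy on latching maps and use only that cofibrations in $\textbf{sSpace}^{inj}$ are the pointwise injections and that latching objects are computed pointwise.
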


\begin{proof}
    We need to prove that the natural map $X \times N(I[\bullet]) \rightarrow \textbf{cc}_\ast(X)$ is a levelwise weak equivalence. This is in fact a trivial fibration levelwise; each map $X \times N(I[n]) \rightarrow X$ is a pullback map for the cospan $X \rightarrow \ast \leftarrow N(I[1])$. Note that since $N$ has its image in $\textbf{CSSP}^{inj}$ by Proposition \ref{prop_h1_n_cssp}, the map $N(I[1]) \rightarrow \ast$ is a fibration in $\textbf{sSpace}^{inj}$. It is moreover a weak equivalence since $I[1] \rightarrow \ast$ is an equivalence of categories and $N$ sends equivalences to levelwise weak equivalences of simplicial spaces \cite[Thm. 3.7]{rezkModelHomotopyTheory2000}.

    It now suffices to prove that $X \times N(I[\bullet])$ is Reedy cofibrant. In particular, we must prove that the latching maps for all $n \geq 0$ of the form
    $$
        \text{colim}_{k \in \partial \Delta[n]} \big( X \times N(I[k]) \big) \cong X \times \big( \text{colim}_{k \in \partial \Delta[n]} N(I[k]) \big) \rightarrow X \times N(I[n])
    $$
    are cofibrations in $\textbf{sSpace}^{inj}$, namely monomorphisms, for which it is sufficient to prove that the maps
    $$
        L(n) := \text{colim}_{k \in \partial \Delta[n]} N(I[k]) \rightarrow N(I[n])
    $$
    are monomorphisms. Thus, we are tasked with proving that the cosimplicial object $N(I[\bullet]) : \Delta \rightarrow SeSp^{inj}$ is Reedy cofibrant.

    Let $m \geq 0$. Note that the above maps are monomorphisms if and only if they are levelwise, namely if and only if each map
    $$
        L(n)_m \cong \text{colim}_{k \in \partial \Delta[n]} N(I[k])_m = L_n (N(I[\bullet])_m) \rightarrow N(I[n])_m
    $$
    is a cofibration in $\textbf{sSet}$. Hence, it suffices to prove that the cosimplicial simplicial sets $N(I[\bullet])_m : \Delta \rightarrow \textbf{sSet}$ are Reedy cofibrant.

    These cosimplicial simplicial sets are, for $n \geq 0$, of the form
    $$
        N(I[n])_m \cong \textbf{nerve}(\textbf{Iso}(I[n]^{[m]})) = \textbf{nerve}(I[n]^{[m]}).
    $$
    By \cite[Cor. 15.9.10]{hirschhornModelCategoriesTheir2009}, a cosimplicial simplicial set $X : \Delta \rightarrow \textbf{sSet}$ is Reedy cofibrant if and only if the equalizer of the map
    $$
        X(0) \sqcup X(0) \rightarrow X(1)
    $$
    is empty. However, this is clearly the case above; the two maps $\textbf{nerve}(I[0]^{m}) \rightarrow \textbf{nerve}(I[1]^{[m]})$ send a $k$-simplex $[k] \times [m] \rightarrow I[0]$ to a map $[k] \times [m] \rightarrow I[1]$ with constant image in either the object $0$ or $1$, respectively. Hence, each $N(I[\bullet])_m$ is a Reedy cofibrant cosimplicial simplicial set, implying $N(I[\bullet])$ is Reedy cofibrant in $(\textbf{sSpace}^{inj})^{\Delta}$, as needed.
\end{proof}

Hence, since Segal spaces are fibrant in the model structure $\textbf{sSpace}^{inj}$, we can build a Kan complex $\textbf{Map}_{\textbf{sS}}^I(X, Y)$ for any Segal spaces $X$ and $Y$, which at level $k$ is the set of maps $X \times N(I[k]) \rightarrow Y$, given by the left homotopy function complex induced by our cosimplicial resolution.

\begin{proposition} \label{prop_h1_enrich_kan}
    $\textbf{Map}_{\textbf{sS}}^I(-, -) : (\textbf{SeSp}^{inj})^{op} \times \textbf{SeSp}^{inj} \rightarrow \textbf{sSet}$ defines an enrichment of $\textbf{SeSp}^{inj}$ in Kan complexes.
\end{proposition}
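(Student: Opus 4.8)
The plan is to realize $\textbf{Map}_{\textbf{sS}}^I$ as a strict Kan-complex enrichment whose composition is induced by the diagonal maps on the cosimplicial object $N(I[\bullet])$, and then to deduce the enrichment axioms from coassociativity and counitality of those diagonals. First, for Segal spaces $X$ and $Y$ the simplicial set $\textbf{Map}_{\textbf{sS}}^I(X,Y)$, whose $k$-simplices are the maps $X \times N(I[k]) \to Y$ in $\textbf{sSpace}$, is exactly the left homotopy function complex $\textbf{sSpace}(X \times N(I[\bullet]), Y)$ determined by the cosimplicial resolution of Proposition~\ref{prop_h1_ni_cosimp_res} together with $Y$ itself as fibrant approximation, the latter being legitimate since a Segal space is Reedy fibrant. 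Hence each $\textbf{Map}_{\textbf{sS}}^I(X,Y)$ is a Kan complex by Proposition~\ref{prop_h1_funccomp_kan}. Since $I[0]$ is the terminal category, $N(I[0]) \cong \ast$, so $\textbf{Map}_{\textbf{sS}}^I(X,Y)_0 \cong \textbf{Hom}_{\textbf{sSpace}}(X,Y) = \textbf{Hom}_{\textbf{SeSp}^{inj}}(X,Y)$, and the underlying ordinary category of the prospective enrichment is $\textbf{SeSp}^{inj}$.

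Next I would supply the composition and units. For $k \geq 0$ write $\delta_k : N(I[k]) \to N(I[k]) \times N(I[k])$ for the diagonal in $\textbf{sSpace}$ and $\mathrm{pr}_X : X \times N(I[k]) \to X$ for the projection, and define
$$
\textbf{Map}_{\textbf{sS}}^I(Y,Z)_k \times \textbf{Map}_{\textbf{sS}}^I(X,Y)_k \longrightarrow \textbf{Map}_{\textbf{sS}}^I(X,Z)_k, \qquad (g,f) \longmapsto g \circ (f \times 1_{N(I[k])}) \circ (1_X \times \delta_k),
$$
with the identity $k$-simplex of $X$ taken to be $\mathrm{pr}_X$. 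Because both $\delta_\bullet$ and $\mathrm{pr}_X$ are natural in the cosimplicial variable, compatibly with the maps $N(I[\phi])$ for $\phi$ in $\Delta$, these families assemble into morphisms of simplicial sets, hence into morphisms of Kan complexes. On $0$-simplices, where $N(I[0]) \cong \ast$, the composition reduces to ordinary composition of maps of Segal spaces, as required.

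Finally, the associativity axiom of an enriched category follows by unwinding the definition against the coassociativity identity $(\delta_k \times 1_{N(I[k])}) \circ \delta_k = (1_{N(I[k])} \times \delta_k) \circ \delta_k$, while the left and right unit axioms follow from the counit identities $\mathrm{pr} \circ \delta_k = 1_{N(I[k])}$ in each of the two factors; all of these are standard general facts in a category with finite products, and chasing the definitions through them is routine. I expect no genuine obstacle here: the only point requiring thought is the recognition that composition of homotopies in $\textbf{Map}_{\textbf{sS}}^I$ must be implemented via the diagonal on $N(I[k])$ — so that the product form $X \times N(I[\bullet])$ of the cosimplicial resolution makes composition strictly, not merely homotopy-coherently, associative and unital — after which the verification uses nothing beyond Propositions~\ref{prop_h1_ni_cosimp_res} and~\ref{prop_h1_funccomp_kan} and the fibrancy of Segal spaces.
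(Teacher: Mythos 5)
Your proof is correct and follows essentially the same route as the paper: the composition is defined by precisely the same formula $g \circ (f \times 1_{N(I[k])}) \circ (1_X \times \delta_k)$, identities are the projections, the Kan-complex property comes from recognizing $\textbf{Map}_{\textbf{sS}}^I$ as the left homotopy function complex arising from the cosimplicial resolution $X \times N(I[\bullet])$, and the enrichment axioms reduce to coassociativity/counitality of the diagonal. The only difference is that you spell out the routine verifications the paper leaves implicit.
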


\begin{proof}
    Suppose $X, Y$ and $Z$ are Segal spaces and let $k \geq 0$. Let $f : X \times N(I[k]) \rightarrow Y$ and $g : Y \times N(I[k]) \rightarrow Z$. Then we have a natural map
    $$
        X \times N(I[k]) \xrightarrow{1_X \times D_k} X \times N(I[k]) \times N(I[k]) \xrightarrow{f \times 1_{N(I[k])}} Y \times N(I[k]) \xrightarrow{g} Z
    $$
    where the map $D_k : N(I[k]) \rightarrow N(I[k]) \times N(I[k])$ is the diagonal map.
    We will use this as our composite of $g$ and $f$.

    The resulting operations are compatible with the simplicial structure on $\textbf{Map}^I_{\textbf{sS}}(-, -)$, giving an operation
    $$
        \circ : \textbf{Map}^I_{\textbf{sS}}(X, Y) \times \textbf{Map}^I_{\textbf{sS}}(Y, Z) \rightarrow \textbf{Map}^I_{\textbf{sS}}(X, Z).
    $$
    Identities are given by $X \times N(I[1]) \rightarrow X \times N(I[0]) \cong X \xrightarrow{1_X} X$. Associativity and identity laws are routine.
\end{proof}

We thus obtain our $\textbf{sSet}$-enrichment for $\textbf{SeSp}^{inj}$. Moreover, the enrichment of $h_1$ is now immediate: there is an evident simplicial map for Segal spaces $X, Y$ of the form
$$
    \textbf{Map}^I_{\textbf{sS}}(X, Y) \rightarrow \textbf{nerve}(\textbf{Iso}(\textbf{Fun}(h_1(X), h_1(Y))))
$$
sending $f : X \times N(I[k]) \rightarrow Y$ to 
\begin{align*}
    h_1(X) \times I[k] \cong h_1(X) \times h_1(N(I[k])) &\cong h_1(X \times N(I[k])) \\
    &\xrightarrow{h_1(f)} h_1(Y).
\end{align*}

We will in general write $h_1(H) : h_1(X) \times I[1] \rightarrow h_1(Y)$ for this induced functor.

\begin{proposition} \label{prop_h1_enrich_functor}
    By the above mapping, $h_1$ extends to an $\textbf{sSet}$-enriched functor.
\end{proposition}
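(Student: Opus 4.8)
The plan is to check directly that the hom-object assignment described above — call it $\Phi_{X,Y} : \textbf{Map}^I_{\textbf{sS}}(X, Y) \to \textbf{Hom}_{\textbf{Cat}_s}(h_1(X), h_1(Y))$, sending $f : X \times N(I[k]) \to Y$ to the composite $h_1(X) \times I[k] \cong h_1(X \times N(I[k])) \xrightarrow{h_1(f)} h_1(Y)$ built from the isomorphisms of Lemma \ref{lemma_h1_prods} and Proposition \ref{prop_h1_n_cancel} — satisfies the two axioms of an $\textbf{sSet}$-enriched functor, namely compatibility with composition and with units. First I would record that each $\Phi_{X,Y}$ is indeed a map of simplicial sets, which is immediate from functoriality of $N$ and $h_1$ and naturality of the product-preservation isomorphism for $h_1$; I would not dwell on this.

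For compatibility with composition I would argue levelwise. Fixing $k \geq 0$, $f : X \times N(I[k]) \to Y$ and $g : Y \times N(I[k]) \to Z$, the composite in $\textbf{SeSp}^{inj}$ from Proposition \ref{prop_h1_enrich_kan} is $g \circ (f \times 1_{N(I[k])}) \circ (1_X \times D_k)$ with $D_k$ the diagonal of $N(I[k])$. Applying $h_1$, invoking that $h_1$ preserves products, that $N$ preserves products (established above), and that $h_1 \circ N \cong 1_{\textbf{Cat}}$ to identify $h_1(D_k)$ with the diagonal $\Delta_k : I[k] \to I[k] \times I[k]$, one transports $\Phi_{X,Z}(g \circ f)$ to the composite
\[
  h_1(X) \times I[k] \xrightarrow{1 \times \Delta_k} h_1(X) \times I[k] \times I[k] \xrightarrow{\Phi(f) \times 1} h_1(Y) \times I[k] \xrightarrow{\Phi(g)} h_1(Z),
\]
where $\Phi(f), \Phi(g)$ abbreviate the images of $f,g$ under $\Phi$. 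This is exactly the composition of $\Phi(f)$ and $\Phi(g)$ in $\textbf{Cat}_s$, since the simplicial composition of Definition \ref{defn_h1_cat_s} is the one induced from composition in $\textbf{Fun}(-,-)$ by the monoidal functor $\textbf{nerve}\circ\iota\circ\textbf{Iso}$, which on $k$-simplices reads off precisely this precomposition with the diagonal. For units I would observe that the identity element of $\textbf{Map}^I_{\textbf{sS}}(X, X)$ at level $k$ is represented by the map $X \times N(I[k]) \to X \times N(I[0]) \cong X \xrightarrow{1_X} X$ induced by $I[k] \to I[0]$, and that $\Phi_{X,X}$, using $h_1(N(I[0])) \cong \ast$, sends it to the constant $k$-simplex on $1_{h_1(X)}$ — which is the identity element of $\textbf{Hom}_{\textbf{Cat}_s}(h_1(X), h_1(X))$.

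The hard part will not be either axiom in the abstract but the coherence bookkeeping underlying the transport in the previous paragraph: the relevant identities hold only after composing with the product-preservation isomorphisms for $h_1$ and for $N$ together with the isomorphism $h_1 \circ N \cong 1_{\textbf{Cat}}$, and one must verify that these three families of natural isomorphisms are mutually compatible — in particular that the isomorphism $h_1(A \times B) \cong h_1(A) \times h_1(B)$ is compatible with diagonals and with $h_1 \circ N \cong 1_{\textbf{Cat}}$. This amounts to a diagram chase of naturality squares; it is not deep, but it is where care is genuinely needed, the remainder being formal.
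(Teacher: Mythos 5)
Your proposal is correct and follows essentially the same route as the paper: the paper's proof likewise dismisses preservation of identities as clear and observes that the diagonal-then-compose formula defining composition in $\textbf{Map}^I_{\textbf{sS}}$ coincides levelwise with horizontal composition of natural isomorphisms written as functors $\mathscr{C} \times I[k] \rightarrow \mathscr{D}$. You simply make explicit the coherence bookkeeping for the product-preservation isomorphisms and the identification $h_1 \circ N \cong 1_{\textbf{Cat}}$, which the paper leaves implicit.
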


\begin{proof}
    It is clear that identities are preserved, so we are left to show the same for composition. However, one notes that if a natural isomorphism $\alpha : F \Rightarrow G$ between functors $F, G : \mathscr{C} \rightarrow \mathscr{D}$ is written in the form $\mathscr{C} \times I[1] \rightarrow \mathscr{D}$, then horizontal composition is exactly as stated for $\textbf{Map}^I_{\textbf{sS}}$ at all levels of $\textbf{nerve}(\textbf{Iso}(\textbf{Fun}(\mathscr{C}, \mathscr{D})))$. Hence, composition is preserved as needed.
\end{proof}

We now have the desired simplicial set enrichment. We also have that the $1$-simplices in $\textbf{Map}_{\textbf{sS}}^I(X, Y)$ are valid left homotopies, as $X \times N(I[1])$ is a cylinder object for $X$ owing to it being part of a cosimplicial resolution. We thus have established how $h_1$ can transmit left homotopies to natural isomorphisms in an `$\infty$-groupoidal' manner.

A useful consequence of this particular choice of cylinder is that it is in fact a very good cylinder object:

\begin{proposition} \label{prop_h1_very_good_cyl_obj}
    Let $X$ be a simplicial space. Then $X \times N(I[1])$ is a very good cylinder object for $X$ in $\textbf{sSpace}^{inj}$.
\end{proposition}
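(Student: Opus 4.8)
The plan is to exhibit the standard factorization of the fold map $X \sqcup X \to X$ through $X \times N(I[1])$ and check the three requirements of Definition~\ref{defn_h1_cylobj}: that it factors the fold map, that the front map $i$ is a cofibration, and that the back map $p$ is a \emph{trivial} fibration (this last being the extra condition distinguishing ``very good'' from ``good'').

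First I would observe that most of this comes for free. By Proposition~\ref{prop_h1_ni_cosimp_res} --- whose proof, I would note, never invokes the Segal condition on $X$ and so applies verbatim to an arbitrary simplicial space --- the cosimplicial object $X \times N(I[\bullet])$ is a cosimplicial resolution of $X$. Proposition~\ref{prop_h1_cosimp_res_cylobj} then immediately yields that
$$
X \sqcup X \xrightarrow{\Tilde{X}_{\langle 0 \rangle} \sqcup \Tilde{X}_{\langle 1 \rangle}} X \times N(I[1]) \xrightarrow{\Tilde{X}_{\langle 0, 0 \rangle}} X
$$
is a \emph{good} cylinder object, so $i$ is already a cofibration and $p$ a weak equivalence. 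The only remaining work is to upgrade ``$p$ is a weak equivalence'' to ``$p$ is a trivial fibration''.

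The key step is to identify $p$ concretely. Under the canonical isomorphism $X \times N(I[0]) \cong X$, the map $p = \Tilde{X}_{\langle 0, 0 \rangle}$ is $1_X$ times the unique map $N(I[1]) \to \ast$, i.e.\ it is the pullback of $N(I[1]) \to \ast$ along the terminal map $X \to \ast$. I would then argue that $N(I[1]) \to \ast$ is a trivial fibration: it is a fibration because $N(I[1])$ is a complete Segal space by Proposition~\ref{prop_h1_n_cssp}, hence Reedy fibrant, i.e.\ fibrant in $\textbf{sSpace}^{inj}$; and it is a weak equivalence because $I[1] \to \ast$ is an equivalence of categories and $N$ carries equivalences of categories to levelwise weak equivalences of simplicial spaces by \cite[Thm.~3.7]{rezkModelHomotopyTheory2000}. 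Since trivial fibrations are stable under pullback, $p$ is a trivial fibration, and the cylinder object is very good.

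I do not anticipate a real obstacle here; the statement is essentially a bookkeeping consequence of the cosimplicial-resolution machinery already assembled, and the only mild point of care is confirming that the proof of Proposition~\ref{prop_h1_ni_cosimp_res} does not secretly use the Segal condition (it does not), so that the claim holds for an arbitrary simplicial space $X$ as stated. A fully self-contained alternative --- checking directly that $i$ is levelwise injective, cofibrations in $\textbf{sSpace}^{inj}$ being levelwise monomorphisms, and computing $p$ as the pullback above --- would also work and sidesteps the resolution lemmas, but the route through Propositions~\ref{prop_h1_ni_cosimp_res} and~\ref{prop_h1_cosimp_res_cylobj} is the shortest.
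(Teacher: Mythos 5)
Your proposal is correct and follows essentially the same route as the paper: invoke Propositions~\ref{prop_h1_ni_cosimp_res} and~\ref{prop_h1_cosimp_res_cylobj} to get a good cylinder object, then upgrade $p$ to a trivial fibration using the Reedy fibrancy of $N(I[1])$. Your extra remark that the proof of Proposition~\ref{prop_h1_ni_cosimp_res} never uses the Segal condition (so the statement really does hold for an arbitrary simplicial space) is a point of care the paper glosses over, but it does not change the argument.
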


\begin{proof}
    By Proposition \ref{prop_h1_cosimp_res_cylobj} and Proposition \ref{prop_h1_ni_cosimp_res}, we have a good cylinder object. Moreover, the object $N(I[1])$ is Reedy fibrant, so that the map $X \times N(I[1]) \rightarrow X$ is a fibration as needed.
\end{proof}

One final fact we will need about left homotopies is their capacity to be composed. That left homotopies can be composed in general is classical; by \cite[Def. 7.4.3]{hirschhornModelCategoriesTheir2009}, given two good cylinder objects $X \wedge I$ and $X \wedge I'$ of a cofibrant object $X$ in a model category $\mathscr{M}$, one may explicitly produce a `composite' of any two good left homotopies $H : X \wedge I \rightarrow Y$ and $H' : X \wedge I' \rightarrow Y$ in $\mathscr{M}$ by using the pushout
$$
    (X \wedge I) \sqcup_X (X \wedge I')
$$
as a new good cylinder object. We provide the details for our case, which concretely shows moreover how we may continue to use precisely the same kind of cylinder objects as we have before:

\begin{definition} \label{defn_h1_htpy_comp}
    Consider a lifting problem of Segal spaces
    \[\begin{tikzcd}
    	& B \\
    	C & D
    	\arrow[from=2-1, to=2-2]
    	\arrow["p", from=1-2, to=2-2]
    \end{tikzcd}\]
    where $p$ is a trivial fibration. Suppose $f, g, h : C \rightarrow B$ are solutions, with left homotopies $H, K : C \times N(I[1]) \rightarrow B$ over $D$, from $f$ to $g$ and from $g$ to $h$ respectively. Then a \emph{composite} of $H$ and $K$ is a left homotopy $KH : C \times N(I[1]) \rightarrow B$ over $D$ from $f$ to $h$ defined by the composition $Q \circ i$ in the diagram
    \[\begin{tikzcd}
    	& {C \sqcup C \sqcup C} \\
    	& {C \times (N(I[1]) \sqcup_\ast N(I[1]))} && B \\
    	{C \times N(I[1])} & {C \times N(I[2])} && D
    	\arrow["{f \sqcup g \sqcup h}", from=1-2, to=2-4]
    	\arrow["p", from=2-4, to=3-4]
    	\arrow[hook', from=1-2, to=2-2]
    	\arrow["j"', hook', from=2-2, to=3-2]
    	\arrow[from=3-2, to=3-4]
    	\arrow["{H \sqcup_{g} K}"{description}, dashed, from=2-2, to=2-4]
    	\arrow["Q"{description}, dashed, from=3-2, to=2-4]
    	\arrow["i", hook, from=3-1, to=3-2]
    \end{tikzcd}\]
    where $i$ is the image of $\langle 0, 2 \rangle : \Delta[1] \rightarrow \Delta[2]$ in the cosimplicial object $C \times N(I[\bullet])$, while $j$ on each component $C \times N(I[1])$ in its domain is the image of the maps $\langle 0, 1 \rangle$ and $\langle 1, 2 \rangle$ respectively.
\end{definition}

Note that $Q$ need not be unique in the above definition; in general, there may be many candidate composites for two given homotopies.

A trivial result is then immediate:

\begin{proposition} \label{prop_h1_htpy_comp_is_comp}
    In the situation of Definition \ref{defn_h1_htpy_comp}, the map $Q$ is a horn filler for the $2$-horn $\Lambda^2_1 \rightarrow \textbf{Map}_{\textbf{sS}}^I(C, B)$ defined by $H$ and $K$ on each of its respective $1$-simplices. The homotopy $KH$ represents the edge $\langle 0, 2 \rangle$ of $Q$.
\end{proposition}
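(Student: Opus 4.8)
The plan is to unwind the definition of the simplicial set $\textbf{Map}_{\textbf{sS}}^I(C, B)$ together with the defining diagram of $Q$, keeping careful track of the cosimplicial structure maps. Recall from Definition~\ref{defn_loc_left_htpy_func_cplx} (applied to the cosimplicial resolution of Proposition~\ref{prop_h1_ni_cosimp_res}) that $\textbf{Map}_{\textbf{sS}}^I(C, B)_n = \textbf{Hom}_{\textbf{sSpace}}(C \times N(I[n]), B)$, with face and degeneracy operators given by precomposition along the images, in the cosimplicial object $[n] \mapsto C \times N(I[n])$, of the coface and codegeneracy maps of $\Delta$. In particular the map $Q : C \times N(I[2]) \to B$ appearing in Definition~\ref{defn_h1_htpy_comp} \emph{is} a $2$-simplex of $\textbf{Map}_{\textbf{sS}}^I(C, B)$, and its three faces $d_0 Q$, $d_1 Q$, $d_2 Q$ are obtained by precomposing $Q$ with the images of $\langle 1, 2\rangle$, $\langle 0, 2\rangle$ and $\langle 0, 1\rangle : \Delta[1] \to \Delta[2]$, respectively.

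First I would verify that $H$ and $K$ genuinely assemble into an inner horn $\Lambda^2_1 \to \textbf{Map}_{\textbf{sS}}^I(C, B)$, with $H$ on the edge $\langle 0, 1\rangle$ and $K$ on the edge $\langle 1, 2\rangle$. A left homotopy $C \times N(I[1]) \to B$ from $f$ to $g$ restricts along $\langle 0\rangle$ and $\langle 1\rangle$ to $f$ and $g$ (this is Definition~\ref{defn_h1_lefthtpy} together with the cylinder object identified in Proposition~\ref{prop_h1_cosimp_res_cylobj}). Hence $H$ and $K$ agree at the shared vertex $1$, where each restricts to $g$, so they do form a well-defined horn; this horn is precisely the datum ``$H$ and $K$ on each of its respective $1$-simplices.''

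Next I would read off from the defining square of $Q$ that it fills this horn. The monomorphism $j : C \times (N(I[1]) \sqcup_\ast N(I[1])) \hookrightarrow C \times N(I[2])$ is, on the first summand, the image of $\langle 0, 1\rangle$ and, on the second, the image of $\langle 1, 2\rangle$; so the commutativity relation $Q \circ j = H \sqcup_g K$ says exactly that $d_2 Q = H$ and $d_0 Q = K$. Thus $Q$ restricts on $\Lambda^2_1$ to the horn constructed above, i.e.\ $Q$ is a horn filler for it. Finally, $i$ is by definition the image of $\langle 0, 2\rangle : \Delta[1] \to \Delta[2]$ in $C \times N(I[\bullet])$, so $d_1 Q = Q \circ i = KH$, which is the claim that $KH$ represents the edge $\langle 0, 2\rangle$ of $Q$.

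There is no substantive obstacle here: once the simplicial structure on the homotopy function complex is identified with precomposition along $C \times N(I[\bullet])$, the statement is pure bookkeeping. The only points that demand care are the source/target conventions for left homotopies (ensuring the shared vertex is really $g$ and the outer edge runs from $f$ to $h$, consistent with the claim in Definition~\ref{defn_h1_htpy_comp} that $KH$ is a homotopy from $f$ to $h$, since $Q$ has vertices $f, g, h$) and matching the two summands of $N(I[1]) \sqcup_\ast N(I[1])$ to the coface maps $\langle 0, 1\rangle$ and $\langle 1, 2\rangle$ in the correct order rather than swapping them.
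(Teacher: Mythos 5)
Your argument is correct and is exactly the bookkeeping the paper has in mind: the paper states this proposition without proof, declaring it ``trivial'' and ``immediate'' from Definition \ref{defn_h1_htpy_comp}, and your unwinding of the simplicial structure on $\textbf{Map}_{\textbf{sS}}^I(C,B)$ as precomposition along $C \times N(I[\bullet])$, with the faces $d_2 Q = H$, $d_0 Q = K$, $d_1 Q = Q \circ i = KH$, is precisely the omitted verification. Your attention to the face-map indexing and to the shared vertex $g$ is exactly where the care is needed, and you have it right.
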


Hence, it is reasonable to see $KH$ as a valid composite of $H$ and $K$ in the Kan complex $\textbf{Map}_{\textbf{sS}}^I(C, B)$. Of course, $KH$ is not at all unique. This will not matter when we come to need this result.

\subsection{Globular 2-Homotopies}

We are in need of some result that establishes when $h_1$ sends two left homotopies $K, H : X \times N(I[1]) \rightarrow Y$ between the same maps $f, g : X \rightarrow Y$ to the same natural isomorphism.  A suitable such criterion is the existence of a \emph{globular $2$-homotopy} between $K$ and $H$:

\begin{definition}
    Let $X, Y \in \textbf{SeSp}^{inj}$. Let $f, g : X \rightarrow Y$ and let $K, H : X \times N(I[1]) \rightarrow Y$ be two left homotopies from $f$ to $g$. Then a \emph{globular $2$-homotopy} $\alpha : K \Rightarrow H$ is a left homotopy from $K$ to $H$, namely a functor $X \times N(I[1]) \times N(I[1]) \rightarrow Y$, such that the restrictions to $X \times \{0\} \times N(I[1])$ and $X \times \{1\} \times N(I[1])$ are constantly $f$ and $g$ respectively.
\end{definition}

Globular $2$-homotopies are notably similar to the \emph{correspondences} of \cite[pg. 23, Def. 2]{quillenHomotopicalAlgebra1967}, for our particular model category and cylinder objects. The two are not precisely equal, though we believe they should be equivalent in some suitable sense.

Globular $2$-homotopies are, in spirit, homotopies between two homotopies of maps of Segal spaces. They moreover enjoy a place in our enrichment of $\textbf{SeSp}^{inj}$ by supposed Kan complexes of `higher homotopies', helping to justify this interpretation thereof. Consider the two maps
$$
    A, B : I[2] \rightarrow I[1] \times I[1]
$$
sending $0 \mapsto (0, 0)$ and $2 \mapsto (1, 1)$, where $A$ sends $1$ to $(0, 1)$ while $B$ sends $1$ to $(1, 0)$. Then for a globular $2$-homotopy $\alpha : X \times N(I[1]) \times N(I[1]) \rightarrow Y$, we obtain two maps
$$
    X \times N(I[2]) \rightarrow X \times N(I[1] \times I[1]) \cong X \times N(I[1]) \times N(I[1]) \rightarrow Y.
$$
In our enrichment of $\textbf{SeSp}^{inj}$ by $\textbf{Map}^I_{\textbf{sS}}(-, -)$, these correspond precisely to those $2$-simplices $\alpha \in \textbf{Map}^I_{\textbf{sS}}(X, Y)_2$ such that either the $1$-simplex of the form $\textbf{Map}^I_{\textbf{sS}}(X, Y)_{\langle 0, 1 \rangle}(\alpha)$ or the $1$-simplex of the form $\textbf{Map}^I_{\textbf{sS}}(X, Y)_{\langle 1, 2 \rangle}(\alpha)$ is degenerate. We may thus interpret globular $2$-homotopies as the $2$-morphisms in each $\infty$-groupoid $\textbf{Map}^I_{\textbf{sS}}(X, Y)$.

A particularly appealing aspect of our specific definition for globular $2$-homotopies is that, upon an application of $h_1$, a globular $2$-homotopy between two homotopies $H$ and $K$ will explicitly identify the natural isomorphisms $h_1(H)$ and $h_1(K)$:

\begin{proposition}
    Let $X, Y, f, g, K, H$ be as above. Suppose $\alpha : K \Rightarrow H$ is a globular $2$-homotopy from $K$ to $H$. Then the two natural isomorphisms $h_1(H), h_1(K) : h_1(X) \times I[1] \rightarrow h_1(Y)$ are equal.
\end{proposition}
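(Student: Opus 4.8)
The plan is to apply $h_1$ directly to the globular $2$-homotopy $\alpha$ and extract the conclusion from functoriality in the groupoid $I[1] \times I[1]$. First I would use Lemma~\ref{lemma_h1_prods} together with the natural isomorphism $h_1 \circ N \cong 1_{\textbf{Cat}}$ of Proposition~\ref{prop_h1_n_cancel} to rewrite $h_1(\alpha)$ as a functor $\bar{\alpha} : h_1(X) \times I[1] \times I[1] \to h_1(Y)$, recording the first $I[1]$-factor as the original homotopy direction and the second as the $2$-homotopy direction. Since $\alpha$ restricts to $K$ on $X \times N(I[1]) \times \{0\}$ and to $H$ on $X \times N(I[1]) \times \{1\}$, and since $h_1$ respects these inclusions (they come from the object-inclusions $N(I[0]) \to N(I[1])$), $\bar{\alpha}$ restricts to $h_1(K)$ on $h_1(X) \times I[1] \times \{0\}$ and to $h_1(H)$ on $h_1(X) \times I[1] \times \{1\}$. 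Likewise the globular hypothesis, that $\alpha$ is constantly $f$ on $X \times \{0\} \times N(I[1])$ and constantly $g$ on $X \times \{1\} \times N(I[1])$, gives after $h_1$ that $\bar{\alpha}$ restricted to $h_1(X) \times \{0\} \times I[1]$ is $h_1(f)$ composed with the projection, and restricted to $h_1(X) \times \{1\} \times I[1]$ is $h_1(g)$ composed with the projection. In particular $h_1(K)$ and $h_1(H)$ already agree on objects, sending $(x,0) \mapsto h_1(f)(x)$ and $(x,1) \mapsto h_1(g)(x)$, so only the behaviour on morphisms remains.

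Next I would invoke the observation recorded after Definition~\ref{defn_h1_cat_s}, that a functor $\mathscr{C} \times I[1] \to \mathscr{D}$ is precisely a natural isomorphism between its two boundary restrictions; hence two such functors with equal boundary restrictions coincide if and only if their components at every object of $\mathscr{C}$ coincide. Thus it suffices to prove, for each object $x$ of $h_1(X)$, that the image under $h_1(K)$ of the morphism $(1_x, \, !) : (x,0) \to (x,1)$ — where $!$ denotes the unique morphism $0 \to 1$ in $I[1]$ — equals the image of the corresponding morphism under $h_1(H)$.

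The key step is then a single commuting square. Inside $h_1(X) \times I[1] \times I[1]$ consider the four objects $(x,i,j)$ with $i,j \in \{0,1\}$ and the canonical square on them; since the two $I[1]$-factors are indiscrete groupoids, this square commutes automatically, the composite $(x,0,0) \to (x,1,1)$ being $(1_x, !, !)$ along either route. Its top and bottom edges are $(1_x, !, 1_0) : (x,0,0) \to (x,1,0)$ and $(1_x, !, 1_1) : (x,0,1) \to (x,1,1)$, whose images under $\bar{\alpha}$ are exactly the component of $h_1(K)$ at $x$ and the component of $h_1(H)$ at $x$. Its two side edges $(1_x, 1_0, !) : (x,0,0) \to (x,0,1)$ and $(1_x, 1_1, !) : (x,1,0) \to (x,1,1)$ lie in $h_1(X) \times \{0\} \times I[1]$ and $h_1(X) \times \{1\} \times I[1]$ respectively, on which $\bar{\alpha}$ is constant in the last variable; hence $\bar{\alpha}$ sends them to the identities $1_{h_1(f)(x)}$ and $1_{h_1(g)(x)}$. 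Applying $\bar{\alpha}$ to the commuting square and cancelling these identities yields that the top and bottom images agree, i.e. the components of $h_1(K)$ and $h_1(H)$ at $x$ are equal. Since $x$ is arbitrary and the boundary restrictions already agree, $h_1(K) = h_1(H)$.

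I expect the only real friction to be bookkeeping rather than anything substantive: keeping straight which of the two $I[1]$-factors plays the role of the original homotopy and which the role of the $2$-homotopy, verifying that the identifications $h_1(A \times B) \cong h_1(A) \times h_1(B)$ and $h_1(N(I[k])) \cong I[k]$ are compatible with the restriction maps used (all of which are induced by product projections and by object-inclusions $I[0] \to I[1]$), and translating the words ``constantly $f$'' correctly through $h_1$. These must be laid out with care, because it is precisely the constancy hypothesis that forces the side edges of the square to identities and thereby collapses the argument; everything else is formal.
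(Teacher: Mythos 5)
Your proof is correct and follows essentially the same route as the paper: apply $h_1$ to $\alpha$ to obtain a functor $h_1(X) \times I[1] \times I[1] \to h_1(Y)$ and read off the conclusion from its boundary restrictions. The only difference is that you spell out the final step (the commuting square in the indiscrete groupoid $I[1] \times I[1]$, whose vertical edges are sent to identities by the globularity hypothesis) which the paper's proof leaves implicit in its concluding ``thus.''
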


\begin{proof}
    We find that the map, which we denote by $h_1(\alpha)$, of the form
    \begin{align*}
        h_1(X) \times I[1] \times I[1] &\cong h_1(X) \times h_1(N(I[1])) \times h_1(N(I[1])) \\
        &\cong h_1(X \times N(I[1]) \times N(I[1])) \xrightarrow{h_1(\alpha)} h_1(Y)
    \end{align*}
    reduces to $h_1(f)$ on $h_1(X) \times \{0\} \times I[1]$ and $h_1(g)$ on $h_1(X) \times \{1\} \times I[1]$. Moreover, it restricts to $h_1(K)$ and $h_1(H)$ on the other evident restrictions. Thus, $h_1(K) = h_1(H)$ as needed.
\end{proof}

An important consequence to this proposition for us lies in comparing solutions of lifting problems. Consider a lifting problem with two possible solutions
\[\begin{tikzcd}
	A && B \\
	C && D
	\arrow["f", from=1-1, to=1-3]
	\arrow["i"', from=1-1, to=2-1]
	\arrow["p", from=1-3, to=2-3]
	\arrow["\delta"{description}, curve={height=-6pt}, dashed, from=2-1, to=1-3]
	\arrow["\epsilon"{description}, curve={height=6pt}, dashed, from=2-1, to=1-3]
	\arrow["g"', from=2-1, to=2-3]
\end{tikzcd}\]
where $i$ is a cofibration and $p$ a trivial fibration. We have the following useful result:

\begin{proposition}[{\cite[Prop. 7.6.13]{hirschhornModelCategoriesTheir2009}}]\label{prop_h1_2lift_htpy}  Let $\mathscr{M}$ be a model category. Consider the above solid arrow diagram, where adding in either $\delta$ or $\epsilon$ alone still makes the diagram commute. Then there is a left homotopy from $\delta$ to $\epsilon$ in the model category $(A \downarrow \mathscr{M} \downarrow D)$.
\end{proposition}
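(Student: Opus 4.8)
The plan is to reduce the statement to the standard fact that two maps out of a cofibrant object into a fibrant object which become equal after composition with a trivial fibration are left homotopic, but carried out inside the model category $\mathscr{N} := (A \downarrow \mathscr{M} \downarrow D)$. Here $\mathscr{N}$ is the category whose objects are factorizations $A \to Z \to D$ of the common composite $a := p\circ f = g \circ i : A \to D$, with morphisms the maps in $\mathscr{M}$ commuting with the structure maps; it carries a model structure in which cofibrations, fibrations and weak equivalences are created in $\mathscr{M}$ (this is the standard coslice-then-slice construction, e.g.\ \cite[Thm. 7.6.5]{hirschhornModelCategoriesTheir2009}). First I would record two bookkeeping observations. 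The object $\mathbf{C} := (A \xrightarrow{i} C \xrightarrow{g} D)$ is cofibrant in $\mathscr{N}$: the initial object of $\mathscr{N}$ is $(A \xrightarrow{\mathrm{id}} A \xrightarrow{a} D)$ and the unique map out of it to $\mathbf{C}$ has underlying map $i$, a cofibration. Dually, $\mathbf{B} := (A \xrightarrow{f} B \xrightarrow{p} D)$ is fibrant in $\mathscr{N}$, since the map from $\mathbf{B}$ to the terminal object $(A \xrightarrow{a} D \xrightarrow{\mathrm{id}} D)$ has underlying map $p$, a (trivial) fibration. Finally, the two given solutions $\delta$ and $\epsilon$ are precisely morphisms $\mathbf{C} \to \mathbf{B}$ in $\mathscr{N}$: each is a map $C \to B$ in $\mathscr{M}$ satisfying $\delta i = f = \epsilon i$ (the upper triangle, so they commute with the maps from $A$) and $p\delta = g = p\epsilon$ (the lower triangle, so they commute with the maps to $D$).

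Next I would invoke the factorization axiom in $\mathscr{N}$ to choose a very good cylinder object for $\mathbf{C}$, in the sense of Definition~\ref{defn_h1_cylobj} applied verbatim to $\mathscr{N}$: a factorization
$$
\mathbf{C} \sqcup \mathbf{C} \xrightarrow{\ j\ } \mathbf{C} \wedge I \xrightarrow{\ q\ } \mathbf{C}
$$
of the fold map in $\mathscr{N}$, with $j$ a cofibration and $q$ a trivial fibration. The coproduct $\mathbf{C}\sqcup\mathbf{C}$ in $\mathscr{N}$ has underlying object the pushout $C \sqcup_A C$, and $\mathbf{C}\wedge I$ comes with structure maps $A \to \mathbf{C}\wedge I \to D$, the latter being $g\circ q$ on underlying objects. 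I would then form, in $\mathscr{M}$, the lifting square whose top map $C\sqcup_A C \to B$ is $(\delta,\epsilon)$, whose bottom map $\mathbf{C}\wedge I \to D$ is $g\circ q$, whose left map is (the underlying map of) $j$ and whose right map is $p$. This square commutes, since $p\circ(\delta,\epsilon)$ and $g\circ q\circ j$ are both the map $C\sqcup_A C \to D$ induced by $g$ on each copy (using $p\delta = g = p\epsilon$ together with $qj = \mathrm{fold}$). Since $j$ is a cofibration and $p$ a trivial fibration, a lift $H : \mathbf{C}\wedge I \to B$ exists.

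It then remains to check that $H$ does the job: that $H$ is a morphism in $\mathscr{N}$ — it commutes with the structure map from $A$ because precomposing with $A \to \mathbf{C}\wedge I$ (which factors through either copy $C \to C\sqcup_A C$) yields $f$, and it commutes with the structure maps to $D$ because $pH = g\circ q$ — and that $H \circ j = (\delta,\epsilon)$ exhibits $H$ as a left homotopy from $\delta$ to $\epsilon$ in $\mathscr{N}$, which is exactly the assertion. The only real point requiring care, and the spot I would dwell on, is the bookkeeping of the first paragraph: confirming that $\mathscr{N}$ is a model category with structure created in $\mathscr{M}$, and correctly identifying its initial and terminal objects, coproducts, fold maps and (co)fibrant objects. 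Once that is in place the proof is a single application of the lifting axiom, and all remaining verifications are routine chases tracking the ``under $A$'' and ``over $D$'' conditions.
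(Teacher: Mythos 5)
Your proposal is correct and follows essentially the same route as the paper: equip $(A \downarrow \mathscr{M} \downarrow D)$ with the model structure created in $\mathscr{M}$, factor the fold map $C \sqcup_A C \to C$ into a cofibration followed by a weak equivalence to get a cylinder object, and lift $(\delta,\epsilon)$ against the trivial fibration $p$. The only (immaterial) differences are that you insist on a \emph{very} good cylinder object where the paper uses merely a good one, and you record the cofibrancy/fibrancy bookkeeping explicitly, neither of which changes the argument.
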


It will be important for us to understand how this is proven.

\begin{proof}
    There is a model structure on $(A \downarrow \mathscr{M} \downarrow D)$ given in \cite[Thm. 7.6.5]{hirschhornModelCategoriesTheir2009}, where weak equivalences, fibrations and cofibrations are those which restrict to such maps in $\mathscr{M}$.

    Now, factor the map $C \sqcup_A C \rightarrow C$ as $C \sqcup_A C \overset{j}{\rightarrow} C \wedge I \overset{r}{\rightarrow} C$ as a good cylinder object, so $j$ is a cofibration and $r$ a weak equivalence. Consider the solid arrow diagram
    \[\begin{tikzcd}
    	{C \sqcup_A C} && B \\
    	{C \wedge I} & C & D
    	\arrow["{\delta \sqcup_f \epsilon}", from=1-1, to=1-3]
    	\arrow["j"', from=1-1, to=2-1]
    	\arrow["p", from=1-3, to=2-3]
    	\arrow["H"{description}, dashed, from=2-1, to=1-3]
    	\arrow["r"', from=2-1, to=2-2]
    	\arrow["g"', from=2-2, to=2-3]
    \end{tikzcd}\]
    where $j$ is a cofibration and $p$ is a trivial fibration. Thus, there is a dotted arrow $H$ making the diagram commute, which makes $H$ a left homotopy from $\delta$ to $\epsilon$ in $(A \downarrow \mathscr{M} \downarrow D)$ as needed.
\end{proof}

A side result should be noted with regards to these homotopies and taking fibers:

\begin{proposition} \label{prop_fib_of_htpy_is_htpy}
    Suppose $\mathscr{M} = \textbf{sSpace}^{inj}$ in the above lifting problem, $X$ is a $2$-fold Segal space and that $f, i, p, g, \delta$ and $\epsilon$ are all object-fibered over $(X_0)^{n+1}$. Then the induced homotopy $H$ will also be object-fibered, with respect to the vertex maps $C \wedge I \rightarrow C \rightarrow (X_0)^{n+1}$. Moreover, for any $x_0, \cdots, x_n \in (X_{0, 0})_0$, the map $H^{x_0, \cdots, x_n} : (C \wedge I)^{x_0, \cdots, x_n} \rightarrow B^{x_0, \cdots, x_n}$ is a left homotopy from $\delta^{x_0, \cdots, x_n}$ to $\epsilon^{x_0, \cdots, x_n}$.
\end{proposition}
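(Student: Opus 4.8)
The plan is to unwind the construction of $H$ from the proof of Proposition~\ref{prop_h1_2lift_htpy}, track the maps to $(X_0)^{n+1}$ along the way, and then apply the fiber functor $(-)^{x_0, \cdots, x_n}$ of Notation~\ref{not_sesp_supscript_fiber}. Recall that there $H$ arises as a lift in a square built from a good cylinder object $C \sqcup_A C \xrightarrow{j} C \wedge I \xrightarrow{r} C$; I would simply take this cylinder object to be \emph{very good}, i.e.\ factor $C \sqcup_A C \to C$ as a cofibration $j$ followed by a trivial fibration $r$ (a very good cylinder is in particular good, so the proof of Proposition~\ref{prop_h1_2lift_htpy} is unaffected). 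Thus the homotopy in question satisfies $H \circ j = \delta \sqcup_f \epsilon$ and $p \circ H = g \circ r$.

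First I would check that $H$ is object-fibered. Writing $f_B : B \to (X_0)^{n+1}$, $f_D : D \to (X_0)^{n+1}$ and $f_C : C \to (X_0)^{n+1}$ for the structure maps — all the relevant composites to $(X_0)^{n+1}$ agreeing by object-fiberedness of $\delta$, $g$, $p$, exactly as in the proof of Proposition~\ref{prop_cd_objfibered_liftprob} — I equip $C \wedge I$ with the map $f_C \circ r$. Then $f_B \circ H = f_D \circ p \circ H = f_D \circ g \circ r = f_C \circ r$, which is precisely object-fiberedness with respect to the vertex map $C \wedge I \xrightarrow{r} C \xrightarrow{f_C} (X_0)^{n+1}$.

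Second I would apply $(-)^{x_0, \cdots, x_n}$. Two facts are needed: that this functor preserves colimits, which holds because up to the canonical equivalence $\textbf{sSpace}_{k-1}/\{\ast\} \simeq \textbf{sSpace}_{k-1}$ it is a base-change functor in a presheaf topos, so $(C \sqcup_A C)^{x_0, \cdots, x_n} \cong C^{x_0, \cdots, x_n} \sqcup_{A^{x_0, \cdots, x_n}} C^{x_0, \cdots, x_n}$; and that it preserves trivial fibrations, by Proposition~\ref{prop_sesp_fiber_pres_trivfib}, so $r^{x_0, \cdots, x_n}$ is a trivial fibration, in particular a weak equivalence. Hence the factorization $C^{x_0, \cdots, x_n} \sqcup_{A^{x_0, \cdots, x_n}} C^{x_0, \cdots, x_n} \xrightarrow{j^{x_0, \cdots, x_n}} (C \wedge I)^{x_0, \cdots, x_n} \xrightarrow{r^{x_0, \cdots, x_n}} C^{x_0, \cdots, x_n}$ of the fold map exhibits $(C \wedge I)^{x_0, \cdots, x_n}$ as a (very good) cylinder object of $C^{x_0, \cdots, x_n}$ relative to $A^{x_0, \cdots, x_n}$. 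Applying $(-)^{x_0, \cdots, x_n}$ to the identity $H \circ j = \delta \sqcup_f \epsilon$ and using functoriality then gives $H^{x_0, \cdots, x_n} \circ j^{x_0, \cdots, x_n} = \delta^{x_0, \cdots, x_n} \sqcup_{f^{x_0, \cdots, x_n}} \epsilon^{x_0, \cdots, x_n}$, so $H^{x_0, \cdots, x_n}$ is a left homotopy from $\delta^{x_0, \cdots, x_n}$ to $\epsilon^{x_0, \cdots, x_n}$, as claimed.

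The main obstacle I anticipate is the bookkeeping around which cylinder object to use: the homotopy produced by Proposition~\ref{prop_h1_2lift_htpy} is built only from a \emph{good} cylinder object, and $(-)^{x_0, \cdots, x_n}$ does not preserve generic weak equivalences, so to conclude that the fiber $r^{x_0, \cdots, x_n}$ is a weak equivalence one must upgrade to a \emph{very good} cylinder, where $r$ is a trivial fibration — alternatively, in the common case $A = \emptyset$ relevant to our applications, one may instead take $C \wedge I = C \times N(I[1])$, whose fiber is visibly $C^{x_0, \cdots, x_n} \times N(I[1])$, again a very good cylinder object by Proposition~\ref{prop_h1_very_good_cyl_obj}. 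Past this observation the argument is a routine diagram chase combined with colimit-preservation of the fiber functor.
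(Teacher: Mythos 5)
Your proof is correct and follows essentially the same route as the paper's: establish object-fiberedness of $H$ by chasing the structure maps to $(X_0)^{n+1}$ through the lifting square, then apply $(-)^{x_0, \cdots, x_n}$, using that it preserves the relevant colimits and trivial fibrations (Proposition \ref{prop_sesp_fiber_pres_trivfib}) to see that the fiber of the cylinder is still a cylinder. The one point where you go beyond the paper is worth keeping: the proof of Proposition \ref{prop_h1_2lift_htpy} only produces a \emph{good} cylinder object, where $r$ is merely a weak equivalence, yet the paper's proof of this proposition asserts that $r^{x_0, \cdots, x_n}$ is a trivial fibration; your explicit upgrade to a \emph{very good} cylinder (or to $C \times N(I[1])$ via Proposition \ref{prop_h1_very_good_cyl_obj}) is precisely what justifies that step, since $(-)^{x_0, \cdots, x_n}$ is a strict pullback and does not preserve general weak equivalences.
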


\begin{proof}
    The map $j$ is clearly object-fibered, by the naturally induced map from the pushout $C \sqcup_A C \rightarrow (X_0)^{n+1}$. The map $\delta \sqcup_f \epsilon$ must also be object-fibered, while $p$ and $g$ are by assumption. Thus, by Proposition \ref{prop_cd_objfibered_liftprob}, the solution $H$ of the lifting problem must also be object-fibered such that commutativity of the diagram is preserved after taking a fiber.

    Note that $r$ is object-fibered. Moreover, $r^{x_0, \cdots, x_n}$ is a trivial fibration and thus a weak equivalence, so $(C \wedge I)^{x_0, \cdots, x_n}$ is a cylinder object, though perhaps not a good or very good one anymore. Hence, $H^{x_0, \cdots, x_n}$ is a valid left homotopy as claimed.
\end{proof}

Note that in the case $C \wedge I = C \times N(I[1])$, the homotopy will in fact stay a very good left homotopy.

If $A = \emptyset$ is the initial object, the cylinder object in question is also a cylinder object in $(\mathscr{M} \downarrow D)$, as well as in $\mathscr{M}$, yielding a lifting problem of the form
\[\begin{tikzcd}
	{C \sqcup C} && B \\
	{C \wedge I} && D
	\arrow["{\delta \sqcup \epsilon}", from=1-1, to=1-3]
	\arrow["g"', from=2-1, to=2-3]
	\arrow["p", from=1-3, to=2-3]
	\arrow["i"', from=1-1, to=2-1]
	\arrow["H", dashed, from=2-1, to=1-3]
\end{tikzcd}\]

We can then compare two such left homotopies $H, K : C \wedge I \rightarrow B$. As per the proof of Proposition \ref{prop_h1_2lift_htpy}, these are two solutions to a new lifting problem, letting us inductively apply the proposition to obtain a left homotopy $\Gamma$ from $H$ to $K$ in $(C \sqcup C \downarrow \mathscr{M} \downarrow D)$. Of course, the cylinder object this homotopy will be defined with respect to is not guaranteed to be such that, for instance, we precisely obtain a globular $2$-homotopy in the case of Segal spaces. We will show how to resolve this situation in the most general context possible, as the structure of a globular $2$-homotopy between homotopies of solutions to a lifting problem may be of interest in other model categories.

We should spend some time understanding precisely what such a left homotopy is in our use case. Consider, for $f : X \rightarrow Z$ a map of simplicial spaces and $p : Y \rightarrow Z$ a trivial fibration in $\textbf{sSpace}^{inj}$, two solutions $\alpha$ and $\beta$ of the induced lifting problem and two left homotopies $H, K : \alpha \Rightarrow \beta$ induced between these solutions as follows:

\[\begin{tikzcd}
	&& Y \\
	X && Z
	\arrow["f"', from=2-1, to=2-3]
	\arrow["p", two heads, from=1-3, to=2-3]
	\arrow["\alpha"{description}, curve={height=-6pt}, dashed, from=2-1, to=1-3]
	\arrow["\beta"{description}, curve={height=6pt}, dashed, from=2-1, to=1-3]
\end{tikzcd}
\hspace{2em}
\begin{tikzcd}
	{X \sqcup X} && Y \\
	{X \times N(I[1])} && Z
	\arrow["{f \circ (1_X \times !)}"', from=2-1, to=2-3]
	\arrow["p", two heads, from=1-3, to=2-3]
	\arrow["H"{description}, curve={height=-6pt}, dashed, from=2-1, to=1-3]
	\arrow["K"{description}, curve={height=6pt}, dashed, from=2-1, to=1-3]
	\arrow[hook', from=1-1, to=2-1]
	\arrow["{\alpha \sqcup \beta}", from=1-1, to=1-3]
\end{tikzcd}\]

Then there is an induced left homotopy $\Gamma$ in $(X \sqcup X \downarrow \textbf{sSpace}^{inj} \downarrow Z)$ from $H$ to $K$. We need a suitable cylinder object in this model category to be able to write down $\Gamma$ directly. One may produce a general such cylinder object. Before we do so, a technical lemma is needed for the construction:

\begin{lemma} \label{lemma_h1_v_cofib}
    Suppose $C \sqcup C \xrightarrow{i} C \wedge I \xrightarrow{p} C$ is a very good cylinder object for $C$ in a model category $\mathscr{M}$. Then there is a very good cylinder object
    $$
        (C \wedge I) \sqcup (C \wedge I) \xrightarrow{h} C \wedge I \wedge I \xrightarrow{r} C \wedge I
    $$
    for $C \wedge I$ and a cofibration $v$ that solves the lifting problem
    \[\begin{tikzcd}
    	{(C \sqcup C) \sqcup (C \sqcup C)} & {(C \wedge I) \sqcup (C \wedge I)} & {C \wedge I \wedge I} \\
    	{(C \sqcup C) \sqcup (C \sqcup C)} \\
    	{(C \wedge I) \sqcup (C \wedge I)} & {C \sqcup C} & {C \wedge I}
    	\arrow["{i \sqcup i}", from=1-1, to=1-2]
    	\arrow["\tau"', from=1-1, to=2-1]
    	\arrow["{i \sqcup i}"', from=2-1, to=3-1]
    	\arrow["h", from=1-2, to=1-3]
    	\arrow["r", from=1-3, to=3-3]
    	\arrow["v", dashed, from=3-1, to=1-3]
    	\arrow["{p \sqcup p}"', from=3-1, to=3-2]
    	\arrow["i"', from=3-2, to=3-3]
    \end{tikzcd}\]
    where $\tau : (C \sqcup C) \sqcup (C \sqcup C) \rightarrow (C \sqcup C) \sqcup (C \sqcup C)$ is the isomorphism permuting the $C$'s by the permutation $(1 3 2 4)$\footnote{Read this as the map $(a_1 a_2 a_3 a_4)$ sending $i \mapsto a_i$, rather than the cyclic permutation.}.
\end{lemma}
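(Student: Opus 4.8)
The plan is to realize $C \wedge I \wedge I$ as a factorization of a single map out of an explicitly constructed ``boundary of the square'', so that both $h$ and $v$ emerge as cofibrations automatically. Write $i_0, i_1 : C \to C \wedge I$ for the two components of $i$, so that the cylinder identities give $p \circ i_0 = p \circ i_1 = 1_C$. Set $D' := (C \sqcup C) \sqcup (C \sqcup C)$ and consider the two maps $u_A := i \sqcup i : D' \to (C \wedge I) \sqcup (C \wedge I)$ and $u_B := (i \sqcup i) \circ \tau : D' \to (C \wedge I) \sqcup (C \wedge I)$; both are cofibrations, since $i$ is one, coproducts of cofibrations are cofibrations, and $\tau$ is an isomorphism. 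Form the pushout
$$
    P := \big( (C \wedge I) \sqcup (C \wedge I) \big) \sqcup_{D'} \big( (C \wedge I) \sqcup (C \wedge I) \big)
$$
of $u_A$ and $u_B$, with structure maps $a, b : (C \wedge I) \sqcup (C \wedge I) \to P$ satisfying $a \circ u_A = b \circ u_B$; as cobase changes of the cofibrations $u_B$ and $u_A$ respectively, $a$ and $b$ are cofibrations.

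Next I would define $\phi : P \to C \wedge I$ via the universal property of the pushout, by declaring $\phi \circ a$ to be the fold map and $\phi \circ b$ to be $i \circ (p \sqcup p)$. The one genuine verification here, and the step I expect to be the main (if modest) obstacle, is checking that these agree on $D'$, i.e.\ that the fold map precomposed with $u_A$ equals $i \circ (p \sqcup p) \circ u_B$ as maps $D' \to C \wedge I$. This is a bookkeeping computation on the four summands of $D'$: using $p \circ i_0 = p \circ i_1 = 1_C$ one finds $i \circ (p \sqcup p) \circ (i \sqcup i)$ sends the four copies of $C$ in order to $(i_0, i_0, i_1, i_1)$, while the fold map precomposed with $i \sqcup i$ sends them to $(i_0, i_1, i_0, i_1)$; the permutation $\tau = (1\,3\,2\,4)$ is chosen precisely so that precomposition by $\tau$ converts the former tuple into the latter. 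Hence $\phi$ is well-defined.

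Finally I would invoke the factorization axiom to write $\phi = r \circ w$ with $w : P \to C \wedge I \wedge I$ a cofibration and $r : C \wedge I \wedge I \to C \wedge I$ a trivial fibration, and set $h := w \circ a$ and $v := w \circ b$, each a cofibration as a composite of cofibrations. It then remains only to read off the required properties. First, $(C \wedge I) \sqcup (C \wedge I) \xrightarrow{h} C \wedge I \wedge I \xrightarrow{r} C \wedge I$ is a very good cylinder object for $C \wedge I$: $h$ is a cofibration, $r$ is a trivial fibration, and $r \circ h = \phi \circ a$ is the fold map. Second, $v$ solves the stated lifting problem: $r \circ v = \phi \circ b = i \circ (p \sqcup p)$ gives the lower triangle, and $v \circ (i \sqcup i) \circ \tau = w \circ b \circ u_B = w \circ a \circ u_A = h \circ (i \sqcup i)$, using the pushout relation, gives the upper triangle. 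Everything beyond the compatibility check of the second paragraph is routine diagram chasing.
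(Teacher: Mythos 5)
Your proof is correct and takes essentially the same route as the paper: form the pushout (the paper's $C \wedge \square$) of the two copies of $(C \wedge I) \sqcup (C \wedge I)$ along $(C \sqcup C) \sqcup (C \sqcup C)$, map it to $C \wedge I$ via the fold map and $i \circ (p \sqcup p)$, factor that map as a cofibration followed by a trivial fibration, and define $h$ and $v$ by composing the pushout legs with the cofibration. Your explicit four-summand check that $\tau$ makes the fold map and $i \circ (p \sqcup p)$ agree on $(C \sqcup C) \sqcup (C \sqcup C)$ is the one detail the paper leaves implicit.
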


\begin{proof}
    Consider the pushout diagram
    \[\begin{tikzcd}
    	{(C \sqcup C) \sqcup (C \sqcup C)} & {(C \sqcup C) \sqcup (C \sqcup C)} & {(C \wedge I) \sqcup (C \wedge I)} \\
    	{(C \wedge I) \sqcup (C \wedge I)} && {C \wedge \square}
    	\arrow["\tau", from=1-1, to=1-2]
    	\arrow["{i \sqcup i}"', from=1-1, to=2-1]
    	\arrow["{i \sqcup i}", from=1-2, to=1-3]
    	\arrow["{v'}", from=1-3, to=2-3]
    	\arrow["{h'}"', from=2-1, to=2-3]
    	\arrow["\lrcorner"{anchor=center, pos=0.125, rotate=180}, draw=none, from=2-3, to=1-2]
    \end{tikzcd}\]
    We note that $h'$ and $v'$ are both cofibrations, by preservation along pushouts. Now, a universal map $f : C \wedge \square \rightarrow C \wedge I$ is obtained from the pushout by the diagram
    \[\begin{tikzcd}
    	{(C \sqcup C) \sqcup (C \sqcup C)} & {(C \sqcup C) \sqcup (C \sqcup C)} & {(C \wedge I) \sqcup (C \wedge I)} & {C \sqcup C} \\
    	{(C \wedge I) \sqcup (C \wedge I)} && {C \wedge \square} \\
    	&&& {C \wedge I}
    	\arrow["\tau", from=1-1, to=1-2]
    	\arrow["{i \sqcup i}"', from=1-1, to=2-1]
    	\arrow["{i \sqcup i}", from=1-2, to=1-3]
    	\arrow["{p \sqcup p}", from=1-3, to=1-4]
    	\arrow["{v'}", from=1-3, to=2-3]
    	\arrow["i", from=1-4, to=3-4]
    	\arrow["{h'}"', from=2-1, to=2-3]
    	\arrow[curve={height=12pt}, two heads, from=2-1, to=3-4]
    	\arrow["\lrcorner"{anchor=center, pos=0.125, rotate=180}, draw=none, from=2-3, to=1-2]
    	\arrow["f", dashed, from=2-3, to=3-4]
    \end{tikzcd}\]
    Factorize this into a cofibration followed by a trivial fibration $C \wedge \square \xrightarrow{\iota} C \wedge I \wedge I \xrightarrow{r} C \wedge I$. The map $h$ is then defined to be $\iota \circ h'$, concluding the construction of the very good cylinder object. The map $v$ is then set to be $\iota \circ v'$. This is clearly a cofibration and solves the required lifting problem by commutativity of the pushout diagram.
\end{proof}

Note that for us, given $\mathscr{M} = \textbf{sSpace}^{inj}$, some Segal space $X$ and using the cylinder object $X \times N(I[1])$, Lemma \ref{lemma_h1_v_cofib}'s induced cylinder object can be set to be $X \times N(I[1]) \times N(I[1])$, while $v$ is given by the evident inclusion of the `vertical' isomorphisms.

\begin{lemma} \label{lemma_h1_glob_cylobj}
    Suppose $C \sqcup C \xrightarrow{i} C \wedge I \xrightarrow{p} C$ is a very good cylinder object for a cofibrant object $C$ in a left proper model category $\mathscr{M}$. Let $h, r, C \wedge I \wedge I$ and $v$ be as in Lemma \ref{lemma_h1_v_cofib}.
    
    Then there is a cylinder object for $C \wedge I$ in $(C \sqcup C \downarrow \mathscr{M})$ of the form
    $$
        (C \wedge I) \sqcup_{C \sqcup C} (C \wedge I) \xrightarrow{b} C \wedge B \xrightarrow{z} C \wedge I
    $$
    where $C \wedge B$ is the pushout
    \[\begin{tikzcd}
    	{(C \wedge I) \sqcup (C \wedge I)} & {C \sqcup C} \\
    	{C \wedge I \wedge I} & {C \wedge B.}
    	\arrow["v"', from=1-1, to=2-1]
    	\arrow["{p \sqcup p}", from=1-1, to=1-2]
    	\arrow["k"', from=2-1, to=2-2]
    	\arrow["s", from=1-2, to=2-2]
    	\arrow["\ulcorner"{anchor=center, pos=0.125, rotate=180}, draw=none, from=2-2, to=1-1]
    \end{tikzcd}\]
\end{lemma}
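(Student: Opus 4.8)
The plan is to write down $b$ and $z$ directly from the two pushouts in sight, reduce the gluing condition for $b$ to the two identities furnished by Lemma \ref{lemma_h1_v_cofib}, and then deduce that $z$ is a weak equivalence from left properness. First I would fix notation: write $i=(i_0,i_1)$ for the endpoint inclusions $C\to C\wedge I$; write $h_a,h_b\colon C\wedge I\to C\wedge I\wedge I$ and $v_a,v_b\colon C\wedge I\to C\wedge I\wedge I$ for the two components of $h$ and of $v$; and let $k\colon C\wedge I\wedge I\to C\wedge B$ and $s=(s_0,s_1)\colon C\sqcup C\to C\wedge B$ be the pushout inclusions, so $k\circ v=s\circ(p\sqcup p)$. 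From Lemma \ref{lemma_h1_v_cofib} I will use exactly the relations $v\circ(i\sqcup i)=h\circ(i\sqcup i)\circ\tau$ and $r\circ v=i\circ(p\sqcup p)$; unwound componentwise, the first says $h_ai_0=v_ai_0$, $h_bi_0=v_ai_1$, $h_ai_1=v_bi_0$, $h_bi_1=v_bi_1$, and the second says $r\circ v_a=i_0\circ p$ and $r\circ v_b=i_1\circ p$.

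Next I would construct $z$. Because $r\circ v=i\circ(p\sqcup p)$, the maps $r\colon C\wedge I\wedge I\to C\wedge I$ and $i\colon C\sqcup C\to C\wedge I$ are compatible over $(C\wedge I)\sqcup(C\wedge I)$, so the pushout defining $C\wedge B$ induces $z\colon C\wedge B\to C\wedge I$ with $z\circ k=r$ and $z\circ s=i$; in particular $z$ is a morphism under $C\sqcup C$ once $C\wedge B$ is equipped with the structure map $s$. To see $z$ is a weak equivalence: $p$ is a weak equivalence between cofibrant objects, so $p\sqcup p$ is a weak equivalence by Ken Brown's lemma (coproducts of trivial cofibrations are trivial cofibrations, since the left lifting property against fibrations is closed under coproducts of morphisms); $v$ is a cofibration and $\mathscr{M}$ is left proper, so the cobase change $k$ of $p\sqcup p$ along $v$ is again a weak equivalence; then $z$ is a weak equivalence by two-out-of-three applied to $z\circ k=r$.

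Then I would construct $b$, and this is the step I expect to be the main obstacle, since it is precisely where the permutation $\tau$ of Lemma \ref{lemma_h1_v_cofib} earns its place. Put $\beta_0=k\circ h_a$ and $\beta_1=k\circ h_b\colon C\wedge I\to C\wedge B$. Using the componentwise identities above together with $k\circ v_a=s_0\circ p$, $k\circ v_b=s_1\circ p$ and $p\circ i_0=p\circ i_1=1_C$, all four corners collapse: $\beta_0i_0=kh_ai_0=kv_ai_0=s_0p i_0=s_0$, and likewise $\beta_1i_0=kh_bi_0=kv_ai_1=s_0$, $\beta_0i_1=kh_ai_1=kv_bi_0=s_1$, $\beta_1i_1=kh_bi_1=kv_bi_1=s_1$. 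Hence $\beta_0\circ i=(s_0,s_1)=s=\beta_1\circ i$, so $\beta_0$ and $\beta_1$ glue to a single map $b\colon(C\wedge I)\sqcup_{C\sqcup C}(C\wedge I)\to C\wedge B$ out of the coproduct of $(C\wedge I,i)$ with itself; since $b$ precomposed with the structure map of that coproduct is $s$, the map $b$ indeed lives in $(C\sqcup C\downarrow\mathscr{M})$.

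Finally I would check the factorization. On each copy of $C\wedge I$ we get $z\circ b=z\circ k\circ h_a=r\circ h_a=1_{C\wedge I}$ (and the same with $h_b$), because $(h,r)$ is a cylinder object for $C\wedge I$; hence $z\circ b$ is the fold map of $(C\wedge I)\sqcup_{C\sqcup C}(C\wedge I)$ in $(C\sqcup C\downarrow\mathscr{M})$. Combined with $z$ being a weak equivalence, this exhibits $(C\wedge I)\sqcup_{C\sqcup C}(C\wedge I)\xrightarrow{b}C\wedge B\xrightarrow{z}C\wedge I$ as a cylinder object for $C\wedge I$ in $(C\sqcup C\downarrow\mathscr{M})$, which is exactly the claim. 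The remaining points — that the coproduct in the comma category is the pushout over $C\sqcup C$, and that $s$ is a cofibration as the cobase change of $v$ — are routine and I would only mention them in passing.
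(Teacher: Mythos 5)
Your proposal is correct and follows essentially the same route as the paper: $z$ is induced on the pushout by $r$ and $i$ (compatible because $r\circ v=i\circ(p\sqcup p)$), $b$ is the gluing of $k\circ h$ over $C\sqcup C$ using exactly the corner identities that the permutation $\tau$ in Lemma \ref{lemma_h1_v_cofib} encodes, $z$ is a weak equivalence via $p\sqcup p$ being a weak equivalence, left properness for $k$, and two-out-of-three against $r$, and the factorization check $z\circ b=$ fold map is identical. The only cosmetic difference is your appeal to Ken Brown's lemma for the functor $X\mapsto X\sqcup X$ to see that $p\sqcup p$ is a weak equivalence, where the paper argues directly by two-out-of-three using the retraction $C\sqcup C\to(C\wedge I)\sqcup(C\wedge I)\xrightarrow{p\sqcup p}C\sqcup C$; both arguments are valid.
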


\begin{proof}
    We need to define the two maps $b$ and $z$. It will then be sufficient to show that $z$ is a weak equivalence and that $zb$ is the appropriate projection map
    $$
        (C \wedge I) \sqcup_{C \sqcup C} (C \wedge I) \rightarrow C \wedge I
    $$
    induced by the pushout.

    The map $z$ is easiest, so we start here: it is simply given by the universal map from the pushout, induced by the maps $i : C \sqcup C \rightarrow C \wedge I$ and $r : C \wedge I \wedge I \rightarrow C \wedge I$. That these commute is due to the lifting problem $v$ is a solution for.

    As an intermediate step, note that $p \sqcup p$ is a weak equivalence. Indeed, the two maps $C \rightarrow C \sqcup C \xrightarrow{i} C \wedge I$ are weak equivalences by $2$-out-of-$3$. As $C$ is cofibrant, these are moreover trivial cofibrations. Then, note that the map
    $$
        C \sqcup C \hookrightarrow (C \sqcup C) \sqcup (C \sqcup C) \xrightarrow{i \sqcup i} (C \wedge I) \sqcup (C \wedge I) \xrightarrow{p \sqcup p} C \sqcup C
    $$
    is simply the identity. The composite of the first two maps is a coproduct of trivial cofibrations so is itself a trivial cofibration \cite[Prop. 7.2.5 (2)]{hirschhornModelCategoriesTheir2009}. Hence, $p \sqcup p$ is a weak equivalence by $2$-out-of-$3$.
    
    Note then that $k$ is a weak equivalence, as $\mathscr{M}$ is left proper, $v$ is a cofibration and $p \sqcup p$ is a weak equivalence. This implies by 2-out-of-3 with $r$ that $z$ is a weak equivalence, as needed.

    Now we turn to $b$. Consider the two morphisms $i_1, i_2 : C \sqcup C \rightarrow (C \wedge I) \sqcup (C \wedge I)$. It will suffice to show that $khi_1 = khi_2$, as this will induce a pushout map to $C \wedge B$. Extending the pushout diagram defining $C \wedge B$ reveals two larger commutative diagrams, for $j \in \{1, 2\}$, of the form
    \[\begin{tikzcd}
    	{C \sqcup C} & {(C \wedge I) \sqcup (C \wedge I)} & {C \sqcup C} \\
    	{(C \wedge I) \sqcup (C \wedge I)} & {C \wedge I \wedge I} & {C \wedge B}
    	\arrow["{i_j}"', from=1-1, to=2-1]
    	\arrow["h"', from=2-1, to=2-2]
    	\arrow["k"', from=2-2, to=2-3]
    	\arrow["{p \sqcup p}", from=1-2, to=1-3]
    	\arrow["s", from=1-3, to=2-3]
    	\arrow["v"', from=1-2, to=2-2]
    	\arrow["\ulcorner"{anchor=center, pos=0.125, rotate=180}, draw=none, from=2-3, to=1-2]
    	\arrow["{\alpha_j}", from=1-1, to=1-2]
    \end{tikzcd}\]
    where $\alpha_1$ and $\alpha_2$ are induced by the lifting problem defining $v$. Commutativity is given by this same lifting problem. It thus suffices to show that $s(p \sqcup p)\alpha_1 = s(p \sqcup p)\alpha_2$. However, $(p \sqcup p)\alpha_1$ and $(p \sqcup p)\alpha_2$ give the identity on $C \sqcup C$, so they are equal as needed, in turn defining $b$.

    We finally need to show that $b$ and $z$ satisfy the requirements of a cylinder object, namely that $z \circ b$ is the projection map from the pushout. This is a matter of a quick diagram chase and an application of the same property of $h$ and $r$. Indeed, if $q : (C \wedge I) \sqcup (C \wedge I) \rightarrow (C \wedge I) \sqcup_{C \sqcup C} (C \wedge I)$ is the natural map, then $zbq = zkh = rh$ by inspection, which is the required projection.
\end{proof}

Note that we did not prove this to be a very good cylinder object, as this is not needed for our purposes. We have not even proven it to be a good cylinder object. Regardless, though this cylinder object is a useful intermediate construction, we will never need to work with it directly:

\begin{lemma} \label{lemma_h1_cb_iff_glob}
    Suppose, in the situation of Lemma \ref{lemma_h1_glob_cylobj}, we have $D \in \mathscr{M}$ and two homotopies $H, K : C \wedge I \rightarrow D$, both between maps $f, g : C \rightarrow D$.
    
    Then there is a left homotopy from $H$ to $K$ in $(C \sqcup C \downarrow \mathscr{M})$ if and only if there is a left homotopy
    $$
    \Gamma : C \wedge I \wedge I \rightarrow D
    $$
    from $H$ to $K$ in $\mathscr{M}$ such that $\Gamma \circ v = (f \circ p) + (g \circ p)$.
\end{lemma}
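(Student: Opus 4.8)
The plan is to unpack both sides of the biconditional in terms of the explicit cylinder object $C \wedge B$ constructed in Lemma~\ref{lemma_h1_glob_cylobj}, and to show that the extra data of a globular-style homotopy $\Gamma : C \wedge I \wedge I \to D$ with $\Gamma \circ v = (f \circ p) + (g \circ p)$ is precisely what is needed to assemble (and what is extracted from) a left homotopy $H \Rightarrow K$ in the comma category $(C \sqcup C \downarrow \mathscr{M})$. First I would recall that, by the definition of the model structure on $(C \sqcup C \downarrow \mathscr{M})$, a left homotopy from $H$ to $K$ is a map out of a cylinder object for $C \wedge I$ \emph{in that comma category} restricting to $H + K$ on the two copies and agreeing on the structure maps from $C \sqcup C$; since $C \wedge B$ with $b$ and $z$ is exactly such a cylinder object, a left homotopy in this sense is (up to the usual fact that homotopy is detected by any good cylinder object, via Proposition~\ref{prop_h1_cosimp_res_cylobj}-style reasoning or directly \cite[Lemma 7.4.7]{hirschhornModelCategoriesTheir2009}) equivalent to a map $C \wedge B \to D$ restricting appropriately.

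For the ``only if'' direction, I would start from a left homotopy $\Phi : C \wedge B \to D$ witnessing $H \sim K$ in $(C \sqcup C \downarrow \mathscr{M})$. Precomposing with the canonical map $k : C \wedge I \wedge I \to C \wedge B$ from the pushout square defining $C \wedge B$ yields a candidate $\Gamma := \Phi \circ k : C \wedge I \wedge I \to D$. One then checks two things: that $\Gamma$ restricts to $H$ and $K$ on the two ``horizontal'' copies of $C \wedge I$ inside $C \wedge I \wedge I$ (this follows because $b$ factors through $k$ via the maps $hi_1, hi_2$ appearing in the proof of Lemma~\ref{lemma_h1_glob_cylobj}, and $\Phi \circ b = H + K$), and that $\Gamma \circ v = (f \circ p) + (g \circ p)$. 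The latter is forced: $\Phi$ is a morphism in $(C \sqcup C \downarrow \mathscr{M})$, so it commutes with the structure map from $C \sqcup C$; chasing the pushout square, $k \circ v$ factors through $s : C \sqcup C \to C \wedge B$, which is exactly the structure map, and $\Phi \circ s$ is the given map $C \sqcup C \to D$, namely $f + g$ — composing with $p$ on the nose gives $(f\circ p)+(g\circ p)$. Hence $\Gamma$ is a left homotopy in $\mathscr{M}$ of the required form.

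For the ``if'' direction I would run this in reverse: given $\Gamma : C \wedge I \wedge I \to D$ with $\Gamma \circ v = (f\circ p)+(g\circ p)$, the universal property of the pushout defining $C \wedge B$ produces a map $\Phi : C \wedge B \to D$ with $\Phi \circ k = \Gamma$ and $\Phi \circ s$ equal to $f + g : C \sqcup C \to D$ — the two composites into $D$ agree on $(C \wedge I)\sqcup(C\wedge I)$ precisely because $\Gamma \circ v = (f\circ p)+(g\circ p)$, which is the compatibility condition the pushout square demands. One then verifies $\Phi \circ b = H + K$ (again using that $b$ is built from $h i_1, h i_2$ and that $\Gamma$ restricts to $H, K$), so $\Phi$ is a left homotopy from $H$ to $K$ built on the cylinder object $C \wedge B$, which by construction lives in $(C \sqcup C \downarrow \mathscr{M})$. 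The main obstacle I anticipate is bookkeeping: matching up which of the several coproduct inclusions and pushout legs in the diagram of Lemma~\ref{lemma_h1_glob_cylobj} corresponds to $H$, to $K$, to the ``constant $f$'' and ``constant $g$'' faces, and verifying that the restriction of $\Gamma$ along the two horizontal copies really is $H$ and $K$ rather than some permuted or twisted version — this is where the permutation $\tau = (1\,3\,2\,4)$ from Lemma~\ref{lemma_h1_v_cofib} has to be tracked carefully, and it is essentially the only place the argument could go wrong.
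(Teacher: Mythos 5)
Your ``if'' direction (from $\Gamma$ to a left homotopy in the comma category) is correct and coincides with the paper's one-line argument: the condition $\Gamma \circ v = (f\circ p)+(g\circ p)$ is exactly the compatibility needed to glue $\Gamma$ and $f+g$ along the pushout defining $C\wedge B$, and the resulting $\Phi$ restricts to $H+K$ along $b$ because $b$ is built from $k\circ h$.

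The genuine gap is in the preliminary reduction that you dispose of parenthetically: you assert that a left homotopy from $H$ to $K$ in $(C\sqcup C\downarrow\mathscr{M})$ is ``equivalent to a map $C\wedge B\to D$ restricting appropriately,'' citing the usual fact that homotopy is detected by any good cylinder object. But $C\wedge B$ has \emph{not} been shown to be a good cylinder object --- the paper explicitly declines to prove that $b$ is a cofibration --- so that fact does not apply to it; and the stronger form of the detection statement (that left-homotopic maps are left homotopic via \emph{every} good cylinder object) in general requires the target to be fibrant, which is not assumed of $D$ here. This reduction is where essentially all of the content of the ``only if'' direction lives. The paper's proof handles it by factoring $b$ as a cofibration $\gamma$ followed by a trivial fibration $\omega$ to obtain a genuine good cylinder object $C\wedge\beta$, on which the given left homotopy may be assumed to be defined, and then solving the lifting problem of the cofibration $v$ against $\omega$ to produce, via the universal property of the pushout, an explicit weak equivalence $e : C\wedge B\to C\wedge\beta$ commuting with the structure maps from $C\sqcup C$; precomposing the left homotopy with $e$ and then with $k$ yields $\Gamma$. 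Without this comparison (or an added fibrancy hypothesis on $D$), your ``only if'' direction does not go through.
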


\begin{proof}
    Suppose the map $\Gamma$ exists. The property it satisfies corresponds exactly to having a map to $D$ from the span which $C \wedge B$ is the pushout of. Thus, a left homotopy $\Gamma' : C \wedge B \rightarrow D$ is induced by universality of the pushout.

    Now, suppose $\kappa : C \wedge B' \rightarrow D$ is a left homotopy in $(C \sqcup C \downarrow \mathscr{M})$, for some good cylinder object
    $$
        (C \wedge I) \sqcup_{C \sqcup C} (C \wedge I) \xrightarrow{b'} C \wedge B' \xrightarrow{z'} C \wedge I.
    $$
    Some work is needed to obtain a left homotopy using the cylinder object $C \wedge B$ itself, as we have not proven it to be a good cylinder object. For now, take any factorization of $b$
    $$
        (C \wedge I) \sqcup_{C \sqcup C} (C \wedge I) \xrightarrow{\gamma} C \wedge \beta \xrightarrow{\omega} C \wedge B
    $$
    into a cofibration $\gamma$ followed by a trivial fibration $\omega$. $C \wedge \beta$ will serve just fine as a good cylinder object. Hence, we can assume $C \wedge B' = C \wedge \beta$.

    Now, take a lift
    \[\begin{tikzcd}
    	{(C \wedge I) \sqcup (C \wedge I)} & {C \sqcup C} & {(C \wedge I) \sqcup_{C \sqcup C} (C \wedge I)} & {C \wedge \beta} \\
    	{C \wedge I \wedge I} &&& {C \wedge B}
    	\arrow["k"', from=2-1, to=2-4]
    	\arrow["\omega", from=1-4, to=2-4]
    	\arrow["v"', from=1-1, to=2-1]
    	\arrow["\gamma", from=1-3, to=1-4]
    	\arrow["\kappa"{description}, dashed, from=2-1, to=1-4]
    	\arrow["{p \sqcup p}", from=1-1, to=1-2]
    	\arrow[from=1-2, to=1-3]
    \end{tikzcd}\]
    The map $\kappa$, together with the natural map $C \sqcup C \rightarrow C \wedge \beta$ given by $\gamma$, induces a map by universality of the pushout $e : C \wedge B \rightarrow C \wedge \beta$. Since $k$ and $\omega$ are weak equivalences, $\kappa$ is a weak equivalence. Thus, since $\kappa$ and $k$ are weak equivalences, $e$ must be a weak equivalence. Note that $e$ commutes with the maps $C \sqcup C \rightarrow C \wedge \beta$ and $s : C \sqcup C \rightarrow C \wedge B$, since $e$ is induced by the pushout's universal property. Precomposing with $e$ thus gives the left homotopy $C \wedge B \rightarrow D$ in $(C \sqcup C \downarrow \mathscr{M})$ desired, which when precomposed with $k$ gives $\Gamma$.
\end{proof}

In particular, we find the following:

\begin{corollary} \label{corr_h1_lefthtpys_2lhtpy}
    Consider the lifting problem induced by the span $C \xrightarrow{g} D \xleftarrow{p} B$ where $C$ is cofibrant and $p$ is a trivial fibration, with solutions $\delta, \epsilon : C \rightarrow B$ in $(\mathscr{M} \downarrow D)$, along with the data given in Lemma \ref{lemma_h1_glob_cylobj}. Any two left homotopies $H, K : C \wedge I \rightarrow B$ over $D$ between $\delta$ and $\epsilon$ induced by this lifting problem will be related by a left homotopy over $D$
    $$
        \Gamma : C \wedge I \wedge I \rightarrow B
    $$
    such that $\Gamma \circ v = (\delta \circ p) + (\epsilon \circ p)$.
\end{corollary}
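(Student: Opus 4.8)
The plan is to reduce the statement to Proposition~\ref{prop_h1_2lift_htpy} and Lemma~\ref{lemma_h1_cb_iff_glob}, carrying out the whole argument inside the slice model category $\mathscr{M}' := \mathscr{M} \downarrow D$, so that the qualifier ``over $D$'' becomes automatic at every stage. First I would record the standard facts that $\mathscr{M}'$ is a model category, that it is left proper whenever $\mathscr{M}$ is (cofibrations and weak equivalences in $\mathscr{M}'$ are detected by the forgetful functor to $\mathscr{M}$, and pushouts in $\mathscr{M}'$ are computed in $\mathscr{M}$), and that the object $(g : C \to D)$ is cofibrant in $\mathscr{M}'$ exactly because $C$ is cofibrant in $\mathscr{M}$. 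I would then check that the very good cylinder object $C \sqcup C \xrightarrow{i} C \wedge I \xrightarrow{p} C$ and the associated data $h, r, C \wedge I \wedge I$ and $v$ coming from Lemma~\ref{lemma_h1_v_cofib} and Lemma~\ref{lemma_h1_glob_cylobj} descend verbatim to $\mathscr{M}'$: the structure maps to $D$ are forced (for instance $C \wedge I \xrightarrow{p} C \xrightarrow{g} D$), their compatibility with $i$, $h$, $r$ and $v$ follows from the cylinder identities and the lifting problem defining $v$, and the notions of cofibration, trivial fibration and pushout all agree between $\mathscr{M}$ and $\mathscr{M}'$.

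With this bookkeeping done, the first substantive step is to observe that $H$ and $K$ are two solutions of a single lifting problem. The good cylinder object supplies the cofibration $i : C \sqcup C \to C \wedge I$, and each of $H$ and $K$ completes the square with top edge the endpoint map $\delta + \epsilon : C \sqcup C \to B$, left edge $i$, right edge $p : B \to D$ and bottom edge $C \wedge I \xrightarrow{p} C \xrightarrow{g} D$; commutativity of the square is the cylinder identity $p \circ i = \nabla_C$ combined with the assumptions that $\delta$, $\epsilon$, $H$ and $K$ all lie over $D$. Applying Proposition~\ref{prop_h1_2lift_htpy} to this lifting problem, with $A = C \sqcup C$, cofibration $i$ and trivial fibration $p$, yields a left homotopy from $H$ to $K$ in $(C \sqcup C \downarrow \mathscr{M} \downarrow D)$. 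Since $(C \sqcup C \downarrow \mathscr{M} \downarrow D)$ is literally the same category as $(C \sqcup C \downarrow \mathscr{M}')$, where $C \sqcup C$ now denotes the coproduct of $(g : C \to D)$ with itself in $\mathscr{M}'$, this is precisely a left homotopy from $H$ to $K$ in $(C \sqcup C \downarrow \mathscr{M}')$.

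The last step is to invoke the forward implication of Lemma~\ref{lemma_h1_cb_iff_glob} for the model category $\mathscr{M}'$, with the lemma's ``$D$'' taken to be the object $(p : B \to D)$ and its ``$f, g$'' taken to be $\delta$ and $\epsilon$. Its hypotheses have all been arranged above, so the left homotopy from $H$ to $K$ in $(C \sqcup C \downarrow \mathscr{M}')$ produces a left homotopy $\Gamma : C \wedge I \wedge I \to B$ from $H$ to $K$ in $\mathscr{M}'$, that is, over $D$, with $\Gamma \circ v = (\delta \circ p) + (\epsilon \circ p)$, which is exactly the claim. I do not anticipate a real obstacle: the mathematical content is entirely contained in Proposition~\ref{prop_h1_2lift_htpy} and Lemma~\ref{lemma_h1_cb_iff_glob}, and the only thing demanding attention is keeping track of the three nested ``over $D$'' conditions, which is handled cleanly by passing to $\mathscr{M} \downarrow D$ once at the start and verifying that left properness, cofibrancy of $C$, and all the cylinder data survive this slicing.
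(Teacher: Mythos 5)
Your proposal is correct and follows essentially the same route as the paper's own (very brief) proof: observe that $H$ and $K$ solve the same lifting problem, apply Proposition~\ref{prop_h1_2lift_htpy} to get a left homotopy in $(C \sqcup C \downarrow \mathscr{M} \downarrow D) = (C \sqcup C \downarrow (\mathscr{M} \downarrow D))$, and conclude via Lemma~\ref{lemma_h1_cb_iff_glob} applied in the slice. Your explicit verification that left properness, cofibrancy of $C$, and the cylinder data descend to $\mathscr{M} \downarrow D$ is bookkeeping the paper leaves implicit, and it is carried out correctly.
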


\begin{proof}
    $H$ and $K$ are solutions of the same lifting problem. Thus, by Proposition \ref{prop_h1_2lift_htpy}, there is a left homotopy between them in $(C \sqcup C \downarrow \mathscr{M} \downarrow D) = (C \sqcup C \downarrow (\mathscr{M} \downarrow D))$, implying the result by Lemma \ref{lemma_h1_cb_iff_glob}.
\end{proof}

Note then that $\Gamma$, in our use case, is exactly the definition of a globular $2$-homotopy. Thus, we have the following result:

\begin{theorem} \label{thm_h1_liftprobs_htpic_2globhptic}
    Consider a lifting problem
    \[\begin{tikzcd}
    	& B \\
    	C & D
    	\arrow["g"', from=2-1, to=2-2]
    	\arrow["p", from=1-2, to=2-2]
    \end{tikzcd}\]
    in $\textbf{sSpace}^{inj}$, where $C, B$ and $D$ are Segal spaces and $p$ is a trivial fibration. Any two solutions will be related by a left homotopy in $(\textbf{SeSp}^{inj} \downarrow D)$ using the cylinder object $C \times N(I[1])$, while any two such left homotopies will be related by a globular $2$-homotopy.
\end{theorem}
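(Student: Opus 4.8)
The plan is to assemble the machinery of this subsection: the first assertion is Proposition \ref{prop_h1_2lift_htpy} run with a carefully chosen cylinder object, and the second is Corollary \ref{corr_h1_lefthtpys_2lhtpy} together with the identification of the cylinder objects appearing there with the concrete products $C \times N(I[1])$ and $C \times N(I[1]) \times N(I[1])$.

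For the first assertion, I would take $A = \emptyset$ and $\mathscr{M} = \textbf{sSpace}^{inj}$, so the given square becomes an instance of the situation of Proposition \ref{prop_h1_2lift_htpy}, with $i : \emptyset \to C$ the (cofibrant) unit map and $p$ a trivial fibration; for any two solutions $\delta, \epsilon$ we have $p\delta = g = p\epsilon$, so adding either one keeps the square commuting. Inspecting the proof of Proposition \ref{prop_h1_2lift_htpy}, the good cylinder object used for $C \sqcup_\emptyset C = C \sqcup C$ may be chosen freely, and I would take it to be $C \times N(I[1])$, which is a very good cylinder object for the cofibrant object $C$ by Proposition \ref{prop_h1_very_good_cyl_obj} (via Propositions \ref{prop_h1_cosimp_res_cylobj} and \ref{prop_h1_ni_cosimp_res}). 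This yields a left homotopy $H : C \times N(I[1]) \to B$ over $D$ from $\delta$ to $\epsilon$; since products of fibrant objects in $SeSp$ are again fibrant, $C \times N(I[1])$ is a Segal space and the homotopy lives in $(\textbf{SeSp}^{inj} \downarrow D)$ as claimed.

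For the second assertion, note that $\textbf{sSpace}^{inj}$ is left proper, as every object is Reedy cofibrant, so the hypotheses of Corollary \ref{corr_h1_lefthtpys_2lhtpy} hold with $C$ cofibrant, $p$ a trivial fibration, and the very good cylinder object $C \times N(I[1])$. Applying the corollary to two left homotopies $H, K : C \times N(I[1]) \to B$ over $D$ between $\delta$ and $\epsilon$ produces a left homotopy $\Gamma : C \wedge I \wedge I \to B$ over $D$ from $H$ to $K$ with $\Gamma \circ v = (\delta \circ p) + (\epsilon \circ p)$. By the remark following Lemma \ref{lemma_h1_v_cofib}, under our choice of cylinder object the space $C \wedge I \wedge I$ may be taken to be $C \times N(I[1]) \times N(I[1])$, with $v$ the inclusion of the two vertical faces $C \times \{0\} \times N(I[1]) \sqcup C \times \{1\} \times N(I[1])$ and $p$ the projection $C \times N(I[1]) \to C$. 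Unwinding $\Gamma \circ v = (\delta \circ p) + (\epsilon \circ p)$ under this identification says exactly that $\Gamma$ restricts to the constant maps $\delta$ and $\epsilon$ on $C \times \{0\} \times N(I[1])$ and $C \times \{1\} \times N(I[1])$, which is precisely the definition of a globular $2$-homotopy from $H$ to $K$ (and it is moreover over $D$). This gives the theorem.

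The only genuine work, and the step I expect to require the most care, is the bookkeeping that justifies replacing the abstract cylinder objects produced in Proposition \ref{prop_h1_2lift_htpy} and Lemmas \ref{lemma_h1_v_cofib} and \ref{lemma_h1_glob_cylobj} by the concrete products above, and in particular confirming that the map $v$ built in Lemma \ref{lemma_h1_v_cofib} really is the vertical-faces inclusion: this amounts to checking that $C \times N(I[1]) \times N(I[1]) \to C \times N(I[1])$ is a trivial fibration (by the pullback argument used in the proof of Proposition \ref{prop_h1_ni_cosimp_res}) and that the induced map out of the pushout $C \wedge \square$ into it is a monomorphism, so the factorization in Lemma \ref{lemma_h1_v_cofib} can be chosen accordingly. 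Everything else is a direct invocation of results already established.
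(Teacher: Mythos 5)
Your proposal is correct and follows essentially the same route as the paper, which in fact gives no separate proof of this theorem: it treats it as the direct combination of Proposition \ref{prop_h1_2lift_htpy} (with $A = \emptyset$ and the very good cylinder object $C \times N(I[1])$ from Proposition \ref{prop_h1_very_good_cyl_obj}) and Corollary \ref{corr_h1_lefthtpys_2lhtpy}, together with the remark after Lemma \ref{lemma_h1_v_cofib} identifying $C \wedge I \wedge I$ with $C \times N(I[1]) \times N(I[1])$ and $v$ with the vertical-faces inclusion. The bookkeeping you flag at the end is exactly the content the paper disposes of in that remark, and your sketch of how to verify it is sound.
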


\begin{corollary} \label{corr_h1_lift_mapssi_h1c_ident_htpies}
    Let $f, g : C \rightarrow B$ be two solutions to the lifting problem above. Let $H, K : C \times N(I[1]) \rightarrow B$ be two left homotopies over $D$ between them. Then the enrichment of $h_1$ in Kan complexes sends $H$ and $K$ to the same natural isomorphism $h_1(C) \times I[1] \rightarrow h_1(D)$.
\end{corollary}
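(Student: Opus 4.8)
The plan is simply to chain the two ingredients already assembled in this subsection. First I would apply Theorem~\ref{thm_h1_liftprobs_htpic_2globhptic} to the lifting problem at hand: since $C$, $B$ and $D$ are Segal spaces and $p$ is a trivial fibration, any two solutions are related by a left homotopy over $D$ built from the cylinder object $C \times N(I[1])$, and any two such left homotopies are in turn related by a globular $2$-homotopy. In particular, the given $H$ and $K$ (which by hypothesis are exactly such left homotopies over $D$ between the solutions $f$ and $g$) are related by a globular $2$-homotopy $\Gamma : K \Rightarrow H$, i.e. a functor $C \times N(I[1]) \times N(I[1]) \to B$ whose restrictions to $C \times \{0\} \times N(I[1])$ and $C \times \{1\} \times N(I[1])$ are constantly $f$ and $g$, and whose two remaining faces recover $H$ and $K$.

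Second, I would feed this $\Gamma$ into the proposition established immediately after the definition of a globular $2$-homotopy, which states precisely that the presence of such a $\Gamma$ forces $h_1(H) = h_1(K)$ as functors $h_1(C) \times I[1] \to h_1(D)$. Unwinding that proposition, one uses that $h_1$ preserves products up to natural isomorphism (Lemma~\ref{lemma_h1_prods}) together with $h_1 \circ N \cong 1_{\textbf{Cat}}$ (Proposition~\ref{prop_h1_n_cancel}) to turn $\Gamma$ into a functor $h_1(C) \times I[1] \times I[1] \to h_1(D)$ with the corresponding constancy along its two ends; the two ``vertical'' slices of this functor are $h_1(H)$ and $h_1(K)$, and since $I[1] \times I[1]$ is the walking square of isomorphisms, a functor out of it that is constant on a pair of opposite edges is forced to be constant in that direction, so the two slices coincide. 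This makes the corollary a one-line deduction once Theorem~\ref{thm_h1_liftprobs_htpic_2globhptic} is in place; the genuine work all lives upstream, in Corollary~\ref{corr_h1_lefthtpys_2lhtpy}, Lemma~\ref{lemma_h1_glob_cylobj} and Lemma~\ref{lemma_h1_v_cofib}.

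The step most likely to demand care — though it is not really an obstacle — is the bookkeeping matching the output of the abstract machinery to the concrete notion of globular $2$-homotopy: one must confirm that the left homotopy supplied by the theorem has endpoints that are \emph{genuinely} constant, i.e. that the condition $\Gamma \circ v = (f \circ p) + (g \circ p)$ appearing in Corollary~\ref{corr_h1_lefthtpys_2lhtpy} becomes, after the identification of $C \wedge I \wedge I$ with $C \times N(I[1]) \times N(I[1])$ and of $v$ with the inclusion of the ``vertical'' isomorphisms (as noted just after Lemma~\ref{lemma_h1_v_cofib}), exactly the constancy clause in the definition of a globular $2$-homotopy. This is immediate on unravelling those identifications, so I expect no real difficulty.
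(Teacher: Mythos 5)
Your proposal is correct and is exactly the argument the paper intends: the corollary is stated without proof precisely because it is the immediate concatenation of Theorem~\ref{thm_h1_liftprobs_htpic_2globhptic} (producing a globular $2$-homotopy between $H$ and $K$) with the proposition following the definition of globular $2$-homotopy (which shows such a $2$-homotopy forces $h_1(H) = h_1(K)$). Your closing remark on matching $\Gamma \circ v = (f \circ p) + (g \circ p)$ to the constancy clause is the right bookkeeping, and the codomain of the natural isomorphism should read $h_1(B)$ rather than $h_1(D)$ — a typo in the statement, not in your argument.
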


As an aside, a consequence of our approach to left homotopies that now deserves mentioning is that $h_1$ will send a section of a trivial fibration to its inverse:

\begin{proposition} \label{prop_h1_section_inv}
    Consider a lifting problem of Segal spaces
    \[\begin{tikzcd}
    	& A \\
    	B & B
    	\arrow["id"', from=2-1, to=2-2]
    	\arrow["f", from=1-2, to=2-2]
    	\arrow["g", dashed, from=2-1, to=1-2]
    \end{tikzcd}\]
    with solution $g$ and a trivial fibration $f$. Then $h_1(g)$ is a categorical inverse to $h_1(f)$.
\end{proposition}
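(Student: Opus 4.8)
The plan is to handle the two composites $h_1(f)\circ h_1(g)$ and $h_1(g)\circ h_1(f)$ separately. The first is free: since $g$ solves the given lifting problem we have $f\circ g = \mathrm{id}_B$, so by functoriality of $h_1$ (obtained just after Proposition \ref{prop_sesp_h1_is_cat}) we get $h_1(f)\circ h_1(g) = h_1(f\circ g) = h_1(\mathrm{id}_B) = \mathrm{id}_{h_1(B)}$ on the nose. So only $h_1(g)\circ h_1(f)$ requires work, and for it we settle for a natural isomorphism rather than an equality — which is the best possible, since a trivial fibration $f$ need not be monic, hence $g\circ f$ need not literally equal $\mathrm{id}_A$.

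The key observation is that $g\circ f$ and $\mathrm{id}_A$ are both solutions of the \emph{same} lifting problem, namely
\[\begin{tikzcd}
	& A \\
	A & B
	\arrow["f"', from=2-1, to=2-2]
	\arrow["f", from=1-2, to=2-2]
\end{tikzcd}\]
Here $p = f$ is a trivial fibration, all three objects $A$, $A$, $B$ are Segal spaces, and the implicit cofibration $\emptyset \to A$ is indeed a cofibration since every object of $\textbf{sSpace}^{inj}$ is Reedy cofibrant. Both maps do solve it: $f\circ(g\circ f) = (f\circ g)\circ f = f$ and $f\circ \mathrm{id}_A = f$. Hence by Theorem \ref{thm_h1_liftprobs_htpic_2globhptic} these two solutions are related by a left homotopy $H : A\times N(I[1]) \to A$ over $B$, defined with respect to the cylinder object $A\times N(I[1])$, restricting to $g\circ f$ on $A\times\{0\}$ and to $\mathrm{id}_A$ on $A\times\{1\}$.

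Next I would push $H$ through the Kan-complex enrichment of $h_1$ from Proposition \ref{prop_h1_enrich_functor}. Since $H$ is a $1$-simplex of $\textbf{Map}^I_{\textbf{sS}}(A, A)$, its image is a $1$-simplex of $\textbf{nerve}(\textbf{Iso}(\textbf{Fun}(h_1(A), h_1(A))))$, i.e.\ a natural \emph{isomorphism} (all morphisms of $\textbf{Iso}(\textbf{Fun}(-,-))$ are invertible, by Definition \ref{defn_h1_cat_s}); tracking the two ends it runs $h_1(g)\circ h_1(f) = h_1(g\circ f) \cong h_1(\mathrm{id}_A) = \mathrm{id}_{h_1(A)}$. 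Combining with the first paragraph, $h_1(f)\circ h_1(g) = \mathrm{id}_{h_1(B)}$ and $h_1(g)\circ h_1(f) \cong \mathrm{id}_{h_1(A)}$, so $h_1(g)$ is a categorical (quasi-)inverse to $h_1(f)$; in particular $h_1(f)$ is an equivalence of categories.

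There is essentially no hard step: the statement is an assembly of results already proved. The only point warranting care is that the enrichment of $h_1$ genuinely delivers a natural \emph{isomorphism} and not just a natural transformation, which is exactly why it is important that $H$ be built from the cylinder object $A\times N(I[1])$ and that the target enrichment be the classifying-diagram one $\textbf{Cat}_s$, where $\textbf{Iso}(\textbf{Fun}(-,-))$ discards all non-invertible natural transformations — precisely the content supplied by Proposition \ref{prop_h1_enrich_functor} together with Definition \ref{defn_h1_cat_s}.
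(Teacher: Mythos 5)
Your proposal is correct and follows essentially the same route as the paper: the identity $h_1(f)\circ h_1(g)=1_{h_1(B)}$ comes for free from $f\circ g=\mathrm{id}_B$, and the other composite is handled by observing that $g\circ f$ and $\mathrm{id}_A$ solve the same lifting problem over $f:A\to B$, producing a left homotopy that the enriched $h_1$ sends to a natural isomorphism. The extra care you take in verifying that the enrichment yields a natural \emph{isomorphism} is a welcome elaboration of a step the paper leaves implicit.
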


\begin{proof}
    We have that $h_1(f) \circ h_1(g) = 1_{h_1(B)}$. Moreover, by the 2-out-of-3 property of weak equivalences, we have that $g$ is a Dwyer-Kan equivalence and thus $h_1(g)$ is an equivalence.

    The proof will be complete if we produce a left homotopy $H : A \times N([I]) \rightarrow A$ from $g \circ f$ to $1_A$, as $h_1^c$ will send this to the necessary natural isomorphism. Such a left homotopy can be produced by noting that $g \circ f$ and $1_A$ are both solutions to the lifting problem
    \[\begin{tikzcd}
    	& A \\
    	A & B
    	\arrow["f"', from=2-1, to=2-2]
    	\arrow["f", from=1-2, to=2-2]
    	\arrow["{g \circ f}", curve={height=-12pt}, dashed, from=2-1, to=1-2]
    	\arrow["id"{description}, dashed, from=2-1, to=1-2]
    \end{tikzcd}\]
    completing the proof.
\end{proof}

Our results also make it possible to prove directly that composition of left homotopies and natural isomorphisms coincide, without needing the simplicial enrichment of $h_1$. Recall our convention of identifying $h_1(X \times N(I[1]))$ with $h_1(X) \times I[1]$ for some Segal space $X$.

\begin{proposition} \label{prop_h1_comp_htpies_is_natiso_comp}
    In the situation of Definition \ref{defn_h1_htpy_comp}, the vertical composite of natural isomorphisms $h_1(K)h_1(H) : h_1(C) \times I[1] \rightarrow h_1(B)$ is equal to $h_1(KH) : h_1(C) \times I[1] \rightarrow h_1(B)$ for any composite $KH : C \times N(I[1]) \rightarrow B$.
\end{proposition}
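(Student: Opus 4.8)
The plan is to apply $h_1$ directly to the horn filler $Q : C \times N(I[1]) \to B$ — more precisely to the $2$-simplex $Q : C \times N(I[2]) \to B$ from Definition \ref{defn_h1_htpy_comp} — and to read the claim off from the structure of $I[2]$. First I would use Lemma \ref{lemma_h1_prods} together with Proposition \ref{prop_h1_n_cancel} (which yields $h_1(N(I[n])) \cong I[n]$) to identify $h_1(C \times N(I[2]))$ with $h_1(C) \times I[2]$, so that $h_1(Q)$ becomes a functor $G : h_1(C) \times I[2] \to h_1(B)$. The key elementary observation is that such a functor is precisely the data of three functors $G|_{h_1(C)\times\{0\}}, G|_{h_1(C)\times\{1\}}, G|_{h_1(C)\times\{2\}}$ together with the natural isomorphisms obtained by restricting $G$ along the three edge inclusions $\langle 0,1\rangle, \langle 1,2\rangle, \langle 0,2\rangle : I[1] \hookrightarrow I[2]$, and that — because the unique arrow $0 \to 2$ in $I[2]$ factors as $0 \to 1 \to 2$ — functoriality of $G$ forces the natural isomorphism on $\langle 0,2\rangle$ to be the vertical composite of those on $\langle 0,1\rangle$ and $\langle 1,2\rangle$.

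Next I would identify the three restrictions of $G$ with $h_1(H)$, $h_1(K)$ and $h_1(KH)$. This is where the bookkeeping sits: one must check that the identification $h_1(C \times N(I[m])) \cong h_1(C) \times I[m]$ is natural in $[m] \in \Delta$, i.e. compatible with the cosimplicial structure maps of $I[\bullet]$ and $N(I[\bullet])$, which follows from the naturality of the isomorphism of Lemma \ref{lemma_h1_prods} and of Proposition \ref{prop_h1_n_cancel}. Granting this, restricting $G$ along (the image of) $\langle 0,1\rangle$ is the same as applying $h_1$ to $Q$ precomposed with the image of $\langle 0,1\rangle$ in the cosimplicial object $C \times N(I[\bullet])$; by the defining diagram in Definition \ref{defn_h1_htpy_comp} one has $Q \circ j = H \sqcup_g K$, so this composite is $H$ and the restriction is $h_1(H)$. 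The same reasoning along $\langle 1,2\rangle$ gives $h_1(K)$, and along $\langle 0,2\rangle$ — using $KH = Q \circ i$ with $i$ the image of $\langle 0,2\rangle$ — gives $h_1(KH)$. Combining this with the previous paragraph yields $h_1(KH) = h_1(K)h_1(H)$ as natural isomorphisms $h_1(C) \times I[1] \to h_1(B)$, which is the assertion.

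I expect the main obstacle to be organizational rather than conceptual: making precise the compatibility of the product-preservation isomorphism and the $h_1 \circ N \cong 1_{\textbf{Cat}}$ isomorphism with the cosimplicial maps, so that "restrict $h_1(Q)$ along $\langle 0,1\rangle$" and "apply $h_1$ to $Q$ composed with the image of $\langle 0,1\rangle$" genuinely coincide under the identifications — everything past that point is the trivial fact that a functor out of $\mathscr{C} \times I[2]$ encodes a composable pair of natural isomorphisms together with their composite. (Alternatively, the statement follows from Proposition \ref{prop_h1_htpy_comp_is_comp} and the $\textbf{sSet}$-enrichment of $h_1$, since $Q$ maps to a $2$-simplex of $\textbf{nerve}(\textbf{Iso}(\textbf{Fun}(h_1(C), h_1(B))))$ whose $\langle 0,2\rangle$-edge is by definition the composite of its two other edges; but the argument above deliberately avoids invoking that enrichment.)
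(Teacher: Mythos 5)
Your proposal is correct and follows essentially the same route as the paper's own proof: both apply $h_1$ to the filler $Q : C \times N(I[2]) \to B$, identify $h_1(C \times N(I[2]))$ with $h_1(C) \times I[2]$, and use the fact that a functor out of $h_1(C) \times I[2]$ restricting to $h_1(H)$ and $h_1(K)$ on the two short edges is unique, with its $\langle 0,2\rangle$-restriction forced to be the vertical composite. Your added care about compatibility of the identifications with the cosimplicial structure maps is a reasonable elaboration of a point the paper leaves implicit.
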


\begin{proof}
    Let $Q$ be as in Proposition \ref{defn_h1_htpy_comp}. The composite of $h_1(K)$ and $h_1(H)$ amounts to considering the induced functor
    $$
        F : h_1(C) \times I[2] \rightarrow h_1(B)
    $$
    which restricts to $h_1(H)$ and $h_1(K)$. There is precisely one such functor. However, $h_1(Q) : h_1(C) \times I[2] \cong h_1(C \times N(I[2])) \rightarrow h_1(B)$ also shares this property, so $h_1(Q) = F$. Hence, restriction yields $h_1(KH)$ for any $KH$ as needed.
\end{proof}
\section{Homotopy Unbiased Bicategories}

Let $X \in \textbf{SeSp}_2^{inj}$ be a $2$-fold Segal space. We wish to define an unbiased bicategory $h_2(X)$. As we will soon find, it will not be reasonable to expect such a bicategory to exist without additional data. This will take the form of a choice of horizontal compositions.

We set $\textbf{ob}(h_2(X)) := (X_{0, 0})_0$, for consistency with $h_1$. We also define
$$
    \textbf{Hom}_{h_2(X)}(x, y) := h_1(X(x, y)) = h_1((X_{1, \bullet})^{x, y}).
$$
We then take a choice of horizontal compositions $(\mu_n)_{n \geq 0}$ for $X$ as in Definition \ref{defn_cd_hor_comps}. Composition functors, for $n \geq 0$ and $x_0, \cdots, x_n \in \textbf{ob}(h_2(X))$, will then be the maps
$$
    \bullet^{x_0, \cdots, x_n} : \prod_{i = 1}^n \textbf{Hom}_{h_2(X)}(x_{i-1}, x_i) \cong h_1(\prod_{i = 1}^n X(x_{i-1}, x_i)) \xrightarrow{h_1(\circ^{x_0, \cdots, x_n})} \textbf{Hom}_{h_2(X)}(x_0, x_n).
$$
In general, for a simplicial composition diagram $K$ of arity $n$, write
$$
    \bullet^{x_0, \cdots, x_n}_K : \prod_{i = 1}^n \textbf{Hom}_{h_2(X)}(x_{i-1}, x_i) \cong h_1(\prod_{i = 1}^n X(x_{i-1}, x_i)) \xrightarrow{h_1(\circ^{x_0, \cdots, x_n}_K)} \textbf{Hom}_{h_2(X)}(x_0, x_n).
$$
Note the domain is $\ast \cong \{x_0\}$ if $n = 0$, as with $\circ^{x_0}$.

Our next order of business is to provide the associators and unitors. We will in fact provide much more general machinery here, which will be needed to work with functors as well. For associators, let $n \in \mathbb{Z}_{> 0}$ and $k_1, \cdots, k_n \in \mathbb{Z}_{\geq 0}$, with $r = \sum_{i = 1}^n k_i$. Consider any $n$-tuple of tuples of elements of $(X_{0, 0})_0$
$$
    Y := ((x_0^1, x_1^1, \cdots, x_{k_1}^1), \cdots, (x_0^n, \cdots, x_{k_n}^n))
$$
such that $x_{k_i}^i = x_0^{i+1}$ for every $i < n$. Let $(x_0, \cdots, x_r)$ be the flattened tuple after removing each $x_{k_i}^i$ for $i < n$.

\begin{proposition} \label{prop_h2_diag_is_nested_comp}
    Let $Y$ be as above. Let $Sp(r) \xrightarrow{\iota} K \xleftarrow{\tau} \Delta[1]$ be a simplicial composition diagram where $K = (K_1 \sqcup_{\Delta[0]} \cdots \sqcup_{\Delta[0]} K_n) \sqcup_{Sp(n)} K_0$, with each diagram $K_i$ of arity $k_i$ where $k_0 = n$. Then
    $$
        \bullet^{x_0, \cdots, x_r}_K = \bullet^{x^1_0, \cdots, x^n_0, x^n_{k_n}}_{K_0} \circ (\bullet^{x^1_0, \cdots, x^1_{k_1}}_{K_1} \times \cdots \times \bullet^{x^n_0, \cdots, x^n_{k_n}}_{K_n}).
    $$
\end{proposition}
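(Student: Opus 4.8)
The plan is to deduce Proposition~\ref{prop_h2_diag_is_nested_comp} from Theorem~\ref{thm_cd_diag_is_comp_of_circs} by applying the functor $h_1$, keeping careful track of the canonical isomorphisms witnessing that $h_1$ preserves products (Lemma~\ref{lemma_h1_prods}).

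First I would unwind the definitions. For any simplicial composition diagram $L$ of arity $m$ and any objects $y_0, \dots, y_m \in (X_{0,0})_0$, the functor $\bullet^{y_0, \dots, y_m}_L$ is by construction the composite $h_1(\circ^{y_0,\dots,y_m}_L) \circ \phi_L$, where $\phi_L : \prod_{i=1}^m \textbf{Hom}_{h_2(X)}(y_{i-1}, y_i) \to h_1(\prod_{i=1}^m X(y_{i-1}, y_i))$ is the iterated inverse of the product-comparison isomorphism of Lemma~\ref{lemma_h1_prods} (when $m = 0$ both sides are terminal and $\phi_L$ is the identity). This applies to $K$ (of arity $r$), to each $K_i$ for $1 \le i \le n$ (of arity $k_i$), and to $K_0$ (of arity $n$), where the domain hom-categories attached to $K_0$ are exactly $\textbf{Hom}_{h_2(X)}(x^i_0, x^i_{k_i}) = h_1(X(x^i_0, x^i_{k_i}))$, using that $x^i_{k_i} = x^{i+1}_0$.

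Next I would apply $h_1$ to the equality of Theorem~\ref{thm_cd_diag_is_comp_of_circs}. Writing $f_i := \circ^{x^i_0,\dots,x^i_{k_i}}_{K_i}$ for $1 \le i \le n$, functoriality of $h_1$ turns that equality into
$$
    h_1(\circ^{x_0,\dots,x_r}_K) = h_1(\circ^{x^1_0,\dots,x^n_0,x^n_{k_n}}_{K_0}) \circ h_1(f_1 \times \cdots \times f_n).
$$
The factor $h_1(f_1 \times \cdots \times f_n)$ is then rewritten using naturality of the comparison isomorphism of Lemma~\ref{lemma_h1_prods}, extended from the binary case to $n$ factors by induction: it equals $p^{-1} \circ (h_1(f_1) \times \cdots \times h_1(f_n)) \circ q$, where $p$ and $q$ are the comparison isomorphisms of $\prod_{i=1}^n X(x^i_0, x^i_{k_i})$ and of $\prod_{i=1}^n (\prod_{j=1}^{k_i} X(x^i_{j-1}, x^i_j))$ respectively. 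Finally I would use coherence of these comparison isomorphisms: identifying the $r$-fold product $\prod_{i=1}^r X(x_{i-1}, x_i)$ with $A_1 \times \cdots \times A_n$ for $A_i := \prod_{j=1}^{k_i} X(x^i_{j-1}, x^i_j)$, coherence gives $\phi_K = q \circ (\phi_{K_1} \times \cdots \times \phi_{K_n})$ and also $p = \phi_{K_0}$. Substituting these and cancelling $q$ against the $q^{-1}$ contributed by $\phi_K$, the right-hand side collapses to $\bullet^{x^1_0,\dots,x^n_0,x^n_{k_n}}_{K_0} \circ (\bullet^{x^1_0,\dots,x^1_{k_1}}_{K_1} \times \cdots \times \bullet^{x^n_0,\dots,x^n_{k_n}}_{K_n})$, which is the claimed identity.

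I expect the only genuine work to be this last bookkeeping step: checking that all the product-comparison isomorphisms fit together as asserted. This is precisely the coherence of $h_1$ as a finite-product-preserving functor, and since products in $\textbf{Cat}$ are strict it reduces, by an induction on $n$, to the single binary naturality statement contained in Lemma~\ref{lemma_h1_prods}, together with the routine matching of the index tuples appearing here with those of Theorem~\ref{thm_cd_diag_is_comp_of_circs}. Alternatively, one could avoid naming the comparison maps and instead apply $h_1$ directly to the commuting diagram in the proof of Theorem~\ref{thm_cd_diag_is_comp_of_circs}, replacing each node of the form ``$h_1$ of a product'' by the corresponding product of $h_1$-values via Lemma~\ref{lemma_h1_prods}; this is the same argument in different dress.
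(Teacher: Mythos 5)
Your proposal is correct and matches the paper's own argument: the paper likewise deduces the statement by applying $h_1$ to the equality of Theorem~\ref{thm_cd_diag_is_comp_of_circs}, with the product-comparison isomorphisms of Lemma~\ref{lemma_h1_prods} assembled into a single commutative diagram identifying each $\bullet_L$ with $h_1(\circ_L)$ precomposed with the comparison map. The coherence bookkeeping you describe is exactly what that diagram records, so your "alternative" phrasing at the end is in fact the paper's proof.
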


\begin{proof}
    We have by Theorem \ref{thm_cd_diag_is_comp_of_circs} that the diagram
    \[\begin{tikzcd}
    	{\prod_{i = 1}^r \textbf{Hom}_{h_2(X)}(x_{i-1}, x_i)} &&& {h_1(\prod_{i = 1}^r X(x_{i-1}, x_i))} \\
    	{\prod_{i = 1}^n h_1(\prod_{j = 1}^{k_i} X(x^i_{j-1}, x^i_j))} \\
    	{\prod_{i = 1}^n \textbf{Hom}_{h_2(X)}(x^i_0, x^i_{k_i})} \\
    	{h_1(\prod_{i = 1}^n X(x^i_0, x^i_{k_i}))} &&& {\textbf{Hom}_{h_2(X)}(x_0, x_r)}
    	\arrow["{h_1(\circ_K^{x_0, \cdots, x_r})}", from=1-4, to=4-4]
    	\arrow["\cong", from=1-1, to=1-4]
    	\arrow["\cong"', from=1-1, to=2-1]
    	\arrow["{\prod_{i = 1}^n h_1(\circ_{K_i}^{x^i_0, \cdots, x^i_{k_i}})}"', from=2-1, to=3-1]
    	\arrow["\cong"', from=3-1, to=4-1]
    	\arrow["{h_1(\circ^{x^1_0, \cdots, x^n_0, x^n_{k_n}}_{K_0})}"', from=4-1, to=4-4]
    	\arrow["\cong"{description}, from=1-4, to=2-1]
    	\arrow["{h_1(\circ^{x^1_0, \cdots, x^1_{k_1}}_{K_1} \times \cdots \times \circ^{x^n_0, \cdots, x^n_{k_n}}_{K_n})}"{description}, from=1-4, to=4-1]
    \end{tikzcd}\]
    commutes. The result follows.
\end{proof}

Let $Sp(n) \xrightarrow{\iota_{K_i}} K_i \xleftarrow{\tau_{K_i}} \Delta[1]$, for $i \in \{1, 2\}$, be simplicial composition diagrams of arity $n$. Consider a map $f : K_1 \rightarrow K_2$ in $\textbf{SCD}_n$, as in Definition \ref{defn_cd_map_scds}. We have as in Proposition \ref{prop_cd_objfib_liftprob} the map $\mu_{K_2}' := f^\ast \circ \mu_{K_2}$. The two maps $\mu_{K_1}$ and $\mu_{K_2}'$ both solve the lifting problem
\[\begin{tikzcd}
	&& {X_{K_1}} & {X_1} \\
	\\
	{X_{Sp(n)}} && {X_{Sp(n)}}
	\arrow["{\tau_{K_1}^\ast}", from=1-3, to=1-4]
	\arrow[from=1-3, to=3-3]
	\arrow["{\mu_{K_2}'}"{description}, curve={height=-12pt}, dashed, from=3-1, to=1-3]
	\arrow["{\mu_{K_1}}"{description}, curve={height=12pt}, dashed, from=3-1, to=1-3]
	\arrow[from=3-1, to=3-3]
\end{tikzcd}\]
and thus admit an induced left homotopy $\beta_f : X_{Sp(n)} \times N(I[1]) \rightarrow X_{K_1}$ between them. This left homotopy is object-fibered and is constant on postcomposition with $\iota^\ast_{K_1} : X_{K_1} \rightarrow X_{Sp(n)}$. We will write $\beta_f$ for a left homotopy induced by a map $f$ in $\textbf{SCD}_n$ in this manner. We will in general speak of an `induced left homotopy' when a left homotopy is produced from two maps being solutions to the same lifting problem.

Recall that even though $\beta_f$ is not in general unique, any two induced left homotopies $\beta_f$ and $\beta_f'$ by a map $f$ will have a globular $2$-homotopy $\alpha : X_{Sp(n)} \times N(I[1]) \times N(I[1]) \rightarrow X_{K_1}$ between them, by Theorem \ref{thm_h1_liftprobs_htpic_2globhptic}. It is this property that asserts functoriality of the following construction:

\begin{definition}
    Let $X$ be a $2$-fold Segal space with a choice of horizontal compositions $(\mu_n)_{n \geq 0}$. Let $x, y \in (X_{0, 0})_0$ and set $n \geq 0$. Define the functor
    $$
        \omega_{(X, (\mu_n)_{n \geq 0})}^{x, y} : \textbf{SCD}_n \rightarrow \textbf{Fun}\Big( h_1 (X_{Sp(n)}^{x, y}), h_1(X(x, y)) \Big)
    $$
    to send $K \mapsto h_1((\tau^*_K)^{x, y} \circ \mu^{x, y}_K)$ and $f : K_1 \rightarrow K_2$ to the natural isomorphism
    $$
        h_1((\tau^*_{K_1})^{x, y} \circ \beta_f^{x, y}) : h_1(X_{Sp(n)}^{x, y}) \times I[1] \xrightarrow{h_1(\beta_f^{x, y})} h_1(X_{K_1}^{x, y}) \xrightarrow{h_1((\tau^\ast_{K_1})^{x, y})} h_1(X(x, y))
    $$
    for an induced left homotopy $\beta_f : X_{Sp(n)} \times N(I[1]) \rightarrow X_{K_1}$ from $\mu_{K_1}$ to $\mu_{K_2}'$.
\end{definition}

\begin{proposition}
    $\omega_{(X, (\mu_n)_{n \geq 0})}^{x, y}$ is a functor.
\end{proposition}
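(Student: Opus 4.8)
The plan is to check the three functor axioms for $\omega := \omega_{(X,(\mu_n)_{n\geq 0})}^{x,y}$ — that it is well-defined on morphisms, preserves identities, and preserves composition — carrying everything out after applying the fibre functor $(-)^{x,y}$ over $(X_0)^2$ along the endpoint maps. This transport is legitimate: all the maps in sight ($\iota_K^\ast$, $\tau_K^\ast$, the $\mu_K$ and the induced left homotopies $\beta_f$) are object-fibered over $(X_0)^2$, so Proposition~\ref{prop_cd_objfibered_liftprob} and Proposition~\ref{prop_fib_of_htpy_is_htpy} apply; moreover $(\iota_K^\ast)^{x,y}$ remains a trivial fibration by Proposition~\ref{prop_cd_iota_ast_trivfib} and Proposition~\ref{prop_sesp_fiber_pres_trivfib}, and $(C\times N(I[k]))^{x,y}\cong C^{x,y}\times N(I[k])$ since $N(I[k])$ is independent of $X_0$, so the cylinder objects of Definition~\ref{defn_h1_htpy_comp} are preserved. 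I would also note in passing that the fibres occurring here are Segal spaces, so that $h_1$ and its $\textbf{sSet}$-enrichment (Proposition~\ref{prop_h1_enrich_functor}) apply.

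For well-definedness, I would first pin down the source and target of $\omega(f)$ for $f : K_1 \to K_2$. An induced left homotopy $\beta_f$ runs from $\mu_{K_1}$ to $\mu_{K_2}' = f^\ast\mu_{K_2}$, so $\omega(f)$ runs from $h_1((\tau_{K_1}^\ast)^{x,y}\mu_{K_1}^{x,y}) = \omega(K_1)$ to $h_1((\tau_{K_1}^\ast)^{x,y}(f^\ast\mu_{K_2})^{x,y})$; since $f$ is a map of simplicial composition diagrams, $\tau_{K_1}^\ast\circ f^\ast = (f\circ\tau_{K_1})^\ast = \tau_{K_2}^\ast$, and functoriality of $(-)^{x,y}$ then identifies the target with $h_1((\tau_{K_2}^\ast)^{x,y}\mu_{K_2}^{x,y}) = \omega(K_2)$. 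That $\omega(f)$ is a natural isomorphism is because $h_1$ is $\textbf{sSet}$-enriched, hence sends the left homotopy $(\tau_{K_1}^\ast)^{x,y}\circ\beta_f^{x,y}$ to one. Independence of the choice of $\beta_f$ is Corollary~\ref{corr_h1_lift_mapssi_h1c_ident_htpies}: any two induced $\beta_f,\beta_f'$ are left homotopies over $X_{Sp(n)}$ between the two solutions $\mu_{K_1}$ and $\mu_{K_2}'$ of the lifting problem against $\iota_{K_1}^\ast$ with cylinder $X_{Sp(n)}\times N(I[1])$, so after passing to fibres the corollary gives $h_1(\beta_f^{x,y}) = h_1((\beta_f')^{x,y})$, hence the same $\omega(f)$. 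For identities, since $\mathrm{id}_K^\ast\mu_K = \mu_K$ the constant homotopy $X_{Sp(n)}\times N(I[1]) \to X_{Sp(n)}\times N(I[0]) \cong X_{Sp(n)} \xrightarrow{\mu_K} X_K$ is a legitimate choice of $\beta_{\mathrm{id}_K}$; its fibre is again constant, and $h_1$ sends a degenerate $1$-simplex to an identity, so $\omega(\mathrm{id}_K) = 1_{\omega(K)}$.

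The real content is preservation of composition. Given $f : K_1\to K_2$ and $g : K_2\to K_3$, pick induced left homotopies $\beta_f : \mu_{K_1}\Rightarrow f^\ast\mu_{K_2}$ and $\beta_g : \mu_{K_2}\Rightarrow g^\ast\mu_{K_3}$. Then $f^\ast\circ\beta_g$ is a left homotopy $f^\ast\mu_{K_2}\Rightarrow f^\ast g^\ast\mu_{K_3} = (g\circ f)^\ast\mu_{K_3}$, again over $X_{Sp(n)}$ via $\iota_{K_1}^\ast$ because $f\circ\iota_{K_1} = \iota_{K_2}$. I would then form the composite left homotopy $\beta := (f^\ast\beta_g)\cdot\beta_f$ of Definition~\ref{defn_h1_htpy_comp} (legitimate since $\iota_{K_1}^\ast$ is a trivial fibration), running from $\mu_{K_1}$ to $(g\circ f)^\ast\mu_{K_3}$ over $X_{Sp(n)}$ with cylinder $X_{Sp(n)}\times N(I[1])$. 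Passing to fibres, $\beta^{x,y}$ is a composite of $\beta_f^{x,y}$ and $(f^\ast\beta_g)^{x,y}$ in the sense of Definition~\ref{defn_h1_htpy_comp}, the witnessing diagram remaining a genuine lifting problem after $(-)^{x,y}$ since the relevant cofibrations stay monomorphisms under pullback. Two facts then close the argument. First, Proposition~\ref{prop_h1_comp_htpies_is_natiso_comp} gives $h_1(\beta^{x,y}) = h_1((f^\ast\beta_g)^{x,y})h_1(\beta_f^{x,y})$; whiskering by $h_1((\tau_{K_1}^\ast)^{x,y})$ and using $\tau_{K_1}^\ast\circ f^\ast = \tau_{K_2}^\ast$ together with the interchange law rewrites the right side as $\omega(g)\omega(f)$. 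Second, $\beta$ and any induced $\beta_{g\circ f}$ are both left homotopies over $X_{Sp(n)}$ between the solutions $\mu_{K_1}$ and $(g\circ f)^\ast\mu_{K_3}$ of the lifting problem against $\iota_{K_1}^\ast$ with the same cylinder, so Corollary~\ref{corr_h1_lift_mapssi_h1c_ident_htpies}, applied on the fibres, gives $h_1(\beta^{x,y}) = h_1(\beta_{g\circ f}^{x,y})$, whence $\omega(g\circ f) = h_1((\tau_{K_1}^\ast)^{x,y})\circ h_1(\beta_{g\circ f}^{x,y}) = h_1((\tau_{K_1}^\ast)^{x,y})\circ h_1(\beta^{x,y}) = \omega(g)\omega(f)$.

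I expect the main obstacle to be the bookkeeping around the fibre functor: confirming that forming composite homotopies commutes with $(-)^{x,y}$ (so that $\beta^{x,y}$ genuinely is a Definition~\ref{defn_h1_htpy_comp}-composite of the fibres of $\beta_f$ and $f^\ast\beta_g$), that the fibred objects remain Segal spaces so $h_1$ applies, and that whiskering by $h_1((\tau_{K_1}^\ast)^{x,y})$ interacts with vertical composition of natural isomorphisms exactly as claimed. None of this is deep, but it is where the care lies.
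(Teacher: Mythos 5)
Your proof is correct and follows essentially the same route as the paper's: well-definedness and the identity case via Corollary \ref{corr_h1_lift_mapssi_h1c_ident_htpies} (constant homotopy for $\beta_{1_K}$), and preservation of composition by forming the composite left homotopy $(f^\ast\beta_g)\cdot\beta_f$, applying Proposition \ref{prop_h1_comp_htpies_is_natiso_comp}, and identifying it with $\beta_{g\circ f}$ since both are homotopies between the same solutions of the lifting problem against $\iota_{K_1}^\ast$. The only difference is that you spell out the object-fibered/fibre-transport bookkeeping that the paper leaves implicit, which is a harmless elaboration rather than a different argument.
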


\begin{proof}
    By the above discussion, $\omega_{(X, (\mu_n)_{n \geq 0})}^{x, y}(f)$ is well-defined for any morphism $f$ in $\textbf{SCD}_n$.
    
    Suppose $K_1 \xrightarrow{f} K_2 \xrightarrow{g} K_3$ is a diagram in $\textbf{SCD}_n$ for simplicial composition diagrams $\Delta[n] \xrightarrow{\iota_i} K_i \xleftarrow{\tau_i} \Delta[1]$. We wish to show that
    $$
        \omega_{(X, (\mu_n)_{n \geq 0})}^{x, y}(g) \omega_{(X, (\mu_n)_{n \geq 0})}^{x, y}(f) = \omega_{(X, (\mu_n)_{n \geq 0})}^{x, y}(g \circ f).
    $$
    These three natural isomorphisms can be rewritten as

    \begin{align*}
        \omega_{(X, (\mu_n)_{n \geq 0})}^{x, y}(f) &= (\tau_1^*)^{x, y} \circ \beta_f^{x, y} \\
        \omega_{(X, (\mu_n)_{n \geq 0})}^{x, y}(g) &= (\tau_1^*)^{x, y} \circ (f^* \circ \beta_g)^{x, y} \\
        \omega_{(X, (\mu_n)_{n \geq 0})}^{x, y}(g \circ f) &= (\tau_1^*)^{x, y} \circ \beta_{g \circ f}^{x, y} \\
    \end{align*}
    for left homotopies $\beta_f^{x, y}$, $(f^* \circ \beta_g)^{x, y}$ and $\beta_{g \circ f}^{x, y}$ induced between $\mu_{K_1}^{x, y}$ and $(f^* \circ \mu_{K_2})^{x, y}$, $(f^* \circ \mu_{K_2})^{x, y}$ and $(f^* \circ g^* \circ \mu_{K_3})^{x, y}$ and $\mu_{K_1}^{x, y}$ and $(f^* \circ g^* \circ \mu_{K_3})^{x, y}$ respectively, all induced by the maps $\mu_{K_1}, (f^\ast \circ \mu_{K_2})$ and $(f^\ast \circ g^\ast \circ \mu_{K_3})$ being solutions to the lifting problem of the form
    \[\begin{tikzcd}
    	&&& {X_{K_1}} \\
    	\\
    	\\
    	{X_{Sp(n)}} &&& {X_{Sp(n)}}
    	\arrow["{\iota_{K_1}^\ast}", from=1-4, to=4-4]
    	\arrow["{(f^\ast \circ g^\ast \circ \mu_{K_3})}"{description}, curve={height=-24pt}, dashed, from=4-1, to=1-4]
    	\arrow["{\mu_{K_1}}"{description}, curve={height=24pt}, dashed, from=4-1, to=1-4]
    	\arrow["{(f^\ast \circ \mu_{K_2})}"{description}, dashed, from=4-1, to=1-4]
    	\arrow[from=4-1, to=4-4]
    \end{tikzcd}\]
    
    We may therefore take a composite $C = ((f^* \circ \beta_g)^{x, y})(\beta_f^{x, y})$ as in Definition \ref{defn_h1_htpy_comp}, which is a left homotopy from $\mu_{K_1}^{x, y}$ to $(f^* \circ g^* \circ \mu_{K_3})^{x, y}$. By Proposition \ref{prop_h1_comp_htpies_is_natiso_comp}, we have that $h_1(C)$ is the composite of $h_1((f^* \circ \beta_g)^{x, y})$ and $h_1(\beta_f^{x, y})$.

    Note however that $C$ and $\beta_{g \circ f}^{x, y}$ are both left homotopies induced between the same solutions of a lifting problem. Hence, by Corollary \ref{corr_h1_lift_mapssi_h1c_ident_htpies}, we have that $h_1(\beta_{g \circ f}^{x, y}) = h_1(C)$ is the composite of the natural isomorphisms $h_1((f^* \circ \beta_g)^{x, y})$ and $h_1(\beta_f^{x, y})$, as needed.

    Now, suppose $K$ is a simplicial composition diagram of arity $n$. We wish to show that
    $$
        \omega_{(X, (\mu_n)_{n \geq 0})}^{x, y}(1_K) = 1_{h_1((\tau_K^*)^{x, y} \circ \mu_K^{x, y})}.
    $$
    The left homotopy $\beta_{1_K}$ can be set to be a constant left homotopy, as its source and target are equal. Since any choice of $\beta_{1_K}$ will yield the same natural isomorphism, the result is evident.
\end{proof}

\begin{definition}
    Let $X$ be a $2$-fold Segal space with a choice of horizontal compositions $(\mu_n)_{n \geq 0}$. Let $x_0, \cdots, x_n \in (X_{0, 0})_0$. Then define the map 
    $$
    \phi^{x_0, \cdots, x_n}_{(X, (\mu_n)_{n \geq 0})} : \textbf{Fun} \Big(h_1(X_{Sp(n)}^{x_0, x_n}), h_1(X(x_0, x_n))\Big) \rightarrow \textbf{Fun} \Big(\prod_{i = 1}^n h_1(X(x_{i-1}, x_i)), h_1(X(x_0, x_n)) \Big)
    $$
    to be the functor given by precomposition with the inclusion
    $$
        F_{x_0, \cdots, x_n} : \prod_{i = 1}^n h_1(X(x_{i-1}, x_i)) \cong h_1(\prod_{i = 1}^n X(x_{i-1}, x_i)) \hookrightarrow h_1(X_{Sp(n)}^{x_0, x_n}).
    $$
    Write $\Phi^{x_0, \cdots, x_n}_{(X, (\mu_n)_{n \geq 0})} := \phi^{x_0, \cdots, x_n}_{(X, (\mu_n)_{n \geq 0})} \circ \omega^{x_0, x_n}_{(X, (\mu_n)_{n \geq 0})}$.
\end{definition}

For our given $Y$, let $K = (\Delta[k_1] \sqcup_{\Delta[0]} \cdots \sqcup_{\Delta[0]} \Delta[k_n]) \sqcup_{Sp(n)} \Delta[n]$ and take the map $f_{k_1, \cdots, k_n} : K \rightarrow \Delta[r]$ in $\textbf{SCD}_r$ to be the map restricting to the morphisms
\begin{align*}
    \langle \sum_{j = 1}^{i-1} k_j, \cdots, \sum_{j = 1}^{i-1} k_j + k_i \rangle &: \Delta[k_i] \rightarrow \Delta[r] \\
    \langle 0, k_1, k_1 + k_2, \cdots, \sum_{j = 1}^n k_j \rangle &: \Delta[n] \rightarrow \Delta[r].
\end{align*}
Our associators for $h_2(X)$ will be the natural isomorphisms
$$
     \Phi^{x_0, \cdots, x_r}_{(X, (\mu_n)_{n \geq 0})}(f_{k_1, \cdots, k_n}) : \bullet^{x^1_0, \cdots, x^n_0, x^n_{k_n}} \circ (\bullet^{x^1_0, \cdots, x^1_{k_1}} \times \cdots \times \bullet^{x^n_0, \cdots, x^n_{k_n}}) \Rightarrow \bullet^{x_0, \cdots, x_r}.
$$

Unitors for us will be trivial; we choose to set them to be identities. We are thus now ready to declare the definition of our unbiased bicategory in full:

\begin{definition} \label{defn_h2_h2}
    Let $X$ be a $2$-fold Segal space. Let $(\mu_n)_{n \geq 0}$ be a choice of horizontal compositions.
    
    Then the \emph{unbiased homotopy bicategory $h_2(X, (\mu_n)_{n \geq 0})$ of $X$}, written as $h_2(X)$ if the $\mu_n$ are known, is the unbiased bicategory defined such that:

    \begin{enumerate}
        \item $\textbf{ob}(h_2(X)) := (X_{0, 0})_0$;
        
        \item $\textbf{Hom}_{h_2(X)}(x, y) := h_1(X(x, y))$ for all $x, y \in (X_{0, 0})_0$;
        
        \item For each $n > 0$ and $x_0, \cdots, x_n \in (X_{0, 0})_0$, composition is given by the functor $\bullet^{x_0, \cdots, x_n}$;

        \item For each $x \in (X_{0, 0})_0$, identities are given by the functor $\bullet^x$;

        \item Let $n > 0$ and $k_1, \cdots, k_n \in \mathbb{Z}_{\geq 0}$ with $r = \sum_i k_i$. Consider any $n$-tuple of tuples of elements of $(X_{0, 0})_0$
        $$
            Y := ((x_0^1, x_1^1, \cdots, x_{k_1}^1), \cdots, (x_0^n, \cdots, x_{k_n}^n))
        $$
        such that $x_{k_i}^i = x_0^{i+1}$ for every $i < n$. Let $(x_0, \cdots, x_r)$ be the flattened tuple after removing each $x_{k_i}^i$ for $i < n$. Then the natural isomorphism $\gamma_Y$ is defined to be
        $$
            \Phi^{x_0, \cdots, x_r}_{(X, (\mu_n)_{n \geq 0})}(f_{k_1, \cdots, k_n});
        $$
        
        \item For each $x, y \in (X_{0, 0})_0$, the natural isomorphism
        $$
            \iota_{x, y} : 1_{\textbf{Hom}_{h_2(X)}(x, x)} \Rightarrow 1_{\textbf{Hom}_{h_2(X)}(x, x)} = \bullet^{x, x}
        $$
        is an identity.
    \end{enumerate}
\end{definition}

\subsection{Coherence Conditions}

We now come to proving the coherence conditions on our proposed data for an unbiased bicategory. We will go further than this; our final proof of coherence will be to establish an `operadic' action of $\textbf{SCD}_\bullet$ via $\Phi^{(-)}_{(X, (\mu_n)_{n \geq 0})}$ on our hom-categories and functors between them. This construction will rely on identifying and composing left homotopies in a suitably `operadic' manner. A technical lemma is first required:

\begin{lemma} \label{lemma_h2_seq_of_lifts}
    Consider a diagram in a model category $\mathscr{M}$
    \[\begin{tikzcd}
    	&& E & F & Z \\
    	A & B & G & H \\
    	C & D
    	\arrow["u", from=2-2, to=2-3]
    	\arrow["h", from=2-1, to=2-2]
    	\arrow["f"', from=2-1, to=3-1]
    	\arrow["g", from=2-2, to=3-2]
    	\arrow["k"', from=3-1, to=3-2]
    	\arrow["q"', from=1-3, to=2-3]
    	\arrow["t"', from=2-3, to=2-4]
    	\arrow["r", from=1-4, to=2-4]
    	\arrow["s", from=1-3, to=1-4]
    	\arrow["\delta"{description}, dashed, from=3-1, to=2-2]
    	\arrow["\epsilon"{description}, dashed, from=2-3, to=1-4]
    	\arrow["w", from=1-4, to=1-5]
    \end{tikzcd}\]
    where the two squares define lifting problems with solutions $\delta$ and $\epsilon$, such that $g$ and $r$ are trivial fibrations. Then the square in the diagram
    \[\begin{tikzcd}
    	A && {B \times_H F} & F & Z \\
    	&& B \\
    	C && D
    	\arrow["f"', from=1-1, to=3-1]
    	\arrow["k"', from=3-1, to=3-3]
    	\arrow["{h \times_{tuh} \epsilon uh}", from=1-1, to=1-3]
    	\arrow["{\delta \times_{t u \delta} \epsilon u \delta}"{description}, dashed, from=3-1, to=1-3]
    	\arrow[two heads, from=1-3, to=2-3]
    	\arrow["g", from=2-3, to=3-3]
    	\arrow["w", from=1-4, to=1-5]
    	\arrow[two heads, from=1-3, to=1-4]
    \end{tikzcd}\]
    is also a lifting problem in $\mathscr{M}$ with the given solution. Moreover, the map $C \rightarrow B \times_H F \rightarrow F \rightarrow Z$ in this diagram is equal to $w \epsilon u \delta$.
\end{lemma}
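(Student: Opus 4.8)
The plan is to unwind the pullback in the second diagram and verify each condition componentwise; there is no deep content, only careful bookkeeping. Throughout, $B \times_H F$ denotes the pullback of $B \xrightarrow{tu} H \xleftarrow{r} F$ (so the subscripts $\times_{tuh}$ and $\times_{tu\delta}$ record which maps into $H$ are being used), and I write $\pi_B : B \times_H F \to B$ and $\pi_F : B \times_H F \to F$ for the two projections.

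First I would check that the two maps into $B \times_H F$ are well defined. The map $h \times_{tuh} \epsilon uh$ is the pair $(h, \epsilon uh) : A \to B \times F$, and its two components agree over $H$ because $r(\epsilon uh) = (r\epsilon)uh = tuh$, using the relation $r\epsilon = t$ coming from the second lifting problem; the identical computation with $\delta$ in place of $h$ shows $\delta \times_{tu\delta} \epsilon u\delta$ is well defined. Next I would argue that the square in the second diagram is genuinely a lifting problem. Its right edge is $g \circ \pi_B$: the projection $\pi_B$ is a pullback of the trivial fibration $r$ along $tu$, hence itself a trivial fibration, and composing with the trivial fibration $g$ keeps it one; the left edge $f$ is a cofibration, being the left leg of the first of the two given lifting problems. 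Commutativity is then the computation $g\pi_B(h \times_{tuh} \epsilon uh) = gh = g\delta f = kf$, where the last two equalities use $h = \delta f$ and $g\delta = k$ from the first lifting problem.

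It then remains to verify that $\delta \times_{tu\delta} \epsilon u\delta$ is an actual lift and to identify the composite to $Z$. Precomposing the diagonal with $f$ gives the pair with components $\delta f = h$ and $\epsilon u\delta f = \epsilon uh$, i.e. exactly $h \times_{tuh} \epsilon uh$, so the upper triangle commutes; and $g\pi_B(\delta \times_{tu\delta} \epsilon u\delta) = g\delta = k$, so the lower triangle commutes. Finally the composite $C \to B \times_H F \to F \to Z$ in the diagram is $w \circ \pi_F \circ (\delta \times_{tu\delta} \epsilon u\delta) = w \circ (\epsilon u\delta) = w\epsilon u\delta$, just by reading off the second component of the pullback map. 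The only step requiring any care is the identification of $B \times_H F$ together with the observation that $\pi_B$ (and not merely $\pi_F$) is a trivial fibration — this is what makes $g\pi_B$ a trivial fibration and hence the new square a bona fide lifting problem — and I would flag that explicitly; everything else is routine chasing of components.
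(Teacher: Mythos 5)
Your proof is correct and takes essentially the same route as the paper's, which simply observes that the right-hand vertical map $g\pi_B$ is a trivial fibration (trivial fibrations being stable under pullback and composition) and leaves the componentwise checks of commutativity, the lifting property of $\delta \times_{tu\delta} \epsilon u\delta$, and the identification $w\pi_F(\delta \times_{tu\delta}\epsilon u\delta) = w\epsilon u\delta$ to the reader; your version just spells out that bookkeeping. One small caveat: your parenthetical assertion that $f$ is a cofibration is neither assumed in the statement nor needed for the conclusion, so it should be dropped rather than relied upon.
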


\begin{proof}
    It is clear that the rightmost vertical map will be a trivial fibration if $g$ and $r$ are as such. Thus, the diagram is a valid lifting problem. The last equality is easily checked by the definition of the chosen lift.
\end{proof}

The following lemma is what allows us to compose left homotopies horizontally in an `operadic' manner, which forms the foundations of our proof of coherence conditions:

\begin{lemma} \label{lemma_h2_operad_of_htpies}
    Let $X$ be a Segal space and set $n \geq 1$ and $k_1, \cdots, k_n \geq 0$. Take Segal spaces $A^j_i$ for $1 \leq i \leq n$ and $1 \leq j \leq k_i$, Segal spaces $B_i$ and $B_i'$ for $1 \leq i \leq n$ and some Segal space $C$, all fibered over $X \times X$. Take between these a set of maps in $SeSp^{inj}$ for $1 \leq i \leq n$ of the form
    \begin{align*}
        f_i, g_i &: \prod_X^{1 \leq j \leq k_i} A^j_i \rightarrow B_i \\
        t_i &: B_i \rightarrow B_i' \\
        \phi, \psi &: \prod_X^{1 \leq i \leq n} B_i' \rightarrow C.
    \end{align*}
    Consider a finite collection of left homotopies in $SeSp^{inj}$ emerging from lifting problems in commutative diagrams fibered over $X \times X$
    \[\begin{tikzcd}
    	{\prod^{1 \leq j \leq k_i}_X A^j_i \sqcup \prod^{1 \leq j \leq k_i}_X A^j_i} && {B_i} & {B_i'} \\
    	{\prod^{1 \leq j \leq k_i}_X A^j_i \times N(I[1])} && {\prod^{1 \leq j \leq k_i}_X A^j_i}
    	\arrow["{H_i}"{description}, dashed, from=2-1, to=1-3]
    	\arrow["{f_i \sqcup g_i}", from=1-1, to=1-3]
    	\arrow[from=1-1, to=2-1]
    	\arrow[from=2-1, to=2-3]
    	\arrow[from=1-3, to=2-3]
    	\arrow["{t_i}", from=1-3, to=1-4]
    \end{tikzcd}\]
    for $1 \leq i \leq n$ where the right vertical map is a trivial fibration and
    \[\begin{tikzcd}
    	{\prod_X^{1 \leq i \leq n} B_i' \sqcup \prod_X^{1 \leq i \leq n} B_i'} && C \\
    	{\prod_X^{1 \leq i \leq n} B_i' \times N(I[1])} && {\prod_X^{1 \leq i \leq n} B_i'}
    	\arrow["{\psi \sqcup \phi}", from=1-1, to=1-3]
    	\arrow[from=1-3, to=2-3]
    	\arrow[from=1-1, to=2-1]
    	\arrow[from=2-1, to=2-3]
    	\arrow["K"{description}, dashed, from=2-1, to=1-3]
    \end{tikzcd}\]
    where again the right vertical map is a trivial fibration. For all $k \geq 1$, set $D_k : \prod^{i, j}_X A^j_i \times N(I[1]) \rightarrow \prod^{i, j}_X A^j_i \times N(I[1])^k$ to be the diagonal map.
    
    Then the left homotopy
    $$
        Q := K \circ \Big( \Big( (\prod^{1 \leq i \leq n}_X t_i \circ H_i) \circ D_n \Big) \times 1_{N(I[1])} \Big) \circ D_2
    $$
    is equal to $p \circ F$, where $p$ and $F$ are fibered naturally over $X \times X$ and fit into a lifting problem in $SeSp^{inj}$
    \[\begin{tikzcd}
    	{\prod^{i, j}_X A^j_i \sqcup \prod_X^{i, j} A^j_i} & {\prod^i_X B_i \times_{\prod^i_X B_i'} C} & C \\
    	& {\prod^i_X B_i} \\
    	{\prod^{i, j}_X A^j_i \times N(I[1])} & {\prod^{i, j}_X A^j_i}
    	\arrow["\iota", from=1-1, to=1-2]
    	\arrow[from=1-2, to=2-2]
    	\arrow[from=2-2, to=3-2]
    	\arrow[from=3-1, to=3-2]
    	\arrow["F"{description}, dashed, from=3-1, to=1-2]
    	\arrow["p", two heads, from=1-2, to=1-3]
    	\arrow[from=1-1, to=3-1]
    \end{tikzcd}\]
    such that $p$ is the pullback projection and $\iota$ is induced by the maps
    $$
        \prod^{i, j}_X A_i^j \sqcup \prod^{i,j}_X A_i^j \xrightarrow{\prod_X^i f_i \sqcup \prod_X^i g_i} \prod^i_X B_i
    $$
    and
    $$
        \prod_X^{i, j} A_i^j \sqcup \prod_X^{i, j} A_i^j \xrightarrow{\prod_X^i f_i + \prod_X^i g_i} \prod_X^i B_i \sqcup \prod_X^i B_i \xrightarrow{\prod_X^i t_i + \prod_X^i t_i} \prod_X^i B_i' \sqcup \prod_X^i B_i' \xrightarrow{\psi \sqcup \phi} C.
    $$
\end{lemma}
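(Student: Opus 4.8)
The plan is to produce $F$ explicitly as a pairing into the pullback and then to verify the two required equalities by unwinding the composites; there is no hard homotopical content beyond the hypothesis that the $H_i$ and $K$ solve their respective lifting problems. First I would assemble the $H_i$ into a single homotopy. Let $c : C \to \prod^i_X B_i'$ denote the trivial fibration forming the right-hand edge of the lifting problem solved by $K$, so that the pullback $\prod^i_X B_i \times_{\prod^i_X B_i'} C$ is taken along $\prod^i_X t_i$ and $c$. Using the canonical isomorphism $\prod^{i,j}_X A^j_i \times N(I[1])^n \cong \prod^{i}_X\bigl(\prod^{j}_X A^j_i \times N(I[1])\bigr)$, set
\[
  H \;:=\; \Bigl(\prod^{i}_X H_i\Bigr)\circ D_n \;:\; \prod^{i,j}_X A^j_i \times N(I[1]) \longrightarrow \prod^{i}_X B_i .
\]
Since each $H_i$ is object-fibered over $X\times X$, restricts to $f_i$ and $g_i$ on the two ends of its cylinder, and sits in the stated lifting problem whose right-hand edge is the trivial fibration $B_i \to \prod^j_X A^j_i$, and since $D_n$ respects endpoint inclusions and projections, $H$ is object-fibered, restricts to $\prod^i_X f_i$ and $\prod^i_X g_i$ on the ends, and solves the lifting problem whose right-hand edge is the product $\prod^i_X\bigl(B_i\to\prod^j_X A^j_i\bigr)$ of trivial fibrations, whose bottom edge is the projection, and whose top edge is $\prod^i_X f_i \sqcup \prod^i_X g_i$. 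Moreover $\prod^i_X t_i \circ H = \bigl(\prod^i_X t_i \circ H_i\bigr)\circ D_n$, which is exactly the homotopy occurring inside $Q$.

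Next I would set $F := (H, Q) : \prod^{i,j}_X A^j_i \times N(I[1]) \to \prod^i_X B_i \times_{\prod^i_X B_i'} C$. For this pairing to land in the pullback one needs $\prod^i_X t_i \circ H = c\circ Q$: since $K$ solves its lifting problem, $c\circ K$ is the projection $\prod^i_X B_i'\times N(I[1])\to\prod^i_X B_i'$, and tracing this through the diagonal $D_2$ and the factor $1_{N(I[1])}$ gives $c\circ Q = \bigl(\prod^i_X t_i\circ H_i\bigr)\circ D_n = \prod^i_X t_i\circ H$, as required. By construction $p\circ F = Q$, because $p$ is the projection of the pullback onto $C$. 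The map $F$ is object-fibered over $X\times X$, being a pairing of object-fibered maps into a pullback of object-fibered maps (here one invokes Proposition~\ref{prop_cd_objfibered_liftprob} for the object-fiberedness of the lifts $H_i$ and $K$), and the vertical maps of the displayed square are trivial fibrations, being pullbacks and composites of $c$ and $\prod^i_X\bigl(B_i\to\prod^j_X A^j_i\bigr)$.

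It remains to see that $F$ solves the displayed lifting problem, which is a matter of evaluating on the two copies of $\prod^{i,j}_X A^j_i$ and on the bottom edge. On the endpoints the $\prod^i_X B_i$-coordinate of $F$ is $H$ at the ends, namely $\prod^i_X f_i$ and $\prod^i_X g_i$, while the $C$-coordinate is $Q$ at the ends, which---chasing the diagonals and the endpoint values $\psi,\phi$ of $K$---is $\psi\circ\prod^i_X t_i\circ\prod^i_X f_i$ and $\phi\circ\prod^i_X t_i\circ\prod^i_X g_i$; these are precisely the two components of $\iota$. Finally, post-composing the $\prod^i_X B_i$-coordinate of $F$ with $\prod^i_X\bigl(B_i\to\prod^j_X A^j_i\bigr)$ gives $\prod^i_X\bigl(B_i\to\prod^j_X A^j_i\bigr)\circ\prod^i_X H_i\circ D_n$, which collapses to the projection $\prod^{i,j}_X A^j_i\times N(I[1])\to\prod^{i,j}_X A^j_i$ because each $H_i$ solves its own lifting problem; this is the bottom edge. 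Hence $F$ is a solution and $Q = p\circ F$, which is the assertion. I expect the only real obstacle to be organizational: keeping the many $N(I[1])$-factors, the fibered products over $X$, and the diagonals $D_n$, $D_2$ in order so that every composite matches---one could alternatively derive $F$ by an iterated application of Lemma~\ref{lemma_h2_seq_of_lifts}, combining the lift $H$ with the lift $K$ into a lift into the pullback, but the diagonal reuse of the cylinder coordinate makes the direct pairing cleaner to write down.
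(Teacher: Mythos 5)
Your argument is correct, and it reaches the same $F$ the paper wants, but by a more direct route. The paper first shows $(\prod_X^i H_i)\circ D_n$ is itself an induced left homotopy from $\prod_X^i f_i$ to $\prod_X^i g_i$, then invokes Lemma~\ref{lemma_h2_seq_of_lifts} to splice that lifting problem with the one solved by $K$, producing a lift $F'$ into the larger pullback $\bigl(\prod_X^i B_i \times N(I[1])\bigr)\times_{\prod_X^i B_i'} C$, and finally sets $F := R\circ F'\circ D_2$ where $R$ forgets the extra cylinder factor; the identity $p\circ F = Q$ and the identification of $\iota$ then come from the commutativity built into that construction. You instead write $F=(H,Q)$ as an explicit pairing into $\prod_X^i B_i\times_{\prod_X^i B_i'} C$ and check by hand the three facts the paper gets structurally: that the pairing lands in the pullback (your computation $c\circ Q=\prod_X^i t_i\circ H$, using that $c\circ K$ is the cylinder projection and that forgetting the duplicated cylinder coordinate undoes $D_2$, is exactly right), that the endpoint restrictions recover the two stated components of $\iota$ (with the correct matching of $\psi$ to the $f_i$-end and $\phi$ to the $g_i$-end), and that postcomposition with the trivial fibrations collapses to the cylinder projection, giving the lower triangle. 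Object-fiberedness and the trivial-fibration status of the vertical edge are handled the same way in both arguments. What the paper's detour through Lemma~\ref{lemma_h2_seq_of_lifts} buys is that $F$ is exhibited as arising from a chain of lifting problems without ever checking the pullback condition explicitly — the same pattern it reuses in Lemma~\ref{lemma_h2_operadic_funct} — while your direct pairing is shorter and makes the equality $p\circ F=Q$ tautological; as you note yourself, the two are interchangeable here.
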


\begin{proof}
    Define
    $$
        q : \prod_X^{i, j} A_i^j \sqcup \prod_X^{i, j} A_i^j \hookrightarrow \prod_X^{1 \leq i \leq n} (\prod_X^j A_i^j \sqcup \prod_X^j A_i^j) \cong \bigsqcup_{k = 1}^{2^n} \prod_X^{i, j} A_i^j
    $$
    to be the pullback map induced by the $n$ maps $\rho_i \sqcup \rho_i$ for $1 \leq i \leq n$, where $\rho_i : \prod_X^{i, j} A_i^j \rightarrow \prod_X^j A_i^j$ is the projection to the $i^{th}$ coordinate. This is implicitly an inclusion.

    We have that $(\prod_X^i H_i) \circ D_n$ is a solution of the lifting problem defined by the outermost square of the diagram
    \[\begin{tikzcd}
    	{\prod_X^{i, j} A_i^j \sqcup \prod_X^{i, j} A_i^j} \\
    	{\prod_X^{1 \leq i \leq n} (\prod_X^j A_i^j \sqcup \prod_X^j A_i^j)} &&& {\prod_X^{1 \leq i \leq n} B_i} \\
    	\\
    	{\prod_{X}^{1 \leq i \leq n} (\prod_X^j A_i^j \times N(I[1]))} &&& {\prod_X^{i, j} A_i^j} \\
    	{\prod_X^{i, j} A_i^j \times N(I[1])}
    	\arrow["{\prod_X^{1 \leq i \leq n} H_i}"{description}, dashed, from=4-1, to=2-4]
    	\arrow[from=2-4, to=4-4]
    	\arrow[hook', from=2-1, to=4-1]
    	\arrow["{\prod_{X}^{1 \leq i \leq n} (f_i \sqcup g_i)}", from=2-1, to=2-4]
    	\arrow[from=4-1, to=4-4]
    	\arrow[from=5-1, to=4-4]
    	\arrow[two heads, from=4-1, to=5-1]
    	\arrow["q"', hook', from=1-1, to=2-1]
    	\arrow["{\prod_X^{1 \leq i \leq n} f_i \sqcup \prod_X^{1 \leq i \leq n} g_i}", curve={height=-12pt}, from=1-1, to=2-4]
    \end{tikzcd}\]
    which demonstrates $(\prod_X^i H_i) \circ D_n$ is an induced left homotopy from $\prod_X^i f_i$ to $\prod_X^i g_i$.
    
    We now have a commutative diagram of the form
    \[\begin{tikzcd}
    	&& {\prod_X^{1 \leq i \leq n} B_i' \sqcup \prod_X^{1 \leq i \leq n} B_i'} && C \\
    	&& {\prod_X^{1 \leq i \leq n} B_i' \times N(I[1])} && {\prod_X^{1 \leq i \leq n} B_i'} \\
    	{(\prod_X^{i, j} A_i^j \sqcup \prod_X^{i, j} A_i^j) \times N(I[1])} && {\prod_X^{1 \leq i \leq n} B_i \times N(I[1])} \\
    	{\prod_X^{i, j} A_i^j \times N(I[1])^{n+1}} && {\prod_X^{i, j} A_i^j \times N(I[1])} \\
    	{\prod_X^{i, j} A_i^j \times N(I[1])^2}
    	\arrow["{\psi \sqcup \phi}", from=1-3, to=1-5]
    	\arrow[from=1-5, to=2-5]
    	\arrow[from=1-3, to=2-3]
    	\arrow[two heads, from=2-3, to=2-5]
    	\arrow["K"{description}, dashed, from=2-3, to=1-5]
    	\arrow["{\prod_X^i t_i \times 1_{N(I[1])}}"', from=3-3, to=2-3]
    	\arrow[from=3-1, to=4-1]
    	\arrow["{(\prod_X^i f_i \sqcup \prod_X^i g_i) \times 1_{N(I[1])}}", shift left=2, from=3-1, to=3-3]
    	\arrow["{(\prod_X^i H_i) \times 1_{N(I[1])}}"{description}, dashed, from=4-1, to=3-3]
    	\arrow[two heads, from=4-1, to=5-1]
    	\arrow[from=3-3, to=4-3]
    	\arrow[two heads, from=4-1, to=4-3]
    	\arrow[two heads, from=5-1, to=4-3]
    \end{tikzcd}\]
    which, by Lemma \ref{lemma_h2_seq_of_lifts}, produces a single lifting problem with solution $F'$ within the diagram
    \[\begin{tikzcd}
    	{\prod^{i, j}_X A^j_i \sqcup \prod_X^{i, j} A^j_i} && C \\
    	{(\prod^{i, j}_X A^j_i \sqcup \prod_X^{i, j} A^j_i) \times N(I[1])} & {\Big( \prod^i_X B_i \times N(I[1]) \Big) \times_{\prod^i_X B_i'} C} & {\prod^i_X B_i \times_{\prod^i_X B_i'} C} \\
    	{\prod^{i, j}_X A^j_i \times N(I[1])^2} & {\prod^{i, j}_X A^j_i \times N(I[1])} \\
    	{\prod^{i, j}_X A^j_i \times N(I[1])} && {\prod^{i, j}_X A^j_i}
    	\arrow["{\iota'}", from=1-1, to=2-2]
    	\arrow[from=2-2, to=3-2]
    	\arrow[two heads, from=3-2, to=4-3]
    	\arrow["{F'}"{description}, dashed, from=3-1, to=2-2]
    	\arrow["{(id, \langle 0 \rangle) + (id, \langle 1 \rangle)}"', from=1-1, to=2-1]
    	\arrow[from=2-1, to=3-1]
    	\arrow[from=2-1, to=2-2]
    	\arrow[two heads, from=3-1, to=4-1]
    	\arrow[two heads, from=3-1, to=3-2]
    	\arrow[two heads, from=4-1, to=4-3]
    	\arrow["R", two heads, from=2-2, to=2-3]
    	\arrow["p", from=2-3, to=1-3]
    	\arrow[from=2-3, to=4-3]
    \end{tikzcd}\]
    where the maps $\langle 0 \rangle, \langle 1 \rangle : \ast \rightarrow N(I[1])$ identify the two objects of $I[1]$ and $R$ is induced by the projection forgetting $N(I[1])$. The outermost square of this diagram is another lifting problem, solved by $F := R \circ F' \circ D_2$. It is clear then that $p \circ F = Q$ by design. Moreover, since the diagram
    \[\begin{tikzcd}
    	{\prod^{i, j}_X A_i^j \sqcup \prod^{i,j}_X A_i^j} && {\prod_X^i B_i \sqcup \prod_X^i B_i} \\
    	{\Big( \prod^{i, j}_X A_i^j \sqcup \prod^{i,j}_X A_i^j \Big) \times N(I[1])} && {\prod_X^i B_i' \sqcup \prod_X^i B_i'} \\
    	{\Big( \prod^i_X B_i \Big) \times N(I[1])} \\
    	{\Big( \prod_X^i B_i' \Big) \times N(I[1])} && C
    	\arrow["{\prod_X^i f_i + \prod_X^i g_i}", from=1-1, to=1-3]
    	\arrow["{\prod_X^i t_i + \prod_X^i t_i}", from=1-3, to=2-3]
    	\arrow["{\psi \sqcup \phi}", from=2-3, to=4-3]
    	\arrow["K"', from=4-1, to=4-3]
    	\arrow["{(\prod_X^i t_i) \times 1_{N(I[1])}}"', from=3-1, to=4-1]
    	\arrow["{(id, \langle 0 \rangle) + (id, \langle 1 \rangle)}"', from=1-1, to=2-1]
    	\arrow["{(\prod_X^i f_i \sqcup \prod_X^i g_i) \times 1_{N(I[1])}}"', from=2-1, to=3-1]
    \end{tikzcd}\]
    commutes, setting $\iota := R \circ \iota'$ completes the proof.
\end{proof}

A crucial and nontrivial lemma is now needed, which carries a remarkably operadic flavor to it. We will not explore any subtle appearances of operads in this paper; such concerns will be left to future work. One could argue this lemma is the reason we have coherence:

\begin{lemma} \label{lemma_h2_psi_operadic}
    Let $X$ and the list $Y$ be as above, with some choice of horizontal compositions $(\mu_n)_{n \geq 0}$. Consider the functor $(-) \cdot (-)$ of the form
    \[\begin{tikzcd}
    	{\textbf{Fun} \Big(\prod\limits_{i = 1}^n h_1(X(x^i_0, x^i_{k_i})), h_1(X(x_0, x_r)) \Big) \times \textbf{Fun} \Big(\prod\limits_{i = 1}^r h_1(X(x_{i-1}, x_i)), \prod\limits_{i = 1}^n h_1(X(x^i_0, x^i_{k_i})) \Big)} \\
    	{\textbf{Fun} \Big(\prod\limits_{i = 1}^r h_1(X(x_{i-1}, x_i)), h_1(X(x_0, x_r)) \Big)}
    	\arrow["{(-) \cdot (-)}"', from=1-1, to=2-1]
    \end{tikzcd}\]
    defined by horizontal composition. Then
    $$
        \Phi^{x^1_0, \cdots, x^n_0, x^n_{k_n}}_{(X, (\mu_n)_{n \geq 0})} \cdot \Big( \prod_{i = 1}^n \Phi^{x^i_0, \cdots, x^i_{k_i}}_{(X, (\mu_n)_{n \geq 0})} \Big) = \Phi^{x_0, \cdots, x_r}_{(X, (\mu_n)_{n \geq 0})} \circ \mathcal{G}^n_{k_1, \cdots, k_n}.
    $$
\end{lemma}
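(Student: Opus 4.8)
The plan is to check that the two functors $\textbf{SCD}_n \times \prod_{i=1}^n \textbf{SCD}_{k_i} \to \textbf{Fun}\left(\prod_{i=1}^r h_1(X(x_{i-1}, x_i)),\, h_1(X(x_0, x_r))\right)$ in the statement agree on objects and on morphisms. On an object $(K_0, K_1, \dots, K_n)$ the left-hand side unwinds, by the definitions of $\Phi$ and of horizontal composition of identity $2$-cells, to the composite $\bullet^{x^1_0, \dots, x^n_0, x^n_{k_n}}_{K_0} \circ \left(\bullet^{x^1_0, \dots, x^1_{k_1}}_{K_1} \times \cdots \times \bullet^{x^n_0, \dots, x^n_{k_n}}_{K_n}\right)$, while the right-hand side is $\bullet^{x_0, \dots, x_r}_K$ with $K = (K_1 \sqcup_{\Delta[0]} \cdots \sqcup_{\Delta[0]} K_n) \sqcup_{Sp(n)} K_0$; these coincide by Proposition~\ref{prop_h2_diag_is_nested_comp}. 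So the content lies entirely in the morphism case.

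Fix a morphism $(f_0, f_1, \dots, f_n)$ with $f_i : K_i \to K_i'$, set $K' = (K_1' \sqcup_{\Delta[0]} \cdots \sqcup_{\Delta[0]} K_n') \sqcup_{Sp(n)} K_0'$ and $g = \mathcal{G}^n_{k_1, \dots, k_n}(f_0, \dots, f_n) : K \to K'$. Both sides are natural isomorphisms between the same two functors (the object-case composites for $(K_0, \dots, K_n)$ and for $(K_0', \dots, K_n')$), so the plan is to exhibit each side as $h_1$ applied to a left homotopy and then invoke Corollary~\ref{corr_h1_lift_mapssi_h1c_ident_htpies}. For the right-hand side this is essentially the definition: it is $h_1$ of the left homotopy obtained from an induced left homotopy $\beta_g$ — from $\mu_K$ to $g^\ast \circ \mu_{K'}$, the two solutions of the lifting problem against $\iota_K^\ast : X_K \to X_{Sp(r)}$ — by passing to the fiber over $(x_0, x_r)$, whiskering with $\tau_K^\ast$ and restricting along $\prod_{i,j} X(x^i_{j-1}, x^i_j) \hookrightarrow X_{Sp(r)}^{x_0, x_r}$; since $\tau_K^\ast \circ g^\ast = \tau_{K'}^\ast$ (Proposition~\ref{prop_cd_objfib_liftprob}) this homotopy runs from $\circ^{x_0, \dots, x_r}_K$ to $\circ^{x_0, \dots, x_r}_{K'}$. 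For the left-hand side, each $\Phi^{x^i_0, \dots, x^i_{k_i}}(f_i)$ and $\Phi^{x^1_0, \dots, x^n_0, x^n_{k_n}}(f_0)$ is likewise $h_1$ of a left homotopy built (by whiskering and restriction) from an induced $\beta_{f_i}$, respectively $\beta_{f_0}$; since $h_1$ is an $\textbf{sSet}$-enriched functor (Proposition~\ref{prop_h1_enrich_functor}) which preserves products (Lemma~\ref{lemma_h1_prods}), the horizontal composite $\Phi^{x^1_0, \dots}(f_0) \cdot \left(\prod_i \Phi^{x^i_0, \dots}(f_i)\right)$ equals $h_1$ of the left homotopy assembled via the composition law of Proposition~\ref{prop_h1_enrich_kan} — and this left homotopy is precisely the ``operadic composite'' $Q$ of Lemma~\ref{lemma_h2_operad_of_htpies}, which that lemma presents as $Q = p \circ F$ with $p$ a pullback projection and $F$ a solution to an explicit lifting problem.

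The remaining, and principal, task is to identify the data defining $F$ with that defining $\beta_g$. Because $F_2^0$ preserves colimits one has $X_K \cong (X_{K_1} \times_{X_0} \cdots \times_{X_0} X_{K_n}) \times_{X_{Sp(n)}} X_{K_0}$ (as in the proof of Proposition~\ref{prop_cd_sp_of_cd_cssp}); under the induced identification of the iterated pullback occurring in the conclusion of Lemma~\ref{lemma_h2_operad_of_htpies} with the relevant fiber of $X_K$, the projection $p$ becomes the whiskering map built from $\tau_K^\ast$, and $F$ becomes an induced left homotopy between the fibered versions of $\mu_K$ and $g^\ast \circ \mu_{K'}$, solving the same lifting problem as $\beta_g$. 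Here the compatibility of the two presentations of $\mu_K$ — via the pullback decomposition and directly — is Proposition~\ref{prop_cd_decomp_scd}, and that the endpoints are $\circ^{x_0, \dots, x_r}_K$ and $\circ^{x_0, \dots, x_r}_{K'}$ follows from Theorem~\ref{thm_cd_diag_is_comp_of_circs} together with $\tau_{K_i}^\ast \circ f_i^\ast = \tau_{K_i'}^\ast$. Granting these identifications, Corollary~\ref{corr_h1_lift_mapssi_h1c_ident_htpies} shows that $F$ and the restricted $\beta_g$ have the same image under $h_1$; whiskering both by $h_1(p)$, using functoriality of $h_1$, then yields the desired equality of natural isomorphisms. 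I expect this bookkeeping — matching the iterated pullbacks, base maps and homotopy endpoints produced by Lemma~\ref{lemma_h2_operad_of_htpies} against the decomposition $K = \mathcal{G}^n_{k_1, \dots, k_n}(K_0, \dots, K_n)$ and the construction of $\beta_g$ — to be the only real obstacle; it is lengthy but routine, and closely parallels the proof of Proposition~\ref{prop_cd_decomp_scd}.
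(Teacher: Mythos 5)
Your proposal is correct and follows essentially the same route as the paper's proof: the object case is handled by Proposition \ref{prop_h2_diag_is_nested_comp}, and on morphisms both arguments assemble the horizontal composite of the whiskered induced homotopies $\beta_{f_0}, \beta_{f_1}, \dots, \beta_{f_n}$ into a single induced left homotopy from $\mu_K$ to the pulled-back $\mu_{K'}$ via Lemma \ref{lemma_h2_operad_of_htpies} (identifying its pullback with $X_K$ and invoking Proposition \ref{prop_cd_decomp_scd}), then conclude by Corollary \ref{corr_h1_lift_mapssi_h1c_ident_htpies}. The only differences are where the bookkeeping sits --- the paper carries it out at the stage of rewriting the composite of natural isomorphisms (diagonals, the isomorphism $h_1(-) \times h_1(-) \cong h_1(- \times -)$, and naturality of the fiber inclusions) so that Lemma \ref{lemma_h2_operad_of_htpies} applies verbatim --- and a small imprecision at the end: the final whiskering is by $\tau_0^\ast$ composed with the projection $p = b^\ast$, i.e.\ by $\tau_K^\ast$, rather than by $p$ alone.
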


\begin{proof}
    That this equivalence holds on objects is by construction and Proposition \ref{prop_h2_diag_is_nested_comp}. For morphisms, consider for $0 \leq i \leq n$ a collection of maps $f_i : K_i \rightarrow K_i'$ in $\textbf{SCD}_{k_i}$, where $Sp(k_i) \xrightarrow{\iota_i} K_i \xleftarrow{\tau_i} \Delta[1]$ and $Sp(k_i) \xrightarrow{\iota_i'} K_i' \xleftarrow{\tau_i'} \Delta[1]$ are simplicial composition diagrams and $k_0 = n$. Then we have an induced map $f : K \rightarrow K'$, where
    \begin{align*}
        K &:= (K_1 \sqcup_{\Delta[0]} \cdots \sqcup_{\Delta[0]} K_n) \sqcup_{Sp(n)} K_0 \\
        K' &:= (K_1' \sqcup_{\Delta[0]} \cdots \sqcup_{\Delta[0]} K_n') \sqcup_{Sp(n)} K_0'
    \end{align*}
    with maps $Sp(r) \xrightarrow{\iota} K \xleftarrow{\tau} \Delta[1]$ and $Sp(r) \xrightarrow{\iota'} K' \xleftarrow{\tau'} \Delta[1]$. We wish to show that
    $$
        \Phi^{x^1_0, \cdots, x^n_0, x^n_{k_n}}_{(X, (\mu_n)_{n \geq 0})}(f_0) \circ \prod_{i = 1}^n \Phi^{x^i_0, \cdots, x^i_{k_i}}_{(X, (\mu_n)_{n \geq 0})}(f_i) = \Phi^{x_0, \cdots, x_r}_{(X, (\mu_n)_{n \geq 0})}(f).
    $$
    The left hand side of this equality can be expanded out as
    $$
        h_1((\tau_0^*)^{x^1_0, x^n_{k_n}}) \circ h_1(\beta_{f_0}^{x^1_0, x^n_{k_n}}) \circ F_{x^1_0, \cdots, x^n_0, x^n_{k_n}} \circ \Big( \prod_{i = 1}^n h_1((\tau_i^*)^{x^i_0, x^i_{k_i}}) \circ h_1(\beta_{f_i}^{x^i_0, x^i_{k_i}}) \circ F_{x^i_0, \cdots, x^i_{k_i}} \Big)
    $$
    where each $\beta_{f_i}$ is a left homotopy induced between $\mu_{K_i}$ and $\mu_{K_i'}'$. The right hand side can then be expanded out as
    $$
        h_1((\tau^*)^{x_0, x_r}) \circ h_1(\beta_f^{x_0, x_r}) \circ F_{x_0, \cdots, x_r}.
    $$
    If we can show that the left hand side is also of the form $h_1((\tau^*)^{x^1_0, x^n_{k_n}}) \circ h_1(H^{x_0, x_r}) \circ F_{x_0, \cdots, x_r}$ for some left homotopy $H$ induced between $\mu_K$ and $\mu_{K'}'$, the proof will be complete, as these will then have to be identified by Corollary \ref{corr_h1_lift_mapssi_h1c_ident_htpies}.

    We should note that, if we are to treat these natural isomorphisms as functors of the form $\mathscr{C} \times I[1] \rightarrow \mathscr{D}$, we will have to rewrite the left-hand side as
    $$
        h_1((\tau^*_0)^{x^1_0, x^n_{k_n}} \circ \beta_{f_0}^{x^1_0, x^n_{k_n}}) \circ F_{x^1_0, \cdots, x^n_0, x^n_{k_n}}' \circ \Bigg( \Big( \prod_{i = 1}^n h_1((\tau_i^*)^{x^i_0, x^i_{k_i}} \circ \beta_{f_i}^{x^i_0, x^i_{k_i}}) \circ F_{x^i_0, \cdots, x^i_{k_i}}' \circ D_n' \Big) \times 1_{I[1]} \Bigg) \circ D_2'
    $$
    where we now have the inclusion
    $$
        F_{a_0, \cdots, a_k}' : \prod_{i = 1}^k h_1(X(a_{i-1}, a_i)) \times I[1] \hookrightarrow h_1(X_{Sp(k)}^{a_0, a_k} \times N(I[1]))
    $$
    and $D_k' : \prod_{i = 1}^r \textbf{Hom}_{h_2(X)}(x_{i - 1}, x_i) \times I[1] \rightarrow \prod_{i = 1}^r \textbf{Hom}_{h_2(X)}(x_{i - 1}, x_i) \times I[1]^k$ the diagonal map for all $k > 0$.

    Using naturality of the isomorphism $h_1(-) \times h_1(-) \cong h_1(- \times -)$, we can convert this into a map $h_1((\tau^*_0)^{x^1_0, x^n_{k_n}} \circ P)$, where
    $$
        P := \beta_{f_0}^{x^1_0, x^n_{k_n}} \circ \mathcal{F}_{x^1_0, \cdots, x^n_0, x^n_{k_n}} \circ \Bigg( \Big( \prod_{i = 1}^n (\tau_i^*)^{x^i_0, x^i_{k_i}} \circ \beta_{f_i}^{x^i_0, x^i_{k_i}} \circ \mathcal{F}_{x^i_0, \cdots, x^i_{k_i}} \circ D_n^{x_0, \cdots, x_r} \Big) \times 1_{N(I[1])} \Bigg) \circ D_2^{x_0, \cdots, x_r}
    $$
    with the inclusions
    $$
        \mathcal{F}_{a_0, \cdots, a_k} : \prod_{i = 1}^k X(a_{i-1}, a_i) \rightarrow X_{Sp(k)}^{a_0, a_k}
    $$
    and the diagonal maps $D_k : X_{Sp(r)} \times N(I[1]) \rightarrow X_{Sp(r)} \times N([1])^k$ on $N(I[1])$.

    Note that the maps $\mathcal{F}_{a_0, \cdots, a_k}$ form part of a more general natural transformation $(-)^{a_0, \cdots, a_k} \Rightarrow (-)^{a_0, a_k}$. Using this naturality, we can prove that $P$ is in fact equal to a map $Q^{x_0, x_r} \circ \mathcal{F}_{x_0, \cdots, x_r}$ such that
    $$
        Q := \beta_{f_0} \circ \Bigg( \Big( ( \prod_{X_0}^{1 \leq i \leq n} \tau^*_i \circ \beta_{f_i} ) \circ D_n \Big) \times 1_{N(I[1])} \Bigg) \circ D_2.
    $$ 
    This precisely places us in the situation of Lemma \ref{lemma_h2_operad_of_htpies}. Note however that the resulting induced left homotopy is of the form
    $$
        H : X_{Sp(r)} \times N(I[1]) \rightarrow X_K
    $$
    and maps from $\mu_K$ to $\mu_{K'}'$ by Proposition \ref{prop_cd_decomp_scd}. Moreover, given the inclusion $b : K_0 \hookrightarrow K$, it is such that $b^* \circ H = Q$. Since $\tau_0^* \circ b^* = \tau^*$, we have our left homotopy $H$ such that our original natural isomorphism is of the form
    $$
        h_1(\tau^{x_0, x_r}) \circ h_1(H^{x_0, x_r}) \circ F_{x_0, \cdots, x_r}.
    $$
    This confirms by Corollary \ref{corr_h1_lift_mapssi_h1c_ident_htpies} that the equality holds.
\end{proof}

\begin{theorem}
    Let $X$ be a $2$-fold Segal space and $(\mu_n)_{n \geq 0}$ a choice of horizontal compositions. Then $h_2(X, (\mu_n)_{n \geq 0})$ is an unbiased bicategory.
\end{theorem}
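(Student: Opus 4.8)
The plan is to verify the two coherence axioms of Definition~\ref{defn_ubicat_ubicat} for the data set up in Definition~\ref{defn_h2_h2}, leaning almost entirely on the functoriality of $\omega^{x,y}_{(X,(\mu_n)_{n\geq 0})}$ (hence of $\Phi^{x_0,\cdots,x_r}_{(X,(\mu_n)_{n\geq 0})}$) and on the ``operadic'' compatibility of Lemma~\ref{lemma_h2_psi_operadic}. First I would dispatch the bookkeeping. The hom-categories, the composition functors $\bullet^{x_0,\cdots,x_n}$ and the identities $\bullet^x$ are genuine functors by construction; since $\mu_1 = 1_{X_1}$ and $X_{\langle 0,1\rangle} = 1_{X_1}$ one checks $\bullet^{x,y} = 1_{\textbf{Hom}_{h_2(X)}(x,y)}$, so the unitor $\iota_{x,y}=1$ really has the source and target demanded in Definition~\ref{defn_ubicat_ubicat}(6). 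Each $\gamma_Y = \Phi^{x_0,\cdots,x_r}_{(X,(\mu_n)_{n\geq 0})}(f_{k_1,\cdots,k_n})$ has the source and target claimed in Definition~\ref{defn_h2_h2}(5) by Proposition~\ref{prop_h2_diag_is_nested_comp}, and it is a natural \emph{isomorphism}: $\omega^{x_0,x_r}_{(X,(\mu_n))}$ sends every morphism to the image under the Kan-complex enrichment of $h_1$ of a left homotopy, which always lies among the natural isomorphisms, and $\phi^{x_0,\cdots,x_r}_{(X,(\mu_n))}$ (precomposition with $F_{x_0,\cdots,x_r}$) preserves invertibility.

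Next I would attack the associativity pentagon of Definition~\ref{defn_ubicat_ubicat}(1). Fix the data $n$, $m_\bullet$, $k^\bullet_\bullet$, the triply-nested list of objects and the morphisms $f_{p,q,r}$, write $R = \sum_{p,q} k^p_q$, and let $K_{\mathrm{top}}$, $K_D$, $K_E$ denote the simplicial composition diagrams representing the top, right, and left vertices of the pentagon (nesting by $r$ inside $q$ inside $p$; nesting by $r$ only with the $p,q$ levels flattened; and nesting by $r,q$ into each $p$-block with the $p$ level kept), so each vertex is $\bullet_K$ applied to the flat list of $f_{p,q,r}$ for the appropriate $K$. Viewing the pentagon as an equality of natural transformations of functors of the $f_{p,q,r}$ (pasting each edge with the identities of the common innermost composition functors, which changes nothing), I would identify, using Definition~\ref{defn_h2_h2}(5) and Lemma~\ref{lemma_h2_psi_operadic}, each edge as an image under $\Phi^{x_0,\cdots,x_R}_{(X,(\mu_n))}$ of a morphism in $\textbf{SCD}_R$: $\gamma_D$ of $K_{\mathrm{top}}\to K_D$, $\gamma_F$ of $K_D\to\Delta[R]$, $\gamma_E$ of $K_E\to\Delta[R]$, and the horizontal composite $\bigcirc_{p=1}^n\gamma_{D_p}$ of $K_{\mathrm{top}}\to K_E$ (here Lemma~\ref{lemma_h2_psi_operadic} rewrites this horizontal composite as $\Phi^{x_0,\cdots,x_R}_{(X,(\mu_n))}$ of a $\mathcal{G}^n$-composite of structure maps). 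By functoriality of $\Phi^{x_0,\cdots,x_R}_{(X,(\mu_n))}$, both composites around the pentagon then equal $\Phi^{x_0,\cdots,x_R}_{(X,(\mu_n))}$ of a morphism $K_{\mathrm{top}}\to\Delta[R]$ in $\textbf{SCD}_R$ — one factoring through $K_E$, the other through $K_D$. A straightforward induction on the construction of a simplicial composition diagram $K$ shows there is a \emph{unique} morphism $K\to\Delta[R]$ in $\textbf{SCD}_R$ (it is forced by the constraint $f\circ\iota_K = g_R$ on the spine, since $\Delta[R]$ is a nerve and $f$ must collapse every chosen composite to the total composite). Hence the two composites coincide and the pentagon commutes.

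For the unitality triangle of Definition~\ref{defn_ubicat_ubicat}(2), since every $\iota_{x,y}=1$ and $\bullet^{x,y}=1$, the diagram degenerates to the assertion $\gamma_{((f_1,\cdots,f_n))} = 1 = \gamma_{((f_1),\cdots,(f_n))}$. I would observe that $\gamma_{((f_1,\cdots,f_n))} = \Phi^{x_0,\cdots,x_n}_{(X,(\mu_n))}(f_n)$ with $f_n\colon \Delta[n]\sqcup_{Sp(1)}\Delta[1]\to\Delta[n]$ and $\gamma_{((f_1),\cdots,(f_n))} = \Phi^{x_0,\cdots,x_n}_{(X,(\mu_n))}(f_{1,\cdots,1})$ with $f_{1,\cdots,1}\colon Sp(n)\sqcup_{Sp(n)}\Delta[n]\to\Delta[n]$; in both cases the domain is canonically isomorphic to $\Delta[n]$ and the structure map is, under that isomorphism, the identity, so by Definition~\ref{defn_cd_induced_comp_k} the induced composition $\mu_K$ agrees with $\mu_n$, the induced left homotopy may be taken constant, and $\Phi^{x_0,\cdots,x_n}_{(X,(\mu_n))}$ of the map is the identity natural transformation. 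Thus the triangle commutes trivially, and with the pentagon this exhibits $h_2(X,(\mu_n)_{n\geq 0})$ as an unbiased bicategory.

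I expect the main obstacle to be the associativity step — precisely the bookkeeping that pairs each of the four edges of the pentagon with the correct $\Phi$-image and confirms that Lemma~\ref{lemma_h2_psi_operadic} together with functoriality of $\omega$ genuinely assembles both sides into $\Phi^{x_0,\cdots,x_R}_{(X,(\mu_n))}$ of the \emph{same} morphism of simplicial composition diagrams. The substantive inputs here — recognizing horizontal composites of associators as $\Phi$ of $\mathcal{G}$-composites, and knowing that any two induced left homotopies between the same solutions of the relevant lifting problem have equal image under $h_1$ (Theorem~\ref{thm_h1_liftprobs_htpic_2globhptic}, Corollary~\ref{corr_h1_lift_mapssi_h1c_ident_htpies}) — are exactly what Lemma~\ref{lemma_h2_psi_operadic} and the preceding development were built to supply; everything else is formal.
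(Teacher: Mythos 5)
Your proposal is correct and follows essentially the same route as the paper: each edge of the associativity coherence diagram is identified, via Lemma~\ref{lemma_h2_psi_operadic} and the definition of $\gamma_Y$, with $\Phi^{x_0,\cdots,x_r}_{(X,(\mu_n)_{n\geq 0})}$ applied to a morphism in $\textbf{SCD}_r$, so that commutativity follows from functoriality of $\Phi$ together with commutativity of the corresponding diagram of simplicial composition diagrams, and unitality is trivial because the unitors are identities and the relevant associators are images of identity maps. Your added observation that a simplicial composition diagram of arity $r$ admits a \emph{unique} morphism to $\Delta[r]$ in $\textbf{SCD}_r$ (every vertex lies on the spine and $\Delta[r]$ is the nerve of a poset) is a clean way to justify what the paper merely calls a ``clear'' commutative diagram; the only slip is cosmetic, as the coherence diagram is a square rather than a pentagon, though the four edges you pair with $\Phi$-images are the correct ones.
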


\begin{proof}
    We begin by dealing with associativity conditions. Let $n, m_1, \cdots, m_n \in \mathbb{Z}_{> 0}$ and $k_1^1, \cdots, k_n^{m_n} \in \mathbb{Z}_{\geq 0}$. Take a thrice-nested tuple of elements of $(X_{0, 0})_0$ of the form
    $$
        L = (L_p)^n_{p = 1} = ((L_{p, q})^{m_p}_{q = 1})^n_{p = 1} = (((x_{p, q, s})_{s = 0}^{k_p^q})_{q = 1}^{m_p})_{p = 1}^n
    $$
    where $x_{p, q, k_p^q} = x_{p, q+1, 0}$ if $q < m_p$ and $x_{p, m_p, k_p^{m_p}} = x_{p+1, 1, 0}$ if $p < n$. Let $t_p = \sum_{q = 1}^{m_p} k_p^q$ and $r = \sum_{p = 1}^n t_p$. Set $(x_0, \cdots, x_r)$ to be the flattened version of $L$ after removing the elements $x_{p, q, k^q_p}$, not including $x_{n, m_n, k^{m_n}_n}$. Let $(x^p_0, \cdots, x^p_{t_p})$ be the flattened version of $L_p$ after removing each $x_{p, q, k^q_p}$ not including $x_{p, m_p, k_p^{m_p}}$. 

    We take an interest in the four composition operations $\bullet^{x_0, \cdots, x_r}_{Q_i}$ for $i \in \{1, 2, 3, 4\}$, where the four simplicial composition diagrams $Q_i$ are as follows:
    \begin{align*}
        Q_1 := &\Big( Q_{11} \sqcup_{\Delta[0]} \cdots \sqcup_{\Delta[0]} Q_{1n} \Big) \sqcup_{Sp(n)} \Delta[n] \\
        Q_2 := &\Big( \Delta[t_1] \sqcup_{\Delta[0]} \cdots \sqcup_{\Delta[0]} \Delta[t_n] \Big) \sqcup_{Sp(n)} \Delta[n] \\
        Q_3 := &\Big( \Delta[k_1^1] \sqcup_{\Delta[0]} \cdots \sqcup_{\Delta[0]} \Delta[k_n^{m_n}] \Big) \sqcup_{Sp(\sum_{i = 1}^n m_i)} \Delta[\sum_{i = 1}^n m_i] \\
        Q_4 := &\Delta[r]
    \end{align*}
    where, for $1 \leq p \leq n$,
    $$
        Q_{1p} := (\Delta[k_p^1] \sqcup_{\Delta[0]} \cdots \sqcup_{\Delta[0]} \Delta[k_p^{m_p}]) \sqcup_{Sp(m_p)} \Delta[m_p].
    $$
    There is a clear commutative diagram in $\textbf{SCD}_r$
    \[\begin{tikzcd}
    	& {Q_1} \\
    	{Q_2} && {Q_3} \\
    	& {Q_4}
    	\arrow["{\iota_{12}}"', from=1-2, to=2-1]
    	\arrow["{\iota_{24}}"', from=2-1, to=3-2]
    	\arrow["{\iota_{13}}", from=1-2, to=2-3]
    	\arrow["{\iota_{34}}", from=2-3, to=3-2]
    \end{tikzcd}\]
    We wish to show that this induces the diagram of natural isomorphisms
    \[\begin{tikzcd}
    	& {\bullet^{x_0, \cdots, x_r}_{Q_1}} \\
    	{\bullet^{x_0, \cdots, x_r}_{Q_2}} && {\bullet^{x_0, \cdots, x_r}_{Q_3}} \\
    	& {\bullet^{x_0, \cdots, x_r}_{Q_4}}
    	\arrow["{\alpha_{12}}"', Rightarrow, from=1-2, to=2-1]
    	\arrow["{\alpha_{24}}"', Rightarrow, from=2-1, to=3-2]
    	\arrow["{\alpha_{13}}", Rightarrow, from=1-2, to=2-3]
    	\arrow["{\alpha_{34}}", Rightarrow, from=2-3, to=3-2]
    \end{tikzcd}\]
    that we seek to prove commutes, where
    \begin{align*}
        \alpha_{12} &:= 1_{\bullet^{x_{1, 1, 0}, \cdots, x_{n, 1, 0}, x_{n, m_n, k^{m_n}_n}}} \circ \prod_{p = 1}^n \Phi^{x^p_0, \cdots, x^p_{t_p}}_{(X, (\mu_n)_{n \geq 0})}(f_{k^p_1, \cdots, k^p_{m_p}}) \\
        \alpha_{13} &:= \Phi^{x_{1, 1, 0}, \cdots, x_{n, 1, 0}, x_{n, m_n, k^{m_n}_n}}_{(X, (\mu_n)_{n \geq 0})}(f_{m_1, \cdots, m_n}) \circ \prod_{p = 1}^n 1_{\Big(\bullet^{x^p_0, \cdots, x^p_{t_p}}_{Q_{1p}}\Big)} \\
        \alpha_{24} &:= \Phi^{x_0, \cdots, x_r}_{(X, (\mu_n)_{n \geq 0})}(f_{t_1, \cdots, t_n}) \\
        \alpha_{34} &:= \Phi^{x_0, \cdots, x_r}_{(X, (\mu_n)_{n \geq 0})}(f_{k^1_1, \cdots, k^{m_n}_n}).
    \end{align*}
    Using Lemma \ref{lemma_h2_psi_operadic}, it is straightforward to identify each $\alpha_{ij}$ with the corresponding $\Phi_{(X, (\mu_n)_{n \geq 0})}^{x_0, \cdots, x_r}(\iota_{ij})$. Thus, by functoriality, the diagram commutes as needed.
    
    The conditions on unitors is then trivial, since all involved natural isomorphisms are identities. Indeed, the relevant associators are $\Phi_{(X, (\mu_n)_{n \geq 0})}^{x_0, \cdots, x_n}(f_{1, \cdots, 1})$, which are constant as $f_{1, \cdots, 1} = 1_{\Delta[n]}$.
\end{proof}
\section{Homotopy Unbiased Pseudofunctors}

In order for $h_2$ to be a genuine functor, we need to demonstrate how to convert a map $f : (X, (\mu_n)_{n \geq 0}) \rightarrow (Y, (\nu_n)_{n \geq 0})$ in $\textbf{SeSp}_2^{comp}$ into a pseudofunctor
$$
    h_2(f) : h_2(X, (\mu_n)_{n \geq 0}) \rightarrow (Y, (\nu_n)_{n \geq 0}).
$$
Before we can do so, a great deal of the technology we developed for $h_2$'s behavior on objects now needs to be extended to handle maps $X \rightarrow Y$ of $2$-fold Segal spaces.

We will first need to extend the notion of being object-fibered to a map between $2$-fold Segal spaces:

\begin{proposition}
    Let $F : X \rightarrow Y$ be a map in $\textbf{sSpace}_k$. Consider two commutative diagrams in $\textbf{sSpace}_{k-1}$
    \[\begin{tikzcd}
    	A && K & A && K \\
    	& {(X_0)^n} & {(Y_0)^n} & {(X_0)^n} & {(Y_0)^n} \\
    	B && L & B && L
    	\arrow["{(F_0)^n}", from=2-2, to=2-3]
    	\arrow["{F_B}"', from=3-1, to=3-3]
    	\arrow[from=3-1, to=2-2]
    	\arrow[from=3-3, to=2-3]
    	\arrow[from=1-3, to=2-3]
    	\arrow[from=1-1, to=2-2]
    	\arrow["{F_A}", from=1-1, to=1-3]
    	\arrow["f"', from=1-1, to=3-1]
    	\arrow["{F_A}", from=1-4, to=1-6]
    	\arrow["{(F_0)^n}", from=2-4, to=2-5]
    	\arrow[from=1-4, to=2-4]
    	\arrow[from=1-6, to=2-5]
    	\arrow["g", from=1-6, to=3-6]
    	\arrow[from=3-6, to=2-5]
    	\arrow[from=3-4, to=2-4]
    	\arrow["{F_B}"', from=3-4, to=3-6]
    \end{tikzcd}\]
    for some chosen maps $F_A$ and $F_B$, so that $f$ and $g$ are object-fibered. Let $x_0, \cdots, x_n \in (X_{0, \cdots, 0})_0$.

    Then, defining $F_B^{x_0, \cdots, x_n}$ to be the map
    \[\begin{tikzcd}
    	{B^{x_0, \cdots, x_n}} & {L^{F(x_0), \cdots, F(x_n)}} \\
    	{B^{F(x_0), \cdots, F(x_n)}}
    	\arrow["{F_B^{x_0, \cdots, x_n}}", from=1-1, to=1-2]
    	\arrow[hook', from=1-1, to=2-1]
    	\arrow["{F_B^{F(x_0), \cdots, F(x_n)}}"', from=2-1, to=1-2]
    \end{tikzcd}\]
    and similarly for $F_A^{x_0, \cdots, x_n}$, we have that
    $$
        F_B^{x_0, \cdots, x_n} \circ f^{x_0, \cdots, x_n} = (F_B \circ f)^{f(x_0), \cdots, f(x_n)} \circ \mathcal{I}^{F, A}_{x_0, \cdots, x_n},
    $$
    where $\mathcal{I}^{F, A}_{x_0, \cdots, x_n} : A^{x_0, \cdots, x_n} \hookrightarrow A^{f(x_0), \cdots, f(x_n)}$ is the inclusion. Similarly, we have
    $$
        g^{f(x_0), \cdots, f(x_n)} \circ F_A^{x_0, \cdots, x_n} = (g \circ F_A)^{f(x_0), \cdots, f(x_n)} \circ \mathcal{I}^{F, A}_{x_0, \cdots, x_n}.
    $$
\end{proposition}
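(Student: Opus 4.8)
The plan is to reduce both identities to the universal property of pullbacks together with the functoriality of the fibre functors $(-)^{z_0, \cdots, z_n}$ from Notation~\ref{not_sesp_supscript_fiber}. One preliminary remark on notation: the subscripts written ``$f(x_i)$'' in the statement and its diagrams should be read as $F(x_i)$, the images of $x_i \in (X_{0, \cdots, 0})_0$ under the map of simplicial sets $X_{0, \cdots, 0} \to Y_{0, \cdots, 0}$ induced by $F$ (the morphism $f : A \to B$ cannot be applied to the $x_i$). I write $F(x_i)$ accordingly, and write $\mathcal{I}^{F, B}_{x_0, \cdots, x_n} : B^{x_0, \cdots, x_n} \hookrightarrow B^{F(x_0), \cdots, F(x_n)}$ for the inclusion appearing in the definition of $F_B^{x_0, \cdots, x_n}$, in analogy with $\mathcal{I}^{F, A}_{x_0, \cdots, x_n}$.

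First I would record the maps in play. From the first diagram, $f : A \to B$ is object-fibered over $(X_0)^n$ (the left-hand triangle), and hence also over $(Y_0)^n$ once the bases are composed with $(F_0)^n$ (the same one-line argument as in the proof of Proposition~\ref{prop_cd_objfibered_liftprob}); so $f^{x_0, \cdots, x_n}$ and $f^{F(x_0), \cdots, F(x_n)}$ are both defined, as are $F_B^{F(x_0), \cdots, F(x_n)}$ (from the bottom of the first diagram, $F_B$ is object-fibered over $(Y_0)^n$) and, from the second diagram, $g^{F(x_0), \cdots, F(x_n)}$ and $F_A^{F(x_0), \cdots, F(x_n)}$. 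By construction $F_B^{x_0, \cdots, x_n} = F_B^{F(x_0), \cdots, F(x_n)} \circ \mathcal{I}^{F, B}_{x_0, \cdots, x_n}$ and $F_A^{x_0, \cdots, x_n} = F_A^{F(x_0), \cdots, F(x_n)} \circ \mathcal{I}^{F, A}_{x_0, \cdots, x_n}$.

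For the first identity I would then compute
\[
  F_B^{x_0, \cdots, x_n} \circ f^{x_0, \cdots, x_n}
  = F_B^{F(x_0), \cdots, F(x_n)} \circ \mathcal{I}^{F, B}_{x_0, \cdots, x_n} \circ f^{x_0, \cdots, x_n}
  = F_B^{F(x_0), \cdots, F(x_n)} \circ f^{F(x_0), \cdots, F(x_n)} \circ \mathcal{I}^{F, A}_{x_0, \cdots, x_n},
\]
where the second equality is the naturality of the fibre-inclusions along $f$; then $F_B^{F(x_0), \cdots, F(x_n)} \circ f^{F(x_0), \cdots, F(x_n)} = (F_B \circ f)^{F(x_0), \cdots, F(x_n)}$ by functoriality of $(-)^{F(x_0), \cdots, F(x_n)} : (\textbf{sSpace}_{k-1})_{/(Y_0)^n} \to \textbf{sSpace}_{k-1}$, which yields the claim. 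The second identity needs no naturality:
\[
  g^{F(x_0), \cdots, F(x_n)} \circ F_A^{x_0, \cdots, x_n}
  = g^{F(x_0), \cdots, F(x_n)} \circ F_A^{F(x_0), \cdots, F(x_n)} \circ \mathcal{I}^{F, A}_{x_0, \cdots, x_n}
  = (g \circ F_A)^{F(x_0), \cdots, F(x_n)} \circ \mathcal{I}^{F, A}_{x_0, \cdots, x_n},
\]
again by functoriality of the fibre functor over $(Y_0)^n$, using that $g$ and $F_A$ are object-fibered over $(Y_0)^n$ by the second diagram.

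The only point needing genuine care --- and the main obstacle, modest as it is --- is the naturality identity $\mathcal{I}^{F, B}_{x_0, \cdots, x_n} \circ f^{x_0, \cdots, x_n} = f^{F(x_0), \cdots, F(x_n)} \circ \mathcal{I}^{F, A}_{x_0, \cdots, x_n}$. I would prove it by the universal property of the pullback $B^{F(x_0), \cdots, F(x_n)} = \{(F(x_0), \cdots, F(x_n))\} \times_{(Y_0)^n} B$: the projection to the point is forced, while the projections of both sides to $B$ equal the single map $A^{x_0, \cdots, x_n} \hookrightarrow A \xrightarrow{f} B$ --- because each $f^{(-)}$ is a restriction of $f$ and each $\mathcal{I}$ is a restriction of an identity --- so the two maps into $B^{F(x_0), \cdots, F(x_n)}$ coincide. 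Equivalently this is an instance of the pasting law for pullbacks, the square relating $A^{x_0, \cdots, x_n}$, $A^{F(x_0), \cdots, F(x_n)}$, $B^{x_0, \cdots, x_n}$, $B^{F(x_0), \cdots, F(x_n)}$ being assembled from pullbacks of the commuting square $\{(x_0, \cdots, x_n)\} \times_{(X_0)^n} (-) \Rightarrow \{(F(x_0), \cdots, F(x_n))\} \times_{(Y_0)^n} (-)$ applied to $f : A \to B$. With this in hand, both conclusions follow from the chains above.
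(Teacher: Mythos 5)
Your proposal is correct and takes essentially the same route as the paper, which disposes of both identities with the one-line remark that they follow from diagram chases; your argument---reducing everything to the universal property of the pullback defining $B^{F(x_0), \cdots, F(x_n)}$ and to functoriality of the fibre functor over $(Y_0)^n$---is exactly that chase written out in full, and your reading of the subscripts $f(x_i)$ as $F(x_i)$ is the intended one.
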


\begin{proof}
    Diagram chases show both statements to be true.
\end{proof}

The central idea controlling our composition, coherence isomorphisms and coherence conditions was the collections of functors $\Phi^{x_0, \cdots, x_n}_{(X, (\mu_n)_{n \geq 0})}$ for all $x_0, \cdots, x_n \in (X_{0, 0})_0$ and $n \geq 0$. The flavor of results we will need to prove for pseudofunctors are similar, so we seek an augmented version of this construct.

In essence, what $\Phi^{x_0, \cdots, x_n}_{(X, (\mu_n)_{n \geq 0})}$ accomplished was to convert diagrams in $\textbf{SCD}_n$ into commutative diagrams of coherence isomorphisms. A reasonable extension of this category for the purposes of pseudofunctors is $\textbf{SCD}_n \times [1]$, with the morphisms $(1_K, 0 < 1)$ representing a homotopy from composition in the codomain to the domain:

\begin{definition}
    Let $f : (X, (\mu_n)_{n \geq 0}) \rightarrow (Y, (\nu_n)_{n \geq 0})$ be a map in $\textbf{SeSp}_2^{comp}$. Let $x, y \in (X_{0, 0})_0$. Then define the functor
    $$
        \omega^{x, y}_f : \textbf{SCD}_n \times [1] \rightarrow \textbf{Fun} \Big(h_1(X_{Sp(n)}^{x, y}), h_1(Y(x, y))\Big)
    $$
    to be the functor sending
    \begin{align*}
        (K, 0) &\mapsto h_1((\tau_K^*)^{f(x), f(y)} \circ \nu_K^{f(x), f(y)} \circ f^{x, y}_{Sp(n)}) \\
        (K, 1) &\mapsto h_1((\tau_K^*)^{f(x), f(y)} \circ f^{x, y}_K \circ \mu_K^{x, y})
    \end{align*}
    and sending a morphism to the map generated by the following cases:

    \begin{enumerate}
        \item $(g, 1_0) \mapsto \omega_{(Y, (\nu_n)_{n \geq 0})}^{f(x), f(y)}(g) \circ h_1(f^{x, y}_{Sp(n)})$, where $g : K_1 \rightarrow K_2$;
        
        \item $(g, 1_1) \mapsto h_1(f^{x, y}_1) \circ \omega_{(X, (\mu_n)_{n \geq 0})}^{x, y}(g)$, where $g$ is as above;
        
        \item $(1_K, 0 < 1) \mapsto h_1((\tau_K^*)^{f(x), f(y)}) \circ h_1(\eta_K^{f(x), f(y)}) \circ h_1(\mathcal{I}^{f, X_{Sp(n)}}_{x, y})$, where $\eta_K$ is an induced left homotopy from $\nu_K \circ f_{Sp(n)}$ to $f_K \circ \mu_K$ by the lifting problem
        \[\begin{tikzcd}
        	& {X_K} & {Y_K} \\
        	\\
        	{X_{Sp(n)}} && {Y_{Sp(n)}}
        	\arrow["{f_K}", dashed, from=1-2, to=1-3]
        	\arrow[from=1-3, to=3-3]
        	\arrow["{\mu_K}", curve={height=-6pt}, dashed, from=3-1, to=1-2]
        	\arrow["{\nu_K \circ f_{Sp(n)}}"{description}, curve={height=6pt}, dashed, from=3-1, to=1-3]
        	\arrow[from=3-1, to=3-3]
        \end{tikzcd}\]
    \end{enumerate}
\end{definition}

\begin{proposition}
    $\omega^{x, y}_f$ is a functor.
\end{proposition}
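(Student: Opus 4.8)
The plan is to exploit the simple presentation of the category $\textbf{SCD}_n \times [1]$. Since $[1]$ has a single non-identity morphism $0 < 1$, this product category is generated by the morphisms $(g, 1_0)$ and $(g, 1_1)$ with $g$ a morphism of $\textbf{SCD}_n$, together with the morphisms $(1_K, 0 < 1)$; the only relations are the composition law of $\textbf{SCD}_n$ at each of the two levels, the identity relations $(1_K, 1_0) = 1_{(K,0)}$ and $(1_K, 1_1) = 1_{(K,1)}$, and, for each $g : K_1 \to K_2$, the single interchange relation
\[
    (g, 1_1) \circ (1_{K_1}, 0<1) \;=\; (1_{K_2}, 0<1) \circ (g, 1_0).
\]
So it suffices to check: that the image of each generator is well defined; that identities are preserved; that the assignment is functorial separately at level $0$ and at level $1$; and that the interchange relation holds. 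The first three points are routine and follow the pattern of the proof that $\omega^{x,y}_{(X,(\mu_n)_{n\geq 0})}$ is a functor; the interchange relation is the only substantive point.

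For well-definedness, for $(g, 1_0)$ and $(g, 1_1)$ it is inherited from the already-proved well-definedness of $\omega^{f(x),f(y)}_{(Y,(\nu_n)_{n\geq 0})}(g)$ and $\omega^{x,y}_{(X,(\mu_n)_{n\geq 0})}(g)$ respectively, and for $(1_K, 0<1)$ one argues exactly as before: every map in the displayed lifting problem is object-fibered, so by Proposition \ref{prop_fib_of_htpy_is_htpy} the induced left homotopy $\eta_K$ restricts to a fibre $\eta_K^{f(x),f(y)}$, while any two choices of $\eta_K$ are related by a globular $2$-homotopy by Theorem \ref{thm_h1_liftprobs_htpic_2globhptic}, so by Corollary \ref{corr_h1_lift_mapssi_h1c_ident_htpies} their images under $h_1$, whiskered by $h_1((\tau_K^*)^{f(x),f(y)})$ and $h_1(\mathcal{I}^{f,X_{Sp(n)}}_{x,y})$, coincide. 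Functoriality at levels $0$ and $1$ and preservation of identities is then immediate by whiskering: $\omega^{x,y}_f(g, 1_0)$ is the precomposition of $\omega^{f(x),f(y)}_{(Y,(\nu_n)_{n\geq 0})}(g)$ with the fixed functor $h_1(f^{x,y}_{Sp(n)})$, and $\omega^{x,y}_f(g, 1_1)$ is the postcomposition of $\omega^{x,y}_{(X,(\mu_n)_{n\geq 0})}(g)$ with the fixed functor $h_1(f^{x,y}_1)$; pre- and postcomposition with a fixed functor are themselves functors on functor categories, so each assignment is functorial in $g$.

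The main work is the interchange relation. I would first observe, using naturality of $f$ (applied to $\tau_{K_i}$, $\iota_{K_i}$ and $g$), the fact that $g$ is a map of simplicial composition diagrams (so $g^*$ commutes with the maps $\iota^*$ and $\tau^*$ of $K_1$ and $K_2$), and the fact that the $\mu_{K_i}$ and $\nu_{K_i}$ are sections of the relevant maps $\iota^*$, that the four object-fibered maps
\[
    \nu_{K_1}\circ f_{Sp(n)},\quad f_{K_1}\circ\mu_{K_1},\quad g^*_Y\circ\nu_{K_2}\circ f_{Sp(n)},\quad f_{K_1}\circ g^*_X\circ\mu_{K_2} = g^*_Y\circ f_{K_2}\circ\mu_{K_2}
\]
are all solutions of one and the same lifting problem, namely lifting $f_{Sp(n)}:X_{Sp(n)}\to Y_{Sp(n)}$ along the trivial fibration $Y_{K_1}\to Y_{Sp(n)}$ induced by $\iota_{K_1}$. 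After whiskering by $h_1((\tau^*_{K_1})^{f(x),f(y)})$ and restricting to fibres, each of the four generator-images appearing in the relation is the $h_1$-image of an induced left homotopy over $Y_{Sp(n)}$ between two consecutive entries of this list: for $\omega^{x,y}_f(g, 1_1)$ one rewrites $h_1(f^{x,y}_1)\circ\omega^{x,y}_{(X,(\mu_n)_{n\geq 0})}(g)$, via $f_1\circ\tau^*_{K_1} = \tau^*_{K_1}\circ f_{K_1}$, as coming from the homotopy $f_{K_1}\circ\beta_g$ valued in $Y_{K_1}$, and for $\omega^{x,y}_f(1_{K_2},0<1)$ one rewrites it, via $\tau^*_{K_2}\circ g^*_Y = \tau^*_{K_1}$, as coming from $g^*_Y\circ\eta_{K_2}$ valued in $Y_{K_1}$. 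By Definition \ref{defn_h1_htpy_comp} and Proposition \ref{prop_h1_comp_htpies_is_natiso_comp}, each side of the interchange relation is then the $h_1$-image (suitably whiskered and restricted to fibres) of a left homotopy over $Y_{Sp(n)}$, taken with respect to the cylinder $X_{Sp(n)}\times N(I[1])$, from $\nu_{K_1}\circ f_{Sp(n)}$ to $f_{K_1}\circ g^*_X\circ\mu_{K_2}$ --- two solutions of the common lifting problem above. Theorem \ref{thm_h1_liftprobs_htpic_2globhptic} supplies a globular $2$-homotopy between any two such left homotopies, so Corollary \ref{corr_h1_lift_mapssi_h1c_ident_htpies} forces the two sides to agree, completing the verification.

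I expect the main obstacle to be precisely this bookkeeping in the final step: keeping track of the several naturality squares for $f$, the commutation relations defining a map of simplicial composition diagrams, and the section identities for the $\mu_{K_i}$ and $\nu_{K_i}$, all needed to recognize both composites as left homotopies between the same pair of solutions to a single lifting problem over $Y_{Sp(n)}$. Once that normalization is in place, the identification of the two sides is automatic from the homotopy-theoretic results of the preceding section.
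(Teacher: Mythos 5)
Your proposal is correct and follows essentially the same route as the paper: reduce functoriality to the interchange relation $(g,1_1)(1_{K_1},0<1)=(1_{K_2},0<1)(g,1_0)$, rewrite both sides (using naturality of $f$ and the commutation of $g^*$ with the $\tau^*$ and $\iota^*$ maps) as $h_1$-images of composites of induced left homotopies valued in $Y_{K_1}$, and then observe that both composites are left homotopies between the same two solutions of the lifting of $f_{Sp(n)}$ along the trivial fibration $\iota_{K_1}^*:Y_{K_1}\to Y_{Sp(n)}$, so Theorem \ref{thm_h1_liftprobs_htpic_2globhptic} and Corollary \ref{corr_h1_lift_mapssi_h1c_ident_htpies} identify them. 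Your write-up in fact makes explicit a few bookkeeping points (the common lifting problem and the generator-and-relations presentation of $\textbf{SCD}_n\times[1]$) that the paper's proof leaves implicit.
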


\begin{proof}
    Functoriality on the subcategory $\textbf{SCD}_n \times \{0, 1\}$ is clear. We need only prove that, for a map $g : K_1 \rightarrow K_2$,
    $$
        \omega^{x, y}_f(g, 1_1) \omega^{x, y}_f(1_{K_1}, 0 < 1) = \omega^{x, y}_f(1_{K_2}, 0 < 1) \omega^{x, y}_f(g, 1_0).
    $$
    Expanding out terms, we have that
    \begin{align*}
        \omega^{x, y}_f(g, 1_0) &= h_1 \Big((\tau_{K_1}^*)^{f(x), f(y)} \circ \alpha_g^{f(x), f(y)} \circ (f_{Sp(n)} \times 1_{N(I[1])})^{f(x), f(y)} \circ \mathcal{I}^{f, X_{Sp(n)} \times N(I[1])}_{x, y} \Big) \\
        \omega^{x, y}_f(1_{K_1}, 0 < 1) &= h_1 \Big((\tau_{K_1}^*)^{f(x), f(y)} \circ \eta_{K_1}^{f(x), f(y)} \circ \mathcal{I}^{f, X_{Sp(n)} \times N(I[1])}_{x, y} \Big)\\
        \omega^{x, y}_f(1_{K_2}, 0 < 1) &= h_1 \Big((\tau_{K_2}^*)^{f(x), f(y)} \circ \eta_{K_2}^{f(x), f(y)} \circ \mathcal{I}^{f, X_{Sp(n)} \times N(I[1])}_{x, y} \Big)\\
        \omega^{x, y}_f(g, 1_1) &= h_1 \Big(f_1^{f(x), f(y)}  \circ (\tau_{K_1}^*)^{f(x), f(y)} \circ \beta_g^{f(x), f(y)} \circ \mathcal{I}^{f, X_{Sp(n)} \times N(I[1])}_{x, y} \Big)
    \end{align*}
    for induced left homotopies $\alpha_g$ from $\nu_{K_1}$ to $\nu_{K_2}'$ and $\beta_g$ from $\mu_{K_1}$ to $\mu_{K_2}'$.

    We are able to produce the two compositions $(f^{f(x), f(y)}_{K_1} \circ \beta^{f(x), f(y)}_g) (\eta_{K_1}^{f(x), f(y)})$ and $((g^*)^{f(x), f(y)} \circ \eta_{K_2}^{f(x), f(y)}) \Big(\alpha^{f(x), f(y)}_g \circ (f^{f(x), f(y)}_{Sp(n)} \times 1_{N(I[1])})\Big)$, which yield the same natural isomorphisms as before. As these compositions are left homotopies induced between the same solutions to the same lifting problem, the equality holds.
\end{proof}

\begin{definition}
    Let $f : (X, (\mu_n)_{n \geq 0}) \rightarrow (Y, (\nu_n)_{n \geq 0})$ be a map in $\textbf{SeSp}_2^{comp}$. Let $x_0, \cdots, x_n \in (X_{0, 0})_0$. Then define the map
    $$
        \phi^{x_0, \cdots, x_n}_f : \textbf{Fun}\Big( h_1(X_{Sp(n)}^{x_0, x_n}), h_1(Y(x_0, x_n)) \Big) \rightarrow \textbf{Fun} \Big( \prod_{i = 1}^n h_1(X(x_{i-1}, x_i)), h_1(Y(x_0, x_n)) \Big)
    $$
    to be precomposition with the inclusion
    $$
        F_{x_0, \cdots, x_n} : \prod_{i = 1}^n h_1(X(x_{i-1}, x_i)) \cong h_1(\prod_{i = 1}^n X(x_{i-1}, x_i)) \hookrightarrow h_1(X_{Sp(n)}^{x_0, x_n}).
    $$
    Write $\Phi^{x_0, \cdots, x_n}_f := \phi^{x_0, \cdots, x_n}_f \circ \omega^{x_0, x_n}_f$.
\end{definition}

Henceforth, for a map $f : (X, (\mu_n)_{n \geq 0}) \rightarrow (Y, (\nu_n)_{n \geq 0})$, write $\bullet^{x_0, \cdots, x_n}_A$ or $\bullet^{x_0, \cdots, x_n}_{K, A}$ for $A \in \{X, Y\}$ to distinguish the composition operations in each of these spaces.

The `operadic' flavor of $\Phi^{x_0, \cdots, x_n}_{(X, (\mu_n)_{n \geq 0})}$ is admittedly somewhat diluted in the following lemma concerning $\Phi^{x_0, \cdots, x_n}_f$. Future work will include seeking out a perhaps more natural category than $\textbf{SCD}_n \times [1]$, which appears too `coarse' in the following results to lend itself to such an interpretation. Nonetheless, some operadic color still shines through, at least enough for us to prove what we need:

\begin{lemma} \label{lemma_h2_operadic_funct}
    Let $f : (X, (\mu_n)_{n \geq 0}) \rightarrow (Y, (\nu_n)_{n \geq 0})$ be a map in $\textbf{SeSp}_2^{comp}$. Let $n > 0$ and choose integers $k_1, \cdots, k_n \in \mathbb{Z}_{\geq 0}$ and a nested sequence of elements $((x^i_j)_{j = 0}^{k_i})_{i = 1}^n$ of $(X_{0, 0})_0$ such that $x^i_{k_i} = x^{i+1}_0$ for $i < n$. Let $(x_0, \cdots, x_r)$ be the flattened version of the sequence with all $x^i_{k_i}$ removed for $i < n$. Set $K$ to be the simplicial composition diagram
    $$
        K = (K_1 \sqcup_{\Delta[0]} \cdots \sqcup_{\Delta[0]} K_n) \sqcup_{Sp(n)} K_0
    $$
    such that $K_i$ has arity $k_i$ and $k_0 = n$. Then the diagram
    \[\begin{tikzcd}
    	{\bullet_{K, Y}^{f(x_0), \cdots, f(x_r)} \circ h_1(f^{x_0, \cdots, x_r}_{Sp(r)})} \\
    	\\
    	& {\bullet_{K_0, Y}^{f(x^1_0), \cdots, f(x^n_0), f(x^n_{k_n})} \circ h_1(f^{x^1_0, \cdots, x^n_0, x^n_{k_n}}_{Sp(n)}) \circ \prod\limits_{i = 1}^n \bullet_{K_0, X}^{x^i_0, \cdots, x^i_{k_i}}} \\
    	\\
    	{h_1(f^{x_0, x_r}_1) \circ \bullet_{K, X}^{x_0, \cdots, x_r}}
    	\arrow["{\Phi^{x^1_0, \cdots, x^n_0, x^n_{k_n}}_f(1_{K_0}, 0 < 1) \circ \prod\limits_{i = 1}^n 1_{\bullet_{K_i, X}^{x^i_0, \cdots, x^i_{k_i}}}}", from=3-2, to=5-1]
    	\arrow["{1_{\bullet_{K_0, Y}^{f(x^1_0), \cdots, f(x^n_0), f(x^n_{k_n})}} \circ \prod\limits_{i = 1}^n \Phi^{x^i_0, \cdots, x^i_{k_i}}_f(1_{K_i}, 0 < 1)}", from=1-1, to=3-2]
    	\arrow["{\Phi^{x_0, \cdots, x_r}_f(1_K, 0 < 1)}"', from=1-1, to=5-1]
    \end{tikzcd}\]
    commutes.
\end{lemma}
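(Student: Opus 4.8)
The plan is to mirror the proof of Lemma \ref{lemma_h2_psi_operadic}, reducing the asserted commutativity of the triangle to an identification of two left homotopies via Corollary \ref{corr_h1_lift_mapssi_h1c_ident_htpies}. First I would unpack all three vertices and the three edges. Using Proposition \ref{prop_h2_diag_is_nested_comp} (applied in both $X$ and $Y$) together with the naturality of $f$ with respect to the Segal maps, the maps $\iota^*$ and $\tau^*$, and the degeneracies, one checks that the vertices are exactly as displayed and that the two-step composite through the middle vertex is well-typed. By definition of $\omega^{x,y}_f$, the two edges through the middle vertex are the $h_1$-images (precomposed with the inclusions $F_{\bullet}$) of $(\tau^*_{K_i})\circ\eta_{K_i}\circ\mathcal{I}^{f,-}$ and $(\tau^*_{K_0})\circ\eta_{K_0}\circ\mathcal{I}^{f,-}$, horizontally composed with suitable identity natural isomorphisms, where $\eta_{K_i}\colon X_{Sp(k_i)}\times N(I[1])\to Y_{K_i}$ and $\eta_{K_0}\colon X_{Sp(n)}\times N(I[1])\to Y_{K_0}$ are induced left homotopies from $\nu_{(-)}\circ f_{Sp(-)}$ to $f_{(-)}\circ\mu_{(-)}$; and the hypotenuse $\Phi^{x_0,\cdots,x_r}_f(1_K,0<1)$ is the $h_1$-image of $(\tau^*_K)\circ\eta_K\circ\mathcal{I}^{f,X_{Sp(r)}}$ for the analogous induced homotopy $\eta_K\colon X_{Sp(r)}\times N(I[1])\to Y_K$.

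Next I would assemble a single homotopy representing the left-hand composite. Since the hypotenuse is a single natural isomorphism induced by a left homotopy, it suffices (by Corollary \ref{corr_h1_lift_mapssi_h1c_ident_htpies}) to exhibit some left homotopy $H\colon X_{Sp(r)}\times N(I[1])\to Y_K$, fibered over $Y_{Sp(r)}$, from $\nu_K\circ f_{Sp(r)}$ to $f_K\circ\mu_K$, whose induced natural isomorphism equals the composite of the two edges. By Proposition \ref{prop_h1_comp_htpies_is_natiso_comp} the composite of the two edge natural isomorphisms is $h_1$ of a composite homotopy formed as in Definition \ref{defn_h1_htpy_comp}, so it is enough to build that composite. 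Each edge is itself a horizontal composite of natural isomorphisms; exactly as in the proof of Lemma \ref{lemma_h2_psi_operadic} — via naturality of $h_1(-)\times h_1(-)\cong h_1(-\times-)$, naturality of the inclusions $\mathcal{F}_{a_0,\cdots,a_k}$ and of the diagonal maps — I would rewrite each edge as $h_1$ applied to $(\tau^*\circ Q)^{x_0,x_r}\circ\mathcal{F}_{x_0,\cdots,x_r}$, where the first $Q$ comes from $\eta_{K_1},\cdots,\eta_{K_n}$ and the constant homotopy underlying $1_{\bullet_{K_0,Y}}$ by one application of Lemma \ref{lemma_h2_operad_of_htpies} (with $H_i=\eta_{K_i}$, $t_i=\tau^*_{K_i}$, base $X_0$), and the second $Q$ comes from $\eta_{K_0}$ together with the constant homotopies underlying the $1_{\bullet_{K_i,X}}$ by a second application. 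Composing these vertically as in Definition \ref{defn_h1_htpy_comp} yields a homotopy $H\colon X_{Sp(r)}\times N(I[1])\to Y_K$ whose restriction along $b^*$ for the inclusion $b\colon K_0\hookrightarrow K$ recovers the composite just described; since $\tau^*_0\circ b^*=\tau^*_K$, the composite of the two edges equals $h_1((\tau^*_K)\circ H\circ\mathcal{I}^{f,X_{Sp(r)}})\circ F_{x_0,\cdots,x_r}$.

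Finally I would check that $H$ and $\eta_K$ are both left homotopies over $Y_{Sp(r)}$, using the cylinder object $X_{Sp(r)}\times N(I[1])$, between the same maps $\nu_K\circ f_{Sp(r)}$ and $f_K\circ\mu_K$. The endpoints agree because the restrictions of $H$ to $\{0\}$ and $\{1\}$ decompose precisely the way Proposition \ref{prop_cd_decomp_scd} (applied to $\nu$, and to $\mu$, combined with naturality of $f$) forces $\nu_K\circ f_{Sp(r)}$ and $f_K\circ\mu_K$ to decompose; and the fibering over $Y_{Sp(r)}$ follows from Proposition \ref{prop_cd_iota_ast_trivfib} since all the relevant $\iota^*$ are pullbacks of Segal maps. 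Corollary \ref{corr_h1_lift_mapssi_h1c_ident_htpies} then gives $h_1(H)=h_1(\eta_K)$ on the relevant fibers, so the two sides of the triangle coincide, proving the lemma. I expect the main obstacle to be the bookkeeping in the middle step: matching the mixed horizontal-and-vertical composite of natural isomorphisms with a single homotopy assembled from Lemma \ref{lemma_h2_operad_of_htpies} and Definition \ref{defn_h1_htpy_comp}, while tracking the two base-point tuples $(x_i)$ and $(f(x_i))$ and the inclusions $\mathcal{I}^{f,-}$ throughout, and confirming that the assembled homotopy stays fibered over $Y_{Sp(r)}$ so that Corollary \ref{corr_h1_lift_mapssi_h1c_ident_htpies} applies.
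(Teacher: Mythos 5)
Your proposal is correct and follows essentially the same route as the paper's proof: expand the three natural isomorphisms via the definition of $\omega_f$, rewrite each edge (using the same manipulations as in Lemma \ref{lemma_h2_psi_operadic} with the inclusions $\mathcal{F}$ and diagonals) so that Lemma \ref{lemma_h2_operad_of_htpies} applies once per edge --- once with the $\eta_{K_i}$ and the constant homotopy on $\nu_{K_0}$, once with $\eta_{K_0}$ and the constant homotopies on the $\mu_{K_i}$ --- producing two induced left homotopies $X_{Sp(r)} \times N(I[1]) \to Y_K$, then compose them as in Definition \ref{defn_h1_htpy_comp} and conclude by Proposition \ref{prop_h1_comp_htpies_is_natiso_comp} and Corollary \ref{corr_h1_lift_mapssi_h1c_ident_htpies} that the composite agrees with $h_1(\eta_K)$. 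Your endpoint and fibering checks via Proposition \ref{prop_cd_decomp_scd}, naturality of $f$, and Proposition \ref{prop_cd_iota_ast_trivfib} are exactly the (partly implicit) verifications the paper relies on.
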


\begin{proof}
    The proof is similar to that of Lemma \ref{lemma_h2_psi_operadic}. Expanding out all the terms, we obtain the expressions
    \begin{align*}
        R &:= h_1((\tau_K^*)^{f(x_0), f(x_r)}) \circ h_1(\eta_K^{f(x_0), f(x_r)}) \circ h_1(\mathcal{I}^{f, X}_{x_0, x_r}) \circ F_{x_0, \cdots, x_r} \\
        S &:= \bullet_{K_0, Y}^{f(x^1_0), \cdots, f(x^n_0), f(x^n_{k_n})} \circ \prod_{i = 1}^n h_1((\tau_{K_i}^*)^{f(x^i_0), f(x^i_{k_i})}) \circ h_1(\eta_{K_i}^{f(x^i_0), f(x^i_{k_i})}) \circ h_1(\mathcal{I}^{f, X}_{x^i_0, x^i_{k_i}}) \circ F_{x^i_0, \cdots, x^i_{k_i}} \\
        T &:= h_1((\tau_{K_0}^*)^{f(x_0), f(x_r)}) \circ h_1(\eta_K^{f(x_0), f(x_r)})  \circ h_1(\mathcal{I}^{f, X}_{x_0, x_r}) \circ F_{x^1_0, \cdots, x^n_0, x^n_{k_n}} \circ \prod_{i = 1}^n \bullet_{K_0, Y}^{f(x^i_0), \cdots, f(x^i_{k_i})}
    \end{align*}
    If we can show both $S$ and $T$ to be of the form 
    $$
        h_1((\tau_K^*)^{f(x_0), f(x_r)}) \circ h_1(Q_A) \circ h_1(\mathcal{I}^{f, X}_{x_0, x_r}) \circ F_{x_0, \cdots, x_r}
    $$
    for some suitable induced left homotopies $Q_A$ where $A \in \{S, T\}$, then we will be done.

    Set $D_k : X_{Sp(r)} \times N(I[1]) \rightarrow X_{Sp(r)} \times N(I[1])^k$ to the diagonal. For $A \in \{X, Y\}$, define the map
    $$
        \mathcal{F}_{a_0, \cdots, a_k}^A : \prod_{i = 1}^k A(a_{i-1}, a_i) \rightarrow A_{Sp(k)}^{a_0, a_k}
    $$
    to be the evident inclusion. Define $I_{K, X}$ to be the identity homotopy from $\mu_K$ to itself and $I_{K, Y}$ to be as such for $\nu_K$.

    Similarly to the proof of Lemma \ref{lemma_h2_psi_operadic}, we have that $S = h_1((\tau_{K_0}^*)^{f(x_0), f(x_r)}) \circ h_1(\mathcal{S})$, where $\mathcal{S}$ is the map
    $$
        (I_{K_0, Y}^{f(x_0), f(x_r)}) \circ \mathcal{F}^Y_{f(x^1_0), \cdots, f(x^n_0), f(x^n_{k_n})} \circ \Bigg( \Big( \prod_{i = 1}^n Q_i \circ D_n^{x_0, \cdots, x_r} \Big) \times 1_{N(I[1])} \Bigg) \circ D_2^{x_0, \cdots, x_r}
    $$
    where
    $$
        Q_i := (\tau_{K_i}^* \circ \eta_{K_i})^{f(x^i_0), f(x^i_{k_i})} \circ \mathcal{I}^{f, X_{Sp(k_i)} \times N(I[1])}_{x^i_0, x^i_{k_i}} \circ \mathcal{F}^X_{x^i_0, \cdots, x^i_{k_i}}
    $$
    and that $T = h_1((\tau_{K_0}^*)^{f(x_0), f(x_r)}) \circ h_1(\mathcal{T})$, where $\mathcal{T}$ is the map
    $$
        \eta_K^{f(x_0), f(x_r)} \circ \mathcal{I}_{x_0, x_r}^{f, X_{Sp(n)} \times N(I[1])} \circ \mathcal{F}_{x^1_0, \cdots, x^n_0, x^n_{k_n}}^X \circ \Bigg( \Big( \prod_{i = 1}^n W_i \circ D_n^{x_0, \cdots, x_r} \Big) \times 1_{N(I[1])} \Bigg) \circ D_2^{x_0, \cdots, x_r}
    $$
    where
    $$
        W_i := (\tau_{K_i}^*)^{x^i_0, x^i_{k_i}} \circ I_{K_i, X}^{x^i_0, x^i_{k_i}} \circ \mathcal{F}^X_{x^i_0, \cdots, x^i_{k_i}}.
    $$
    By similar manipulations to those in Lemma \ref{lemma_h2_psi_operadic}, we can reduce each of these further such that $\mathcal{S} = Q_S^{f(x_0), f(x_r)} \circ \mathcal{I}^{f, X}_{x_0, x_r} \circ \mathcal{F}^X_{x_0, \cdots, x_r}$ where
    $$
        Q_S := I_{K_0, Y} \circ \Bigg( \Big( \prod_{i = 1}^n (\tau_{K_i}^* \circ \eta_{K_i}) \circ D_n \Big) \times 1_{N(I[1])} \Bigg) \circ D_2
    $$
    and such that $\mathcal{T} = Q_T^{f(x_0), f(x_r)} \circ \mathcal{I}^{f, X}_{x_0, x_r} \circ \mathcal{F}^X_{x_0, \cdots, x_r}$ where
    $$
        Q_T := \eta_K \circ \Bigg( \Big( \prod_{i = 1}^n (\tau_{K_i}^* \circ I_{K_i, X}) \circ D_n \Big) \times 1_{N(I[1])} \Bigg) \circ D_2.
    $$
    Both of these are of suitable form for application of Lemma \ref{lemma_h2_operad_of_htpies}, the resulting homotopies from which are both of the form $X_{Sp(r)} \times N(I[1]) \rightarrow Y_K$ and can be composed to complete the proof.
\end{proof}

Armed with these results, we are able to finally declare our definition of $h_2(f)$ and prove its correctness.

\begin{definition} \label{defn_h2_h2(f)}
    Let $f : (X, (\mu_n)_{n \geq 0}) \rightarrow (Y, (\nu_n)_{n \geq 0})$ be a morphism in $\textbf{SeSp}_2^{comp}$. Then define
    $$
        h_2(f) : h_2(X, (\mu_n)_{n \geq 0}) \rightarrow h_2(Y, (\nu_n)_{n \geq 0})
    $$
    to be the pseudofunctor defined such that:
    \begin{enumerate}
        \item For every $x \in (X_{0, 0})_0$, $h_2(f)(x) = (f_{0, 0})_0(x)$;

        \item For every $x, y \in (X_{0, 0})_0$, $h_2(f)_{x, y} = h_1(f_1^{x, y}) : h_1(X(x, y)) \rightarrow h_1(Y(f(x), f(y)))$;

        \item For every $n \in \mathbb{Z}_{> 0}$ and $x_0, \cdots, x_n \in (X_{0, 0})_0$, the natural isomorphism
        $$
            \pi_{x_0, \cdots, x_n} := \Phi^{x_0, \cdots, x_n}_f(1_{\Delta[n]}, 0 < 1);
        $$
        \item For every $x \in (X_{0, 0})_0$, the natural isomorphism $\pi_x$ is the identity.
    \end{enumerate}
\end{definition}

\begin{theorem}
    Let $f : (X, (\mu_n)_{n \geq 0}) \rightarrow (Y, (\nu_n)_{n \geq 0})$ be a morphism in $\textbf{SeSp}_2^{comp}$. Then $h_2(f)$ is a pseudofunctor.
\end{theorem}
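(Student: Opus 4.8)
The plan is to verify the two coherence axioms of Definition~\ref{defn_ubicat_pseudofunc} for the data assembled in Definition~\ref{defn_h2_h2(f)}, reusing the operadic machinery of $\Phi^{x_0, \cdots, x_n}_f$ and $\omega^{x, y}_f$ in exactly the way the proof that $h_2(X, (\mu_n)_{n \geq 0})$ is an unbiased bicategory reused $\Phi^{x_0, \cdots, x_n}_{(X, (\mu_n)_{n \geq 0})}$. The second axiom (the $\iota$-triangle) is immediate: the unitors of $h_2(X)$ and $h_2(Y)$ and all the $\pi_x$ are identities, and for arity $n = 1$ one has $Sp(1) = \Delta[1]$, $\tau_{\Delta[1]} = 1_{\Delta[1]}$, $\mu_1 = 1_{X_1}$ and $\nu_1 = 1_{Y_1}$, so the lifting problem defining $\eta_{\Delta[1]}$ has coinciding source and target $f_1$; taking $\eta_{\Delta[1]}$ to be the constant left homotopy shows $\pi_{(f)} = \Phi^{x, y}_f(1_{\Delta[1]}, 0 < 1)$ is the identity natural isomorphism, and the triangle commutes.

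For the first (associativity) axiom, fix $n > 0$, integers $k_1, \cdots, k_n \geq 0$ with $r = \sum_i k_i$, the nested tuple $((x^i_j)_{j=0}^{k_i})_{i=1}^n$ and morphisms $f^i_j$, and let $(x_0, \cdots, x_r)$ be the flattened tuple. Put $K_i := \Delta[k_i]$, $K_0 := \Delta[n]$, $K := (\Delta[k_1] \sqcup_{\Delta[0]} \cdots \sqcup_{\Delta[0]} \Delta[k_n]) \sqcup_{Sp(n)} \Delta[n]$, and let $f_{k_1, \cdots, k_n} : K \to \Delta[r]$ be the map in $\textbf{SCD}_r$ used to define the associators. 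In $\textbf{SCD}_r \times [1]$ there is then a commuting square of morphisms, since composition there is componentwise: $(f_{k_1, \cdots, k_n}, 1_1) \circ (1_K, 0 < 1) = (f_{k_1, \cdots, k_n}, 0 < 1) = (1_{\Delta[r]}, 0 < 1) \circ (f_{k_1, \cdots, k_n}, 1_0)$. Applying the functor $\Phi^{x_0, \cdots, x_r}_f = \phi^{x_0, \cdots, x_r}_f \circ \omega^{x_0, x_r}_f$ (precomposition $\phi$ preserves functoriality, and $\omega^{x_0, x_r}_f$ is a functor) turns this into a commuting square of natural isomorphisms; the remainder of the argument is to match its four sides with the four edges of the pseudofunctor coherence diagram, after whiskering everything over the canonical inclusion $\prod_i h_1(X(x_{i-1}, x_i)) \cong h_1(\prod_i X(x_{i-1}, x_i)) \hookrightarrow h_1(X^{x_0, x_r}_{Sp(r)})$ (using Lemma~\ref{lemma_h1_prods} and the Segal maps).

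The identifications I would then check are: (i) $\Phi^{x_0, \cdots, x_r}_f(f_{k_1, \cdots, k_n}, 1_0)$ is, by the definition of $\omega^{x, y}_f$ on $\textbf{SCD}_r \times \{0\}$, the transformation ``apply $f$ levelwise, then perform the $K$-nested composition in $Y$'', which once whiskered is precisely $\gamma_D$ (the $h_2(Y)$-associator $\Phi^{f(x_0), \cdots, f(x_r)}_{(Y, (\nu_n)_{n \geq 0})}(f_{k_1, \cdots, k_n})$ further whiskered by $\prod_i h_1(f_1^{x_{i-1}, x_i})$); (ii) $\Phi^{x_0, \cdots, x_r}_f(1_{\Delta[r]}, 0 < 1) = \pi_{x_0, \cdots, x_r}$, i.e. $\pi_{(f^1_1, \cdots, f^n_{k_n})}$; (iii) $\Phi^{x_0, \cdots, x_r}_f(f_{k_1, \cdots, k_n}, 1_1)$ is ``perform the $K$-nested composition in $X$, then apply $f$'', namely $h_1(f_1^{x^1_0, x^n_{k_n}})$ whiskered onto the $h_2(X)$-associator $\Phi^{x_0, \cdots, x_r}_{(X, (\mu_n)_{n \geq 0})}(f_{k_1, \cdots, k_n})$, which is exactly $P_{x^1_0, x^n_{k_n}}(\gamma_{D'})$; and (iv) by Lemma~\ref{lemma_h2_operadic_funct} with $K_i = \Delta[k_i]$ and $K_0 = \Delta[n]$, $\Phi^{x_0, \cdots, x_r}_f(1_K, 0 < 1)$ factors as $\pi_{(\bigcirc_{j} f^1_j, \cdots, \bigcirc_{j} f^n_j)} \circ \big( \bigcirc_{i=1}^n \pi_{(f^i_1, \cdots, f^i_{k_i})} \big)$, i.e. the composite down the left-hand side of the coherence diagram. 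Substituting (i)--(iv) into the commuting square coming from $\textbf{SCD}_r \times [1]$ yields $\pi_{(f^1_1, \cdots, f^n_{k_n})} \circ \gamma_D = P_{x^1_0, x^n_{k_n}}(\gamma_{D'}) \circ \pi_{(\bigcirc_{j} f^1_j, \cdots, \bigcirc_{j} f^n_j)} \circ \big( \bigcirc_{i} \pi_{(f^i_1, \cdots, f^i_{k_i})} \big)$, which is the required coherence identity.

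The main obstacle is step (iv) together with the whiskering bookkeeping in (i)--(iii): one must confirm that the inclusions $\prod_i h_1(X(x_{i-1}, x_i)) \hookrightarrow h_1(X^{x_0, x_r}_{Sp(r)})$ are compatible with the maps $f^{x, y}_{Sp(k)}$, with the nesting of $\mu_\bullet$/$\nu_\bullet$ encoded by $K$, and with the natural transformations $(-)^{a_0, \cdots, a_k} \Rightarrow (-)^{a_0, a_k}$, so that the $\omega^{x_0, x_r}_f$-images coincide on the nose with the edges of the coherence diagram rather than merely up to further conjugation. This is the same compatibility exploited inside the proofs of Lemma~\ref{lemma_h2_psi_operadic} and Lemma~\ref{lemma_h2_operadic_funct}, so no genuinely new difficulty arises, only an intricate unwinding of definitions; once it is in place, the proof is the short formal argument above.
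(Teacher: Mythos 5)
Your proposal is correct and takes essentially the same route as the paper: the unit axiom is dispatched because all the relevant isomorphisms are identities, and the associativity axiom is exactly the commutativity of the diagram obtained by applying the functor $\Phi^{x_0,\cdots,x_r}_f$ to the square $(f_{k_1,\cdots,k_n},1_1)\circ(1_K,0<1)=(1_{\Delta[r]},0<1)\circ(f_{k_1,\cdots,k_n},1_0)$ in $\textbf{SCD}_r\times[1]$, combined with Lemma~\ref{lemma_h2_operadic_funct} for the factorization of $\Phi^{x_0,\cdots,x_r}_f(1_K,0<1)$. Your edge identifications (i)--(iv), including the whiskering compatibility of the inclusions $F_{x_0,\cdots,x_r}$ with $f$, are precisely the unwinding the paper performs implicitly when it writes the coherence diagram directly in terms of $\Phi_f$ and the composition functors $\bullet$.
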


\begin{proof}
    Let $n \in \mathbb{Z}_{> 0}$ and $k_1, \cdots, k_n \in \mathbb{Z}_{\geq 0}$, with $r = \sum_{i = 1}^n k_i$. Consider any $n$-tuple of tuples of elements of $(X_{0, 0})_0$
    $$
        Y := ((x_0^1, x_1^1, \cdots, x_{k_1}^1), \cdots, (x_0^n, \cdots, x_{k_n}^n))
    $$
    such that $x_{k_i}^i = x_0^{i+1}$ for every $i < n$. Let $(x_0, \cdots, x_r)$ be the flattened tuple after removing each $x_{k_i}^i$ for $i < n$.
    
    Consider moreover the composition diagram
    $$
        K := (\Delta[k_1] \sqcup_{\Delta[0]} \cdots \sqcup_{\Delta[0]} \Delta[k_n]) \sqcup_{Sp(n)} \Delta[n].
    $$
    We are tasked with showing that the diagram
    \[\begin{tikzcd}
    	{\bullet^{f(x_0), \cdots, f(x_r)}_{K, Y} \circ h_1(f_{Sp(r)}^{x_0, \cdots, x_r})} & {\bullet_Y^{x_0, \cdots, x_r} \circ h_1(f_{Sp(r)}^{x_0, \cdots, x_r})} \\
    	\\
    	{\bullet_Y^{f(x^1_0), \cdots, f(x^n_0), f(x^n_{k_n})} \circ h_1(f_{Sp(n)}^{x^1_0, \cdots, x^n_0, x^n_{k_n}}) \circ \prod_{i = 1}^n \bullet_X^{x^i_0, \cdots, x^i_{k_i}}} \\
    	\\
    	{h_1(f_1^{x_0,x_r}) \circ \bullet^{x_0, \cdots, x_r}_{K, X}} & {h_1(f_1^{x_0, x_r}) \circ \bullet_X^{x_0, \cdots, x_r}}
    	\arrow["{\Phi_f^{x_0, \cdots, x_r}(f_{k_1, \cdots, k_n}, 1_0)}", Rightarrow, from=1-1, to=1-2]
    	\arrow["{1_{\bullet_Y^{f(x^1_0), \cdots, f(x^n_0), f(x^n_{k_n})}} \circ \prod_{i = 1}^n \Phi^{x^i_0, \cdots, x^i_{k_i}}_f(1_{\Delta[k_i]}, 0 < 1)}", Rightarrow, from=1-1, to=3-1]
    	\arrow["{\Phi^{x^1_0, \cdots, x^n_0, x^n_{k_n}}_f(1_{\Delta[n]}, 0 < 1) \circ \prod_{i = 1}^n 1_{\bullet_{X}^{x^i_0, \cdots, x^i_{k_i}}}}", Rightarrow, from=3-1, to=5-1]
    	\arrow["{\Phi_f^{x_0, \cdots, x_r}(f_{k_1, \cdots, k_n}, 1_1)}"', Rightarrow, from=5-1, to=5-2]
    	\arrow["{\Phi_f^{x_0, \cdots, x_r}(1_{\Delta[r]}, 0 < 1)}"{description}, Rightarrow, from=1-2, to=5-2]
    \end{tikzcd}\]
    commutes. This, however, must hold, by Lemma \ref{lemma_h2_operadic_funct} and functoriality of $\Phi^{x_0, \cdots, x_r}_f$. The other commuting diagram is trivial, as all involved natural isomorphisms are identities.
\end{proof}

We now need to show that $h_2$, given the behavior on objects from Definition \ref{defn_h2_h2} and morphisms from Definition \ref{defn_h2_h2(f)}, defines a valid functor into $\textbf{UBicat}$:

\begin{theorem}
    Let
    $$
        (X, (\mu_n)_{n \geq 0}) \xrightarrow{f} (Y, (\nu_n)_{n \geq 0}) \xrightarrow{g} (Z, (\omega_n)_{n \geq 0})
    $$
    be a chain of morphisms in $\textbf{SeSp}^{comp}_2$. Then $h_2(g \circ f) = h_2(g) \circ h_2(f)$.
\end{theorem}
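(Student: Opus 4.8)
The plan is to verify the equality of unbiased pseudofunctors component by component, using Definition~\ref{defn_ubicat_func_comp} for the composite $h_2(g) \circ h_2(f)$ and Definition~\ref{defn_h2_h2(f)} for $h_2(g \circ f)$. On objects and hom-functors the equality is immediate: $h_2(g \circ f)(x) = ((g \circ f)_{0,0})_0(x) = (g_{0,0})_0((f_{0,0})_0(x))$ agrees with the object map of $h_2(g) \circ h_2(f)$, and similarly $h_2(g \circ f)_{x,y} = h_1((g \circ f)_1^{x,y}) = h_1(g_1^{f(x),f(y)} \circ f_1^{x,y}) = h_1(g_1^{f(x),f(y)}) \circ h_1(f_1^{x,y}) = h_2(g)_{f(x),f(y)} \circ h_2(f)_{x,y}$, using functoriality of $h_1$ and of the fiber functor $(-)^{x,y}$. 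The unit natural isomorphisms $\pi_x$ are identities on both sides, so that component is trivial.

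The substance is in comparing the natural isomorphisms $\pi_{x_0,\cdots,x_n}$. For $h_2(g \circ f)$ this is $\Phi^{x_0,\cdots,x_n}_{g \circ f}(1_{\Delta[n]}, 0 < 1)$, which after unwinding $\omega^{x_0,x_n}_{g \circ f}$ is induced by a left homotopy $\eta^{g \circ f}_{\Delta[n]}$ from $\omega_{\Delta[n]} \circ (g \circ f)_{Sp(n)}$ to $(g \circ f)_{\Delta[n]} \circ \mu_{\Delta[n]}$, i.e. from $\omega_n \circ g_{Sp(n)} \circ f_{Sp(n)}$ to $g_{\Delta[n]} \circ f_{\Delta[n]} \circ \mu_n$. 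For the composite pseudofunctor, Definition~\ref{defn_ubicat_func_comp} gives $\psi_{x_0,\cdots,x_n}$ as the vertical composite of $h_2(g)_{f(x_0),f(x_n)}$ applied to $\pi^f_{x_0,\cdots,x_n}$ and of $\pi^g_{f(x_0),\cdots,f(x_n)}$ whiskered by the product of the $h_2(f)_{x_{i-1},x_i}$. I would expand both sides into the form $h_1((\tau^*)^{\bullet} \circ (-) \circ (\text{inclusions}))$ for explicit left homotopies: on the composite side, one splices the left homotopy $g_1 \circ \eta^f_{\Delta[n]}$ (from $g_1 \circ \nu_n \circ f_{Sp(n)}$ to $g_1 \circ f_{\Delta[n]} \circ \mu_n$) with the left homotopy $\eta^g_{\Delta[n]} \circ (f_{Sp(n)} \times 1_{N(I[1])})$ (from $\omega_n \circ g_{Sp(n)} \circ f_{Sp(n)}$ to $g_{\Delta[n]} \circ \nu_n \circ f_{Sp(n)}$), using the composition of left homotopies from Definition~\ref{defn_h1_htpy_comp} and the fact (Proposition~\ref{prop_h1_comp_htpies_is_natiso_comp}) that $h_1$ of a composite homotopy is the vertical composite of natural isomorphisms. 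The key observation is that this spliced homotopy and $\eta^{g \circ f}_{\Delta[n]}$ are both induced left homotopies between the \emph{same} two solutions $\omega_n \circ g_{Sp(n)} \circ f_{Sp(n)}$ and $g_{\Delta[n]} \circ f_{\Delta[n]} \circ \mu_n$ of the same lifting problem (the one with trivial fibration $\iota_{\Delta[n]}^* : Z_{\Delta[n]} \to Z_{Sp(n)}$), so by Corollary~\ref{corr_h1_lift_mapssi_h1c_ident_htpies} they are sent by $h_1$ to the same natural isomorphism. Taking fibers over $(X_0)^{n+1}$ throughout and tracking the inclusion maps $\mathcal{I}$ and $F_{x_0,\cdots,x_n}$ — which behave functorially and compatibly with $f$ and $g$ by the object-fibered transport proposition preceding Definition of $\omega^{x,y}_f$ — completes the identification.

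The main obstacle I anticipate is purely bookkeeping: correctly matching up the two fiber/inclusion bureaucracies. On the $h_2(g \circ f)$ side one has fibers and inclusions indexed by elements of $(X_{0,0})_0$, whereas $h_2(g) \circ h_2(f)$ produces intermediate data fibered over $(Y_0)^{n+1}$ via $f$ and then over $(Z_0)^{n+1}$ via $g$; one must check that $\mathcal{I}^{g \circ f, X}_{x_0,\cdots,x_n}$ factors as $\mathcal{I}^{g, Y}_{f(x_0),\cdots,f(x_n)} \circ (\text{the } f\text{-image of }\mathcal{I}^{f,X})$ and that the maps $\tau^*$, $F_{x_0,\cdots,x_n}$ and $\mathcal{F}$ are all transported coherently. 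This is exactly the content of the object-fibered transport result stated just before the definition of $\omega^{x,y}_f$, so no genuinely new idea is needed; the risk is only in the length of the verification. I would therefore organize the argument as: (i) objects and hom-functors, (ii) reduce the coherence-isomorphism equality to an equality of two induced left homotopies into $Z_{\Delta[n]}$ (after passing to fibers), (iii) invoke Corollary~\ref{corr_h1_lift_mapssi_h1c_ident_htpies}, and (iv) note the unit components are identities on both sides, hence the two pseudofunctors agree in all their data.
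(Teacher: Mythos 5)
Your proposal is correct and follows essentially the same route as the paper's proof: after the trivial check on objects and hom-categories, both arguments expand the compositors, splice the homotopy $\eta^g$ precomposed with $f_{Sp(n)} \times 1_{N(I[1])}$ with $g$ applied to $\eta^f$, and then use Proposition~\ref{prop_h1_comp_htpies_is_natiso_comp} together with Corollary~\ref{corr_h1_lift_mapssi_h1c_ident_htpies} to identify the resulting natural isomorphism with the one induced by the homotopy for $g \circ f$, since both are induced between the same solutions of the same lifting problem. Only a small notational slip: the postcomposition should be by $g_n = g_{\Delta[n]} : Y_n \rightarrow Z_n$ (not $g_1$), exactly as in the paper's $g_n \circ \eta_n$.
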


\begin{proof}
    The equivalence is evident on the object mapping and behavior on hom-categories. For the compositors, let the isomorphisms for $g$ be labelled as $\theta$ and for $f$ be $\pi$. The ones $\psi$ for $g \circ f$ must be proven to be such that
    $$
        \psi_{x_0, \cdots, x_n} = \big( h_1(g_1^{f(x_0), f(x_n)}) \circ (\pi_{x_0, \cdots, x_n}) \big) \big( \theta_{f(x_0), \cdots, f(x_n)} \circ h_1(f_{Sp(n)}^{x_0, \cdots, x_n}) \big).
    $$
    Expanding out all of these terms gives us that
    \begin{align*}
        \pi_{x_0, \cdots, x_n} &= h_1(Y_{\langle 0, n \rangle}^{f(x_0), f(x_n)}) \circ h_1(\eta_n^{f(x_0), f(x_n)}) \circ h_1(\mathcal{I}^{f, X_{Sp(n)}}_{x_0, x_n}) \circ F^X_{x_0, \cdots, x_n} \\
        \theta_{f(x_0), \cdots, f(x_n)} &= h_1(Z_{\langle 0, n \rangle}^{gf(x_0), gf(x_n)}) \circ h_1(\kappa_n^{gf(x_0), gf(x_n)}) \circ h_1(\mathcal{I}^{g, Y_{Sp(n)}}_{f(x_0), f(x_n)}) \circ F^Y_{f(x_0), \cdots, f(x_n)} \\
        \psi_{x_0, \cdots, x_n} &= h_1(Z_{\langle 0, n \rangle}^{gf(x_0), gf(x_n)}) \circ h_1(\zeta_n^{gf(x_0), gf(x_n)}) \circ h_1(\mathcal{I}^{gf, X_{Sp(n)}}_{x_0, x_n}) \circ F^X_{x_0, \cdots, x_n}
    \end{align*}
    where $F^A_{a_0, \cdots, a_n}$ is the map $F_{a_0, \cdots, a_n}$ adjusted in the obvious way for $A \in \{X, Y\}$ and the left homotopies $\eta_n, \kappa_n$ and $\zeta_n$ are respectively from $\nu_n \circ f_{Sp(n)}$ to $f_n \circ \mu_n$, from $\omega_n \circ g_{Sp(n)}$ to $g_n \circ \nu_n$ and from $\omega_n \circ g_{Sp(n)} \circ f_{Sp(n)}$ to $g_n \circ f_n \circ \mu_n$.

    Note that $\theta_{f(x_0), \cdots, f(x_n)} \circ h_1(f_{Sp(n)}^{x_0, \cdots, x_n})$ is equal to
    $$
        h_1(Z_{\langle 0, n \rangle}^{gf(x_0), gf(x_n)}) \circ h_1((\kappa_n \circ (f_{Sp(n)} \times 1_{N(I[1])}))^{gf(x_0), gf(x_n)}) \circ h_1(\mathcal{I}^{gf, X_{Sp(n)}}_{x_0, x_n}) \circ F^X_{x_0, \cdots, x_n}
    $$
    while similarly $h_1(g_1^{f(x_0), f(x_n)}) \circ (\pi_{x_0, \cdots, x_n})$ is equal to
    $$
        h_1(g^{f(x_0), f(x_n)}_1) \circ h_1(Y_{\langle 0, n \rangle}^{f(x_0), f(x_n)}) \circ h_1(\eta_n^{f(x_0), f(x_n)}) \circ h_1(\mathcal{I}^{f, X_{Sp(n)}}_{x_0, x_n}) \circ F^X_{x_0, \cdots, x_n}
    $$
    which is then equal to
    $$
         h_1(Z_{\langle 0, n \rangle}^{gf(x_0), gf(x_n)}) \circ h_1((g_n \circ \eta_n)^{f(x_0), f(x_n)}) \circ h_1(\mathcal{I}^{f, X_{Sp(n)}}_{x_0, x_n}) \circ F^X_{x_0, \cdots, x_n}.
    $$
    All of these three cases are now of the form $h_1(Z_{\langle 0, n \rangle}^{gf(x_0), gf(x_n)}) \circ h_1(\Gamma^{f(x_0), f(x_n)}) \circ h_1(\mathcal{I}^{f, X_{Sp(n)}}_{x_0, x_n}) \circ F^X_{x_0, \cdots, x_n}$ for induced left homotopies $\Gamma$. We find that the left homotopies $g_n \circ \eta_n$ and $\kappa_n \circ (f_{Sp(n)} \times 1_{N(I[1])})$ are now readily composable, so by Proposition \ref{prop_h1_comp_htpies_is_natiso_comp} and Corollary \ref{corr_h1_lift_mapssi_h1c_ident_htpies} the equality holds.
\end{proof}

\begin{theorem}
    Let $(X, (\mu_n)_{n \geq 0})$ be an object in $\textbf{SeSp}^{comp}_2$. Then $h_2(1_X) = 1_{h_2(X, (\mu_n)_{n \geq 0})}.$
\end{theorem}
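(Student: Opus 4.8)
The plan is to verify that $h_2(1_X)$ coincides with the identity pseudofunctor $1_{h_2(X, (\mu_n)_{n \geq 0})}$ on each piece of data listed in Definition \ref{defn_ubicat_pseudofunc}, and then to conclude, since a pseudofunctor is completely determined by that data and we have already shown $h_2(1_X)$ is a pseudofunctor. On objects this is immediate from Definition \ref{defn_h2_h2(f)}: $h_2(1_X)(x) = ((1_X)_{0,0})_0(x) = x$. On hom-categories, $(1_X)_1 = 1_{X_1}$, so $(1_X)_1^{x,y} = 1_{X(x,y)}$ and hence $h_2(1_X)_{x,y} = h_1(1_{X(x,y)}) = 1_{h_1(X(x,y))} = 1_{\textbf{Hom}_{h_2(X)}(x,y)}$. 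The unit compositors $\pi_x$ are identities by Definition \ref{defn_h2_h2(f)}, in agreement with the identity pseudofunctor.

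The only step carrying content is that each compositor $\pi_{x_0, \cdots, x_n} = \Phi^{x_0, \cdots, x_n}_{1_X}(1_{\Delta[n]}, 0 < 1)$ is the identity natural isomorphism on $\bullet^{x_0, \cdots, x_n}$. First I would unwind $\Phi^{x_0, \cdots, x_n}_{1_X} = \phi^{x_0, \cdots, x_n}_{1_X} \circ \omega^{x_0, x_n}_{1_X}$ on the morphism $(1_{\Delta[n]}, 0 < 1)$ of $\textbf{SCD}_n \times [1]$. Because $1_X$ fixes every $x_i$ and its source and target carry the \emph{same} choice of horizontal compositions, the two vertices $(\Delta[n], 0)$ and $(\Delta[n], 1)$ are sent by $\omega^{x_0, x_n}_{1_X}$ to one and the same functor $h_1((\tau^*_{\Delta[n]})^{x_0, x_n} \circ \mu_n^{x_0, x_n})$, and the induced left homotopy $\eta_{\Delta[n]}$ appearing in the image of $(1_{\Delta[n]}, 0 < 1)$ is a homotopy from $\mu_n$ to $\mu_n$. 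It may therefore be taken to be the constant homotopy at $\mu_n$, and by Corollary \ref{corr_h1_lift_mapssi_h1c_ident_htpies} any other admissible choice yields the same natural isomorphism under $h_1$; using that $h_1$ preserves products (Lemma \ref{lemma_h1_prods}) and that $h_1 \circ N \cong 1_{\textbf{Cat}}$ (Proposition \ref{prop_h1_n_cancel}), $h_1$ of a constant homotopy at a map $g$ represents the identity natural isomorphism $1_{h_1(g)}$. Moreover the inclusion $\mathcal{I}^{1_X, X_{Sp(n)}}_{x_0, x_n}$ is the identity map $X_{Sp(n)}^{x_0, x_n} \hookrightarrow X_{Sp(n)}^{x_0, x_n}$, since $1_X$ fixes $x_0$ and $x_n$. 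Post-composing with $h_1((\tau^*_{\Delta[n]})^{x_0, x_n})$ and then precomposing with $F_{x_0, \cdots, x_n}$ (which is what $\phi^{x_0, \cdots, x_n}_{1_X}$ does) leaves an identity natural isomorphism, now on $h_1((\tau^*_{\Delta[n]})^{x_0, x_n} \circ \mu_n^{x_0, x_n}) \circ F_{x_0, \cdots, x_n} = \bullet^{x_0, \cdots, x_n}$. This is precisely $1_{\bullet^{x_0, \cdots, x_n}}$, the compositor of the identity pseudofunctor.

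Having matched all the data, $h_2(1_X) = 1_{h_2(X, (\mu_n)_{n \geq 0})}$. There is no real obstacle: the single point requiring attention is recognising that $\eta_{\Delta[n]}$ has coinciding endpoints and so may be chosen constant — the same device already used to establish functoriality of $\omega^{x,y}_{(X, (\mu_n)_{n \geq 0})}$ on identity morphisms — together with the choice-independence guaranteed by Corollary \ref{corr_h1_lift_mapssi_h1c_ident_htpies}.
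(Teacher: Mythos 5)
Your proposal is correct and follows essentially the same route as the paper: the object, hom-category and unit data are checked directly, and the compositors are identified as identities because the induced left homotopy $\eta_{\Delta[n]}$ has equal source and target (here $\mu_n$ to $\mu_n$), so it may be taken constant, with Corollary \ref{corr_h1_lift_mapssi_h1c_ident_htpies} guaranteeing that any other choice gives the same natural isomorphism. Your write-up is simply a more detailed unwinding of the same argument the paper gives.
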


\begin{proof}
    The behavior on objects and hom-categories is self-evident. Moreover,
    $$
        \Phi^{x_0, \cdots, x_n}_{1_X}(1_{\Delta[n]}, 0 < 1)
    $$
    is built of a left homotopy with the same domain and codomain, so can be assumed to be constant, making the natural isomorphisms trivial.
\end{proof}

We are finally able to state what we consider to be the central definition of this thesis:

\begin{definition}
    Define
    $$
        h_2 : \textbf{SeSp}^{comp}_2 \rightarrow \textbf{UBicat}
    $$
    to be the functor that sends $(X, (\mu_n)_{n \geq 0})$ to $h_2(X, (\mu_n)_{n \geq 0})$ and a morphism $f$ to $h_2(f)$.
\end{definition}

A useful consequence of functoriality is that we may finally show how all choices of horizontal compositions made in a $2$-fold Segal space $X$ ultimately have no real consequence in the homotopy bicategory:

\begin{corollary} \label{cor_h2_diffcomps_ident_fctr}
    Let $X$ be a $2$-fold Segal space. Let $(\mu_n)_{n \geq 0}$ and $(\nu_n)_{n \geq 0}$ be two choices of horizontal compositions for $X$. Then there is a pseudofunctor $h_2(X, (\mu_n)_{n \geq 0}) \rightarrow h_2(X, (\nu_n)_{n \geq 0})$ which is an isomorphism in $\textbf{UBicat}$ and acts as the identity on objects and hom-categories.
\end{corollary}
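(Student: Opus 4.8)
The plan is to obtain the desired pseudofunctor for free from the functoriality of $h_2$, exploiting the fact --- built deliberately into Definition \ref{defn_ubicat_cssp2comp} --- that a morphism in $\textbf{SeSp}_2^{comp}$ records only an underlying map of $2$-fold Segal spaces and forgets the chosen horizontal compositions on both sides. Concretely, the first step is to observe that the identity map $1_X : X \to X$ in $\textbf{SeSp}_2$ simultaneously defines a morphism $e : (X, (\mu_n)_{n \geq 0}) \to (X, (\nu_n)_{n \geq 0})$ and a morphism $e' : (X, (\nu_n)_{n \geq 0}) \to (X, (\mu_n)_{n \geq 0})$ in $\textbf{SeSp}_2^{comp}$, whose underlying maps are both $1_X$, so that $e' \circ e = 1_{(X, (\mu_n)_{n \geq 0})}$ and $e \circ e' = 1_{(X, (\nu_n)_{n \geq 0})}$ in $\textbf{SeSp}_2^{comp}$.

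Next I would set $P := h_2(e)$ and $Q := h_2(e')$. Applying the two theorems asserting that $h_2$ is a functor, namely $h_2(g \circ f) = h_2(g) \circ h_2(f)$ and $h_2(1_{(X, (\mu_n)_{n \geq 0})}) = 1_{h_2(X, (\mu_n)_{n \geq 0})}$, yields $Q \circ P = 1_{h_2(X, (\mu_n)_{n \geq 0})}$ and $P \circ Q = 1_{h_2(X, (\nu_n)_{n \geq 0})}$, so $P$ is an isomorphism in $\textbf{UBicat}$ with strict inverse $Q$.

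Finally I would check that $P$ is the identity on objects and hom-categories by unwinding Definition \ref{defn_h2_h2(f)} in the case $f = 1_X$: on objects $P(x) = ((1_X)_{0,0})_0(x) = x$, and on hom-categories $P_{x,y} = h_1((1_X)_1^{x,y}) = h_1(1_{X(x,y)}) = 1_{h_1(X(x,y))}$, since restricting $1_X$ to simplicial level $1$ and then to the fiber over $(x,y)$ yields the identity of $X(x,y)$ and $h_1$ preserves identities; note moreover that the hom-categories of $h_2(X, (\mu_n)_{n \geq 0})$ and $h_2(X, (\nu_n)_{n \geq 0})$ are literally the same objects $h_1(X(x,y))$. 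The compositors of $P$, namely $\pi_{x_0, \cdots, x_n} = \Phi^{x_0, \cdots, x_n}_{1_X}(1_{\Delta[n]}, 0 < 1)$, are in general non-trivial natural isomorphisms encoding an induced left homotopy between $\nu_n$ and $\mu_n$, but the statement makes no claim about these. The only load-bearing step is the first one --- that the identity of $X$ qualifies as a morphism between any two of its decorations in $\textbf{SeSp}_2^{comp}$ --- and I do not expect any real obstacle beyond it; everything else is a formal consequence of $h_2$ being a functor together with the explicit formulas in Definition \ref{defn_h2_h2(f)}.
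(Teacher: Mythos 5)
Your proposal is correct and follows exactly the route the paper intends: the corollary is stated there as an immediate consequence of the functoriality theorems, obtained by viewing $1_X$ as a morphism in $\textbf{SeSp}_2^{comp}$ in both directions between the two decorated objects and applying $h_2$, with the identity behaviour on objects and hom-categories read off from Definition \ref{defn_h2_h2(f)}. Your additional remark that the compositors are generally non-trivial (being built from an induced left homotopy between the two choices of composition) is also consistent with the paper's subsequent discussion of this pseudofunctor in the fundamental bigroupoid example.
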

\section{Fundamental Bigroupoids}

To take stock of our progress in defining homotopy bicategories, we now consider the effect of $h_2$ on the case of $S := \textbf{Sing}_{\textbf{ssS}}(U)$ for some given $U \in \textbf{Top}$. We will find this yields a sensible notion of \emph{fundamental bigroupoid} of a topological space, similar in nature to \cite{hardieKampsKieboomHomotopyBigroupoidTopological2001} though with more flexible horizontal composition operations.

To begin, we have a set of objects $\textbf{ob}(h_2(S))$ equal to the set of points in $U$, along with hom-categories of the form
$$
    \textbf{Hom}_{h_2(S)}(x, y) \cong \Pi_1(\{(x, y)\} \times_{U \times U} U^{\Delta_t[1]}),
$$
namely those groupoids whose objects are paths in $U$ from $x$ to $y$ and whose morphisms are homotopy classes of homotopies between such paths that are constant on endpoints. This is thus far precisely what we should expect.

For composition, we observe that $\textbf{Sing}_{\textbf{sS}}$ commutes with limits, since it is a right adjoint. Hence,
$$
    \prod_{S_0}^{1 \leq i \leq n} S_1 \cong \textbf{Sing}_{\textbf{sS}}(U^{Sp_t(n)})
$$
where $Sp_t(n) := \Delta_t[1] \sqcup_{\Delta_t[0]} \cdots \sqcup_{\Delta_t[0]} \Delta_t[1]$. Thus, a choice of horizontal compositions for $S$ actually reduces to solving lifting problems of the form
\[\begin{tikzcd}
	& {\textbf{Sing}_{\textbf{sS}}(U^{\Delta_t[n]})} \\
	{\textbf{Sing}_{\textbf{sS}}(U^{Sp_t(n)})} & {\textbf{Sing}_{\textbf{sS}}(U^{Sp_t(n)})}
	\arrow["id"', from=2-1, to=2-2]
	\arrow["{\textbf{Sing}_{\textbf{sS}}(U^{\lvert g_n \rvert})}", from=1-2, to=2-2]
	\arrow["{\mu_n}"{pos=0.3}, dashed, from=2-1, to=1-2]
\end{tikzcd}\]
where $\lvert g_n \rvert : Sp_t(n) \hookrightarrow \Delta_t[n]$ is the topological spine inclusion.

Note that the map $\lvert g_n \rvert$ is a trivial Hurewicz cofibration; it is the inclusion of a subcomplex and is moreover a homotopy equivalence. This in turn implies that $U^{\Delta_t[n]} \rightarrow U^{Sp_t(n)}$ is a trivial Hurewicz fibration. Hence, solutions to this lifting problem can be obtained from any of the sections of $U^{\lvert g_n \rvert}$, which necessarily exist. We can obtain a simple example of such a section by finding a retract $R_n : \Delta_t[n] \twoheadrightarrow Sp_t[n]$ of $\lvert g_n \rvert$: if we understand that $Sp_t(n) \subseteq \Delta_t[n]$ and define $e_i \in \Delta_t[n]$ such that $e_i$'s $i^{th}$ entry is $1$, we choose to map $e_i \mapsto (\frac{n-i}{n}, 0, \cdots, 0, \frac{i}{n})$, inducing linearly a retraction $r_n$ to the edge from $e_0$ to $e_n$ in $\Delta_t[n]$ that sends
$$
    (x_0, \cdots, x_n) \mapsto (\sum_{i = 0}^n x_i \frac{n - i}{n}, 0, \cdots, 0, \sum_{i = 0}^n x_i \frac{i}{n}).
$$
Composing this with a map $s_n$ to $Sp_t(n)$ that sends
$$
    (1 - x_n, 0, \cdots, 0, x_n) \mapsto ((i + 1) - n x_n) e_i + (n x_n - i) e_{i+1}, \;\;\;\; \frac{i}{n} \leq x_n \leq \frac{i+1}{n}
$$
gives us a final retraction $R_n = s_n \circ r_n : \Delta_t[n] \rightarrow Sp_t(n)$, sending
$$
    (x_0, \cdots, x_n) \mapsto \big((i + 1) - \sum_{j = 0}^n j x_j\big) e_i + \big(\sum_{j = 0}^n j x_j - i\big) e_{i + 1}, \;\;\;\; i \leq \sum_{j = 0}^n j x_j \leq i + 1
$$
We should show that this is the identity on $Sp_t(n)$. Indeed, consider some $(x_0, \cdots, x_n) =  (1 - t)e_i + t e_{i+1} = (0, \cdots, 0, 1-t, t, 0, \cdots, 0)$ for $0 \leq t \leq 1$. Then
$$
    \sum_{j=0}^n j x_j = i(1-t) + (i + 1)t = i - it + it + t = i + t
$$
which ranges from $i$ to $i+1$ with $t$.

Applying $R_n$ then gives us
$$
    R_n(0, \cdots, 0, 1 - t, t, 0, \cdots, 0) = ((i + 1) - (i + t))e_i + (i + t - i)e_{i+1} = (1-t)e_i + te_{i+1}
$$
which confirms that the inclusion $\lvert g_n \rvert : Sp_t(n) \subseteq \Delta_t[n]$ is a section of $R_n$, as needed.

We now have our composition map, which by using the identification
$$
    \textbf{Hom}_{h_2(S)}(x_0, x_1) \times \cdots \times \textbf{Hom}_{h_2(S)}(x_{n-1}, x_n) \cong \Pi_1(\{x_0, \cdots, x_n\} \times_{U^{n+1}} U^{Sp_t(n)})
$$
can be phrased in the form
\[\begin{tikzcd}
	{\Pi_1(\{x_0, \cdots, x_n\} \times_{U^{n+1}} U^{Sp_t(n)})} \\
	{\Pi_1(\{x_0, x_n\} \times_{U \times U} U^{Sp_t(n)})} && {\Pi_1(\{x_0, x_n\} \times_{U \times U} U^{\Delta_t[n]})} \\
	&& {\Pi_1(\{x_0, x_n\} \times_{U \times U} U^{\Delta_t[1]})}
	\arrow[from=1-1, to=2-1]
	\arrow["{\Pi_1(1_{\{x_0, x_n\}} \times_{1_{U^2}} U^{R_n})}", from=2-1, to=2-3]
	\arrow[from=2-3, to=3-3]
\end{tikzcd}\]
which, in the end, amounts to composing a sequence of $n$ paths in a topological space into one path. This composition, by the way $R_n$ was defined, is `unbiased' - the composite path $[0, 1] \rightarrow U$ sends the interval $[\frac{i}{n}, \frac{i+1}{n}]$ to the $i^{th}$ path in the chain by the map $x \mapsto nx - i$.

We should note that this is but one possible choice of composition operation. In our approach, we established that choice of composition could be built upon a choice of retract for the inclusion $Sp_t(n) \hookrightarrow \Delta_t[n]$. There are in fact an uncountably infinite number of such retracts. For instance, if $n = 2$, choosing any $t \in (0, 1)$ induces a distinct retract $f_t : \Delta_t[2] \rightarrow Sp_t(2)$ sending
$$
    (a, b, c) \mapsto \begin{cases}
        (1 - \frac{bt + c}{t}, \frac{bt + c}{t}, 0) & bt + c < t \\
        (0, 1 - \frac{bt + c - t}{1 - t}, \frac{bt + c - t}{1 - t}) & bt + c \geq t.
    \end{cases}
$$
This results in a composition of paths which, as before, concatenates the paths in an affine manner, but differs in the parameterization of the concatenation. More precisely, the parameter $t$ specifies the point in $[0, 1]$ where the first path ends and the second begins in the composite. We can generalize this phenomenon to all $\Delta_t[n]$ in a straightforward way.

Of course, the space of all retracts $\Delta_t[n] \rightarrow Sp_t[n]$ of $\lvert g_n \rvert$ need not contain only piecewise linear elements or even only smooth ones. Moreover, our particular construction of the homotopy bicategory of $S$ may have used these retracts, but there is certainly no need to rely on these to obtain the necessary lifts at all. Any section of $U^{\lvert g_n \rvert} : U^{\Delta_t[n]} \rightarrow U^{Sp_t(n)}$ will do. One could go further and say the section need not result from a map on these underlying topological spaces - it need only be defined on the resulting Segal spaces, so behavior levelwise could potentially vary drastically.

We now turn to associators in $h_2(S)$, as unitors are of course trivial. In order to work within $\textbf{Top}$ efficiently, we need to show that one may convert from homotopies defined using the interval $[0, 1]$ to ones defined using $N(I[1])$.

\begin{proposition} \label{prop_h2_nervecyl_to_interval}
    There is a weak equivalence of Segal spaces $\tau : N(I[1]) \rightarrow \textbf{Sing}_{\textbf{sS}}([0, 1])$ such that the diagram
    \[\begin{tikzcd}
    	{\ast \sqcup \ast} & {\textbf{Sing}_{\textbf{sS}}([0, 1])} \\
    	{N(I[1])} & \ast
    	\arrow[from=1-1, to=1-2]
    	\arrow[from=1-2, to=2-2]
    	\arrow[from=1-1, to=2-1]
    	\arrow[from=2-1, to=2-2]
    	\arrow["\tau"{description}, dashed, from=2-1, to=1-2]
    \end{tikzcd}\]
    commutes.
\end{proposition}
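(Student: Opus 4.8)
The plan is to produce $\tau$ as the solution to a lifting problem, exploiting that both $N(I[1])$ and $\textbf{Sing}_{\textbf{sS}}([0,1])$ are ``contractible'' Segal spaces carrying two marked points. The map $\tau$ will then automatically be a weak equivalence by two-out-of-three, with no need to write it down explicitly.

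First I would record the fibrancy and contractibility facts. By Proposition~\ref{prop_h1_n_cssp} the space $N(I[1])$ is a complete Segal space, hence Reedy fibrant; and $\textbf{Sing}_{\textbf{sS}}([0,1])$ is a Segal space (our running example), hence Reedy fibrant. Moreover the canonical maps to the terminal object $N(I[1]) \to \ast$ and $\textbf{Sing}_{\textbf{sS}}([0,1]) \to \ast$ are trivial fibrations: each is a fibration since its source is fibrant, and each is a levelwise weak equivalence --- for $N(I[1])$ because $I[1] \to [0]$ is an equivalence of categories and $N$ carries equivalences to levelwise weak equivalences \cite[Thm. 3.7]{rezkModelHomotopyTheory2000}, so $N(I[1]) \to N([0]) = \ast$ is such; and for $\textbf{Sing}_{\textbf{sS}}([0,1])$ because $([0,1])^{\Delta_t[n]}$ is contractible for every $n$, so each $\textbf{Sing}(([0,1])^{\Delta_t[n]})$ is a contractible Kan complex. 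Next I would check that the left-hand vertical map $\ast \sqcup \ast \to N(I[1])$ --- given by $N$ applied to the two functors $[0] \to I[1]$, i.e. by the two objects of $I[1]$ --- is a Reedy cofibration: at simplicial level $n$ it is the map $\Delta[0] \sqcup \Delta[0] \to \textbf{nerve}(I[1]^{[n]})$ picking out the two distinct constant functors $[n] \to I[1]$, which is a monomorphism of simplicial sets, so the map is a levelwise monomorphism. The horizontal map $\ast \sqcup \ast \to \textbf{Sing}_{\textbf{sS}}([0,1])$ is $\textbf{Sing}_{\textbf{sS}}$ of the two points $0, 1 \in [0,1]$. The square of the statement is therefore a lifting problem of a cofibration against the trivial fibration $\textbf{Sing}_{\textbf{sS}}([0,1]) \to \ast$, so it admits a solution $\tau : N(I[1]) \to \textbf{Sing}_{\textbf{sS}}([0,1])$; by construction $\tau$ makes the square commute (commutativity over $\ast$ being automatic).

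Finally, since $N(I[1]) \to \ast$ and $\textbf{Sing}_{\textbf{sS}}([0,1]) \to \ast$ are weak equivalences and $\tau$ postcomposed with the latter is the former, two-out-of-three shows $\tau$ is a weak equivalence; as it is a map between Reedy fibrant (Segal) spaces this is a weak equivalence of Segal spaces, as required. There is no genuine obstacle here; the only points needing a little care are verifying that $\textbf{Sing}_{\textbf{sS}}([0,1])$ is Reedy fibrant (so that its map to $\ast$ is a fibration) and that $\ast \sqcup \ast \to N(I[1])$ is a levelwise monomorphism, both of which are immediate once the definition of the classifying diagram is unwound.
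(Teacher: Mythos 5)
Your proposal is correct and follows essentially the same route as the paper: obtain $\tau$ as a solution to the lifting problem of the cofibration $\ast \sqcup \ast \to N(I[1])$ against the trivial fibration $\textbf{Sing}_{\textbf{sS}}([0,1]) \to \ast$, then conclude it is a weak equivalence by two-out-of-three. You simply spell out the fibrancy, contractibility and monomorphism checks that the paper leaves implicit.
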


\begin{proof}
    Note that this is a valid lifting problem in $\textbf{sSpace}^{inj}$, since $\textbf{Sing}_{\textbf{sS}}(-)$ preserves weak equivalences levelwise and has its image in $\textbf{SeSp}^{inj}$, so that the rightmost vertical map is a trivial fibration in $\textbf{sSpace}^{inj}$. Thus, the map $\tau$ is simply a solution of a lifting problem, so will exist. Moreover, it must be a weak equivalence by 2-out-of-3.
\end{proof}

Further discussion of a concrete implementation of $\tau$, together with some relevant references, may be found in the proof of \cite[Prop. 5.1.1]{romoTowardsAlgebraicNCategories} and the material immediately following it.

This means, given we specify our homotopies classically in $\textbf{Top}$, we can transmit them by $\textbf{Sing}_{\textbf{sS}}$ and precomposition with $S$ to the format of left homotopy we have built $h_2$ upon.

Suppose then we have chosen sections $p_n : U^{Sp_t(n)} \rightarrow U^{\Delta_t[n]}$ of $U^{\lvert g_n \rvert}$. It will suffice, for each $n > 0$ and $k_1, \cdots, k_n \geq 0$ with $r := \sum_i k_i$, to find a homotopy
$$
    U^{Sp_t(r)} \times [0, 1] \rightarrow U^{(\Delta_t[k_1] \sqcup_{\Delta_t[0]} \cdots \sqcup_{\Delta_t[0]} \Delta_t[k_n]) \sqcup_{Sp_t(n)} \Delta_t[n]}
$$
from $U^{\lvert f_{k_1, \cdots, k_n} \rvert} \circ p_r$ to
$$
    \big((1_{U^{\Delta_t[k_1]}} \times_{1_U} \cdots \times_{1_U} 1_{U^{\Delta_t[k_n]}}) \times_{1_{U^{Sp_t(n)}}} p_n \big) \circ \big(p_{k_1} \times_{1_U} \cdots \times_{1_U} p_{k_n}\big)
$$
that is constant on postcomposition with the natural map to $U^{Sp_t(r)}$. Both of these maps are again sections of a trivial Hurewicz fibration, so there will necessarily be a homotopy between them.

In the simple case that $p_n := U^{R_n}$, our challenge shrinks to the problem of finding a homotopy from 
$U^{\lvert f_{k_1, \cdots, k_n} \rvert} \circ U^{R_r}$ to $U^{(id \sqcup_{id} \cdots \sqcup_{id} id) \sqcup_{id} R_n} \circ U^{(R_{k_1} \sqcup_{id} \cdots \sqcup_{id} R_{k_n})}$. It then suffices to construct a homotopy of the form
$$
    \Big( (\Delta_t[k_1] \sqcup_{\Delta_t[0]} \cdots \sqcup_{\Delta_t[0]} \Delta_t[k_n]) \sqcup_{Sp_t(n)} \Delta_t[n] \Big) \times [0, 1] \rightarrow Sp_t(r)
$$
from $R_r \circ \lvert f_{k_1, \cdots, k_n} \rvert$ to $(R_{k_1} \sqcup_{id} \cdots \sqcup_{id} R_{k_n}) \circ (id \sqcup_{id} R_n)$. We choose to identify $Sp_t(r)$ with $[0, r]$ in the evident way, sending $e_i \mapsto i$, and set our homotopy to be linear interpolation.

To understand more concretely what our homotopy does, consider the case $n = 2$ and $k_1 = 1, k_2 = 2$. The homotopy is from the composition map $((- \circ -) \circ -)$ to $(- \circ - \circ -)$. We have reduced this to a homotopy of retracts
$$
    \Big( (\Delta_t[1] \sqcup_{\Delta_t[0]} \Delta_t[2]) \sqcup_{Sp_t(2)} \Delta_t[2] \Big) \times [0, 1] \rightarrow [0, 3] \cong Sp_t(3).
$$
The important behavior of the homotopy is on the unit square $\Delta_t[1] \times [0, 1]$ in the domain, identified by the inclusion
$$
    \Delta_t[1] \xrightarrow{\lvert \langle 0, 2 \rangle \rvert} \Delta_t[2] \cong (\emptyset \sqcup_{\emptyset} \emptyset) \sqcup_{\emptyset} \Delta_t[2] \hookrightarrow \Big((\Delta_t[1] \sqcup_{\Delta_t[0]} \Delta_t[2]) \sqcup_{Sp_t(2)} \Delta_t[2] \Big)
$$
as the image of this path is the end result of the composition operation. We find that $R_3 \circ \lvert f_{1, 2} \rvert$ acts on this interval as the morphism $[0, 1] \rightarrow [0, 3]$ sending $x \mapsto 3x$, while $(R_1 \sqcup_{id} R_2) \circ (id \sqcup_{id} R_2)$ is the piecewise linear map sending $0 \mapsto 0$, $\frac{1}{2} \mapsto 1$, $\frac{3}{4} \mapsto 2$ and $1 \mapsto 3$. Our associator is a piecewise linear interpolation between these two maps.

As we might expect, there are many possible choices of homotopy to exhibit the associators in $h_2(S)$. Linear interpolation is but one option; a suitable homotopy may be only polynomial, smooth or merely continuous. However, all of these will produce identical associators, as the induced natural isomorphisms will levelwise be homotopic paths. In order to generate truly distinct associators, they must be given as natural isomorphisms that are levelwise paths which are not homotopic, which is not possible by the given approach nor indeed by any method within the confines of our constructions of homotopy bicategories.

Now we consider the production of pseudofunctors. We add the following definition:

\begin{definition}
    Let $\textbf{Top}^{comp}$ be the category of pairs $(T, (p_n)_{n \geq 0})$ where $T \in \textbf{Top}$ and $p_n : T^{Sp_t(n)} \rightarrow T^{\Delta_t[n]}$ is a section of the map $T^{\lvert g_n \rvert}$, together with maps $(T, p_n) \rightarrow (U, q_n)$ given by continuous maps $T \rightarrow U$ in $\textbf{Top}$.
\end{definition}

We now have a functor
$$
    \textbf{Sing}_{\textbf{ssS}}^{comp} : \textbf{Top}^{comp} \rightarrow \textbf{SeSp}_2^{comp}
$$
that extends $\textbf{Sing}_{\textbf{ssS}}$ by inducing horizontal compositions from the maps $p_n$.

Consider a continuous map $f : (T, p_n) \rightarrow (U, q_n)$ in $\textbf{Top}$. Let $S_T := \textbf{Sing}_{\textbf{ssS}}(T)$ and $S_U := \textbf{Sing}_{\textbf{ssS}}(U)$. We already have specified horizontal compositions in $S_U$ and $S_T$. Write $\mu_n$ for the horizontal compositions induced by $p_n$ and likewise $\nu_n$ for those from $q_n$. We thus obtain a map
$$
    F := \textbf{Sing}_{\textbf{ssS}}(f) : (S_T, \mu_n) \rightarrow (S_U, \nu_n).
$$
We will write $F$ for the underlying map $S_T \rightarrow S_U$ as well.

The pseudofunctor $h_2(F) : h_2(S_T, (\mu_n)_{n \geq 0}) \rightarrow h_2(S_U, (\nu_n)_{n \geq 0})$ evidently sends $x \mapsto f(x)$ for objects $x \in T$. On hom-categories, for $x, y \in T$, we have the induced functor
$$
    \Pi_1(\{(x, y)\} \times f^{\Delta_t[1]}) : \Pi_1(\{(x, y)\} \times T^{\Delta_t[1]}) \rightarrow \Pi_1(\{(f(x), f(y))\} \times U^{\Delta_t[1]})
$$
which sends a $1$-morphism $p : [0, 1] \rightarrow T$ from $x$ to $y$ to the path $f \circ p$. Moreover, it sends a homotopy class of paths $[H]$ for $H : [0, 1] \times [0, 1] \rightarrow T$ to $[f \circ H]$.

Vertical composition is respected on the nose. For horizontal composition, we must understand the compositors $\pi_{x_0, \cdots, x_n}$. It suffices to find a homotopy
$$
    T^{Sp_t(n)} \times [0, 1] \rightarrow U^{\Delta_t[n]}
$$
from $f^{\Delta_t[n]} \circ p_n$ to $q_n \circ f^{Sp_t(n)}$ that is constant on the vertices. Such a homotopy will necessarily exist, as both are solutions to the same lifting problem
\[\begin{tikzcd}
	&& {U^{\Delta_t[n]}} \\
	\\
	{T^{Sp_t(n)}} && {U^{Sp_t(n)}}
	\arrow[from=1-3, to=3-3]
	\arrow["{f^{\Delta_t[n]} \circ p_n}"{description}, curve={height=-12pt}, dashed, from=3-1, to=1-3]
	\arrow["{q_n \circ f^{Sp_t(n)}}"{description}, curve={height=12pt}, dashed, from=3-1, to=1-3]
	\arrow[from=3-1, to=3-3]
\end{tikzcd}\]
in the \emph{Hurewicz model structure} on $\textbf{Top}$, as defined for instance in \cite[Sec. 17.1]{mayPontoMoreConciseAlgebraic2012}. Any two such homotopies will by Corollary \ref{cor_h2_diffcomps_ident_fctr} induce the same natural isomorphism, so the pseudofunctor $h_2(F)$ is specified entirely.

In the special case that $p_n = T^{R_n}$ and $q_n = U^{R_n}$, this is much simpler. In fact, the two maps being homotoped between are equal, so the natural isomorphisms $\pi_{x_0, \cdots, x_n}$ are all identities.

Now, suppose one introduced a different set of deformation retracts $R_n' : \Delta_t[n] \rightarrow Sp_t(n)$. These induce a different choice of horizontal compositions $(\eta_n)_{n \geq 0}$ on $S_T$. We thus have by Corollary \ref{cor_h2_diffcomps_ident_fctr} that there is a canonical pseudofunctor $P : h_2(S_T, (\mu_n)_{n \geq 0}) \rightarrow h_2(S_T, (\eta_n)_{n \geq 0})$ that is the identity on objects and hom-categories. Indeed, $P$ is induced by the identity morphism $1_T : T \rightarrow T$.

The only interesting aspect of this pseudofunctor's structure is in the natural isomorphisms $\pi_{x_0, \cdots, x_n}$. These will be induced by homotopies
$$
    T^{Sp_t(n)} \times [0, 1] \rightarrow T^{\Delta_t[n]}
$$
from $U^{R_n}$ to $U^{R_n'}$. For such purposes, it suffices to find a left homotopy
$$
    \Delta_t[n] \times [0, 1] \rightarrow Sp_t(n)
$$
from $R_n$ to $R_n'$ that preserves $Sp_t(n)$. We may choose our left homotopy to be linear interpolation pointwise, choosing to interpret $Sp_t(n) \cong [0, n]$. We find thus that, for a sequence of paths $x_0 \xrightarrow{f_1} x_1 \cdots \xrightarrow{f_n} x_n$, the $2$-morphism $\pi_{(f_1, \cdots, f_n)}$ is the homotopy class $[H]$ of the map $H : [0, 1] \rightarrow U^{[0, 1]}$ such that $0 \mapsto (f_0 \sqcup_* \cdots \sqcup_* f_n) \circ R_n$ and $1 \mapsto (f_0 \sqcup_* \cdots \sqcup_* f_n) \circ R_n'$, with all other points defined by the linear interpolation between $R_n$ and $R_n'$. Any other homotopy will yield the same natural isomorphism, so this description covers all possibilities.

In the end, composing $\textbf{Sing}_{\textbf{ssS}}^{comp}$ and $h_2$ yields a `fundamental bigroupoid' functor
$$
    \Pi_2^{comp} : \textbf{Top}^{comp} \rightarrow \textbf{UBicat}.
$$
Using some coherent choice of maps $p_n$, for instance those induced by $R_n$, allows us to make the domain $\textbf{Top}$. This then yields a functor
$$
    \Pi_2 : \textbf{Top} \rightarrow \textbf{UBicat}.
$$

{\textbf{Data Access Statement:} No data was collected or used in this project. All commutative diagrams were generated with \url{https://q.uiver.app/}.}

\printbibliography

\end{document}